\newtheorem{thm}{Theorem}[section]
\newtheorem{lem}[thm]{Lemma}
\newtheorem{cor}[thm]{Corollary}
\newtheorem{pro}[thm]{Proposition}
\theoremstyle{definition}
\newtheorem{defi}[thm]{Definition}
\newtheorem{ex}[thm]{Example}
\theoremstyle{remark}
\newtheorem*{rmk}{Remark}
\DeclareMathOperator{\inv}{inv}
\DeclareMathOperator{\inti}{int}
\DeclareMathOperator{\im}{im}
\DeclareMathOperator{\morp}{mor}
\DeclareMathOperator{\ob}{ob}
\DeclareMathOperator{\grad}{grad}
\DeclareMathOperator{\spf}{sf}
\DeclareMathOperator{\gradtil}{\widetilde{grad}}
\DeclareMathOperator{\edim}{dim}
\DeclareMathOperator{\grade}{gr}
\newcommand{\stL}{\text{\sout{\ensuremath{\mathcal{L}}}} }
\DeclareFontFamily{U}{mathx}{\hyphenchar\font45}
\DeclareFontShape{U}{mathx}{m}{n}{<-> mathx10}{}
\DeclareSymbolFont{mathx}{U}{mathx}{m}{n}
\DeclareMathAccent{\widebar}{0}{mathx}{"73}
\title
{Unfolded Seiberg-Witten Floer spectra, I: Definition and invariance }
\author{Tirasan Khandhawit}              
\date{} 
\address{Kavli Institute for the Physics and Mathematics of the Universe (WPI),The University of Tokyo Institutes for Advanced Study, The University of Tokyo, Kashiwa, Chiba 277-8583, Japan}
\email{tirasan.khandhawit@ipmu.jp}
\author{Jianfeng Lin}
\address{Department of Mathematics 
University of California Los Angeles, 
Box 951555, Los Angeles, CA 90095-1555, USA }
\email{juliuslin@math.ucla.edu}
\author{Hirofumi Sasahira}
\address{Graduate School of Mathematics, Nagoya University,
Furocho, Chikusa-ku, Nagoya 464-8602,
Japan}
\email{hsasahira@math.nagoya-u.ac.jp}
\begin{document}

\vskip 0.3 truecm

\maketitle 

\begin{abstract}
Let $Y$ be a closed and oriented $3$-manifold. We define different versions of unfolded Seiberg-Witten Floer spectra for
$Y$. These invariants generalize Manolescu's Seiberg-Witten Floer spectrum for rational homology $3$-spheres. We also compute
some examples when $Y$ is a Seifert space.
  
\end{abstract}

\tableofcontents

\section {Introduction}   \label{section introduction}
 The Seiberg-Witten equations, introduced in \cite{Witten}, and related theories have been playing a central role in the study of smooth 4-dimensional manifolds since 1990s. Following the seminal work of Floer \cite{Floer}, Kronheimer and Mrowka \cite{Kronheimer-Mrowka} used Seiberg-Witten equations on 3-manifolds to construct monopole Floer homology. The monopole Floer homology and its counterparts are powerful invariants of 3-manifolds and became an important tool in the study of low-dimensional topology with many remarkable applications.

In contexts of symplectic Floer theory  and instanton Floer theory, Cohen, Jones and Segal \cite{Cohen-Jones-Segal} posed a question of constructing a ``Floer spectrum,'' an object whose homology recovers the Floer homology .  In 2003, Manolescu \cite{Manolescu1} first constructed the Seiberg-Witten Floer spectrum for rational homology 3-spheres by incorporating Furuta's technique of finite dimensional approximation in Seiberg-Witten theory \cite{Furuta108} and Conley index theory \cite{Conley}.    It has been just recently shown by Lidman and Manolescu \cite{Lidman-Manolescu} that the homology of this spectrum is isomorphic to the monopole Floer homology. In principle, the Seiberg-Witten Floer spectrum can be thought as a stable homotopy refinement
of Floer homology. For example, one can apply the K-theory functor to this spectrum and define ``Seiberg-Witten Floer K-theory''
as well as other generalized homology theories (see \cite{Manolescu2}, \cite{Furuta-Li} and \cite{Jianfeng} for applications
in this direction).

As monopole Floer homology is defined for general 3-manifolds, it is a natural question to extend Manolescu's construction to any 3-manifold $Y$ with $b_1(Y) >0$.  In the case $b_1(Y) =1 $, Kronheimer and Manolescu \cite{Kronheimer-Manolescu} constructed a periodic pro-spectrum for such $Y$ with nontorsion spin$^c$ structure. The first author \cite{Khandhawit2} gave an approach to construct Seiberg-Witten Floer spectrum for a general case.

The main goal of the current paper is to rigorously construct the ``unfolded'' version of Seiberg-Witten Floer spectrum for general 3-manifolds. Our invariants come with two variations: type-A invariant and type-R invariant. The letters ``A'' and ``R'' stand for attractor and repeller, which are notions in dynamical system and play a role in our construction.

\begin{thm}\label{main theorem 1}
Let $Y$ be a closed, oriented 3-manifold and let $\mathfrak{s}$ be a spin$^{c}$ structure on $Y$. Given a Riemannian metric $g$ on $Y$ and a spin$^{c}$ connection $A_{0}$ which induces a connection on the determinant bundle of the spinor bundle with harmonic curvature, we can define
$$ \underline{\textnormal{swf}}_{}^{A}(Y,\mathfrak{s},A_{0}, g ; S^1) \text{ and } \  \underline{\textnormal{swf}}^{R}_{}(Y,\mathfrak{s},A_{0}, g ; S^1) $$
as a direct system and an inverse system in the $S^1$-equivariant stable category. These objects are well-defined up to canonical isomorphisms in the corresponding categories.

{In the case that} $c_{1}(\mathfrak{s})$ is nontorsion and $l=\gcd\{(h\cup [c_{1}(\mathfrak{s})])[Y] \mid h \in H^1(Y;\mathbb{Z})\} $, the objects $\underline{\textnormal{swf}}_{}^{A}(Y,\mathfrak{s},A_{0}, g ; S^1)$ and
$\underline{\textnormal{swf}}_{}^{R}(Y,\mathfrak{s},A_{0}, g ; S^1)$ are $l$-periodic in the sense that
\begin{align*}
\Sigma^{\frac{l}{2}\mathbb{C}} \underline{\textnormal{swf}}_{}^{A}(Y,\mathfrak{s},A_{0}, g ; S^1)
&\cong \underline{\textnormal{swf}}_{}^{A}(Y,\mathfrak{s},A_{0}, g ; S^1), \\
\Sigma^{\frac{l}{2}\mathbb{C}} \underline{\textnormal{swf}}_{}^{R}(Y,\mathfrak{s},A_{0}, g ; S^1)
&\cong \underline{\textnormal{swf}}_{}^{R}(Y,\mathfrak{s},A_{0}, g ; S^1).
\end{align*}
{ When  the metric $g$ or the connection $A_{0}$ changes, the objects $\underline{\textnormal{swf}}_{}^{A}(Y,\mathfrak{s},A_{0},
g ; S^1)$ and $\underline{\textnormal{swf}}_{}^{R}(Y,\mathfrak{s},A_{0}, g ; S^1) $ can change only by suspending or desuspending
by copies of the complex representation $\mathbb{C}$ of $S^{1}$. }

{In the case that} $c_{1}(\mathfrak{s})$ is torsion, we can normalize the above objects to obtain invariants
$$
        \underline{\textnormal{SWF}}_{}^{A}(Y,\mathfrak{s} ; S^1)        \text{ and }
        \underline{\textnormal{SWF}}_{}^{R}(Y,\mathfrak{s};S^1)
$$
of the spin-c manifold $(Y, \mathfrak{s})$.

\end{thm}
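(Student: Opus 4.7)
The overall plan is to follow Manolescu's construction in \cite{Manolescu1}, modified to cope with the non-compact factor $H^{1}(Y;i\mathbb{R})$ that appears on the global Coulomb slice when $b_{1}(Y)>0$. Rather than producing a single Conley index as in the rational homology sphere case, one obtains a family of Conley indices indexed by a parameter $R$ that bounds the trajectories in the harmonic direction; assembling these with attractor inclusions gives the direct system $\underline{\textnormal{swf}}^{A}$, and with repeller projections gives the inverse system $\underline{\textnormal{swf}}^{R}$.

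\emph{Setup and finite-dimensional approximation.} Placing the Seiberg--Witten equations on the global Coulomb slice, I would write them as a gradient-like flow $\partial_{t}x=\ell x+c(x)$, where $\ell$ is a self-adjoint operator with compact resolvent on the orthogonal complement of the harmonic forms and $c$ is a quadratic compact nonlinearity. A Furuta-style approximation then projects the non-harmonic part onto the spectral subspace $V_{\lambda}^{\mu}$ of $\ell$ corresponding to eigenvalues in $[\lambda,\mu]$, while the harmonic factor $H^{1}(Y;i\mathbb{R})$ is retained in full. The residual gauge group $H^{1}(Y;\mathbb{Z})$ acts by integer translations on $H^{1}(Y;i\mathbb{R})$; this action commutes with the approximated flow and provides the source of periodicity later on.

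\emph{Isolating blocks and attractor--repeller decomposition.} For each approximation level $(\lambda,\mu)$ and each radius $R$ in the harmonic direction, construct an isolating neighborhood $N(R,\lambda,\mu)$ for the approximated flow as in \cite{Khandhawit2}, together with an attractor--repeller decomposition: the attractor consists of trajectories that remain bounded as $t\to-\infty$, and the repeller of those bounded as $t\to+\infty$. Taking Conley indices (suspended by $V_{\lambda}^{0}$ to land in the $S^{1}$-equivariant stable category) then behaves functorially in $R$: enlarging $R$ yields attractor inclusions $I^{A}(R)\hookrightarrow I^{A}(R')$ and repeller projections $I^{R}(R')\twoheadrightarrow I^{R}(R)$, producing the respective direct and inverse systems.

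\emph{Invariance, periodicity, and normalization.} Canonical well-definedness up to suspension by copies of $\mathbb{C}$ follows from Conley index continuation along any path in the parameter space $(\lambda,\mu,g,A_{0})$; since each stabilizing subspace $V_{\lambda}^{\lambda'}$ is a complex $S^{1}$-representation, the discrepancy is indeed a $\mathbb{C}$-suspension. Periodicity in the non-torsion case comes from the $H^{1}(Y;\mathbb{Z})$-translation action: translating by $h$ commutes with the flow but shifts the spectral decomposition of the perturbed Dirac operator, and the index formula identifies the shift as $\tfrac{1}{2}(h\cup c_{1}(\mathfrak{s}))[Y]$ copies of $\mathbb{C}$; taking the $\gcd$ yields the claimed $l$-periodicity. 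When $c_{1}(\mathfrak{s})$ is torsion, the harmonic factor contributes only a compact Jacobian torus modulo gauge, so a universal correction in terms of the eta invariant (and a Fr\o yshov-type count of eigenvalues) absorbs the dependence on $(g,A_{0})$ and produces the normalized invariants $\underline{\textnormal{SWF}}^{A/R}(Y,\mathfrak{s};S^{1})$.

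\emph{Main obstacle.} The principal difficulty is to make all of these continuation and identification maps coherent: as $R$, $\lambda$, $\mu$, $g$, and $A_{0}$ vary simultaneously, the relevant diagrams in the $S^{1}$-equivariant stable category must commute canonically rather than merely up to homotopy, so that the direct and inverse systems and the periodicity isomorphisms are truly well-defined. The analytic input is a uniform $C^{0}$-bound on Seiberg--Witten trajectories inside the isolating blocks, independent of the approximation parameters, which requires careful control of the nonlinear flow along the non-compact harmonic direction.
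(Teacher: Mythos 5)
Your outline agrees with the paper's broad strategy (Coulomb slice, finite-dimensional approximation of the non-harmonic part, nested bounded sets in the harmonic direction producing a direct system of Conley indices via attractor maps and an inverse system via repeller maps, periodicity from the $H^{1}(Y;\mathbb{Z})$-translation and a spectral-flow computation, eta-invariant normalization in the torsion case). However, there are two concrete gaps that would stop the argument as you have sketched it.

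First, the bounded sets cannot simply be taken to be of the form ``$\|p_{\mathcal{H}}(x)\|\le R$'': such level sets are generically not transverse to the approximated Seiberg--Witten flow, and nothing forces the flow to cross them in a consistent direction, so you do not get isolating neighborhoods, let alone the nested attractor or repeller structure. The paper instead builds cutting functions of the form $g_{j,\pm}=\bar g\circ p_{\mathcal{H},j}\pm\mathcal{L}$, where $\bar g$ is a staircase function whose derivative is small compared to $\|\gradtil\mathcal{L}\|$ away from critical points (Lemma~\ref{lem no critical points rs} and Lemma~\ref{lem g_j+ transverse}). It is the coupling with $\mathcal{L}$ itself that makes $\langle\gradtil\mathcal{L},\gradtil g_{j,\pm}\rangle$ have a definite sign, which is what forces the flow to enter (respectively exit) the region and turns the nested invariant sets into an attractor filtration (respectively a repeller filtration). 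Also, your description of the attractor as ``trajectories bounded as $t\to-\infty$'' is not how the decomposition is set up; it is the relative attractor/repeller structure between $\operatorname{inv}(J^{n,\pm}_{m-1})$ and $\operatorname{inv}(J^{n,\pm}_{m})$ that matters, and it is established by checking the direction of the flow along $\partial J^{n,\pm}_{m-1}\setminus\partial Str(\tilde R)$.

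Second, and more seriously, your invariance argument asserts that canonical well-definedness ``follows from Conley index continuation along any path in the parameter space.'' This does not work for the abstract perturbation $f$. The construction needs $f$ to be a \emph{good} perturbation, meaning the critical set is discrete modulo gauge; otherwise the cutting functions above cannot be built. The space of good perturbations is not known to be path connected, so a straight continuation argument along a path of perturbations can leave this space and is not available. This is explicitly identified in the paper as the main new difficulty beyond Manolescu's case (where no perturbation is needed), and the resolution occupies Section~\ref{subsection invariance II}: one constructs a family of ``mixed'' functionals $\stL^{s}_{e}$ that agree with $\mathcal{L}_1$ on a large box in the harmonic direction and with $\mathcal{L}_2$ outside a slightly larger box, proves a uniform boundedness result for the whole family via periodic tame functions and Proposition~\ref{uniform bound for compact set}, and then interleaves the two systems into a single ``mixed'' direct system (\ref{mixed direct system}) so that each original system is a cofinal subsystem of it. Without this (or something playing the same role) the well-definedness claim is not established.

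A smaller remark: for the metric and connection change, the identification of the discrepancy as a $\mathbb{C}$-suspension is not immediate from naive continuation, because the spectral subspaces $V^{\mu}_{\lambda}(s)$ jump as eigenvalues of $\slashed{D}$ cross $\lambda$ or $\mu$; one needs the family result of Theorem~\ref{finite dimensional approximation for a family of flows} to produce well-defined maps $\rho(B,\alpha)$ whose suspension index is the spectral flow $\operatorname{sf}(-\slashed{D},\alpha)$.
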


A portion of this paper is devoted to proving that our construction is well-defined, i.e. it does not depend on choices involved in the construction up to canonical isomorphisms. Note that, for rational homology 3-spheres, the invariants $\underline{\textnormal{SWF}}^{A}$  and $\underline{\textnormal{SWF}}^{R}$ are the same and they  agree with Manolescu's spectrum.  In the case $b_1 (Y) = 1 $ and $\mathfrak{s} $ is nontorsion, $\underline{\textnormal{swf}}_{}^{A}(Y,\mathfrak{s},A_{0}, g ; S^1)$ is equivalent to $\textnormal{SWF}_{0}(Y,\mathfrak{s},g,A_{0}) $ constructed by Kronheimer and Manolescu.

\begin{rmk} According to Furuta \cite{FurutaPrivate}, one could set up a periodically graded category so that it is possible to define $\underline{\textnormal{SWF}}_{}^{A}(Y,\mathfrak{s} ; S^1) $ and
$\underline{\textnormal{SWF}}_{}^{R}(Y,\mathfrak{s} ; S^1)$ as invariants of the manifold in the nontorsion case.
 \end{rmk}
When $\mathfrak{s}$ is a spin structure, there is an additional $Pin(2)$-symmetry on the Seiberg-Witten equations.
The $Pin(2)$-equivariant Seiberg-Witten Floer spectrum for a rational homology sphere is instrumental in Manolescu's solution \cite{Manolescu3} of the Triangulation Conjecture. For a general spin 3-manifold, we have the following generalization:
 \begin{thm}\label{main theorem 2}
Let $Y$ be a closed, oriented 3-manifold and let $\mathfrak{s}$ be a spin {structure} on $Y$. We can obtain
$$\underline{\textnormal{SWF}}_{}^{A}(Y,\mathfrak{s} ; Pin(2)) \text{ and } \underline{\textnormal{SWF}}_{}^{R}(Y,\mathfrak{s};Pin(2))$$
as $Pin(2)$-equivariant analogs of $\underline{\textnormal{SWF}}_{}^{A}(Y,\mathfrak{s} ; S^1)$ and  $\underline{\textnormal{SWF}}_{}^{R}(Y,\mathfrak{s};S^1)$.
\end{thm}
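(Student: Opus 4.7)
The plan is to revisit each step in the proof of Theorem~\ref{main theorem 1} and observe that, under the stronger hypothesis that $\mathfrak{s}$ is a spin structure, every construction can be upgraded from an $S^1$-equivariant one to a $Pin(2)$-equivariant one. Recall $Pin(2) = S^1 \cup j \cdot S^1 \subset \mathbb{H}^{\ast}$; the additional element $j$ acts on the spinor bundle via the quaternionic structure coming from the spin structure, acts as $-1$ on imaginary-valued $1$-forms, and sends a spin$^c$ connection $A$ to its conjugate. One checks directly that the Seiberg-Witten equations on $Y$, the Chern-Simons-Dirac functional and its gradient on the Coulomb slice are all preserved under this involution, so the entire configuration space carries a $Pin(2)$-action extending the $S^1$-action of Theorem~\ref{main theorem 1}.

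First I would verify $Pin(2)$-equivariance of each analytic ingredient used in the $S^1$-construction: the global Coulomb projection (which commutes with $j$ because $j$ commutes with $d^{\ast}$ after accounting for the sign on $1$-forms), the self-adjoint operators controlling finite-dimensional approximation (combinations of $\ast d$ and the Dirac operator, which are $Pin(2)$-equivariant), and the nonlinear cut-off vector fields used to define the approximate flows. Since spectral projectors of $Pin(2)$-equivariant self-adjoint operators are themselves $Pin(2)$-equivariant, the approximating finite-dimensional subspaces acquire $Pin(2)$-actions and the truncated gradient flows are $Pin(2)$-equivariant. Second, I would appeal to $G$-equivariant Conley index theory with $G = Pin(2)$: under the $Pin(2)$-equivariance just established, one selects isolating neighborhoods and index pairs that are $Pin(2)$-invariant, yielding Conley indices as objects of the $Pin(2)$-equivariant stable category. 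The attractor/repeller splittings used to define the direct and inverse systems of the unfolded construction are manifestly $j$-invariant, so these systems lift as well.

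Third, the invariance and canonical-isomorphism statements transfer: continuation maps associated to varying the approximation level, the cut-off parameters, the metric $g$, or the connection $A_0$ are induced by $Pin(2)$-equivariant homotopies of $Pin(2)$-equivariant flows, exactly as in the $S^1$-case but reinterpreted in the $Pin(2)$-equivariant stable category. Since $c_1(\mathfrak{s}) = 0$ whenever $\mathfrak{s}$ admits a spin refinement, the normalization procedure of Theorem~\ref{main theorem 1} applies verbatim to remove the dependence on $g$ and $A_0$, producing the desired invariants $\underline{\textnormal{SWF}}^{A}(Y,\mathfrak{s}; Pin(2))$ and $\underline{\textnormal{SWF}}^{R}(Y,\mathfrak{s}; Pin(2))$.

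The main obstacle I anticipate is bookkeeping rather than analysis: one must verify that every auxiliary choice already present in the $S^1$-construction (isolating neighborhoods, index pairs, cut-off functions, admissible levels of finite-dimensional approximation) can be made consistently $j$-invariant, and that the unfolded direct/inverse limit structures remain functorial when reinterpreted in the $Pin(2)$-equivariant stable category. The analytic content is no harder than in the $S^1$-case, since the extra symmetry is a finite-order linear involution compatible with the Seiberg-Witten flow.
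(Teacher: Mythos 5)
Your overall approach matches the paper's: construct the $Pin(2)$-equivariant stable categories modeled on $\mathbb{R}\oplus\tilde{\mathbb{R}}\oplus\mathbb{H}$, observe the additional $\jmath$-action $(a,\phi)\mapsto(-a,j\phi)$ preserves the Chern-Simons-Dirac functional, the Coulomb slice, the metric, and the spectral subspaces, and then rerun the $S^1$-argument in $Pin(2)$-equivariant Conley index theory with $\jmath$-invariant isolating neighborhoods. However, there are two specific points where the proposal is not quite complete.

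First, you list ``perturbations'' under ``bookkeeping,'' but the existence of a $\jmath$-invariant \emph{good} perturbation is not a bookkeeping matter. Recall that a good perturbation $(\bar f,\delta)$ must make the critical set of $CSD_{\nu_0,f}$ discrete modulo gauge, and the genericity result used in the $S^1$-case (\cite[Theorem~12.1.2]{Kronheimer-Mrowka}) does not automatically produce $\jmath$-invariant $\bar f$: the generic $\bar f$ in $\mathcal{P}$ will break the $\jmath$-symmetry. One needs a transversality theorem inside the fixed subspace of $\jmath$-invariant extended cylinder functions — the paper invokes an adaptation of \cite[Theorem~2.6]{Francesco} for precisely this purpose. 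Without that result, you have a $Pin(2)$-symmetric flow only on the nose for a non-generic perturbation, and discreteness of critical points (essential for the transverse system $J^\pm_m$) may fail. You should explicitly state and justify that $\jmath$-equivariant transversality is available; the analogous issue for $J^{n,\pm}_m$ is easier and, as the paper notes, follows from taking the staircase function $\bar g$ even.

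Second, the normalization does \emph{not} apply ``verbatim.'' In the $S^1$-category, $\Sigma^{-\mathbb{C}}$ shifts the rational index $n$ by $1$; in the $Pin(2)$-category, the admissible suspension is by $\mathbb{H}\cong\mathbb{C}^2$, so the index shift per unit is doubled. Consequently the paper normalizes by the constant $n(Y,\mathfrak{s},A_0,g)/2$, not by $n(Y,\mathfrak{s},A_0,g)$, in defining $\underline{\mathrm{SWF}}^{A}(Y,\mathfrak{s};Pin(2))$ and $\underline{\mathrm{SWF}}^{R}(Y,\mathfrak{s};Pin(2))$. Using the unhalved constant would give an object that still depends on $g$ and $A_0$. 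Aside from these two adjustments, the rest of your outline (equivariance of $\ast d$, $\slashed{D}$, the Coulomb projection, the spectral projectors, and the attractor/repeller structure; lifting the continuation and invariance arguments to the $Pin(2)$-equivariant stable category) is correct and is exactly what the paper does.
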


Let us try to explain the motivation of our ``unfolded'' construction. Intuitively, the monopole Floer homology is a Morse-Floer homology of a quotient configuration space $Coul(Y)/H^{1}(Y;\mathbb{Z})$, where $Coul(Y)$ is a Hilbert space of configurations with gauge fixing. We see that this is a Hilbert bundle when $b_1 (Y) > 0 $ and we cannot simply use vector spaces for finite dimensional approximation. There is also a topological obstruction to find a good sequence of subbundles for finite dimensional approximation (cf. \cite[Proposition~6]{Kronheimer-Manolescu}). Thus, we instead do finite dimensional approximation on $Coul(Y)$. Since the Seiberg-Witten solutions and trajectories are no longer compact on $Coul(Y)$, we will require to consider spectra obtained from an increasing sequence of bounded sets with nice properties on $Coul(Y)$. Our unfolded spectrum is then obtained as a direct (or inverse) system from these spectra.

From the construction, we expect the homology of our unfolded invariants to agree with monopole
Floer homology with fully twisted coefficients, i.e. homology with
a local system on the blown up configuration space whose fiber is the group ring $\mathbb{Z}[H^{1}(Y;\mathbb{Z})]$. By equivalence of monopole Floer homology and Heegaard Floer homology, the corresponding Heegaard Floer group with  totally twisted coefficient $\underline{HF} (Y,\mathfrak{s}) $ is constructed by Ozsv\'ath and Szab\'o \cite[Section~8]{OStwisted}.
This inspires us to use underline notation $\underline{\textnormal{SWF}} $ for the unfolded spectrum.
Moreover, it should be possible to give a rigorous proof of this speculation with techniques developed
by Lidman and Manolescu \cite{Lidman-Manolescu}. However, this is not the aim of the present paper.

In another direction, the third author \cite{Sasahira} defined a folded version of Seiberg-Witten Floer spectra in the  case that the topological obstruction, as mentioned above, vanishes. The first author \cite[Chapter~6]{Khandhawit2} also gave an approach to define a folded invariant, called twisted Floer spectrum, for general 3-manifolds as  a twisted
parametrized spectrum. These theories will not be discussed here either.

One of the main complication to show well-definedness of our invariants is that we need to perturb the Chern-Simons-Dirac functional in the construction. First, we perturb the functional by a nonexact 2-form so that the functional is balanced (see Section \ref{sec setup} ). Second, we require that the set of critical points is discrete modulo gauge otherwise we cannot construct a good sequence of bounded subsets to apply finite dimensional approximation. As a result, the space of such perturbations may not be path connected and we cannot use standard homotopy argument here. Note that this difficulty was avoided in Manolescu's original construction because perturbations are not necessary in the case of homology spheres.

In general, our invariants are quite difficult to compute. However, by using Mrowka-Ozv$\acute{\text{a}}$th-Yu's explicit description of the Seiberg-Witten moduli space for Seifert manifolds \cite{MOY} and a refinement of the rescaling technique developed by the first author \cite{Khandhawit2}, we are able to give explicit computation of the invariants in torsion cases of the following manifolds:
\begin{enumerate}
\item  The manifold  $S^{2}\times S^{1}$;
\item  Large degree circle bundles over surfaces;
\item  All nil manifolds;
\item  All flat manifolds except $T^{3}$.
\end{enumerate}

 At the end of this introductory section, we briefly mention further developments that we hope to cover in our subsequent papers.
\begin{itemize}
\item  As an extension of Manolescu's construction \cite{Manolescu1}, we will define relative Bauer-Furuta invariants for a 4-manifold whose boundary can be an arbitrary 3-manifold.
\item The relative invariants will give new inequalities regarding intersection forms of spin 4-manifolds with boundary as in \cite{Manolescu2}.
\item We will establish Spanier-Whitehead duality between $\underline{\textnormal{SWF}}_{}^{A}$ and $\underline{\textnormal{SWF}}_{}^{R}$.
\item We will prove generalized gluing theorem for the relative Bauer-{Furuta} invariants.
\item Various applications of the generalized gluing theorem: behavior of the fiberwise Bauer-Furuta invariant under
surgery along loops, generalization of Bauer's connected sum theorem \cite{Bauer}; nonexistence
of essential spheres with trivial normal bundle in a 4-manifold with nontrivial Bauer-Furuta invariant, K$\ddot{\text{u}}$nneth
formula for Manolescu's spectrum.
\end{itemize}

The paper is organized as follows: Section \ref{sec setup} covers some of the basics of the Seiberg-Witten equations. Section \ref{Section approx SW traj} gives the analytical results which are needed in our constructions. Section \ref{Section Cat Top} reviews some elementary facts about the Conley index theory. Section \ref{construction of spectrum} constructs the spectrum invariants. Section \ref{invariance} proves the invariance. Section \ref{section linearized flow}  and  \ref{section examples} are devoted to the calculation of the examples.

\bigskip\noindent\textbf{Acknowledgement:} Some arguments in this paper first appeared in the first author's Ph.D. thesis at Massachusetts Institute of Technology \cite{Khandhawit2}. The first author is supported by World Premier International Research Center Initiative (WPI), MEXT, Japan. The third author is supported by JSPS KAKENHI Grant Number 25800040. We would like to thank Ciprian Manolescu for helpful discussions during the preparation of this paper.

\section{The Chern-Simons-Dirac functional and Seiberg-Witten trajectories} \label{sec setup}

Let $Y$ be a closed, oriented (but not necessarily connected) 3-manifold endowed with a $\text{spin}^{c}$ structure $\mathfrak{s}$ and a Riemannian metric $g$. We denote its connected components by $Y_{1}, \ldots, Y_{b_0}$ and denote by $b_1 = b_{1}(Y)$ its first Betti number. Let $S_{Y}$ be the associated spinor bundle and $\rho \colon TY\rightarrow \text{End}(S_{Y})$ be the Clifford multiplication. After fixing a base $\text{spin}^{c}$ connection $A_{0}$, the space of $\text{spin}^{c}$ connections on $S_{Y}$ can be identified with $i\Omega^{1}(Y)$ via the correspondence $A \mapsto A-A_{0}$.

Let $A_{0}^{t}$ be the connection on $\text{det}(S_{Y})$ induced by $A_{0}$.
We choose $A_{0}$ such that the curvature $F_{A_{0}^{t}}$ equals $2\pi i\nu_{0}$,
where $\nu_{0}$ is the harmonic $2$-form representing $-c_{1}(\mathfrak{s})$.
For a \mbox{1-form} $a \in i\Omega^{1}(Y) $, we let $\slashed{D}_{A_0 +a}$ be the Dirac operator associated to the connection $A_{0}+a$.
We also denote by $\slashed{D} := \slashed{D}_{A_{0}}$ the Dirac operator corresponding to the base connection, so we have $\slashed{D}_{A_{0}+a} = \slashed{D}_{} + \rho(a) $.

 The gauge group $\text{Map}(Y,S^{1})$ acts on the space $i\Omega^{1}(Y)\oplus\Gamma(S_{Y})$ by
 \begin{equation*}
     u \cdot (a,\phi)=(a-u^{-1}du, u  \phi)  ,
\end{equation*}
where $ u\in \text{Map}(Y,S^{1}) $ and $ (a,\phi) \in i\Omega^{1}(Y)\oplus\Gamma(S_{Y})$.
In practice, we will work with the Sobolev completion of the spaces $i\Omega^{1}(Y)\oplus\Gamma(S_{Y})$ and $\text{Map}(Y,S^{1})$ by the $L^{2}_{k}$ and $L^{2}_{k+1}$ norms respectively. We fix an integer $k > 4$ throughout the paper and denote the completed spaces by $\mathcal{C}_{Y}$ and $\mathcal{G}_{Y}$ respectively. We will also consider the following subgroups of $\mathcal{G}_{Y}$:
\begin{itemize}
\item $\mathcal{G}^{e}_{Y}:=\{u \in \mathcal{G}_{Y} \mid u=e^{\xi} \text{ for some }\xi \colon Y\rightarrow i\mathbb{R}\}$;
\item $\mathcal{G}^{e,0}_{Y}:=\{u \in \mathcal{G}^{e}_{Y} \mid u=e^{\xi} \text{ with } \int_{Y_{j}}\xi d\text{vol}=0 \text{ for } j = 1, \dots,  b_0 \}$;
\item $\mathcal{G}^{h}_{Y}:=\{u \in \mathcal{G}_{Y}\mid \Delta (\log u) = 0\} $ the harmonic gauge group, where $\Delta = d^* d$;
\item $\mathcal{G}^{h,o}_{Y}:=\{u \in \mathcal{G}_{Y}^{h} \mid u(o_{j})=1 \text{ for } j = 1, \ldots, b_0 \}$ the based harmonic gauge group,
where $o_{j}$ is a chosen base point on $Y_{j}$.

\end{itemize}
Note that $\mathcal{G}^{e}_{Y} \cong \mathcal{G}^{e,0}_{Y} \times (S^1)^{b_0}$ and $\mathcal{G}^{h}_{Y} \cong \mathcal{G}^{h,0}_{Y} \times (S^1)^{b_0} $.

The balanced Chern-Simons-Dirac functional $CSD_{\nu_{0}} \colon \mathcal{C}_{Y}\rightarrow \mathbb{R}$ is defined as
\begin{equation*}\label{unperturbed Chern-Simons functional}
      CSD_{\nu_0}(a,\phi)
      :=-\frac{1}{2} \left( \int_{Y} a\wedge da- \int_{Y} \langle\phi,\slashed{D}_{A_{0}+a}(\phi)\rangle d\text{vol} \right).
\end{equation*}
Note that this is a perturbation of the standard Chern-Simons-Dirac functional by the closed but nonexact 2-form $\nu_0$ so that $CSD_{\nu_{0}} $ becomes invariant under the full gauge group (cf.\cite[Definition~29.1.1]{Kronheimer-Mrowka}). The formal $L^2$-gradient is given by
\begin{equation} \label{eq gradCSD}
\grad  CSD_{\nu_{0}}(a,\phi)= (*da +\rho^{-1}(\phi\phi^{*})_{0}, \slashed{D}_{A_0 + a} \phi),
\end{equation}
where $(\phi\phi^{*})_{0}$ is the traceless part the endomorphism $\phi\phi^{*} $ on $S_{Y}$.

If we slightly perturb $CSD_{\nu_0}$, the critical points of $CSD_{\nu_{0}}$ are discrete modulo gauge transformations. To ensure this property, we will need to pick a function $f \colon \mathcal{C}_{Y}\rightarrow \mathbb{R}$ which is invariant under $\mathcal{G}_{Y}$ and consider a twice perturbed functional $CSD_{\nu_{0},f}:=CSD_{\nu_{0}}+f$. We will make use of a large Banach space of perturbations constructed by Kronheimer and Mrowka \cite[Section~11]{Kronheimer-Mrowka}.

\begin{defi} \label{def space perturbation}
 Let $\{ \hat{f}_j \}_{j=1}^{\infty} $ be a countable collection of cylinder functions as
in \cite[Page~193]{Kronheimer-Mrowka}. Given a sequence $\{ C_j \}_{j=1}^{\infty}$ of positive real numbers, we consider a separable Banach space
\begin{equation} \label{banach space of cylinder functions}
     \mathcal{P} =
     \left\{ \sum_{j=1}^{\infty} \eta_j \hat{f}_j    \left|
       \eta_j \in \mathbb{R},  \  \sum_{j=1}^{\infty} C_j | \eta_j | < \infty   \right. \right\},
\end{equation}
where the norm is defined by $\left\Vert\sum_{j=1}^{\infty} \eta_j \hat{f}_j\right\Vert = \sum_{j=1}^{\infty} | \eta_j | C_j$. An element of $\mathcal{P}$ will be called an \emph{extended cylinder function}.
\end{defi}

The Banach space $\mathcal{P}$ will be fixed throughout the paper. In particular, we will choose a real sequence $\{ C_j \}_j$ satisfying our requirements as in the following result.

\begin{pro}   \label{prop perturbproperty}
The sequence $\{ C_j \}_j$ can be chosen so that any extended cylinder function $\bar{f}$ in $\mathcal{P}$ has the following properties:
\begin{enumerate}[(i)]
\item \label{item bounded function} $\bar{f}$ is a bounded function;
\item \label{item propertyperturb1} The formal $L^2$-gradient $\grad \bar{f}$ is a tame perturbation (see \cite[Definition 10.5.1]{Kronheimer-Mrowka});

\item \label{item propertyperturb2} For any positive integer $m$, the gradient  $\grad \bar{f}$ defines
a smooth vector field on the Hilbert space $L^{2}_{m}(i\Omega^{1}(Y)\oplus
\Gamma(S_{Y}))$. Moreover, for each nonnegative integer $n$, we have
\[
       \| \mathcal{D}^{n}_{(a,\phi)}\grad \bar{f}\|   \leq   C \, p_{m,n}(\|(a,\phi)\|_{L^{2}_{m}}),
\]
where $p_{m,n}$ is a polynomial depending only on $m,n$ and $C$ is a constant depending on $m, n$ and $\bar{f}$. The norm of $\mathcal{D}^{n}_{(a,\phi)}\grad \bar{f}$ is taken considering $\mathcal{D}^{n}_{(a,\phi)}\grad \bar{f}$ as an element of
\[
       \operatorname{Mult}^{n}(\times_{n}L^{2}_{m}(i\Omega^{1}(Y)   \oplus
       \Gamma(S_{Y})),L^{2}_{m}(i\Omega^{1}(Y)\oplus \Gamma(S_{Y}))).
\]

\item \label{item propertyperturb3}
$\{ C_j \}_j$ is taken so that the statement of Lemma \ref{lem C_j W_M} and \ref{double mixed tame functions} holds.


\end{enumerate}
\end{pro}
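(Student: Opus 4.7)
The plan is to choose the weights $\{C_j\}$ large enough to simultaneously dominate all of the countably many estimates demanded by (i)--(iv), so that absolute convergence of $\sum \eta_j \hat f_j$ in $\mathcal{P}$ automatically upgrades to convergence in each relevant norm. The logical structure is ``one Banach-space trick against a diagonal list of constants.''

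For the first step I would verify each property termwise. Since every $\hat f_j$ is a cylinder function in the sense of \cite[Section~11]{Kronheimer-Mrowka}, it factors through a pairing with finitely many fixed smooth sections composed with a compactly-supported smooth profile on a finite-dimensional Euclidean space. From this one reads off: a uniform bound $M_j^{(0)} := \sup|\hat f_j|$; tameness of $\grad \hat f_j$ together with a numerical ``tame norm'' $M_j^{(1)}$; and, for each pair $(m,n)$ of nonnegative integers, a constant $M_j^{(m,n)}$ with
\[
\|\mathcal{D}^n_{(a,\phi)} \grad \hat f_j\| \leq M_j^{(m,n)}\, p_{m,n}\bigl(\|(a,\phi)\|_{L^2_m}\bigr),
\]
where $p_{m,n}$ is a polynomial that can be chosen uniformly in $j$, as it reflects only the Sobolev multiplication/chain-rule structure on $L^2_m(Y)$ used to differentiate the profile in the directions of the defining sections. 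I would also collect the analogous constants $M_j^{\mathrm{aux}}$ required in Lemma~\ref{lem C_j W_M} and Lemma~\ref{double mixed tame functions}.

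For the second step I would set
\[
C_j := 2^j \max\bigl\{ M_j^{(0)},\, M_j^{(1)},\, M_j^{\mathrm{aux}},\, M_j^{(m,n)} : m,n \leq j \bigr\},
\]
so that for any fixed $(m,n)$ the tail $\sum_{j > \max(m,n)} |\eta_j| M_j^{(m,n)}$ is controlled by $\sum_j |\eta_j| C_j < \infty$. The third step is then routine: (i) is immediate from $|\bar f| \leq \sum |\eta_j| M_j^{(0)}$; for (ii) the tame perturbations form a Banach space and the $\ell^1$-sum converges in the tame norm; (iii) follows by termwise differentiation, with the finitely many initial terms (where $j < \max(m,n)$) absorbed into the polynomial by replacing $p_{m,n}$ with a suitable majorant and the constant $C$ appropriately enlarged; (iv) is built into the definition of $C_j$.

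I expect the main obstacle to be the first step, and in particular property (iii) with a polynomial $p_{m,n}$ that depends only on $(m,n)$ and not on $j$. Estimating $\|\mathcal{D}^n_{(a,\phi)} \grad \hat f_j\|$ as a multilinear operator on $L^2_m$ requires tracking how the profile's derivatives interact with Sobolev multiplication and with the finitely many defining sections of $\hat f_j$; pushing all the $j$-dependence into the constant $M_j^{(m,n)}$ while keeping $p_{m,n}$ universal is the delicate bookkeeping point. Once this is settled, the rest of the proposition is a direct consequence of the weighted $\ell^1$-structure of $\mathcal{P}$.
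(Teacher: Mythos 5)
Your overall architecture is the same as the paper's: extract termwise constants from each cylinder function $\hat f_j$, choose $C_j$ to dominate those with indices up to $j$, and then absorb the finite initial segment into the constant $C$ in property~(iii) while controlling the tail by $\sum_j|\eta_j|C_j$. You correctly identify the crux — that $p_{m,n}$ must be chosen independent of $j$ — but you leave this as a ``delicate bookkeeping point'' rather than establishing it, and this is precisely the content of the paper's argument. The paper fills this gap by citing \cite[Proposition~11.3.3]{Kronheimer-Mrowka}, which gives the estimate
\[
\|\mathcal{D}^{n}_{(a,\phi)}\grad \hat{f}_j\|\ \leq\ C'_{j,m,n}\,(1+\|\phi\|_{L^{2}})^{n}(1+\|a\|_{L^{2}_{m-1}})^{m}(1+\|\phi\|_{L^{2}_{m,A_{0}+a}}),
\]
in which the $j$-dependence is isolated in a single multiplicative constant $C'_{j,m,n}$ and the polynomial structure is already explicit \emph{except} that it involves the covariant Sobolev norm $\|\phi\|_{L^{2}_{m,A_{0}+a}}$. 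The remaining work — and the part your proposal skips — is to bound this norm by a $j$-independent polynomial in $\|(a,\phi)\|_{L^{2}_{m}}$: one expands $\nabla^{(m)}_{A_0+a}\phi$ into terms $\nabla^{(n_1)}a\cdots\nabla^{(n_i)}a\cdot\nabla^{(n_{i+1})}_{A_0}\phi$ and applies Sobolev multiplications case by case ($i=0$; $i=1$ with one derivative order $m-1$; and the remaining low-order cases). That computation is what licenses your claim that $p_{m,n}$ ``reflects only the Sobolev multiplication/chain-rule structure.'' Once you supply it, your choice $C_j \geq \max\{C'_{l_1,l_2,l_3} : l_1,l_2,l_3 \leq j\}$ (the extra factor $2^j$ in your definition is harmless but unnecessary) and the split of $\sum_j |\eta_j| C'_{j,m,n}$ at $N=\max\{m,n\}$ gives exactly the paper's conclusion.
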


\begin{proof}
By the definition of cylinder functions, each $\hat{f}_{j}$ is bounded. Therefore, property~(\ref{item bounded function}) can be ensured by taking $\{C_{j}\}_{j}$ increasing fast enough.  Property~(\ref{item propertyperturb1}) is a consequence of \cite[Theorem~11.6.1]{Kronheimer-Mrowka}. For property~(\ref{item propertyperturb2}), let $\hat{f}_j$ be a cylinder function from the collection. By \cite[Proposition 11.3.3]{Kronheimer-Mrowka}, the gradient $\grad \hat{f}_j$ defines a smooth vector field over  $L^{2}_{m}(i\Omega^{1}(Y)\oplus \Gamma(S_{Y}))$ with the property that
\[
        \|\mathcal{D}^{n}_{(a,\phi)}\grad \hat{f}_j\|
           \leq
       C'_{j, m, n} (1+\|\phi\|_{L^{2}})^{n}  (1+\|a\|_{L^{2}_{m-1}})^{m}  (1+\|\phi\|_{L^{2}_{m,A_{0}+a}}) ,
\]
where $C'_{j, m, n}$ is a constant and $\|\cdot\|_{L^{2}_{m,A_{0}+a}}$ denotes the $L^{2}_{m}$-norm defined using the connection $A_{0}+a$. Therefore, we only need to estimate $\|\phi\|_{L^{2}_{m,A_{0}+a}}$ by a polynomial of $\|(a,\phi)\|_{L^{2}_{m}}$.

Notice that the expansion of $\nabla^{(m)}_{A_{0}+a}\phi$ consists of terms  of the form $\nabla^{(n_{1})}a \cdot \nabla^{(n_{2})}a \cdots \nabla^{(n_{i})}a\cdot \nabla^{(n_{i+1})}_{A_{0}}\phi$
 where $\nabla$ denotes the Levi-Civita connection and $i,n_{1}, \ldots ,n_{i+1}$ are nonnegative integers satisfying $n_{1}+n_{2}+ \cdots +n_{i+1}+i=m.$
As we want to control the $L^{2}$-norm of this term using $\|(a,\phi)\|_{L^{2}_{m}}$, there are three cases:

\begin{itemize}
\item $i=0$: This is trivial since $\|\phi\|_{L^{2}_{m}}\leq \|(a,\phi)\|_{L^{2}_{m}}$;

\item $i=1 \text{ and }n_{1}=m-1$: We apply Sobolev multiplication $L^{2}_{1}\times L^{2}_{m}\rightarrow L^{2}$ and obtain $\left\Vert \nabla^{(m-1)}a \cdot \phi \right\Vert_{L^2} \le C \left\Vert \nabla^{(m-1)}a \right\Vert_{L^2_1} \left\Vert \phi \right\Vert_{L^2_m} \le C \|(a,\phi)\|^2_{L^{2}_{m}}$. The case $i=1$ and $n_{2}=m-1$ can be done in the same manner;

\item Otherwise, we will have $i \ge 1$ and $n_{1}, \ldots, n_{i+1}<m-1$. Similarly, we consider $n_{\text{max}} = \max{\{n_1 , \ldots , n_{i+1}\}}$ and apply Sobolev multiplication
\[
       L^{2}_{m-n_{1}}\times \cdots \times L^{2}_{m-n_{i}}\times L^{2}_{m-n_{i+1}}
        \rightarrow
     L^{2}_{m- n_{\text{max}}} \hookrightarrow\ L^{2}.
\]
\end{itemize}

Putting these together, we can find a polynomial $p_{m,n} $ (independent of $j$) such that
\[
      \|\mathcal{D}^{n}_{(a,\phi)}\grad \hat{f}_j\|\leq C'_{j, m, n} \, p_{m,n}(\|(a,\phi)\|_{L^{2}_{m}}).
\]
For each $j$, take a constant $C_j$ with
\[
           C_{j} \geq \max \{ \ C_{l_1, l_2, l_3}' \ | \ 0 \leq l_1, l_2, l_3  \leq j \ \}.
\]
We will prove that condition (\ref{item propertyperturb2}) is satisfied.  Take any element $\bar{f} = \sum_{j} \eta_j \hat{f}_j$ of $\mathcal{P}$.  Then we have
\[
   \begin{split}
      \| \mathcal{D}^n_{ (a, \phi) } \grad \bar{f} \|
    &  \leq   \sum_{j} | \eta_j | \| \mathcal{D}^n_{ (a, \phi) } \grad \hat{f}_j \|      \\
    &  \leq   \sum_{j} | \eta_j | C_{j, m, n}' p_{m, n}( \| (a, \phi) \|_{L^2_m} )      \\
    &   \leq  \left( \sum_{1 \leq j \leq N }  | \eta_j | C_{j, m, n}' +  \sum_{ j \geq N} | \eta_j | C_j \right)
                    p_{m, n}( \| (a, \phi ) \|_{L^2_m} ).
      \end{split}
\]
Here $N = \max \{ m, n \}$.  Putting $C := \left( \sum_{1 \leq j \leq N }  | \eta_j | C_{j, m, n}' +  \sum_{ j \geq N} | \eta_j | C_j \right)$, we obtain
\[
          \| \mathcal{D}^n_{ (a, \phi) } \grad \bar{f} \| \leq C p_{m, n}( \| (a, \phi) \|_{L^2_m}).
\]
Thus $\mathcal{P}$ satisfies  (\ref{item propertyperturb2}).

By further shrinking $C_{j}$, we may suppose that $C_j$ satisfies Lemma \ref{lem C_j W_M} and Lemma \ref{double mixed tame functions} (2).  That is, condition (\ref{item propertyperturb3}) is satisfied.


\end{proof}

The perturbation we consider in the current paper will be of the form
\begin{equation*}\label{general form of perturbation}
f(a,\phi)=\bar{f}(a,\phi)+\frac{\delta}{2}\|\phi\|_{L^{2}}^{2},
\end{equation*}
where $\bar{f}$ is an extended cylinder function and $\delta$ is a real number.
We sometimes write the above perturbation as a pair $(\bar{f},\delta)$.

\begin{defi} \label{good perturbation}
A perturbation $f = (\bar{f} , \delta)$ is called
\emph{good} if the critical points of $CSD_{\nu_{0},f}$ are discrete modulo
gauge transformations.
\end{defi}

When $\delta =0$, we know that good perturbations are generic in $\mathcal{P}$ by virtue of \cite[Theorem~12.1.2]{Kronheimer-Mrowka}. It is immediate to extend the result to a general case and we only give a statement here.

\begin{lem}
For any real $\delta$, a subset of extended cylinder functions $\bar{f}$ in $ \mathcal{P}$ such that $(\delta,\bar{f})$ is a good perturbation is residual.
\end{lem}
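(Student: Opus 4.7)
The plan is to reduce the statement to the $\delta=0$ case already handled by \cite[Theorem~12.1.2]{Kronheimer-Mrowka} by absorbing the quadratic term $\tfrac{\delta}{2}\|\phi\|_{L^2}^2$ into the base functional. Fix $\delta\in\mathbb{R}$ and set
$$CSD_{\nu_0}^{\delta}(a,\phi):=CSD_{\nu_0}(a,\phi)+\frac{\delta}{2}\|\phi\|_{L^2}^2,$$
so that $CSD_{\nu_0,f}=CSD_{\nu_0}^{\delta}+\bar f$. The formal $L^2$-gradient of $CSD_{\nu_0}^{\delta}$ differs from $\grad CSD_{\nu_0}$ only by the smooth bounded linear term $(a,\phi)\mapsto(0,\delta\phi)$; equivalently, the Dirac operator $\slashed{D}_{A_0+a}$ appearing in \eqref{eq gradCSD} is replaced by $\slashed{D}_{A_0+a}+\delta$, which remains a self-adjoint first-order elliptic operator.

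Next I would rerun the transversality proof of \cite[Theorem~12.1.2]{Kronheimer-Mrowka} with $CSD_{\nu_0}^{\delta}$ playing the role of the unperturbed functional. The argument proceeds by forming the universal zero set
$$\mathcal{Z}:=\left\{(\bar f,[a,\phi])\in\mathcal{P}\times(\mathcal{C}_Y/\mathcal{G}_Y) \ \Big|\ \grad(CSD_{\nu_0}^{\delta}+\bar f)(a,\phi)=0\right\},$$
showing via the parametric implicit function theorem that $\mathcal{Z}$ is a Banach submanifold, and then applying the Sard--Smale theorem to the projection $\mathcal{Z}\to\mathcal{P}$. The derivative of the defining equation along the $\mathcal{P}$-direction is identical to that in the $\delta=0$ case, so surjectivity of the universal linearization is unaffected; the Fredholm property of the linearization in the configuration direction is likewise preserved because adding the scalar $\delta$ to a self-adjoint elliptic operator preserves its Fredholm index. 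Sard--Smale therefore yields a residual subset of $\mathcal{P}$ on which every critical point of $CSD_{\nu_0,f}$ is nondegenerate, and hence isolated modulo gauge.

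The only point requiring verification is that no step in Chapter~12 of \cite{Kronheimer-Mrowka} exploits any property specific to $\delta=0$. This reduces to the observation that the shift by $\delta$ is a bounded self-adjoint zeroth-order operator and therefore preserves all the ellipticity estimates, Fredholm indices, and Sobolev multiplication inequalities used there. I do not anticipate any obstacle beyond this bookkeeping check; indeed the density arguments used to confirm that perturbations from $\mathcal{P}$ span the needed directions in $L^2$ depend only on the collection of cylinder functions themselves, not on the base functional.
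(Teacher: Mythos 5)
Your proposal is correct and fills in precisely the "bookkeeping check" that the paper itself elides: the authors only assert that the extension from $\delta=0$ is immediate and do not supply details. Absorbing $\tfrac{\delta}{2}\|\phi\|_{L^2}^2$ into the base functional so that $\slashed{D}_{A_0+a}$ is replaced by $\slashed{D}_{A_0+a}+\delta$ — a zeroth-order self-adjoint shift that preserves the symbol, the Fredholm index, and all elliptic estimates — and then rerunning the Sard--Smale argument of \cite[Theorem~12.1.2]{Kronheimer-Mrowka} is exactly the intended reasoning.
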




\begin{rmk}
To define our invariants, it is sufficient to take $\delta=0$. We include the term $\frac{\delta}{2}\|\phi\|^{2} $ as it will facilitate computations of many examples in Section \ref{section examples}.
\end{rmk}


Our main object of interest is the negative gradient flow of the functional $CSD_{\nu_{0}, f}$ on the space $\mathcal{C}_Y $ modulo the gauge group. Let $I \subset \mathbb{R}$ be an interval.  A trajectory $\gamma \colon I \rightarrow \mathcal{C}_{Y}$ of the negative gradient flow is described by the equation
\begin{equation*}\label{Seiberg-Witten trajectory}
 -\frac{\partial}{\partial t}\gamma(t)=\grad CSD_{\nu_{0},f}(\gamma(t)).
\end{equation*}

As in \cite{Manolescu1} and \cite{Khandhawit2}, it is more convenient to study the flow on the subspace called the Coulomb slice
\begin{equation*}
               Coul(Y)  =  \{(a,\phi) \mid d^{*}a=0\}\subset \mathcal{C}_{Y}.
\end{equation*}


Since any configuration $(a,\phi)\in \mathcal{C}_{Y}$ can be gauge transformed into $Coul(Y)$ by a unique element of $\mathcal{G}_{Y}^{e,0}$, the Coulomb slice is isomorphic to the quotient $\mathcal{C}_{Y}/\mathcal{G}^{e,o}_{Y}$ with residual action by the harmonic gauge group $\mathcal{G}^{h}_{Y}$.

%

Let $\Pi:\mathcal{C}_{Y}\rightarrow\mathcal{C}_{Y}/\mathcal{G}^{e,o}_{Y}\cong Coul(Y)$ be the nonlinear Coulomb projection. The formula for $\Pi$ is given by
\begin{align} \label{eq nonlinearCoulproj}
\Pi (a,\phi) =  \left(a - d\bar{\xi}(a) , e^{\bar{\xi}(a)} \phi\right) ,
\end{align}
where $\bar{\xi}(a) \colon Y \rightarrow i \mathbb{R} $ is a unique function which solves \begin{equation}\label{solving laplace}\Delta \bar{\xi}(a) = d^* a \text{ and } \int_{Y_j}\bar{\xi}(a) = 0 \text{ for each } j = 1, \dots, b_0.\end{equation}

To describe the Seiberg-Witten vector field on $Coul(Y)$, we first consider a trivial bundle $\mathcal{T}_{k-1} $ over $\mathcal{C}_Y $ with fiber
$L^{2}_{k-1}(i\Omega^{1}(Y)\oplus
\Gamma(S_{Y}))$. Note that the vector field $\grad CSD_{\nu_{0},f}$ is a section of $\mathcal{T}_{k-1} $. Similarly, we have a trivial bundle $\mathit{Coul}_{k-1} $ over $Coul(Y) $ whose fiber is the $L^2_{k-1}$-completion of $\ker{d^*} \oplus \Gamma(S_Y) $. At a point $(a,\phi) \in Coul(Y) $, the pushforward ${\Pi}_* \colon \mathcal{T}_{k-1} \rightarrow \mathit{Coul}_{k-1}$ of the Coulomb projection $\Pi$ is given by

\begin{align} \label{eq pipushforward}
{\Pi}_{*  (a,\phi)} (b,\psi) =\left( b - d \bar{\xi}(b) , \psi + \bar{\xi}(b) \phi \right).
\end{align}

We now project the negative gradient flow lines from $\mathcal{C}_{Y}$ to $Coul(Y)$ using $\Pi$. Such projected trajectories $\gamma \colon I\rightarrow Coul(Y)$ are described by an equation
\begin{equation}\label{projected trajectory}
-\frac{\partial}{\partial t}\gamma(t)= {\Pi}_*\grad CSD_{\nu_{0},f} (\gamma(t)).
\end{equation}
From (\ref{eq gradCSD}) and (\ref{eq pipushforward}), we can write down an explicit formula for the induced vector field on $Coul(Y) $ as a section of $\mathit{Coul}_{k-1}$
\begin{align} \label{eq grad=l+c}
{\Pi}_*\grad CSD_{\nu_{0},f}(a, \phi) = l(a,\phi) + c(a,\phi),
\end{align}
where $l=(*d,\slashed{D})$ is a first order elliptic operator and $c = (c^1 , c^2) $ is given by
\begin{align}
c^1 (a, \phi) &= \rho^{-1}(\phi\phi^{*})_{0}+\grad
^{1}f(a,\phi) - d \bar{\xi}(\rho^{-1}(\phi\phi^{*})_{0}+\grad
^{1}f(a,\phi)),
                          \label{def of c^1} \\
c^2(a,\phi) &= \rho(a) \phi + \grad^2 f (a,\phi)+ \bar{\xi}(\rho^{-1}(\phi\phi^{*})_{0}+\grad^{1}f(a,\phi)) \phi .
                          \label{def of c^2}
\end{align}
Note that $l$ is linear and the nonlinear term $c$ has nice compactness properties which will be explored in Section \ref{Section approx SW traj}. We will call those trajectories $\gamma$ satisfying (\ref{projected trajectory}) the \emph{Seiberg-Witten trajectories}. By the standard elliptic bootstrapping argument,  $\gamma$ is actually a smooth path in $Coul(Y)$ when restricted to interior of $I$.


We would also like to interpret the vector field ${\Pi}_* \grad CSD_{\nu_{0},f} $ from (\ref{projected trajectory}) as a gradient vector field on $Coul(Y)$. However, ${\Pi}_* \grad CSD_{\nu_{0},f}$ is not the gradient of the restriction $CSD_{\nu_{0}, f}|_{Coul(Y)}$ with respect to the standard $L^2$-metric and we need to introduce another metric on $Coul(Y)$. Roughly speaking, we have to measure only the component of a vector on $Coul(Y)$ which is orthogonal to the linearized gauge group action.
More specifically, consider a bundle decomposition over $\mathcal{C}_Y$
\begin{align*}
\mathcal{T}_{k-1} = \mathcal{J}_{k-1} \oplus \mathcal{K}_{k-1},
\end{align*}
where the fiber of $\mathcal{J}_{k-1}$ at $(a,\phi)$ consists of a vector of the form $(-d \xi , \xi \phi)$ where $\xi \in L^2_{k}(Y;i \mathbb{R})$ with $\int_{Y_j} \xi=0$ and the fiber of $\mathcal{K}_{k-1}$ is the $L^2$-orthogonal complement. Note that this decomposition is slightly different from the decomposition which appeared in \cite[Section~9.3]{Kronheimer-Mrowka} as we use the derivative of the action of $\mathcal{G}^{e,0}_Y $ rather than $\mathcal{G}^e_Y$. Let $\widetilde{\Pi}$ be the $L^2$-orthogonal projection onto $\mathcal{K}_{k-1} $. Explicitly, the projection $\widetilde{\Pi}$ at $(a,\phi) $ is given by
\begin{align*}
\widetilde{\Pi}_{(a,\phi)} (b,\psi) = \left( b - d \tilde{\xi}(b, \psi, \phi) , \psi + \tilde{\xi}(b, \psi, \phi) \phi \right),
\end{align*}
where $\tilde{\xi}(b,\psi,\phi) : Y \rightarrow i \mathbb{R}$ is a unique function such that
$-d^{*}(b_{}-d\tilde{\xi}(b,\psi,\phi))+i\text{Re}\langle i\phi,\psi_{}+ \tilde{\xi}(b,\psi,\phi) \phi \rangle$ is a locally constant function and $\int_{Y_{j} }\tilde{\xi}(b,\psi,\phi)=0$. It is not hard to see that we have a bundle isomorphism
\begin{equation*}
\xymatrix{
   \mathit{Coul}_{k-1} \ar[dr]^{} \ar@<2pt>[rr]^{\widetilde{\Pi}} & & \mathcal{K}_{k-1} \ar@<2pt>[ll]^{\Pi_*} \ar[dl]  \\
   & Coul(Y)  &
}
\end{equation*}
since both are complementary to the derivative of the action of $\mathcal{G}^{e,0}_Y$.

We now define a metric $\tilde{g}$ for the bundle $\mathit{Coul}_{k-1} $ by setting$$\langle (b_{1},\psi_{1}),(b_{2},\psi_{2})\rangle _{\tilde{g}}:=\langle\widetilde{\Pi} (b_{1},\psi_{1}),\widetilde{\Pi}(b_{2},\psi_{2})\rangle _{L^{2}}.$$
Since $\widetilde{\Pi} $ and $\Pi_* $ are inverse of each other and $\widetilde{\Pi} $ is an orthogonal projection, we have the following identity
\begin{equation*}
            \left\langle \Pi_* v, w\right\rangle_{\tilde{g}}
            = \left\langle v , w \right\rangle_{L^2} \quad \text{whenever } v \in \mathcal{K}_{k-1}.
\end{equation*}
Since $CSD_{\nu_{0},f}$ is gauge invariant, $\grad CSD_{\nu_{0},f}$ lies in $\mathcal{K}_{k-1} $. From this point on, we will denote by $\gradtil $ the gradient on $Coul(Y)$ with respect to the metric $\tilde{g} $ and put
\[
           \mathcal{L} := CSD_{\nu_0, f} |_{ Coul(Y) }.
\]
We then have
\begin{equation}\label{two versions of gradient}
         \gradtil \mathcal{L}  = {\Pi}_*\grad CSD_{\nu_{0},f}  =  l+c
         \text{ and }
         \|\gradtil  \mathcal{L}\|_{\tilde{g}} = \|\grad  CSD_{\nu_{0},f}\|_{L^{2}}.
\end{equation}
Note that analogous results hold for any functional on $\mathcal{C}_Y$ which is $\mathcal{G}^{e,0}_Y$-invariant.

\section{Analysis of approximated Seiberg-Witten trajectories}  \label{Section approx SW traj}

In this section, we review some boundedness and convergence results relevant to finite dimensional approximation which will be used in the main construction.
\begin{defi}
A smooth path in $Coul(Y)$ is called \emph{finite type} if it
is contained in a fixed bounded set (in the $L^{2}_{k}$-norm).
\end{defi}
It can be proved that
a Seiberg-Witten trajectory $\gamma(t)=(\alpha(t),\phi(t))$ is of finite type if and only
if both $CSD_{\nu_{0},f}(\gamma(t))$ and $\|\phi(t)\|_{C^{0}}$ are bounded (cf. \cite[Definition~1]{Manolescu1}).

Recall that the set of the Seiberg-Witten solutions is compact modulo the full gauge group. However, there is a residual action by the group $\mathcal{G}^{h,o}_{Y}\cong H^{1}(Y; \mathbb{Z}) $ on $Coul(Y)$. This motivates us to consider a strip of balls
$$Str(R)=\{x\in Coul(Y) \mid \exists h\in \mathcal{G}^{h,o}_{Y} \text{ s.t. } \|h\cdot x\|_{L^{2}_{k}}\leq
R\},$$
where $R$ is a positive real number.

Since $CSD_{\nu_{0},f}$ is invariant under the full gauge group $\mathcal{G}_{Y}$, we  have a uniform  bound for the topological energy of all finite type trajectories (see \cite[Proposition
10]{Khandhawit2}). As a result, we have the following boundedness result.

\begin{thm}[\cite{Khandhawit2}]\label{Boundedness of trajectories} There exists a constant $R_{0}$ such that all finite type Seiberg-Witten trajectories are contained in the interior of $Str(R_{0})$. In particular, the set $Str(R_{0})$ contains all the critical points of $\mathcal{L}$ and trajectories between them.
\end{thm}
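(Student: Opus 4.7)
The strategy is to convert the finite-type property into universal (trajectory-independent) a priori bounds, and then to use the residual harmonic gauge action to place each point of such a trajectory inside a single fixed $L^2_k$-ball. I start from the equivalent characterization that a Seiberg-Witten trajectory $\gamma(t) = (\alpha(t), \phi(t))$ is of finite type if and only if both $CSD_{\nu_0, f}(\gamma(t))$ and $\|\phi(t)\|_{C^0}$ remain bounded along $\gamma$. The forward direction is trivial by the Sobolev embedding $L^2_k \hookrightarrow C^0$ (which is available since $k>4$) together with continuity of $CSD_{\nu_0, f}$; the converse is proved by combining the standard Weitzenb\"ock/maximum-principle $C^0$-estimate applied to $|\phi|^2$ with an elliptic bootstrap in \eqref{projected trajectory}, using coercivity of $l = (*d,\slashed{D})$ modulo its kernel and the tame estimates on $\grad f$ supplied by Proposition \ref{prop perturbproperty}(\ref{item propertyperturb1})--(\ref{item propertyperturb2}).

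The crucial next step is to promote these bounds to \emph{universal} ones, independent of the individual finite type trajectory. Because $CSD_{\nu_0, f}$ is invariant under the full gauge group $\mathcal{G}_Y$ (this is precisely the reason for introducing $\nu_0$), its set of critical values modulo gauge is a compact subset of $\mathbb{R}$, so the oscillation of $CSD_{\nu_0, f}$ along any finite type trajectory is bounded by a universal constant $E_0$. Monotonicity of $\mathcal{L}$ along the downward flow then bounds the total topological energy $\int_I \|\gradtil \mathcal{L}\|_{\tilde g}^2\, dt$ by $E_0$ for every finite type trajectory. Feeding this back into the Weitzenb\"ock estimate, and using that the scalar curvature of $Y$ and $\grad f$ on bounded sets are universally controlled, yields a universal $C^0$-bound $\|\phi(t)\|_{C^0} \le M_0$ that is independent of the trajectory.

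Combining the universal energy and spinor bounds, a second round of elliptic bootstrapping applied to \eqref{def of c^1}--\eqref{def of c^2} produces a universal constant $M_1$ such that, writing $\alpha(t) = \alpha_h(t) + \alpha_\perp(t)$ with $\alpha_h(t)$ harmonic and $\alpha_\perp(t) \in (\mathrm{harm})^\perp \cap \ker d^*$, we obtain $\|(\alpha_\perp(t), \phi(t))\|_{L^2_k} \le M_1$. The harmonic piece $\alpha_h(t)$ is then absorbed by choosing, for each $t$, an element $h(t) \in \mathcal{G}^{h,o}_Y$ so that $h(t) \cdot \alpha_h(t)$ lies in a fixed bounded fundamental domain $\mathcal{F}$ for the full-rank integer lattice $\mathcal{G}^{h,o}_Y \cong H^1(Y;\mathbb{Z})$ acting on the space of harmonic $1$-forms. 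Setting $R_0 > M_1 + \mathrm{diam}(\mathcal{F})$ yields the required strict containment $\gamma(t) \in \mathrm{int}\, Str(R_0)$ for all $t \in I$; since critical points are constant finite type trajectories, they are also covered. The principal technical obstacle is the universal $C^0$-bound on $\phi$ in the presence of both the extended cylinder perturbation $\bar f$ and the mass-like term $\tfrac{\delta}{2}\|\phi\|^2$, whose gradient contribution $\delta\phi$ must be absorbed into the maximum-principle argument; this hinges on Proposition \ref{prop perturbproperty}, which bounds $\bar f$ and its gradient polynomially in the $L^2_m$-norms and hence permits the standard a priori estimates to go through uniformly over all perturbations in $\mathcal{P}$.
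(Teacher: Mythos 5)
Your overall strategy --- gauge invariance of $CSD_{\nu_0,f}$ gives a universal topological-energy bound, and then elliptic bootstrap plus gauge-fixing yields containment in a fixed strip --- is exactly the route the paper is relying on; the paper itself states only the energy bound and defers the rest to \cite{Khandhawit2}, so your proposal is a reasonable reconstruction of the cited argument. There is, however, one step that does not work as written and must be reorganized.

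You claim a universal bound $\|(\alpha_\perp(t),\phi(t))\|_{L^2_k}\le M_1$ \emph{before} gauge-fixing the harmonic component $\alpha_h(t)$, and only afterward absorb $\alpha_h$ by a transformation $h(t)\in\mathcal{G}^{h,o}_Y$. That ordering is not coherent, for two interlocking reasons. First, the $\phi$-equation involves $\slashed{D}_{A_0+\alpha}=\slashed{D}_{A_0}+\rho(\alpha)$, so in the bootstrap the term $\rho(\alpha_h)\phi$ contributes expressions of the form $\nabla^{j}\alpha_h\cdot\nabla^{k-j}\phi$ to $\nabla^k\phi$, whose size grows with $|\alpha_h|$; the estimate for $\|\phi\|_{L^2_k}$ in the fixed base gauge therefore cannot be uniform as $\alpha_h$ ranges over the noncompact lattice of harmonic translates. (What the bootstrap gives uniformly is the gauge-covariant norm $\|\phi\|_{L^2_{k,A_0+\alpha(t)}}$, not the fixed-gauge norm.) Second, even granting your bound, the subsequent gauge transformation sends $\phi\mapsto h(t)\phi$, and $\|h(t)\phi\|_{L^2_k}$ is not controlled by $\|\phi\|_{L^2_k}$ alone when $h(t)$ corresponds to a far-away lattice point. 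The standard repair is to interleave the two steps: either work throughout with the covariant norm $\|\phi\|_{L^2_{k,A_0+\alpha(t)}}$ and convert to the fixed-gauge norm only after $\alpha_h$ has been placed in a bounded fundamental domain, or view $\gamma|_{[t-1,t+1]}$ as a $4$-dimensional configuration on $I\times Y$ and apply a $4$-dimensional gauge-fixing that simultaneously normalizes the harmonic part and supplies the $L^2_{k+1/2}$ bounds needed for the bootstrap. The latter is the approach of \cite[Theorem~10.7.1]{Kronheimer-Mrowka} and of the cited thesis, and it is precisely what the present paper does in the family version (Proposition~\ref{uniform bound for compact set}), where the gauge transformation $\hat{u}_j$ on $[-\tfrac12,\tfrac12]\times Y$ is chosen so that the harmonic period integral lies in $[0,m)$.
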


We now discuss finite dimensional approximation of Seiberg-Witten trajectories  following \cite{Manolescu1} and \cite{Khandhawit2}. To describe various projections, we first specify the $L^{2}_{m}$-inner product $(m\geq 1)$ on
$i\Omega^{1}(Y)\oplus \Gamma(S_{Y})$.
From the Hodge decomposition $\Omega^1 (Y) = \ker{d^*} \oplus \im{d} $, we will just define  an inner
product on each summand. On $i\ker d^{*}\oplus \Gamma(S_{Y})$, we use the elliptic operator $l=(*d,\slashed{D})$
$$
\langle (a_{1},\phi_{1}),(a_{2},\phi_{2})\rangle_{L^{2}_{m}}:=\langle (a_{1},\phi_{1}),(a_{2},\phi_{2})\rangle_{L^{2}}+\langle
l^{m}(a_{1},\phi_{1}),l^{m}(a_{2},\phi_{2})\rangle_{L^{2}}.
$$
 For $\beta_{1},\beta_{2}\in i\im  d$, we define
 $$
        \langle \beta_{1},\beta_{2}\rangle_{L^{2}_{m}}
       := \langle \beta_{1}, \beta_{2} \rangle_{L^{2}}+\langle \Delta^m \beta_{1},  \beta_{2}\rangle_{L^{2}}.
$$
\begin{defi}\label{nice projections}
With the Sobolev inner product defined above, a projection $\pi$ will be called a \emph{nice} projection if it satisfies the following
properties:
\begin{enumerate}[(i)]
\item $\pi$ is an $L^{2}_{m}$-orthogonal projection for any $m\geq 0$;
\item $\pi$ extends to a map on a cylinder $I \times Y $ with $ \left\Vert\pi\right\Vert_{L^2_{m}(I \times Y)} \le 1 $ for any $m \ge 0$.
\end{enumerate}
\end{defi}

Consider the spectral decomposition of $Coul(Y)$ with respect to the eigenspaces of $l=(*d,\slashed{D})$.
For any real numbers $\lambda < 0 \leq \mu$, let $V^{\mu}_{\lambda}$ be the span of the eigenspaces of $l$
with eigenvalues in the interval $(\lambda,\mu]$ and let $p^{\mu}_{\lambda}$ be the $L^2$-orthogonal
projection onto $V^{\mu}_{\lambda}$. It is not hard to see that $p^{\mu}_{\lambda}$ is a nice projection.

Recall that a Seiberg-Witten trajectory is an integral curve of the vector field  $l +c$ on $ Coul(Y)$.
This leads us to consider a trajectory on a finite-dimensional subspace $\gamma \colon I \rightarrow V^{\mu}_{\lambda}$ satisfying an equation
\begin{equation*}
-\frac{d\gamma(t)}{dt}= (l+p^{\mu}_{\lambda}\circ c)(\gamma(t)).
\end{equation*}
Such a trajectory will be loosely called an approximated Seiberg-Witten trajectory. We will also call a sequence of approximated Seiberg-Witten trajectories $\left\{\gamma_n \colon I \rightarrow V^{\mu_{n}}_{\lambda_{n}}\right\}_{n \in\mathbb{N}}$ an exhausting sequence when $-\lambda_n , \mu_n \rightarrow \infty $.
The next proposition is the main convergence result of this section.
\begin{pro}\label{convegence of approximated trajectories}
Let  $\{\gamma_{n} \colon [a,b]\rightarrow V^{\mu_{n}}_{\lambda_{n}}\}$ be an exhausting sequence of approximated
Seiberg-Witten trajectories whose $L^{2}_{k}$-norms are uniformly bounded. Then
there exists a Seiberg-Witten trajectory $\gamma_{\infty}\colon (a,b)\rightarrow Coul(Y)$, such that,
after passing to a subsequence, $\gamma_{n}(t)\rightarrow \gamma_{\infty}(t)$ uniformly in any Sobolev
norm on any compact subset of $(a,b)$.
\end{pro}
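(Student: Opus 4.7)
The plan is to follow the finite-dimensional approximation scheme of Manolescu \cite{Manolescu1} and Khandhawit \cite{Khandhawit2}, adapted to the Coulomb slice setting. The argument proceeds in three stages: extract a first continuous limit $\gamma_\infty$ by an Arzel\`a--Ascoli argument in a low Sobolev topology; bootstrap this to convergence in every Sobolev norm on compact subintervals via interior elliptic regularity of the four-dimensional operator $\partial_t + l$ on the cylinder $(a,b) \times Y$; and finally pass to the limit in the equation to identify $\gamma_\infty$ as a genuine Seiberg-Witten trajectory.

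First I would establish a uniform $L^2_{k-1}(Y)$-bound on $\dot\gamma_n$. The nonlinearity $c = (c^1, c^2)$ in \eqref{def of c^1}--\eqref{def of c^2} is quadratic in $\gamma$ plus the perturbation gradient, so Sobolev multiplication together with Proposition~\ref{prop perturbproperty} controls $c(\gamma_n(t))$ in $L^2_{k-1}(Y)$ uniformly in $n$ and $t$ once $\gamma_n(t)$ is uniformly bounded in $L^2_k(Y)$. The spectral projection $p^{\mu_n}_{\lambda_n}$ is a nice projection (Definition~\ref{nice projections}) and hence norm-non-increasing on $L^2_{k-1}$, so the equation $-\dot\gamma_n = l\gamma_n + p^{\mu_n}_{\lambda_n} c(\gamma_n)$ yields the desired bound on $\dot\gamma_n$. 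Equicontinuity in $L^2_{k-1}$ combined with the Rellich embedding $L^2_k \hookrightarrow L^2_{k-1}$ and Arzel\`a--Ascoli then gives, after passing to a subsequence, uniform convergence $\gamma_n \to \gamma_\infty$ in $C^0([a,b], L^2_{k-1}(Y))$.

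Second, I would bootstrap on the four-manifold $(a,b) \times Y$. Rewritten as $(\partial_t + l)\gamma_n = -p^{\mu_n}_{\lambda_n} c(\gamma_n)$, the defining equation features the elliptic first-order operator $\partial_t + l$. For any closed subinterval $I' \subset (a,b)$ I would insert a smooth temporal cutoff and apply interior elliptic regularity: the niceness of $p^{\mu_n}_{\lambda_n}$ on cylinder Sobolev norms (Definition~\ref{nice projections}(ii)) prevents the projection from spoiling the estimates, while Sobolev multiplication on the four-manifold bounds $c(\gamma_n)$ in $L^2_{k-1}$ there. Iterating in the standard fashion produces uniform $L^2_m$-bounds on $\gamma_n$ over every compact subcylinder for every $m$, and Rellich compactness extracts a diagonal subsequence converging in every such Sobolev norm on every compact subinterval, which (via Sobolev embedding in the time variable) is the claimed convergence.

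The main obstacle is the final step: verifying that $\gamma_\infty$ actually solves $-\dot\gamma_\infty = l\gamma_\infty + c(\gamma_\infty)$. One needs $p^{\mu_n}_{\lambda_n} c(\gamma_n) \to c(\gamma_\infty)$ in a topology strong enough to take the limit in the integrated form of the equation. This combines continuity of $c$ under the Sobolev convergence already obtained (clear from its quadratic-plus-tame structure together with Proposition~\ref{prop perturbproperty}) with the strong convergence $p^{\mu_n}_{\lambda_n} \to \mathrm{Id}$ on any fixed Sobolev space as $-\lambda_n, \mu_n \to \infty$. The delicate point is the interplay between the spectral cutoff tending to infinity and the rate at which the high-frequency tail of $c(\gamma_n)$ in the spectrum of $l$ decays uniformly in $n$; this is controlled precisely by the higher-regularity bounds from the bootstrap stage, which is why the bootstrap and the limit-passage must be carried out hand in hand.
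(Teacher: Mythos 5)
Your overall strategy---Arzel\`a--Ascoli in $C^0([a,b],L^2_{k-1})$, then a regularity bootstrap on the cylinder, then limit passage in the equation---is the same as the paper's, and your first step (uniform bound on $\dot\gamma_n$ in $L^2_{k-1}(Y)$ from boundedness of $l+c$ and niceness of $p^{\mu_n}_{\lambda_n}$) matches. However, there is a genuine gap in the bootstrap stage, and you misplace where the real difficulty lies.

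\textbf{The gap in the bootstrap.} You propose to apply interior elliptic regularity for $\partial_t + l$ directly on the cylinder, but elliptic regularity must start from \emph{some} four-manifold Sobolev bound on $\hat\gamma_n$ and on the right-hand side $p^{\mu_n}_{\lambda_n} c(\gamma_n)$. What you have after Arzel\`a--Ascoli is a bound uniform in $t$ in $L^2_k(Y)$, i.e.\ an anisotropic $L^\infty_t L^2_{k,x}$ bound, not a bound in $L^2_{k-1}([a,b]\times Y)$. Converting the former to the latter requires controlling the time derivatives $(\partial_t)^m\gamma_n$ in $L^2_{k-1-m}(Y)$ for $m=0,\dots,k-1$, and controlling $(\partial_t)^m c(\gamma_n)$ likewise. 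This is not automatic: differentiating the equation in $t$ feeds $\dot\gamma_n$ into $\mathcal{D}_{\gamma_n}c$, so at each step you trade one spatial Sobolev level for one time derivative, and the mapping properties of $\mathcal{D}^s c$ at the lower regularity are exactly what you need to check. This is precisely the role of property~(\ref{item defquadlike converge}) of Definition~\ref{quadratic-like map}: the paper proves by induction on $m$ that $(\partial_t)^m\gamma_n \to (\partial_t)^m\gamma_\infty$ uniformly in $L^2_{k-1-m}(Y)$, and only then assembles these anisotropic bounds into convergence $\hat\gamma_n \to \hat\gamma_\infty$ in $L^2_{k-1}([a,b]\times Y)$. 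Your phrase ``iterating in the standard fashion'' papers over this entire inductive structure, which is the technical heart of the argument. Once the $L^2_{k-1}$ cylinder bound is in hand, property~(\ref{item defquadlike 4d}) of the same definition---the $4$d mapping property of $c$---lets the bootstrap proceed as you describe; your invocation of Definition~\ref{nice projections}(ii) for the cylinder niceness of $p^{\mu_n}_{\lambda_n}$ is appropriate there.

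\textbf{The misplaced difficulty.} You flag the identification of $\gamma_\infty$ as a Seiberg--Witten trajectory as ``the main obstacle,'' and worry about the ``rate at which the high-frequency tail of $c(\gamma_n)$ decays uniformly in $n$.'' This is not a real difficulty. Once $\gamma_n \to \gamma_\infty$ uniformly in $L^2_{k-1}(Y)$, property~(\ref{item defquadlike converge}) with $m=0$ gives $c(\gamma_n)\to c(\gamma_\infty)$ uniformly in $L^2_{k-2}(Y)$, and then $p^{\mu_n}_{\lambda_n}c(\gamma_n) - c(\gamma_\infty) = p^{\mu_n}_{\lambda_n}\bigl(c(\gamma_n)-c(\gamma_\infty)\bigr) + \bigl(p^{\mu_n}_{\lambda_n}-\mathrm{Id}\bigr)c(\gamma_\infty)$. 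The first term vanishes because $p^{\mu_n}_{\lambda_n}$ has norm $\le 1$; the second term vanishes because $\{c(\gamma_\infty(t)) : t\in[a,b]\}$ is a compact subset of $L^2_{k-2}(Y)$ and $p^{\mu_n}_{\lambda_n}\to\mathrm{Id}$ strongly, hence uniformly on compact sets. No rate of decay of high-frequency tails is needed, and the limit passage does not need to be interleaved with the higher-regularity bootstrap. The paper mentions this step almost in passing, which is the right weighting: the delicate part is the inductive time-derivative estimate you omitted.
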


The proof of this proposition will be at end of this section. We basically follow the same strategy as in the proof of \cite[Proposition~3]{Manolescu1} and \cite[Proposition~11]{Khandhawit2}. Since our vector field $l+c$ has an extra term coming from $\grad f$, we need to assure that the nonlinear part $c$ still has nice compactness properties similar to those of the quadratic term in the Seiberg-Witten equation. For this purpose, we recall the notion of
 ``quadratic-like'' map and related results in \cite[Section~4.2]{Khandhawit2}. Since our setting here is slightly different,
we give out some details for completeness.

\begin{defi}\label{quadratic-like map}
Let $E$ be a vector bundle over $Y$. A smooth map $Q \colon Coul(Y)\rightarrow L^{2}_{k}(\Gamma(E))$
is called \emph{quadratic-like} if it has the following properties:\begin{enumerate}[(i)]
\item \label{item defquadlike bound} The map \(Q\) sends a bounded subset in $L^2_{k}$ to a bounded subset in  $L^2_{k}$;
\item \label{item defquadlike converge}  Let $m$ be a nonnegative integer not greater than  $k-1$. If there is a convergence of paths over a compact interval $(\frac{d}{dt})^{s}\gamma_{n}(t)\rightarrow (\frac{d}{dt})^{s}\gamma_{\infty}(t)$
uniformly in $L^{2}_{k-1-s}$ for each $s=0,1,...,m$, then we have  $(\frac{d}{dt})^{m}Q(\gamma_{n}(t))\rightarrow
(\frac{d}{dt})^{m}Q(\gamma_{\infty}(t))$ uniformly in $L^{2}_{k-2-m}$;
\item \label{item defquadlike 4d} The map $Q$ extends to a continuous map from $L^2_m(I \times Y)$ to $L^2_m(I \times Y)$ (with suitable bundles understood) for each integer $m \ge k-1 $. Here $I$ is a compact interval.
\end{enumerate}
\end{defi}

The sum of two quadratic-like maps is obviously quadratic-like. Furthermore, it can be shown that the pointwise tensor product of two quadratic-like maps is also quadratic-like (cf. \cite[Lemma~10]{Khandhawit2}).
%

\begin{lem}[cf. Lemma~9 of \cite{Khandhawit2}]\label{gradf is quadratic-like}
Let $f$ be a perturbation given by a pair $(\delta,\bar{f})$ with $\delta\in \mathbb{R}$ and $\bar{f}\in \mathcal{P}$.
Then the map $ \grad{f} \colon Coul(Y)\rightarrow L^{2}_{k}(i\Omega^{1}(Y)\oplus \Gamma(S_{Y}))$ is quadratic-like.
\end{lem}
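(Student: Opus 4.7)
The plan is to decompose $f = \bar{f} + \tfrac{\delta}{2}\|\phi\|_{L^2}^2$ and handle the two summands separately. The second term has gradient $(0,\delta\phi)$, which is linear in $(a,\phi)$, so it trivially satisfies the three parts of Definition \ref{quadratic-like map} (bounded sets go to bounded sets, paths of derivatives converge whenever the inputs do, and extension to the cylinder is immediate). Hence it suffices to verify the three properties for $\grad\bar{f}$, and all three will be extracted from Proposition \ref{prop perturbproperty}.

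Property (i) is immediate: taking $n=0$ and $m=k$ in Proposition \ref{prop perturbproperty}(ii) gives $\|\grad\bar{f}(a,\phi)\|_{L^2_k}\le C\,p_{k,0}(\|(a,\phi)\|_{L^2_k})$, so $L^2_k$-bounded sets have $L^2_k$-bounded image. For property (ii), I would iterate the chain rule on $t\mapsto\grad\bar{f}(\gamma(t))$ to write $(\tfrac{d}{dt})^m\grad\bar{f}(\gamma(t))$ as a finite linear combination of multilinear expressions $\mathcal{D}^j_{\gamma(t)}\grad\bar{f}\bigl[\gamma^{(s_1)}(t),\dots,\gamma^{(s_j)}(t)\bigr]$ with $s_1+\cdots+s_j=m$ and $1\le j\le m$. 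Apply Proposition \ref{prop perturbproperty}(ii) at a Sobolev level $m'$ chosen so that $L^2_{k-1-s_i}\hookrightarrow L^2_{m'}$ for every admissible $s_i$ and $L^2_{m'}\hookrightarrow L^2_{k-2-m}$ (possible since $k>4$); each such multilinear form is then uniformly continuous on bounded subsets of $L^2_{m'}$ with polynomial operator bound. The hypothesis supplies uniform convergence of each $\gamma^{(s_i)}_n(t)$ in $L^2_{k-1-s_i}$ on the compact interval, hence in $L^2_{m'}$, together with uniform $L^2_{m'}$-bounds on $\gamma_n(t)$; the required uniform convergence in $L^2_{k-2-m}$ then follows term by term by splitting differences via telescoping arguments and invoking the continuity bounds.

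For property (iii), one must extend $\grad\bar{f}$ to a continuous operator $L^2_m(I\times Y)\to L^2_m(I\times Y)$ for every integer $m\ge k-1$. This is essentially the content of tameness of the perturbation, namely Proposition \ref{prop perturbproperty}(ii) combined with \cite[Definition~10.5.1]{Kronheimer-Mrowka}. More concretely, one writes $\bar{f}=\sum_j\eta_j\hat{f}_j$: each cylinder gradient $\grad\hat{f}_j$ extends to the four-dimensional cylinder by its very construction in \cite[Chapter~10]{Kronheimer-Mrowka}, and the weights $\{C_j\}$ secured in Proposition \ref{prop perturbproperty}(iv), together with the auxiliary Lemmas \ref{lem C_j W_M} and \ref{double mixed tame functions} reserved for exactly this purpose, are chosen so that the series converges in the relevant 4-dimensional operator norm uniformly on bounded subsets of paths.

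The main obstacle is property (iii): properties (i) and (ii) follow from the polynomial multilinear bounds of Proposition \ref{prop perturbproperty}(ii) by arguments that closely parallel those for the genuine quadratic nonlinearity $(a,\phi)\mapsto(\rho^{-1}(\phi\phi^{*})_0,\rho(a)\phi)$ in the Seiberg--Witten vector field. Property (iii), on the other hand, is a genuinely four-dimensional statement about Nemytskii-type operators, and the only reason it succeeds for our infinite-dimensional space $\mathcal{P}$ of perturbations is that the weights $\{C_j\}$ were set aside in advance to enforce summability in each of the Sobolev norms on $I\times Y$ that arise.
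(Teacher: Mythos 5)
Your proof follows essentially the same route as the paper's: decompose $f$ into the linear $(0,\delta\phi)$ piece plus $\grad\bar{f}$, verify properties (i) and (ii) of Definition~\ref{quadratic-like map} from the polynomial multilinear bounds of Proposition~\ref{prop perturbproperty} (note: the polynomial bound is item~(\ref{item propertyperturb2}), i.e.\ (iii), not (ii) as you cite), and derive property (iii) from the tameness of $\grad\bar{f}$. The only inessential variations are that you establish property (i) by applying the $n=0$ bound directly whereas the paper uses the fundamental theorem of calculus with the $n=1$ bound, and that your discussion of property (iii) unnecessarily drags in Lemmas~\ref{lem C_j W_M} and~\ref{double mixed tame functions}, which are reserved for the invariance argument in Section~\ref{invariance} rather than for this lemma.
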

\begin{proof}
We see that $\grad f(a,\phi)= (0, \delta \phi) + \grad \bar{f}(a,\phi)$ and the first term is obviously quadratic-like. We just need to show that $\grad{\bar{f}} $ is quadratic-like. First, we will check properties (\ref{item defquadlike bound}) and (\ref{item defquadlike converge}) when $m=0$ of Definition~\ref{quadratic-like map}  .

For two configurations $(a_0 , \phi_0 ) $ and $(a_1 , \phi_1 ) $, we consider a straight segment $(a_t , \phi_t ) = (1-t)(a_0 , \phi_0 ) + t (a_1 , \phi_1 ) $ joining them and apply the fundamental theorem of calculus
\begin{equation*}
   \begin{split}
    \left\Vert\grad \bar{f}(a_1,\phi_1) - \grad \bar{f}(a_0,\phi_0)\right\Vert_{L^2_j}
       &  = \left\Vert\int_{[0,1]} \mathcal{D}_{(a_t , \phi_t )} \grad \bar{f}(a_1 - a_0,\phi_1 - \phi_0) dt\right\Vert_{L^2_j} \\
       &  \le C \int_{[0,1]} p_{j,1}(\left\Vert a_t , \phi_t \right\Vert_{L^2_j}) \left\Vert (a_1,\phi_1) - (a_0,\phi_0) \right\Vert_{L_j^2}dt,
   \end{split}
\end{equation*}
where the last inequality follows from Proposition~\ref{prop perturbproperty}~(\ref{item propertyperturb2}). When $j=k$ and $(a_0 , \phi_0) =(0,0) $, this implies property~(\ref{item
defquadlike bound}) of Definition~\ref{quadratic-like map}. Property~(\ref{item defquadlike converge}) when $m=0$ also follows from the above inequality when $j=k-1$.

We now check property~(\ref{item defquadlike converge})
when $1 \le m \le k-2$. Suppose that $(\frac{d}{dt})^{s}\gamma_{n}(t)\rightarrow (\frac{d}{dt})^{s}\gamma_{\infty}(t)$
uniformly in $L^{2}_{k-1-s}$ for each $s=0,1, \dots ,m$. We observe that an expansion of $(\frac{d}{dt})^{m}\grad \bar{f}(\gamma_{}(t))$ consists of terms of the form
\begin{align*}
       \mathcal{D}^{s}_{\gamma_{}(t)}  \grad
        \bar{f}  \left(  \left(    \frac{d}{dt} \right)^{\alpha_{1}}    \gamma (t),
          \dots,  \left(\frac{d}{dt}  \right)^{\alpha_{s}}   \gamma(t)        \right)
 \text{ with }\alpha_i \ge 1 \text{ and } \alpha_1 + \cdots + \alpha_s = m.
\end{align*}
From Proposition~\ref{prop perturbproperty}~(\ref{item propertyperturb2}), $\Vert\mathcal{D}^{s}_{\gamma_{}(t)}\grad \bar{f}\Vert \le C \, p_{k-1-m , s}(\left\Vert \gamma(t) \right\Vert_{L^2_{k-1-m}})$ as an element of $\operatorname{Mult}^{s}(\times_{s}L^{2}_{k-1-m},L^{2}_{k-1-m}) $. We see that $\gamma_{n}$ is uniformly bounded in $L^2_{k-1-m} $ and that the convergence $(\frac{d}{dt})^{\alpha_{i}}\gamma_{n}(t)\rightarrow
(\frac{d}{dt})^{\alpha_{i}}\gamma_{\infty}(t)$ is uniform in $L^{2}_{k-1-m}$ as $\alpha_i \le m$. These imply property~(\ref{item defquadlike converge}).

Properties~(\ref{item defquadlike 4d}) easily follows from the fact that $\grad \bar{f} $ is a tame perturbation.

\end{proof}

As a result, we can deduce compactness property of the induced vector field on $Coul(Y) $.

\begin{cor}\label{c is quadratic-like}
The nonlinear part $c$ of the induced Seiberg-Witten vector field in (\ref{eq grad=l+c}) is quadratic-like.
\end{cor}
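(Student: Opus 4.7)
The plan is to deduce the corollary from the closure properties of quadratic-like maps (closure under sums and pointwise tensor products, as cited from \cite[Lemma~10]{Khandhawit2}) together with Lemma~\ref{gradf is quadratic-like} and the known quadratic-like nature of the classical Seiberg-Witten nonlinearity. Concretely, I decompose
\[
    c(a,\phi) = c_{\text{SW}}(a,\phi) + \grad f(a,\phi) + c_{\xi}(a,\phi),
\]
where $c_{\text{SW}} = (\rho^{-1}(\phi\phi^{*})_{0},\rho(a)\phi)$ is the ordinary (unperturbed, unprojected) Seiberg-Witten quadratic term, and $c_{\xi}$ collects the remaining terms involving $\bar{\xi}(\,\cdot\,)$ that come from the pushforward $\Pi_{*}$. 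It is then enough to check that each of these three summands is quadratic-like.

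First I would handle $c_{\text{SW}}$: the maps $(a,\phi)\mapsto \rho(a)\phi$ and $\phi\mapsto \rho^{-1}(\phi\phi^{*})_{0}$ are pointwise bilinear in the configuration, so the bound and convergence properties (\ref{item defquadlike bound}) and (\ref{item defquadlike converge}) of Definition~\ref{quadratic-like map} follow from Sobolev multiplication $L^{2}_{k-1}\times L^{2}_{k-1}\hookrightarrow L^{2}_{k-1}$ (valid since $k>4$, so $k-1>3$), while (\ref{item defquadlike 4d}) is the standard 4-dimensional Sobolev multiplication argument and is precisely the content of the pre-perturbation case already established in \cite[Lemma~11]{Khandhawit2}. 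The second summand $\grad f$ is quadratic-like directly by Lemma~\ref{gradf is quadratic-like}.

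The main work is the third piece $c_{\xi}$, which consists of $-d\bar{\xi}(v)$ in the first component and $\bar{\xi}(v)\phi$ in the second, where $v := \rho^{-1}(\phi\phi^{*})_{0}+\grad^{1}f(a,\phi)$. The key observation is that the map $v\mapsto \bar{\xi}(v)$ is the composition of $d^{*}$ with the Green's operator for the scalar Laplacian (with the mean-zero constraint on each component), hence is a bounded \emph{linear} operator $L^{2}_{m}\to L^{2}_{m+1}$ for every $m\geq 0$ and extends to a bounded linear operator $L^{2}_{m}(I\times Y)\to L^{2}_{m}(I\times Y)$ for every $m$ (this is elliptic regularity, uniform in the time variable). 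Consequently the composite $v\mapsto d\bar{\xi}(v)$ and $v\mapsto \bar{\xi}(v)$ are linear and smoothing by one derivative; since $v$ itself is a sum of two quadratic-like maps (the classical one and $\grad^{1} f$), applying a bounded linear smoothing operator preserves the three properties of Definition~\ref{quadratic-like map}. Finally, $\bar{\xi}(v)\phi$ is the pointwise product of the quadratic-like map $\bar{\xi}(v)$ with the identity in $\phi$, so the product rule for quadratic-like maps \cite[Lemma~10]{Khandhawit2} gives that this is quadratic-like as well.

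The step I expect to require the most care is verifying property (\ref{item defquadlike 4d}) uniformly for the $\bar{\xi}$-terms on the cylinder $I\times Y$: one must check that solving the Laplacian on $Y$ slice-wise really produces an operator bounded on $L^{2}_{m}(I\times Y)$ and not just on the time-integrated spaces. This is where I would invoke the fact that the inverse Laplacian commutes with $\partial_{t}$ and is bounded on each spatial Sobolev level, so the resulting operator is bounded on mixed Sobolev spaces of the form $L^{2}(I;L^{2}_{m+1}(Y))\cap H^{m}(I;L^{2}(Y))$, which embeds into $L^{2}_{m}(I\times Y)$. Once these estimates are in place, assembling the three pieces via the sum-and-product closure yields that $c$ is quadratic-like.
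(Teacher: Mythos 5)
Your proof is correct and follows essentially the same route as the paper: identify that $\bar{\xi}$ is a linear operator of order $-1$, note that composing a quadratic-like map with a linear operator of nonpositive order preserves the quadratic-like property, invoke Lemma~\ref{gradf is quadratic-like} for the perturbation term, and close with the sum and pointwise-product stability of quadratic-like maps. The paper states this in two sentences; you usefully spell out the explicit three-term decomposition of $c$, the Sobolev-multiplication argument for the unperturbed bilinear piece, and the slicewise boundedness of the inverse Laplacian on the cylinder needed for property~(\ref{item defquadlike 4d}), all of which is consistent with the paper's intent.
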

\begin{proof} It is clear that the composition of a quadratic-like map with a linear operator of nonpositive order is quadratic-like. Since the operator $\bar{\xi}$ in (\ref{eq nonlinearCoulproj}) is of order -1, Lemma~\ref{gradf is quadratic-like} and closure under pointwise multiplication imply that the map $c$ is quadratic-like.


\end{proof}

We are now ready to prove Proposition \ref{convegence of approximated trajectories}. Although, we will only give outline of the proof as the reader can find more details in \cite{Manolescu1} and \cite{Khandhawit2}.
\begin{proof}[Proof of Proposition~\ref{convegence of approximated trajectories}]
 Let $\{\gamma_{n}\}$ be an exhausting sequence of approximated trajectories which are all contained in a ball $B(R)$ in $L^2_k$. The norm $\|\frac{d}{dt}\gamma_{n}(t)\|_{L^{2}_{k-1}}$ is uniformly bounded by boundedness of the map $l+c $. By the Rellich lemma and the Arzela-Ascoli theorem, we can pass to a subsequence of $\{\gamma_{n}\}$ which converges to a path $\gamma_{\infty}$ uniformly in $L^{2}_{k-1}$. Moreover, it can be shown that $\gamma_\infty $ is a Seiberg-Witten trajectory. By property~(\ref{item
defquadlike converge}) of Definition~\ref{quadratic-like map} of $c$, we can inductively prove uniform convergence $(\frac{d}{dt})^{m}(\gamma_{n}(t))\rightarrow (\frac{d}{dt})^{m}(\gamma_{\infty}(t))$ in $L^{2}_{k-1-m}$ for $m=1, \dots, k-1$. This implies that $\hat{\gamma}_{n}\rightarrow \hat{\gamma}_{\infty}$
in $L^{2}_{k-1}([a,b]\times Y)$. (Here we treat $\gamma_{n}(t)$ and $\gamma_{\infty}(t)$ as sections over $I\times Y$ and denote them respectively by $\hat{\gamma}_{n}$ and $\hat{\gamma}_{\infty}$.) Property (\ref{item defquadlike 4d}) of Definition \ref{quadratic-like map} allows us to do the bootstrapping argument over any shorter cylinder $I\times Y$. This finishes the proof of the proposition.
\end{proof}

Proposition \ref{convegence of approximated trajectories} has the following consequence.

\begin{cor}\label{boundedness for approximated trajectories}
For a closed and bounded subset $S$ of $Coul(Y)$ in $L^2_k$, there exist  large numbers $-\bar{\lambda}, \bar{\mu}, -\bar{T} \gg 0$  such that if $\lambda < \bar{\lambda}$, $\mu > \bar{\mu}$ and $T > \bar{T}$ then for any approximated Seiberg-Witten trajectory $\gamma \colon [-T,T]\rightarrow V^{\mu}_{\lambda}$ contained
in $S$, we have $\gamma(0)\in Str(R_{0})$.  Here $R_{0}$ is the universal constant from Theorem \ref{Boundedness of trajectories}.
\end{cor}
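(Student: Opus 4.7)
The plan is to argue by contradiction, combining the convergence result of Proposition~\ref{convegence of approximated trajectories} with the a priori boundedness result of Theorem~\ref{Boundedness of trajectories}.

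Suppose the claim fails. Then there exist sequences $\lambda_{n} \to -\infty$, $\mu_{n} \to \infty$, $T_{n} \to \infty$, together with approximated Seiberg--Witten trajectories $\gamma_{n}\colon [-T_{n},T_{n}] \to V^{\mu_{n}}_{\lambda_{n}}$ whose images lie in $S$ and which satisfy $\gamma_{n}(0)\notin Str(R_{0})$. Since $S$ is $L^{2}_{k}$-bounded, the hypothesis of Proposition~\ref{convegence of approximated trajectories} is satisfied on any fixed compact interval $[-N,N]\subset\mathbb{R}$ for $n$ large enough that $T_{n}>N$.

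Next, I would run a standard diagonal subsequence argument. Applying Proposition~\ref{convegence of approximated trajectories} on $[-1,1]$ gives a subsequence converging on $(-1,1)$ to a Seiberg--Witten trajectory; then applying it to $[-2,2]$ gives a further subsequence converging on $(-2,2)$; and so on. The diagonal subsequence (still denoted $\gamma_{n}$) converges, uniformly in every Sobolev norm on every compact subset of $\mathbb{R}$, to a Seiberg--Witten trajectory $\gamma_{\infty}\colon \mathbb{R}\to Coul(Y)$. Since $S$ is closed in $L^{2}_{k}$ and each $\gamma_{n}(t)\in S$ for $|t|\le T_{n}$, the limit satisfies $\gamma_{\infty}(t)\in S$ for every $t\in\mathbb{R}$. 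In particular $\gamma_{\infty}$ is of finite type.

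By Theorem~\ref{Boundedness of trajectories}, every finite type Seiberg--Witten trajectory lies in the \emph{interior} of $Str(R_{0})$; thus $\gamma_{\infty}(0)\in \operatorname{int}(Str(R_{0}))$. Since the set $Str(R_{0})$ is $\mathcal{G}^{h,o}_{Y}$-invariant and its interior is open in the $L^{2}_{k}$-topology, and since $\gamma_{n}(0)\to \gamma_{\infty}(0)$ in $L^{2}_{k}$, we obtain $\gamma_{n}(0)\in Str(R_{0})$ for all sufficiently large $n$. This contradicts the choice of $\gamma_{n}$.

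The main obstacle I anticipate is only the diagonal argument being carried out cleanly: one must verify that the $\gamma_{n}$ used in Proposition~\ref{convegence of approximated trajectories} form an exhausting sequence on every fixed $[-N,N]$ (which holds since $\lambda_{n}\to-\infty$ and $\mu_{n}\to\infty$) and that the resulting limit trajectory is indeed globally defined and of finite type; this is where the uniform $L^{2}_{k}$-bound supplied by $S$ is essential. Everything else is formal once these two inputs are in place.
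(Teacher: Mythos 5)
Your proof follows the same strategy as the paper's: argue by contradiction, combine Proposition~\ref{convegence of approximated trajectories} with a diagonal subsequence argument to extract a finite-type Seiberg--Witten trajectory $\gamma_\infty$ valued in $S$, invoke Theorem~\ref{Boundedness of trajectories} to place $\gamma_\infty(0)$ in the interior of $Str(R_0)$, and use $L^2_k$-convergence $\gamma_n(0)\to\gamma_\infty(0)$ to reach a contradiction. You simply spell out a few steps (closedness of $S$ giving finite type, openness of $\operatorname{int}(Str(R_0))$ giving the final contradiction) that the paper leaves implicit.
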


\begin{proof}
Suppose the contrary: we can find an exhausting sequence of approximated trajectories $\gamma_{n} \colon [-T_{n},T_{n}]\rightarrow V^{\mu_{n}}_{\lambda_{n}}\cap S$,  with $T_{n}\rightarrow \infty$, with $\gamma_{n}(0)\notin Str(R_{0})$. Since $S$ is bounded, we can apply Proposition~\ref{convegence of approximated trajectories} and the diagonalization argument to find a Seiberg-Witten trajectory $\gamma_{\infty} \colon \mathbb{R}\rightarrow S$ of finite type such that, after passing to a subsequence, $\gamma_{n}(0)\rightarrow \gamma_{\infty}(0)$ in $L^{2}_{k}$.  However, $\gamma_{\infty}(0)$ is in the interior of $Str(R_{0})$ by Theorem \ref{Boundedness of trajectories}. This is a contradiction.
\end{proof}

\begin{rmk} In Corollary~\ref{boundedness for approximated trajectories}, we can also consider more generalized approximated trajectories. For example, we can use interpolation between two projections for approximation, i.e. a trajectory satisfying
\begin{align*}
-\frac{d\gamma(t)}{dt}= \left(l+((1-s)p^{\mu}_{\lambda} + s p^{\mu'}_{\lambda'})\circ c\right)(\gamma(t)),
\end{align*}
where $0 \le s \le 1 $ and $\lambda'  < \lambda < \bar{\lambda}$ and $\mu' > \mu > \bar{\mu} $.
\end{rmk}

\section{Categorical and topological preliminaries} \label{Section Cat Top}

\subsection{The stable categories.}\label{subsection stable categories} In this subsection, we will briefly review algebraic-topological constructions which will be needed later. In particular, we will define three stable categories $\mathfrak{C},\mathfrak{S}$ and $\mathfrak{S}^{*}$ in which our invariants live as objects. The categories $\mathfrak{S}$
and $\mathfrak{S}^{*} $ are defined as direct systems and inverse systems of \(\mathfrak{C}\) respectively. Our treatment follows closely with \cite{Manolescu1} and \cite{Manolescu2}. See \cite{Adams} and \cite{May} for more systematic and detailed discussions regarding equivariant stable homotopy theory.

The category $\mathfrak{C}$, which was defined in \cite{Manolescu1}, is the $S^{1}$-equivariant analog of the classical Spanier-Whitehead category with $\mathbb{R}^\infty \oplus \mathbb{C}^\infty$ as the universe. In other words,  we will only consider suspensions involving the following two representations:

\begin{enumerate}
\item $\mathbb{R}$ the one-dimensional trivial representation;
\item $\mathbb{C}$ the two-dimensional representation where $S^{1}=\{e^{i\theta}|\theta\in [0,2\pi)\}$ acts
by complex multiplication.
\end{enumerate}

For a representation $V$, we will denote by $V^+$ its one-point compactification and by $V^{S^1}$ its $S^1$-fixed point set. Note that the transposition $(\mathbb{R}^{u_1})^+ \wedge (\mathbb{R}^{u_2})^+ \rightarrow (\mathbb{R}^{u_2})^+ \wedge (\mathbb{R}^{u_1})^+$ is homotopic to identity only when $u_1$ or $u_2$ is even.

The objects of $\mathfrak{C}$ are triples $(A,m,n)$ consisting of a pointed topological space $A$ with an $S^{1}$-action, an even integer $m$ and a rational number $n$. We require that $A$ is $S^{1}$-homotopy equivalent to a finite $S^{1}$-CW complex. The set of morphisms between two objects is given by
\[
    \morp_{\mathfrak{C}}((A,m,n),(A',m',n')):=
         \operatorname*{colim}_{u,v \rightarrow \infty}
              [(\mathbb{R}^{u}\oplus\mathbb{C}^{v})^{+}\wedge A,
                      (\mathbb{R}^{u+m-m'} \oplus \mathbb{C}^{v+n-n'})^{+} \wedge A' )]_{S^{1}},
\]
if $n - n' \in \mathbb{Z}$, where $[\cdot, \cdot]_{S^{1}}$ denotes the set of  pointed $S^{1}$-equivariant homotopy classes. We define $\morp_{\mathfrak{C}}((A,m,n),(A',m',n'))$ to be the empty set if $n - n' \not\in \mathbb{Z}$. As in \cite{Manolescu1}, there is a full subcategory $\mathfrak{C}_{0}$ inside of $\mathfrak{C}$ consisting of objects of the form $(A,0,0)$, which we also denote by $A$. For an object $Z=(A,m,n) \in \ob\mathfrak{C}$, an even integer $m'$ and a rational number $n'$, we also write $(Z,m',n')$ for $(A,m+m',n+n')$.

An $S^{1}$-representation $E$ is called \emph{admissible} if it is isomorphic to $\mathbb{R}^a \oplus \mathbb{C}^b$ for some nonnegative integers $a,b$. For such a representation, we can define the suspension functor $\Sigma^{E} \colon \mathfrak{C}\rightarrow\mathfrak{C}$ by setting $\Sigma^{E}(A,m,n):=(\Sigma^{E}A,m,n)$. For a morphism $F$, we define $\Sigma^{E} F$  to be the $U(1)$-equivariant stable homotopy class represented by a composition
\begin{equation}\label{suspending morphism}
\begin{split}
&(\mathbb{R}^{u}\oplus\mathbb{C}^{v})^{+}\wedge E^{+}\wedge A \xrightarrow{\sigma_{1,2}} E^{+}\wedge(\mathbb{R}^{u}\oplus\mathbb{C}^{v})^{+}\wedge A \xrightarrow{\text{id}_{E^{+}}\wedge f}\\
&E^{+}\wedge(\mathbb{R}^{u+m-m'}\oplus\mathbb{C}^{v+n-n'})^{+}\wedge A' \xrightarrow{\sigma_{1,2}} (\mathbb{R}^{u+m-m'}\oplus\mathbb{C}^{v+n-n'})^{+}\wedge E^{+}\wedge A',
\end{split}
\end{equation}
where $f: (\mathbb{R}^{u}\oplus\mathbb{C}^{v})^{+} \wedge A \rightarrow ( \mathbb{R}^{u+m-m'}\oplus\mathbb{C}^{v+n-n'})^{+} \wedge A' $ is a $U(1)$-equivariant map representing $F$ and $\sigma_{1,2}$ is the interchanging map of the first and the second factor.
We can also define the desuspension functor $\Sigma^{-E}\colon\mathfrak{C}\rightarrow \mathfrak{C}$ by setting $\Sigma^{-E}(A,m,n):=(\Sigma^{E^{S^{1}}}A,m+2a,n+b)$ and define $\Sigma^{-E} F$ as in (\ref{suspending morphism}) but replacing $E^{+}$  with $(E^{S^{1}})^{+}$. The following lemma is straightforward and we omit the proof.

\begin{lem}\label{lem susp functor commute} As functors of $\mathfrak{C} $, we have $\Sigma^{E_{1}}\circ \Sigma^{E_{2}}=\Sigma^{E_{1}\oplus E_{2}}$ , $\Sigma^{-E_{1}}\circ
\Sigma^{-E_{2}}=\Sigma^{-(E_{1}\oplus E_{2})}$.
\end{lem}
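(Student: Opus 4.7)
The plan is to check the claimed identity separately on objects and on morphisms, and in both cases reduce to coherence of the canonical $S^{1}$-equivariant homeomorphism $E_{1}^{+}\wedge E_{2}^{+}\cong (E_{1}\oplus E_{2})^{+}$ (and its $S^{1}$-fixed-point analogue) with the swap maps appearing in the definition of $\Sigma^{E}$.

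On objects the verification is essentially a repackaging of definitions. One has $\Sigma^{E_{1}}\Sigma^{E_{2}}(A,m,n) = (E_{1}^{+}\wedge E_{2}^{+}\wedge A, m, n)$ and $\Sigma^{E_{1}\oplus E_{2}}(A,m,n) = ((E_{1}\oplus E_{2})^{+}\wedge A, m, n)$, and the two pointed $S^{1}$-spaces are identified via the standard homeomorphism of smash products with one-point compactifications of direct sums. For the desuspension claim I would additionally use $(E_{1}\oplus E_{2})^{S^{1}} = E_{1}^{S^{1}}\oplus E_{2}^{S^{1}}$, together with the additivity of the shift data $(2a_{1}+2a_{2}, b_{1}+b_{2})$, where $E_{i}\cong \mathbb{R}^{a_{i}}\oplus \mathbb{C}^{b_{i}}$.

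On morphisms, for a representative $f \colon (\mathbb{R}^{u}\oplus \mathbb{C}^{v})^{+}\wedge A \to (\mathbb{R}^{u+m-m'}\oplus \mathbb{C}^{v+n-n'})^{+}\wedge A'$ of a class $F$, I would write out both $\Sigma^{E_{1}}(\Sigma^{E_{2}}F)$ and $\Sigma^{E_{1}\oplus E_{2}}F$ as explicit compositions in the form (\ref{suspending morphism}). After identifying $E_{1}^{+}\wedge E_{2}^{+}$ with $(E_{1}\oplus E_{2})^{+}$, the only difference between the two is whether the factor $(\mathbb{R}^{u}\oplus \mathbb{C}^{v})^{+}$ is commuted past $E_{2}^{+}$ and then $E_{1}^{+}$ in two stages, or past $(E_{1}\oplus E_{2})^{+}$ in one stage, and similarly on the target side. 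The coherence of the symmetry isomorphism on smash products makes the two swap compositions agree, so they represent the same stable class.

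The only delicate point, and the main potential obstacle, is the sign produced by the interchange of real-sphere coordinates: the map $\mathbb{R}^{p}\wedge \mathbb{R}^{q} \to \mathbb{R}^{q}\wedge \mathbb{R}^{p}$ has degree $(-1)^{pq}$. A direct bookkeeping shows that the two-stage swap on the source contributes the total sign $(-1)^{u a_{1}}(-1)^{u a_{2}} = (-1)^{u(a_{1}+a_{2})}$, which agrees with the sign produced by the one-stage swap past $\mathbb{R}^{a_{1}+a_{2}}$; the identical matching occurs at the target, noting that $m-m'$ is even so that the $m-m'$ dependence drops out. The complex coordinates contribute no sign since $\mathbb{C}^{p}\wedge \mathbb{C}^{q}$ interchange is $S^{1}$-equivariantly homotopic to the identity. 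The desuspension identity is proved identically after replacing each $E_{i}$ by $E_{i}^{S^{1}}=\mathbb{R}^{a_{i}}$ and absorbing the shifts $(2a_{i}, b_{i})$ into the object labels.
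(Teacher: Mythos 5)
The paper omits the proof, saying only that the lemma is ``straightforward,'' so there is no argument of the authors' to compare against. Your verification is correct in substance and structure: reduce on objects to the canonical $S^1$-equivariant homeomorphism $E_1^+\wedge E_2^+\cong (E_1\oplus E_2)^+$ (and its $S^1$-fixed-point analogue, together with additivity of the shift $(2a,b)$), and on morphisms to the observation that the two-stage and one-stage interchanges agree.

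One remark on the sign bookkeeping: it is a harmless sanity check, but it is not actually needed to close the argument, and invoking it risks obscuring what is really going on. You are not comparing a swap to the identity (which is where the degree $(-1)^{pq}$ and the requirement that $m$ be even enter), but rather comparing the composite swap $(\mathbb{R}^u\oplus\mathbb{C}^v)^+\wedge E_1^+\wedge E_2^+ \to E_1^+\wedge E_2^+\wedge(\mathbb{R}^u\oplus\mathbb{C}^v)^+$ done in two stages against the single swap past $(E_1\oplus E_2)^+$. Under the canonical identification $E_1^+\wedge E_2^+=(E_1\oplus E_2)^+$ these are \emph{equal as maps}, by coherence of the symmetry isomorphism for the smash product: both send $(x,e_1,e_2)\mapsto(e_1,e_2,x)$. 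So no homotopy, hence no degree computation, is required; the same holds on the target side, regardless of the parity of $m-m'$. Your phrase ``the coherence of the symmetry isomorphism on smash products makes the two swap compositions agree'' is exactly the right reason, and the subsequent degree discussion can be dropped without loss. The desuspension case is handled correctly by replacing $E_i$ with $E_i^{S^1}$ and checking $(E_1\oplus E_2)^{S^1}=E_1^{S^1}\oplus E_2^{S^1}$ together with additivity of the shift data, as you do.
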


Furthermore, we show that suspension and desuspension are inverse of each other as functors in $\mathfrak{C}$.

\begin{pro}\label{suspension desuspension functors cancels}
For an admissible representations $E$, there is a natural isomorphism $\eta$ from the functor $\Sigma^{-E_{}}\circ \Sigma^{E_{}} $ to $\operatorname{id}_{\mathfrak{C}}$ ,
where $\operatorname{id}_{\mathfrak{C}}$ is the identity functor on $\mathfrak{C}$.
\end{pro}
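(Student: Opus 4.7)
Write $E = \mathbb{R}^a \oplus \mathbb{C}^b$, so that $E^{S^1} = \mathbb{R}^a$. The first step is unwinding the definitions: using Lemma~\ref{lem susp functor commute}, for any object $Z = (A,m,n)$ I would compute
$$\Sigma^{-E}\Sigma^{E} Z = (\Sigma^{\mathbb{R}^a}\Sigma^{E} A,\, m+2a,\, n+b) = (\Sigma^{\mathbb{R}^{2a}\oplus\mathbb{C}^b} A,\, m+2a,\, n+b).$$
Observe that the real shift $2a$ and complex shift $b$ match exactly the dimensions added to the underlying space by the suspension $\Sigma^{\mathbb{R}^{2a}\oplus\mathbb{C}^b}$. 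I would then define $\eta_Z$ as the morphism represented, for each $u,v$ in the colimit defining $\morp_{\mathfrak{C}}$, by the canonical $S^1$-equivariant homeomorphism
$$(\mathbb{R}^u \oplus \mathbb{C}^v)^+ \wedge \Sigma^{\mathbb{R}^{2a}\oplus\mathbb{C}^b} A \;\xrightarrow{\;\cong\;}\; (\mathbb{R}^{u+2a}\oplus\mathbb{C}^{v+b})^+ \wedge A$$
obtained by regrouping smash factors (i.e., pulling $\Sigma^{\mathbb{R}^{2a}\oplus\mathbb{C}^b}$ through the sphere factor and recombining direct summands). The inverse regrouping gives $\eta_Z^{-1}$ tautologically, so $\eta_Z$ is an isomorphism of $\mathfrak{C}$.

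Well-definedness of this class (independence of the regrouping order) rests on the standard observation that any transposition $\sigma_{V,W}\colon V^+ \wedge W^+ \to W^+ \wedge V^+$ between finite-dimensional $S^1$-representations of the form $\mathbb{R}^p \oplus \mathbb{C}^q$ is $S^1$-equivariantly stably homotopic to the identity provided at least one of the two real dimensions is even. Here every summand that will ever appear is either a copy of $\mathbb{C}$ (hence of even real dimension automatically) or a copy of $\mathbb{R}^{m}, \mathbb{R}^{m'}, \mathbb{R}^{2a}, \mathbb{R}^{u}$. The parity built into $\mathfrak{C}$ forces $m, m'$ to be even, $2a$ is even by construction, and restricting the colimit to even $u$ (which is cofinal) makes every transposition in sight satisfy the parity condition.

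For naturality, take a morphism $F\colon(A,m,n)\to(A',m',n')$ represented by an $S^1$-equivariant map
$$f\colon (\mathbb{R}^u\oplus\mathbb{C}^v)^+ \wedge A \longrightarrow (\mathbb{R}^{u+m-m'}\oplus\mathbb{C}^{v+n-n'})^+ \wedge A'.$$
Formula (\ref{suspending morphism}) expresses $\Sigma^{E}F$ as $f$ pre- and post-composed with transpositions $\sigma_{1,2}$ moving $E^+$ past the sphere factors; iterating, $\Sigma^{-E}\Sigma^{E}F$ is $f$ conjugated by four transpositions, two involving $E^+$ and two involving $(E^{S^1})^+ = (\mathbb{R}^a)^+$. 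After restricting to even $u$, each of these four transpositions satisfies the parity condition above and is therefore stably trivial. Consequently, $\eta_{Z'}\circ\Sigma^{-E}\Sigma^{E}F$ is represented by $f$ composed only with the regroupings that define $\eta_{Z}$ and $\eta_{Z'}$, which is precisely $F\circ \eta_Z$. This establishes the naturality square.

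The only genuine subtlety is the parity bookkeeping. The evenness of $m$ in the definition of $\mathfrak{C}$, together with the doubled real shift $2a$ (rather than $a$) in the formula for $\Sigma^{-E}$, are exactly what preserve the parity across the construction and what allow all the transpositions produced by $\Sigma^{E}$, $\Sigma^{-E}$, and the regroupings defining $\eta$ to collapse to the identity stably. I expect this careful tracking of parities to be the main, though routine, obstacle.
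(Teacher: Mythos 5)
Your proof is essentially the same as the paper's: both reduce $\Sigma^{-E}\Sigma^{E}(A,m,n)$ to $(\Sigma^{\mathbb{R}^{2a}\oplus\mathbb{C}^b}A,\,m+2a,\,n+b)$, define $\eta$ by regrouping smash factors, and use the parity of the trivial summands (even $m$, even $2a$, and a cofinal restriction to even $u$) to make every intervening transposition stably equivariantly trivial, which also gives naturality. The one point you gloss over is that an admissible $E$ is only \emph{isomorphic} to $\mathbb{R}^a\oplus\mathbb{C}^b$, so your regrouping map secretly depends on a choice of isomorphism $\tau\colon E\to\mathbb{R}^a\oplus\mathbb{C}^b$; the paper handles this by noting that the equivariant Hopf theorem makes the homotopy class of the resulting map independent of $\tau$, a remark worth adding to make the construction well defined.
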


\begin{proof}

For each object $(A,m,n)$ of $\mathfrak{C} $, we want to construct an isomorphism $\eta_{(A,m,n)}$ from $\Sigma^{-E_{}}\circ
\Sigma^{E_{}}(A,m,n)=((E^{S^{1}}_{}\oplus E_{})^{+}\wedge A,m+2a,n+b)$ to $(A,m,n)$. First, we choose an isomorphism $\tau \colon E \rightarrow \mathbb{R}^a \oplus \mathbb{C}^b$. Then $\tau$ induces an isomorphism $$\tilde{\tau} \colon (E_{}\oplus E^{S^{1}}_{})^{+}\xrightarrow{\tau\oplus \tau^{S^{1}}} (\mathbb{R}^{a}\oplus \mathbb{C}^{b}\oplus \mathbb{R}^{a})^{+}\xrightarrow{\sigma_{2,3}}(\mathbb{R}^{2a}\oplus \mathbb{C}^{b})^{+}.$$  Note that by equivariant Hopf theorem (cf. \cite[Section 2.4]{TomDieck}), the homotopy class of $\tilde{\tau}$ is independent of the choice of $\tau$.  The {isomorphism} $\eta_{(A,m,n)} : \Sigma^{-E} \circ \Sigma^{E}(A,m,n) \rightarrow (A, m, n)$ is the $U(1)$-equivariant  stable homotopy class of
\begin{equation*}
          (\mathbb{R}^{u}\oplus \mathbb{C}^{v})^{+}\wedge (E^{S^{1}} \oplus E)^{+}\wedge A
           \xrightarrow{\tilde{\tau}_{u,v} \wedge \operatorname{id}_A}
          (\mathbb{R}^{u+2a}\oplus \mathbb{C}^{v+b})^{+}\wedge A,
\end{equation*}
where $\tilde{\tau}_{u,v}$ is the composition of $\operatorname{id}_{(\mathbb{R}^{u}\oplus \mathbb{C}^{v})^{+}} \wedge\ \tilde{\tau} $ with the transposition $(\mathbb{R}^{u}\oplus \mathbb{C}^{v})^{+} \wedge (\mathbb{R}^{2a}\oplus \mathbb{C}^{b})^{+} \rightarrow (\mathbb{R}^{u+2a}\oplus \mathbb{C}^{v+b})^{+} $.

For a map $f\colon (\mathbb{R}^{u}\oplus \mathbb{C}^{v})^{+}\wedge A\rightarrow (\mathbb{R}^{u+m-m'}\oplus \mathbb{C}^{v+n-n'})^{+}\wedge A'$, we want to check that $f \circ\eta_{(A,m,n)}  = \eta_{(A',m',n')}^{} \circ \Sigma^{-E} \circ \Sigma^{E}f$ up to stable homotopy. This follows from a commutative diagram
\begin{equation*}
\xymatrix{
        (\mathbb{R}^{u} \oplus \mathbb{C}^{v})^{+}   \wedge (E^{S^{1}}  \oplus E )^{+}   \wedge A
         \ar[d]_{\operatorname{id}_{(\mathbb{R}^{u}\oplus \mathbb{C}^{v})^{+}} \wedge \tilde{\tau} \wedge \operatorname{id}_A}
           \ar[r]^{\sigma_{1,2}} &
        (E^{S^{1}}  \oplus E )^{+} \wedge (\mathbb{R}^{u}\oplus \mathbb{C}^{v})^{+} \wedge A
          \ar[d]^{\tilde{\tau} \wedge \operatorname{id}_{(\mathbb{R}^{u}\oplus \mathbb{C}^{v})^{+}} \wedge \operatorname{id}_{A}}
  \\
   (\mathbb{R}^{u} \oplus \mathbb{C}^{v})^{+}    \wedge    (\mathbb{R}^{2a}\oplus \mathbb{C}^{b})^{+}    \wedge   A
    \ar[r]^{\sigma_{1,2}} &
    (\mathbb{R}^{2a}\oplus \mathbb{C}^{b})^{+}   \wedge    (\mathbb{R}^{u}\oplus \mathbb{C}^{v})^{+}      \wedge A
}
\end{equation*}
and a similar diagram for $A'$ and the fact that the transpositions in the diagrams are homotopic to identity.

\end{proof}

We now turn to the description of the category $ \mathfrak{S}$. An object of $\mathfrak{S}$ consists of a collection $Z = ( \{Z_p \}, \{ i_p \})_{p \in \mathbb{N}})$ of objects $\{ Z_p \}_{p \in \mathbb{N}}$ of \(\mathfrak{C}\)  and morphisms $\{ i_p \in \morp_{\mathfrak{C}}(Z_{p},Z_{p+1}) \}_{p \in \mathbb{N}} $. In other word, an object $Z$ of \(\mathfrak{S}\) is a direct system
\begin{equation*}\label{object of S}
                 Z_{1}   \xrightarrow{i_{1}} Z_{2}  \xrightarrow{i_{2}} \cdots.
\end{equation*}
For two objects $Z = ( \{ Z_p \}_p, \{ i_p \}_p)$ and $Z' = ( \{ Z_p' \}_p, \{ i_p' \}_p)$ of $\mathfrak{S}$,  we define the set of morphisms as
\begin{equation}\label{the morphism in S}
       \morp_{\mathfrak{S}}(Z,  Z')
      :=\mathop{\lim}\limits_{\infty\leftarrow p}\mathop{\lim}\limits_{q\rightarrow \infty}\morp_{\mathfrak{C}}(Z_{p},Z'_{q}).
\end{equation}
The identity morphism and the composition law are defined in the obvious way. Notice that here we first take the direct limit and then take the inverse limit. This order should not be changed.

As for the category $\mathfrak{S}^{*}$, its objects are the inverse systems
$$
      \bar{Z}_{1}  \xleftarrow{j_1}   \bar{Z}_{2}  \xleftarrow{j_2} \cdots,
$$
where $\bar{Z}_{p}\in \ob\mathfrak{C}$ and $j_{p}\in \morp_{\mathfrak{C}}(\bar{Z}_{p+1},\bar{Z}_{p})$.
 For two objects $\bar{Z} = ( \{  \bar{Z}_p \}_p, \{ j_p \}_p)$ and $\bar{Z}' = ( \{ \bar{Z}_p' \}_p, \{ j_p' \}_{p})$ of $\mathfrak{S}^*$,  we define  the set of morphisms as
\begin{equation}\label{moprhism in S*}
      \morp_{\mathfrak{S}^{*}}(\bar{Z}, \bar{Z}')
      :=\mathop{\lim}\limits_{\infty\leftarrow q}\mathop{\lim}\limits_{p\rightarrow \infty}\morp_{\mathfrak{C}}(\bar{Z}_{p},\bar{Z}'_{q}).
      \end{equation}
Again, we first take the direct limit and then take the inverse limit.


The suspension functor and the desuspension functor can be extended to functors on $\mathfrak{S}$ and $\mathfrak{S}^{*}$ in the obvious way. Lemma~\ref{lem susp functor commute} and Proposition~\ref{suspension desuspension functors cancels} continue to hold for these extended functors.  For an object $Z = ( \{ Z_p \}_p, \{ i_p \}_p)$ of  $\mathfrak{S}$, an even integer $m$ and a rational number $n$, we write $(Z, m, n)$ for $( \{ (Z_p, m, n) \}_p,  \{ i_p' \}_p)$, where $i_p':(Z_p, m,n) \rightarrow (Z_{p+1}, m, n)$ is the morphism induced by $i_p$.  For an object $\bar{Z}$ of $\mathfrak{S}^*$, we define $(\bar{Z}, m, n)$ similarly.

\begin{rmk}
The full subcategory of $\mathfrak{C}$ consisting of objects $\{(A,m,n) \mid m\in 2\mathbb{Z}, n \in \mathbb{Z} \}$ can be naturally embedded into the homotopy category of the $S^{1}$-equivariant spectra modeled on the standard universe $\mathbb{R}^{\infty}\oplus \mathbb{C}^{\infty}$. Therefore, an object $( \{ (A_p, m_p, n_p) \}_{p}, \{ i_p \}_p)$ of $\mathfrak{S}$ (resp. $\mathfrak{S}^{*}$)  with $m_p\in 2\mathbb{Z}$ and $n_p \in \mathbb{Z}$   corresponds to an  inductive system (resp. projective system) of $S^{1}$-equivariant spectra. For this reason, we call an object of $\mathfrak{S}$ an  ind-{spectrum} and  an object of $\mathfrak{S}^{*}$ a pro-spectrum. However, this is not so accurate because, in the usual sense, an ind-spectrum (resp. pro-spectrum) refers to an inductive system (resp. projective system) in the category of spectra, not the homotopy category of spectra. Also, with a slightly abuse of language, we call all our invariants spectrum invariants.
\end{rmk}
We end this subsection with the following useful lemma, which is directly implied by the definition of the direct limit and inverse limit.

\begin{lem}\label{subsyemtem}
Let $ Z =( \{ Z_p \}_{p \in \mathbb{N}}, \{ i_p \}_{p \in \mathbb{N}})$ be an object of $\mathfrak{S}$.  For any infinite sequence of positive integers $0<p_{1}<p_{2}< \cdots$,  the subsystem
$$
      Z_{p_{1}}    \xrightarrow{ i_{p_{2}-1} \circ  \cdots \circ i_{p_{1}}  }    Z_{p_{2}}
                         \xrightarrow {i_{p_{3}-1}  \circ \cdots \circ i_{p_{2}}}      Z_{p_{3}}    \rightarrow \cdots
$$
of $Z$ is canonically isomorphic to $Z$ as an object of $\mathfrak{S}$.
Similarly,  let $\bar{Z} = ( \{ \bar{Z}_{p} \}_{p \in \mathbb{N}}, \{ j_p \}_{p \in \mathbb{N}})$ be an object $\mathfrak{S}^*$, then the subsystem
$$
       \bar{Z}_{p_{1}}  \xleftarrow{ j_{p_{1}}  \circ \dots \circ j_{p_{2}-1}  }     \bar{Z}_{p_{2}}
                                 \xleftarrow { j_{p_{2}} \circ \cdots \circ j_{p_{3}-1} }    \bar{Z}_{p_{3}}   \leftarrow \cdots
$$
of $\bar{Z}$ is canonically isomorphic to $\bar{Z}$ as an object of $\mathfrak{S}^{*}$.
\end{lem}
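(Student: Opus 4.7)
The plan is to construct mutually inverse morphisms between $Z$ and its subsystem $Z_{\mathrm{sub}} := ( \{Z_{p_i}\}_{i \in \mathbb{N}}, \{\iota_i\}_{i\in\mathbb{N}})$, where $\iota_i := i_{p_{i+1}-1} \circ \cdots \circ i_{p_i}$, and then to argue that these morphisms are canonical, i.e.\ built only from the structure maps $i_p$ without auxiliary choices. Unwinding the definition, a morphism $Z \to Z_{\mathrm{sub}}$ in $\mathfrak{S}$ is an element of $\lim_{\infty \leftarrow p} \operatorname{colim}_{i \to \infty} \morp_\mathfrak{C}(Z_p, Z_{p_i})$; concretely, for each $p$ one chooses a stable class $[f_p]$ represented by some map $Z_p \to Z_{p_i}$ with $i$ large, and the family $\{[f_p]\}$ must be compatible with the structure maps $i_p$ after passing to the colimit in the target.

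First I would define the projection $\pi\colon Z \to Z_{\mathrm{sub}}$ as follows: for each $p \in \mathbb{N}$, choose the smallest $i$ with $p_i \geq p$ and set $[\pi_p] := [i_{p_i - 1} \circ \cdots \circ i_p] \in \operatorname{colim}_{i \to \infty} \morp_\mathfrak{C}(Z_p, Z_{p_i})$. This is independent of the chosen representative in the colimit since any larger $i_i' > i$ gives the same class after post-composition with the structure maps of $Z_{\mathrm{sub}}$, which are compositions of the $i_p$. Compatibility in $p$ is automatic: pulling back $[\pi_{p+1}]$ by $i_p$ gives $[i_{p_i-1}\circ \cdots \circ i_{p+1}\circ i_p] = [\pi_p]$. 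Second, I would define the inclusion $\iota\colon Z_{\mathrm{sub}} \to Z$ by $[\iota_i] := [\operatorname{id}_{Z_{p_i}}] \in \operatorname{colim}_{q\to\infty} \morp_\mathfrak{C}(Z_{p_i}, Z_q)$; compatibility with the structure maps $i_{p_{i+1}-1}\circ\cdots\circ i_{p_i}$ of $Z_{\mathrm{sub}}$ is immediate from the definition of the colimit.

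Next I would verify $\pi \circ \iota = \operatorname{id}_{Z_{\mathrm{sub}}}$ and $\iota \circ \pi = \operatorname{id}_Z$. At index $p_i$ of $Z_{\mathrm{sub}}$, the composite $\pi \circ \iota$ is represented by $i_{p_j - 1}\circ\cdots\circ i_{p_i}$ for $j > i$, which equals $[\operatorname{id}_{Z_{p_i}}]$ in $\operatorname{colim}_{j\to\infty}\morp_\mathfrak{C}(Z_{p_i}, Z_{p_j})$ by definition of the colimit along the structure maps of $Z_{\mathrm{sub}}$. At index $p$ of $Z$, the composite $\iota \circ \pi$ is represented by $i_{p_i-1}\circ\cdots\circ i_p$, which likewise equals $[\operatorname{id}_{Z_p}]$ in $\operatorname{colim}_{q\to\infty}\morp_\mathfrak{C}(Z_p, Z_q)$. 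Hence $\iota$ and $\pi$ are mutually inverse isomorphisms in $\mathfrak{S}$, and their definitions used no choices beyond the data of the subsequence and the structure maps of $Z$, so the isomorphism is canonical.

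Finally, the statement for $\mathfrak{S}^*$ follows by the same argument with arrows reversed. The projection $\bar{Z}_{p_i} \to \bar{Z}_{p_{i+1}}$ is replaced by $j_{p_i}\circ\cdots\circ j_{p_{i+1}-1}\colon \bar{Z}_{p_{i+1}} \to \bar{Z}_{p_i}$, and the roles of colimit and inverse limit in the morphism definition $\morp_{\mathfrak{S}^*}(\bar Z, \bar Z') = \lim_{\infty\leftarrow q}\operatorname{colim}_{p\to\infty}\morp_\mathfrak{C}(\bar{Z}_p, \bar{Z}'_q)$ suggest defining the inclusion $\bar\iota\colon\bar{Z}_{\mathrm{sub}} \to \bar Z$ by $[\operatorname{id}_{\bar Z_{p_i}}]$ at each target index $q = p_i$, and the projection $\bar\pi\colon \bar Z \to \bar Z_{\mathrm{sub}}$ by $[j_p \circ \cdots \circ j_{p_i - 1}]$ where $p \leq p_i$ is chosen suitably. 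The verifications are formally dual to the ones above. I do not foresee a significant obstacle; the one point to be careful about is the order ``first colimit then limit'' emphasised after \eqref{the morphism in S}, which ensures that the colimit classes $[\pi_p]$ are well-defined before imposing compatibility in $p$, and it is precisely this order that makes the proof go through without a cofinality subtlety.
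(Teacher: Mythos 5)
Your proof is correct, and since the paper gives no argument for this lemma (it is stated as ``directly implied by the definition''), you have essentially supplied the proof the authors had in mind; there is no real alternative route. The construction of $\pi$ via the smallest $i$ with $p_i \geq p$, the construction of $\iota$ via identities, and the two composition checks are exactly what unwinding the definitions requires, and your observation about the order ``colimit then limit'' is on point: with the reversed order, $\pi$ would have to be realized by a single target index $p_i$ simultaneously for all $p$, which fails since $\{p_i\}$ is unbounded, whereas the given order lets the target index grow with $p$, precisely what cofinality of the subsequence requires.
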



\subsection{The Conley index}\label{subsection conley index}

In this section, we recall basic facts regarding the Conley index theory.
See \cite{Conley} ,\cite{Manolescu1} and \cite {Salamon} for more details.

Let $V$ be a finite dimensional manifold and $\varphi$ be a smooth flow on
$V$, i.e. a $C^{\infty}$-map $\varphi \colon V\times \mathbb{R}\rightarrow V$ such
that $\varphi(x,0)=x$ and $\varphi(x,s+t)=\varphi(\varphi(x,s),t)$ for any
$x\in V$ and $s,t \in \mathbb{R}$. We denote by $\inv(\varphi,A) := \{ x \in A \mid \varphi(x ,
\mathbb{R}) \subset A  \}$  the maximal invariant set  of $A$. We
sometimes write $\inv(A)$ when the flow $\varphi$ is obvious from the context.

A compact set $A \subset V$ is called an \emph{isolating neighborhood} if $\inv(A)$ lies in the interior
of $A$. A compact set $S\subset V$ is called an \emph{isolated invariant set} if there exists
an isolating neighborhood $A$ such that $\inv(A)=S$. In this situation, we also say that $A$ is
an isolating neighborhood of $S$. For an isolated invariant set $S$, a pair $(N,L)$ of compact
sets $L\subset N$ is called an \emph{index pair} of $S$ if the following conditions hold:

\begin{enumerate}[(i)]
\item  $\inv(N\setminus L)=S\subset \inti(N\setminus L)$, where $\inti(N \setminus L)$ is the interior of $N\setminus L$;
\item  $L$ is an exit set for $N$, i.e. for any $x\in N$ and $t>0$ such
that $\varphi(x,t)\notin N$, there exists $\tau\in [0,t)$ with $\varphi(x,\tau)\in
L$;
\item  $L$ is positively invariant in $N$, i.e. for $x\in L$ and $t>0$, if
we know $\varphi(x,[0,t])\subset N$, then we have $\varphi(x,[0,t])\subset
L$.
\end{enumerate}

We list two fundamental facts regarding index pairs:
\begin{itemize}
\item For an isolated invariant set $S$ with an isolating neighborhood $A$,
we can always find an index pair $(N,L)$ of $S$ such that $L\subset N\subset A$.
\item The pointed homotopy type of $N/L$ with $[L]$ as a base point only depends on $S$ and $\varphi$.
More precisely, for any two index pairs $(N,L)$ and $(N',L')$ of $S$,
there is a natural pointed homotopy equivalence $N/L\rightarrow N'/L'$ induced by the flow.
\end{itemize}
These lead to us the definition of the Conley index.

\begin{defi} \label{def conleyindex} Given an isolated invariant set $S$ of a flow $\varphi$, we denote
by $I(\varphi,S,N,L)$ the pointed space of $(N/L,[L])$, where $(N,L) $ is an index pair of $S$. This is called the \emph{Conley index} of $S$. We will always suppress $(N,L)$ from our notation and write $I(\varphi,S)$ instead. We may also write $I(S)$ when the flow is clear from the
context.
\end{defi}

\begin{rmk}\label{simple connected system}In \cite{Salamon}, the Conley {index} was defined as a connected simple system of pointed spaces. I.e., a collection of pointed spaces (given by different index pairs) together with natural homotopy equivalences between them (given by the flow map). In Definition \ref{def conleyindex}, we actually pick a representative of this connected simple system by making a choice of the index pair $(N,L)$. As we will see in {the} next section, we need to make choices of all kinds of index pairs in our construction of spectrum invariants. Just like the Riemannian metric $g$ and the perturbation on $f$, these choices will be treated as auxiliary data involved in the construction and we will prove that our spectrum invariant is independent of this data upto canonical isomorphism.
\end{rmk}

We further provide relevant properties of the Conley index.

\begin{enumerate}

\item(Product flow) If $\varphi_{j}$ is a flow on $V_{j}$ for $j=1,2$ and $S_{j}$ is an
isolated invariant set for $\varphi_{j}$, then we have a canonical homotopy equivalence $I(\varphi_{1}\times
\varphi_{2},S_{1}\times S_{2})\cong I(\varphi_{1},S_{1})\wedge I(\varphi_{2},S_{2})$,
where ``$\wedge$'' is the smash product.

\item (Continuation) Let $\varphi_t$ is a continuous family of flows parametrized by $t \in [0, 1]$.
Suppose  that $A$ is an isolating neighborhood of $\varphi_t$ for any $t \in [0, 1]$, and let $S_t$ be $\inv (\varphi_t, A)$.
Then we have a canonical homotopy equivalence $I(\varphi_{0},S_{0})\cong
I(\varphi_{1},S_{1})$.

\end{enumerate}

The following concept will be useful for explicitly computing the Conley index.

\begin{defi}[\cite{Krzysztof}] For a compact subset $A$, we
consider the following subsets of
its boundary
\begin{align*}
n^{+}(A) &:= \{x\in \partial A \mid \exists \epsilon>0 \text{ s.t }\varphi(x,(-\epsilon,0))\cap
A=\emptyset\}, \\
n^{-}(A) &:= \{x\in \partial A \mid \exists \epsilon>0 \text{ s.t }\varphi(x,
(0,\epsilon))\cap A=\emptyset\}.
\end{align*}
A compact subset $N$ is called an \emph{isolating block}  if $\partial
N=n^{+}(N)\cup n^{-}(N)$.
\end{defi}

It is easy to verify that an isolating block is an isolating neighborhood.
When $N$ is an isolating block, its index pair can be given by $(N,n^{-}(N))$.



Next, we consider a situation when an isolated invariant set can be decomposed to smaller isolated invariant
sets.

\begin{defi} \ \begin{enumerate}[(i)]
\item  For
a subset $A\subset V$, we define its $\alpha$-limit and $\omega$-limit
set as$$
\alpha(A)=\mathop{\cap}_{t<0}\overline{\varphi(A,(-\infty,t])} \quad \text{and } \quad
\omega(A)=\mathop{\cap}_{t>0}\overline{\varphi(A,[t,+\infty))}.
$$
\item Let $S$ be an isolated invariant set. A subset $T\subset
S$ is called an
\emph{attractor} (resp. \emph{repeller}) if there exists a neighborhood $U$
of $T$ in $S$ such that $\omega(U)=T$ (resp. $\alpha(U)=T$).
\item When $T$
is an attractor in $S$, we define the set $T^{*}:=\{x\in S \mid \omega(x)\cap
T=\emptyset\}$, which is a repeller in $S$. We call $(T,T^{*})$ an \emph{attractor-repeller
pair} in $S$.
\end{enumerate}
\end{defi}

Note
that an attractor and a repeller are always an isolated invariant sets.
We give an
important result relating Conley indices of an attractor-repeller pair.
\begin{pro}[Salamon \cite{Salamon}]\label{Attractor-repeller-exact sequence}Let
$S$ be an isolated invariant set with an isolating neighborhood $A$ and $(T,T^{*})$
be an attractor-repeller pair in $S$. Then there exist compact sets $\tilde{N}_{3}\subset
\tilde{N}_{2}\subset \tilde{N}_{1}\subset A$ such that the pairs $(\tilde{N}_{2},\tilde{N}_{3}),
(\tilde{N}_{1},\tilde{N}_{3}),(\tilde{N}_{1},\tilde{N}_{2})$ are index pairs
for $T,$ $S$ and $T^*$ respectively. The maps induced by inclusions give a natural coexact sequence of
Conley indices
$$
I(\varphi,T)\xrightarrow{i_{1}} I(\varphi,S)\xrightarrow{i_{2}}
I(\varphi,T^{*})\rightarrow \Sigma I(\varphi,T)\rightarrow \Sigma I(\varphi,
S) \rightarrow \cdots.
$$
We call the triple $(\tilde{N}_{3},\tilde{N}_{2},\tilde{N}_{1})$
an index triple for the pair $(T,T^{*})$ and call the maps $i_{1}$ and $i_{2}$ the attractor map and
the repeller map
respectively.
\end{pro}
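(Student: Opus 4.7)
The plan is to first produce a nested triple of compact sets $\tilde N_3 \subset \tilde N_2 \subset \tilde N_1 \subset A$ compatible with the attractor-repeller decomposition of $S$, and then obtain the coexact sequence from the Puppe construction applied to the resulting filtration of pointed spaces. This is essentially Salamon's original argument, which I would follow in outline.

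First, I would invoke the standard existence theorem for index pairs to pick an index pair $(\tilde N_1, \tilde N_3)$ for $S$ with $\tilde N_3 \subset \tilde N_1 \subset A$. Then I would carve out the intermediate set $\tilde N_2$ using the attractor property: choose a compact neighborhood $U$ of $T$ in $\tilde N_1 \setminus \tilde N_3$ with $\omega(U) = T$, and set
$$
\tilde N_2 := \tilde N_3 \cup \{\, x \in \tilde N_1 \mid \varphi(x,[0,t]) \subset \tilde N_1 \text{ and } \varphi(x,t) \in U \text{ for some } t \ge 0 \,\}.
$$
Compactness of $\tilde N_2$ will follow from continuity of the flow together with compactness of $\tilde N_1$ and $U$; by construction $\tilde N_3 \subset \tilde N_2 \subset \tilde N_1$.

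The bulk of the work is to verify the three index-pair conditions. For $(\tilde N_2, \tilde N_3)$ to be an index pair for $T$, I would argue that any orbit trapped in $\tilde N_2 \setminus \tilde N_3$ for all time must eventually enter and remain in $U$, hence $\omega$-accumulate on $T$, giving $\inv(\tilde N_2 \setminus \tilde N_3) = T$; the exit condition and positive invariance of $\tilde N_3$ are inherited from the pair for $S$. For $(\tilde N_1, \tilde N_2)$ to be an index pair for $T^*$, I would use that any orbit contained in $\tilde N_1 \setminus \tilde N_2$ whose forward trajectory stays in $\tilde N_1$ never meets $U$, hence cannot $\omega$-accumulate on $T$, so its $\omega$-limit lies in $T^{*}$; positive invariance of $\tilde N_2$ inside $\tilde N_1$ is immediate from its flow-based definition.

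Once the triple is in place, the coexact sequence is formal: the inclusions $\tilde N_3 \hookrightarrow \tilde N_2 \hookrightarrow \tilde N_1$ are cofibrations, so $\tilde N_2/\tilde N_3 \hookrightarrow \tilde N_1/\tilde N_3$ is a cofibration with cofiber canonically homeomorphic to $\tilde N_1/\tilde N_2$, and iterating the Puppe construction produces the asserted sequence after identifying the quotients with $I(\varphi,T)$, $I(\varphi,S)$, and $I(\varphi,T^{*})$. The hard part will be calibrating $U$ so that $\tilde N_2$ simultaneously serves as the exit set for the repeller pair and as the trapping region for the attractor pair; this rests on a compactness-and-continuity argument guaranteeing that the only orbits forced into $U$ under the forward flow are precisely those accumulating on $T$, which uses crucially that $(T,T^{*})$ is an attractor-repeller pair rather than just two disjoint invariant subsets.
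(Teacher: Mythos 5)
The paper does not actually prove this proposition---it states it as a cited result from Salamon \cite{Salamon}, so there is no in-paper argument to compare yours against. Your outline does track Salamon's strategy (fix an index pair $(\tilde N_1,\tilde N_3)$ for $S$, interpolate $\tilde N_2$ via the attractor, then run Puppe on the resulting cofibration triple), and your identification of which pair is an index pair for $T$, $S$, $T^*$ is correct.

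There is, however, a concrete gap in the middle step. You assert that positive invariance of $\tilde N_2$ inside $\tilde N_1$ is ``immediate from its flow-based definition,'' and that is not true for the $\tilde N_2$ you write down. Take $x$ with $\varphi(x,t_0)\in U$ but whose forward orbit exits $U$ at some later time and subsequently exits $\tilde N_1$ through $\tilde N_3$ \emph{without re-entering $U$}; for $s$ between the time it leaves $U$ and the time it first hits $\tilde N_3$, $\varphi(x,s)$ is in $\tilde N_1$ but need not lie in $\tilde N_3\cup U$ and need not flow forward into $U$ while staying in $\tilde N_1$, so $\varphi(x,s)\notin\tilde N_2$. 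Nothing about $\omega(U)=T$ alone rules this out. The repair is to choose $U$ to be not just a compact neighborhood of $T$ with $\omega(U)=T$ but one that is positively invariant for the local flow on $\tilde N_1\setminus \tilde N_3$ (equivalently, a trapping region for $T$ in that local flow); once $U$ is chosen this way, your positive-invariance claim does go through, and the rest of the verification is as you sketch it. You in fact contradict yourself later by saying that ``calibrating $U$'' is the hard part---it is, precisely because it is what makes the positive-invariance step work, so the earlier ``immediate'' is misleading. You should state and use the existence of such a positively invariant $U$ explicitly rather than treating it as an afterthought.
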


By Corollary~4.4 of \cite{Salamon}, the attractor maps are transitive in
the following sense. Suppose that $S_{1}$ is an attractor in $S_{2}$ and $S_{2}$
is an attractor in $S_{3}$. Then $S_{1}$ is also an attractor in $S_{3}$.
Moreover, the corresponding attractor maps
$$
i_{1}:I(\varphi,S_{1})\rightarrow
I(\varphi,S_{2}), \; i_{1}':I(\varphi,S_{2})\rightarrow I(\varphi,S_{3})
\text{ and } i_{1}'':I(\varphi,S_{1})\rightarrow I(\varphi,S_{3})
$$
satisfy the relation $i''_{1} = i'_{1} \circ i_{1}$. Similar statements hold for the repeller maps.

%

Lastly, we briefly discuss the equivariant Conley index
theory, which has been developed in \cite{Floer1} and \cite{Pruszko}. Let $G$ be a compact Lie group
acting on $V$ while preserving the flow
$\varphi$.  For a $G$-invariant isolated invariant set $S$, we can find a $G$-invariant
isolating neighborhood as well as a $G$-invariant index pair $(N,L)$.
As in the non-equivariant case, with the choice of $(N,L)$, we denote by $I_{G}(\varphi,S)$ the pointed $G$-space $(N/L,[L])$, whose $G$-equivariant homotopy
type only depends on $S$ and $\varphi$. In particular, $I_{G}(\varphi,S)$ is the \emph{$G$-equivariant
Conley index} of $S$.
All the non-equivariant results stated above can be adapted to the $G$-equivariant
setting. From now on, we will work on this equivariant setting with $G=S^{1}\text{
or }Pin(2)$.

\section{Construction of the spectrum invariants}\label{construction of spectrum}
In this section, we define different versions of unfolded Seiberg-Witten-Floer spectra for the $\text{spin}^{c}$
manifold $(Y,\mathfrak{s})$. We first define the spectrum  $\underline{\text{swf}}^{A}(Y,\mathfrak{s},A_{0},g;S^{1})$
and $\underline{\text{swf}}^{R}(Y,\mathfrak{s},A_{0},g;S^{1})$ for a general spin$^{c}$ structure $\mathfrak{s}$.
In Section \ref{subsection torsion case}, we consider the situation when $\mathfrak{s}$ is torsion and define normalized spectra $\underline{\text{SWF}}^{A}(Y,\mathfrak{s};S^{1})$
and $\underline{\text{SWF}}^{R}(Y,\mathfrak{s};S^{1})$ which are independent of the choices of base connection $A_0$ and metric $g$. In Section \ref{subsection Pin(2)-spectrum}, we deal with the $Pin(2)$-equivariant
case for a spin structure $\mathfrak{s}$ and define $\underline{\text{SWF}}^{A}(Y,\mathfrak{s};Pin(2))$,
$\underline{\text{SWF}}^{R}(Y,\mathfrak{s};Pin(2))$.

\subsection{The spectrum invariants for general $\text{spin}^{c}$ structures.}  \label{Subsection general spin^c}

The main idea of the construction follows \cite{Kronheimer-Manolescu} and \cite{Khandhawit2}. In summary, we want to apply finite dimensional approximation of Conley indices to the set $Str(R)$ which contains all critical points and flow lines between them. However, the set $Str(R)$ is unbounded owing to the action of $\mathcal{G}^{h}_{Y}$. We then need to introduce transverse functions and use their level sets to obtain a collection of bounded subsets of $Str(R)$.

Notice that the space of imaginary-valued harmonic 1-forms, denoted
by $i\Omega^{1}_{h}(Y)$, is a subspace of $Coul(Y)$. Let $p_{\mathcal{H}} \colon Coul(Y)\rightarrow
i\Omega^{1}_{h}(Y)$ be the $L^{2}$-orthogonal projection.
Here, we identify $i\Omega^{1}_{h}(Y)$ with $\mathbb{R}^{b_1}$ by choosing harmonic forms $\{ h_1 , h_2 , \ldots h_{b_1} \} \subset i\Omega^{1}_{h}(Y)$ representing a set of free generators of the group
$$
        2\pi i \im  (H^{1}(Y;\mathbb{Z}) \rightarrow H^{1}(Y;\mathbb{R})) \cong \mathbb{Z}^{b_1}.
$$
With this identification, we can write the projection as
\[
     p_{\mathcal{H}} = (p_{\mathcal{H}, 1}, \dots ,p_{\mathcal{H}, b_1}).
\]
From now on, we assume that our perturbation $f$ is good (see Definition \ref{good perturbation}).
Together with the compactness result \cite[Theorem~10.7.1]{Kronheimer-Mrowka}, the critical
points of $\mathcal{L}$ in $Coul(Y) $ is finite modulo the action of $\mathcal{G}^{h}_{Y}$.
Consequently, we can find a small interval $[r,s] \subset (0,1)$ such that $\bigcup_{j=1}^{b_1} p_{\mathcal{H}, j}^{-1}( [-s,-r] \cup [r, s] )$  contains no critical point of $\mathcal{L}$. Let us pick a positive number $\tilde{R}$ greater than  the universal constant $R_{0}$ from Theorem \ref{Boundedness of trajectories}.

\begin{lem}\label{lem no critical points rs}
There exists a positive number $\tilde{\epsilon}>0$ such that we have $\|\gradtil \mathcal{L}(x)\|_{\tilde{g}}>\tilde{\epsilon}$ for any $x\in \left(\bigcup_{j=1}^{b_1} p_{\mathcal{H}, j}^{-1}( [-s, -r] \cup [r,s] ) \right) \cap Str(\tilde{R})$.
\end{lem}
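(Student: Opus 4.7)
The plan is to argue by contradiction via a compactness-plus-gauge-translation argument. Suppose the bound fails; then there is a sequence $x_{n}$ in the stated set with $\|\gradtil\mathcal{L}(x_{n})\|_{\tilde{g}}\to 0$. For each $n$, using the definition of $Str(\tilde{R})$, I would choose $u_{n}\in\mathcal{G}^{h,o}_{Y}$ with $y_{n}:=u_{n}\cdot x_{n}$ satisfying $\|y_{n}\|_{L^{2}_{k}}\leq\tilde{R}$. Full gauge invariance of $CSD_{\nu_{0},f}$ makes $\mathcal{L}$ invariant under the harmonic gauge action, and $\tilde{g}$ is $\mathcal{G}_{Y}$-equivariant, so $\|\gradtil\mathcal{L}(y_{n})\|_{\tilde{g}}=\|\gradtil\mathcal{L}(x_{n})\|_{\tilde{g}}\to 0$. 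Because $\gradtil\mathcal{L}\in\mathcal{K}_{k-1}$ (another consequence of gauge invariance, so $\tilde{g}$ restricts to the $L^2$-metric), this is equivalent to $\|l(y_{n})+c(y_{n})\|_{L^{2}}\to 0$.

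Next I would extract a critical-point limit. Boundedness of $\{y_{n}\}$ in $L^{2}_{k}$ and Rellich's theorem give, along a subsequence, weak convergence $y_{n}\rightharpoonup y_{\infty}$ in $L^{2}_{k}$ and strong convergence in $L^{2}_{k-1}$. The bounded linear operator $l$ respects weak convergence, while the continuity of $c$ on bounded $L^{2}_{k-1}$-balls, supplied by Corollary~\ref{c is quadratic-like} together with the Sobolev multiplication estimates used in its proof, yields $c(y_{n})\to c(y_{\infty})$ in a weaker norm. Combined with $l(y_{n})+c(y_{n})\to 0$ in $L^{2}$, this forces $l(y_{\infty})+c(y_{\infty})=0$, and elliptic regularity for the first-order elliptic operator $l$ promotes $y_{\infty}$ to a genuine smooth critical point of $\mathcal{L}$ in $Coul(Y)$.

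Finally, I would obtain the contradiction by tracking $p_{\mathcal{H}}$. The harmonic gauge action shifts $p_{\mathcal{H}}$ by the integer lattice coming from the chosen generators $\{h_{1},\dots,h_{b_{1}}\}$, so $v_{n}:=p_{\mathcal{H}}(y_{n})-p_{\mathcal{H}}(x_{n})\in\mathbb{Z}^{b_{1}}$; since $\|y_{n}\|_{L^{2}_{k}}\leq\tilde{R}$ and each $p_{\mathcal{H},j}(x_{n})$ lies in the compact set $[-1,1]$, the sequence $\{v_{n}\}$ is bounded, and a further subsequence makes $v_{n}=v$ constant with a single index $j$ realizing $p_{\mathcal{H},j}(x_{n})\in[-s,-r]\cup[r,s]$. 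As $p_{\mathcal{H}}$ is a finite-rank continuous linear operator, $p_{\mathcal{H},j}(y_{\infty})=\lim_{n}p_{\mathcal{H},j}(y_{n})\in[-s,-r]\cup[r,s]+v_{j}$. Applying the harmonic gauge transformation corresponding to $-v$ to $y_{\infty}$ then produces a critical point of $\mathcal{L}$ whose $j$-th harmonic component lies in $[-s,-r]\cup[r,s]$, contradicting the defining property of $[r,s]$.

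The main technical point is the passage-to-the-limit step of the second paragraph: genuinely upgrading a weak $L^{2}_{k}$-limit with vanishing gradient to a smooth critical point. However, given the quadratic-like structure of $c$ already established in Corollary~\ref{c is quadratic-like} and the elliptic character of $l$, this is essentially a routine application of Rellich compactness plus standard elliptic bootstrapping; the rest of the argument is bookkeeping with the harmonic gauge shifts and the finiteness of critical points modulo $\mathcal{G}_{Y}^{h}$.
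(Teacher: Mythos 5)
Your argument is correct and is essentially the same as the paper's: argue by contradiction, extract a weak $L^2_k$/strong $L^2_{k-1}$ limit via Rellich, and show the limit is a critical point of $\mathcal{L}$ in the excluded band $\bigcup_j p_{\mathcal{H},j}^{-1}([-s,-r]\cup[r,s])$. The one difference is bookkeeping: the paper simply observes that $p_{\mathcal{H}}^{-1}([-1,1]^{b_1})\cap Str(\tilde{R})$ is already bounded in $L^2_k$ and takes the limit of $x_n$ directly, whereas you gauge-translate to a fixed $L^2_k$-ball, take the limit there, and then translate back by a constant lattice element; both routes rely on the same underlying fact that a subset of $Str(\tilde{R})$ with bounded harmonic part is bounded.
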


\begin{proof}
Suppose that the result is not true. We can then find a sequence $\{x_{n}\}$ contained in $\left(\bigcup_{j=1}^{b_1} p_{\mathcal{H}, j}^{-1}([-s,-r] \cup [r, s]) \right) \cap Str(\tilde{R})$ with $\|\gradtil  \mathcal{L}(x_{n})\|_{\tilde{g}}\rightarrow 0$. Notice that the sequence $\{ x_n \} $ is contained in $p_{\mathcal{H}}^{-1}([-1,1]^{b_1})\cap Str(\tilde{R})$, which is bounded in $L^2_k$. Hence,  after passing to a subsequence, $x_n$ converges to some point $x_{\infty}$ of $Coul(Y)$ weakly in $L^2_k$ and strongly in $L^{2}_{k-1}$ by Rellich lemma.
Consequently, we have $p_{\mathcal{H}}(x_{n}) \rightarrow p_{\mathcal{H}}(x_{\infty})$ and  $\gradtil  \mathcal{L} (x_{\infty}) = 0$ by continuity.
This is a contradiction since $x_{\infty}$ is a critical point of $\gradtil{\mathcal{L}}$ and lies in $\bigcup_{j = 1}^{b_1} p_{\mathcal{H}, j}^{-1} ( [-s, -r] \cup [r, s])$.
\end{proof}

Note that $\tilde{\epsilon} $ in the above lemma depends on the choice of $r, s$ and $\tilde{R}$. With these data, we choose a smooth ``staircase'' function $\bar{g} \colon \mathbb{R}\rightarrow [0,\infty)$ satisfying the following properties:

\begin{enumerate}[(i)]
\item  $\bar{g}$ is even, i.e. $\bar{g}(x) = \bar{g}(-x)$ for all $x \in \mathbb{R}$;
\item There is a positive constant $\bar{\tau}$ such that $\bar{g}(x+1)=\bar{g}(x)+ \bar{\tau}$ for all $x \in [0,\infty)$;
\item $\bar{g}$ is increasing on the interval \([r,s]\) and $\bar{g}' =0 $ on $[0,r]\cup[s,1]$;
\item
$|\bar{g}'(x)| < \tilde{\epsilon} \cdot \epsilon''$ for all $x\in \mathbb{R}$, where $\epsilon''$ is a positive constant with the property that
\begin{equation}\label{stretch factor}
\epsilon''\cdot \|\sum_{j=1}^{b}a_{j}h_{j}\|_{L^{2}}\leq (\sum_{j=1}^{b} a_{j}^{2})^{\frac{1}{2}},\ \forall (a_{1},a_{2},...,a_{b})\in \mathbb{R}^{b}.
\end{equation}

\end{enumerate}
\begin{figure}[htbp]
\centering
\includegraphics[angle=0,width=1.0\textwidth]{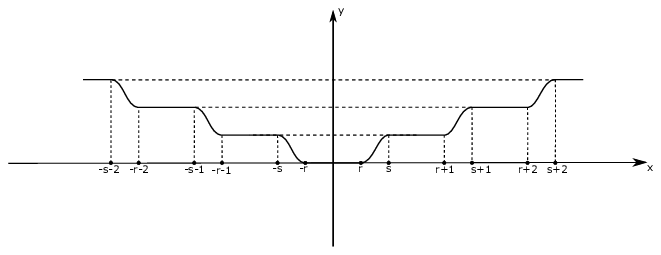}
\caption{the function $\bar{g}$} \label{meinv}
\end{figure}

Next we use the function $\bar{g}$ to define a small perturbation of \(\mathcal{L}\) which is not invariant under $\mathcal{G}^{h}_{Y}$ but transverse to level sets of \(\mathcal{L}\). For each $j = 1 , \ldots, b_1$, we define
\begin{equation*}
g_{j,+}=\bar{g}\circ p_{\mathcal{H}, j}+\mathcal{L} \text{ and } g_{j,-}=\bar{g}\circ p_{\mathcal{H}, j}-\mathcal{L}.
\end{equation*}
With our assumptions on $\bar{g}$, we have the following result.

\begin{lem} \label{lem g_j+ transverse} For each $j= 1 , \ldots, b_1$, we have
\begin{equation*}
\langle \gradtil \mathcal{L}(x),\gradtil g_{j,+}(x)\rangle_{\tilde{g}} \geq 0 \text{ and }
\langle \gradtil \mathcal{L}(x),\gradtil g_{j,-}(x)\rangle_{\tilde{g}} \leq 0,
\end{equation*}
where the equalities hold only when $x$ is a critical point of $\mathcal{L}$.
\end{lem}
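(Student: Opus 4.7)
The plan is to reduce the desired inequality to a dominance statement about the ``transverse term,'' and then verify that dominance by a Cauchy--Schwarz estimate together with the gradient bound from Lemma \ref{lem no critical points rs}.

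First I would exploit linearity of $\gradtil$. Since $g_{j,\pm} = \bar g \circ p_{\mathcal H, j} \pm \mathcal L$, one has
\[
\langle \gradtil \mathcal L(x), \gradtil g_{j,\pm}(x) \rangle_{\tilde g}
= \pm \|\gradtil \mathcal L(x)\|_{\tilde g}^{2} + \langle \gradtil \mathcal L(x), \gradtil (\bar g \circ p_{\mathcal H, j})(x) \rangle_{\tilde g}.
\]
Thus it suffices to prove
\[
|\langle \gradtil \mathcal L, \gradtil (\bar g \circ p_{\mathcal H, j}) \rangle_{\tilde g}| \leq \|\gradtil \mathcal L\|_{\tilde g}^{2},
\]
with equality only when $x$ is a critical point of $\mathcal L$.

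Next I would split into two cases according to whether $\bar g'(p_{\mathcal H, j}(x))$ vanishes. In the plateau regions, i.e.\ $p_{\mathcal H, j}(x) \in [n, n+r] \cup [n+s, n+1]$ modulo the periodicity/evenness of $\bar g$, property (iii) gives $\bar g' = 0$ and the chain rule forces $\gradtil(\bar g \circ p_{\mathcal H, j})(x) = 0$, so the cross term vanishes identically and only the term $\pm \|\gradtil \mathcal L\|^{2}_{\tilde g}$ remains, which has the desired sign with equality iff $x$ is critical.

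In the sloping case where $\bar g' \neq 0$, $p_{\mathcal H, j}(x)$ lies (modulo integer translation) in $[-s,-r] \cup [r, s]$. I would compute $\|\gradtil(\bar g \circ p_{\mathcal H, j})\|_{\tilde g}$ by lifting to the $\mathcal G^{e,0}$-invariant function $\widetilde G := (\bar g \circ p_{\mathcal H, j}) \circ \Pi$ on $\mathcal C_Y$ and applying the identity (\ref{two versions of gradient}) so that $\|\gradtil(\bar g \circ p_{\mathcal H, j})\|_{\tilde g} = \|\grad \widetilde G\|_{L^{2}}$. A direct differentiation shows $\grad \widetilde G$ is purely harmonic in the $1$-form component, with $L^{2}$-norm proportional to $|\bar g'(p_{\mathcal H, j}(x))|$ times a constant determined by the chosen harmonic basis $\{h_i\}$. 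Property (iv) bounds $|\bar g'| < \tilde \epsilon \cdot \epsilon''$, and combining with (\ref{stretch factor}) yields $\|\gradtil(\bar g \circ p_{\mathcal H, j})\|_{\tilde g} < \tilde \epsilon$. By Lemma \ref{lem no critical points rs}, $\|\gradtil \mathcal L\|_{\tilde g} > \tilde \epsilon$ on the relevant region inside $Str(\tilde R)$; points outside $Str(\tilde R)$ can be brought inside by a harmonic gauge transformation in $\mathcal G^{h,o}_Y$, under which $\|\gradtil \mathcal L\|_{\tilde g}$ is invariant. Cauchy--Schwarz then gives the strict bound
\[
|\langle \gradtil \mathcal L, \gradtil(\bar g \circ p_{\mathcal H, j})\rangle_{\tilde g}|
\leq \|\gradtil \mathcal L\|_{\tilde g} \cdot \|\gradtil(\bar g \circ p_{\mathcal H, j})\|_{\tilde g}
< \tilde \epsilon \cdot \|\gradtil \mathcal L\|_{\tilde g}
< \|\gradtil \mathcal L\|^{2}_{\tilde g},
\]
and equality would force $\|\gradtil \mathcal L\|_{\tilde g} = 0$, i.e.\ $x$ is critical.

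The main obstacle I anticipate is carrying out the gradient norm computation cleanly in the metric $\tilde g$ rather than $L^{2}$, and calibrating the constants so that the smallness hypothesis on $\bar g'$ in (iv), together with the basis estimate (\ref{stretch factor}), exactly beats the lower bound $\tilde \epsilon$ on $\|\gradtil \mathcal L\|_{\tilde g}$ supplied by Lemma \ref{lem no critical points rs}; outside of that, the argument is the expected ``transverse perturbation'' calculation.
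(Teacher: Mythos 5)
Your proposal is correct and follows essentially the same route as the paper: expand $g_{j,\pm}=\bar g\circ p_{\mathcal H,j}\pm\mathcal L$, bound $\|\gradtil(\bar g\circ p_{\mathcal H,j})\|_{\tilde g}<\tilde\epsilon$ via property (iv) and estimate (\ref{stretch factor}), split into the plateau case (where the cross term vanishes) and the sloping case (where Lemma \ref{lem no critical points rs} gives $\|\gradtil\mathcal L\|_{\tilde g}>\tilde\epsilon$ and Cauchy--Schwarz closes the argument). The only cosmetic difference is that you package the two sign cases into a single dominance estimate $|\langle\gradtil\mathcal L,\gradtil(\bar g\circ p_{\mathcal H,j})\rangle_{\tilde g}|\le\|\gradtil\mathcal L\|_{\tilde g}^2$, and you make explicit the $\mathcal G^{h,o}_Y$-equivariance needed to reduce $|p_{\mathcal H,j}(x)|\in[n+r,n+s]$ to $[r,s]$, which the paper leaves implicit.
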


\begin{proof}
By (\ref{two versions of gradient}) and a straightforward computation, we can prove that $$\left\Vert \gradtil { (\bar{g}\circ p_{\mathcal{H}, j}}) (x) \right\Vert_{\tilde{g}}=\|\grad { (\bar{g}\circ p_{\mathcal{H}, j}})(x)\|_{L^{2}}\leq \frac{1}{\epsilon''}\cdot |\bar{g}'(p_{\mathcal{H}, j}(x))|<\tilde{\epsilon}.$$

If $ \left\vert p_{\mathcal{H}, j}(x) \right\vert \in [n, n+r]$ or $\left\vert p_{\mathcal{H}_j}(x)\right\vert \in [n + s, n+1]$ for some integer $n$, then  $\bar{g}'(p_{\mathcal{H}, j}(x)) =0$ and $\langle \gradtil \mathcal{L}(x),\gradtil g_{j,+}(x)\rangle_{\tilde{g}}
= \|\gradtil \mathcal{L}(x)\|^2_{\tilde{g}}$ which is zero if and only if $x$ is a critical point of $\mathcal{L}$.  Otherwise, $ \left\vert p_{\mathcal{H}, j}(x)\right\vert
\in [n + r, n+s]$ for some integer $n$ and Lemma~\ref{lem no critical points rs} implies
\begin{align*}
      \langle \gradtil \mathcal{L}(x),\gradtil g_{j,+}(x)\rangle_{\tilde{g}}
     &= \|\gradtil \mathcal{L}(x)\|^2_{\tilde{g}}
      + \left\langle \gradtil \mathcal{L}(x) , \gradtil { (\bar{g}\circ  p_{\mathcal{H}, j}}) (x) \right \rangle_{\tilde{g}} \\
       &\geq \|\gradtil \mathcal{L}(x)\|^2_{\tilde{g}} -\left\Vert \gradtil { (\bar{g}\circ p_{\mathcal{H}, j}}) (x) \right\Vert_{\tilde{g}}\cdot \left\Vert \gradtil \mathcal{L}(x) \right\Vert_{\tilde{g}} \\
      & > \|\gradtil \mathcal{L}(x)\|_{\tilde{g}} \left( \|\gradtil \mathcal{L}(x)\|_{\tilde{g}} - \tilde{\epsilon}  \right)
         > 0.
\end{align*}
The same argument applies to the inner product $\langle \gradtil \mathcal{L}(x),\gradtil g_{j,-}(x)\rangle_{\tilde{g}} $.

\end{proof}
Since the number of critical points of $\mathcal{L}$ is finite modulo gauge, we can find a real number $\theta\in \mathbb{R}$ such that $g_{j,\pm}^{}(x) \neq \theta$  for any critical point $x$ of $\mathcal{L}$  and $j \in \{ 1 ,2 , \ldots , b_1 \} $. For convenience, we also  choose a decreasing sequence of negative real numbers $\{\lambda_{n}\}$ and an increasing sequence of positive real numbers $\{\mu_{n}\}$ such that $-\lambda_{n}, \mu_n \rightarrow \infty$. We are now ready to define a collection of bounded sets in $Str(\tilde{R})$.

\begin{defi} With the choice of $\tilde{R}, \bar{g}$ and $\theta$ above, we
 define the sets
 \begin{equation}  \label{cutting set for attractor}
   \begin{split}
        J^{+}_{m} &:= Str(\tilde{R})  \cap \bigcap_{1\leq j \leq b_1}g^{-1}_{j,+}(-\infty,\theta+m], \\
         J^{-}_{m} &:= Str(\tilde{R})   \cap \bigcap_{1\leq j\leq b_1}g^{-1}_{j,-}(-\infty,\theta+m],
   \end{split}
 \end{equation}
for each positive integer $m$. This collection of $J^+_m$ (resp. $J^-_m$) will be called a \emph{positive (resp. negative) transverse system}.
With the choice of  $\{ \lambda_{n} \}$ and  $\{ \mu_{n} \}$ , we also define
\[
        J_{m}^{n,\pm} :=J_{m}^{\pm}\cap V^{\mu_{n}}_{\lambda_{n}}.
\]
\end{defi}

 Notice that the functional $\mathcal{L}$ is bounded on $Str(\tilde{R})$, and the perturbed functional $g_{j,\pm}$ is bounded below on $Str(\tilde{R})$. Since a subset $S\subset Str(\tilde{R})$ is bounded if and only if $p_{\mathcal{H}}(S)$ is bounded, we can see that
 the set $J^{\pm}_{m}$ is bounded in the $L^{2}_{k}$-norm.

We will start to derive some properties of the finite-dimensional bounded sets $J_{m}^{n,\pm} $.
Although some of the following results are slightly stronger than what we need to define the 3-dimensional invariants,  they will be useful when we develop the 4-dimensional theory and prove the gluing theorem in \cite{KLS2, KLS3}.

\begin{lem}\label{approximated flow goes in}
For any positive integer $m$, there exist positive real numbers $\epsilon_{m}, \theta_{m}$ and an integer $N_{m} \gg 0 $ such that for any $n>N_{m} $ and $1\leq j \leq b_1$ we have
\begin{enumerate}[(i)]
\item $\langle(l+p^{\mu_{n}}_{\lambda_{n}}\circ c )(x),\gradtil  g_{j,+}(x)\rangle_{\tilde{g}}>\epsilon_{m}$
 for any $x\in J^{n,+}_{m}\cap g^{-1}_{j,+}[\theta+m-\theta_{m},\theta+m]$;

\item $\langle(l+p^{\mu_{n}}_{\lambda_{n}}\circ c )(x),\gradtil  g_{j,-}(x)\rangle_{\tilde{g}}<-\epsilon_{m}$
for any $x\in J^{n,-}_{m}\cap g^{-1}_{j,-}[\theta+m-\theta_{m},\theta+m]$.\end{enumerate}
\end{lem}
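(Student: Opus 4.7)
The plan has two stages: first establish a positive lower bound for the \emph{unperturbed} inner product $\langle(l+c)(x), \gradtil g_{j,\pm}(x)\rangle_{\tilde g}$ on a thin slab near the boundary level set $\{g_{j,\pm} = \theta + m\}$, then transfer this bound to the approximated flow $l + p^{\mu_n}_{\lambda_n}\circ c$ by a spectral convergence argument. I will describe the $+$ case; the $-$ case is entirely analogous.

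Stage 1 (unperturbed bound). I claim there exist $\theta_m^{(0)}>0$ and $\epsilon_m^{(0)}>0$ such that
\[
   \langle (l+c)(x),\gradtil g_{j,+}(x)\rangle_{\tilde g}\;\geq\;\epsilon_m^{(0)}
\]
for every $x\in J^+_m$ with $g_{j,+}(x)\in[\theta+m-\theta_m^{(0)},\,\theta+m]$ and every $j$. The argument is by contradiction and compactness. Suppose instead that some sequence $\{x_n\}\subset J^+_m\cap g_{j,+}^{-1}(\{\theta+m\})$ satisfies $\langle(l+c)(x_n),\gradtil g_{j,+}(x_n)\rangle_{\tilde g}\to 0$. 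The set $J^+_m$ is bounded in $L^2_k$ (because $\mathcal{L}$ is bounded on $Str(\tilde R)$ and $g_{j,+}$ is bounded above on $J^+_m$, which forces $p_{\mathcal{H}}$ to take values in a bounded box), so after extracting a subsequence we have $x_n\rightharpoonup x_\infty$ weakly in $L^2_k$ and strongly in $L^2_{k-1}$ (Rellich). Continuity of $g_{j,+}$ in the $L^2_{k-1}$-topology gives $g_{j,+}(x_\infty)=\theta+m$. Writing $\gradtil g_{j,+}=\gradtil\mathcal{L}+\gradtil(\bar g\circ p_{\mathcal{H},j})$ and invoking the sharp form of the estimate in the proof of Lemma \ref{lem g_j+ transverse}, together with weak lower semicontinuity of $\|\gradtil \mathcal{L}\|^2_{\tilde g}$ under weak $L^2_k$-convergence, we conclude $\gradtil\mathcal{L}(x_\infty)=0$. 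Thus $x_\infty$ is a critical point of $\mathcal{L}$ with $g_{j,+}(x_\infty)=\theta+m$, contradicting the choice of $\theta$ (which is chosen to avoid the countable set of critical values of $g_{j,\pm}$ and all of their integer translates; genericity allows this). Continuity of the map $x\mapsto \langle(l+c)(x),\gradtil g_{j,+}(x)\rangle_{\tilde g}$ in the $L^2_k$-topology then propagates the bound from the level set $\{g_{j,+}=\theta+m\}$ to a slab of positive width $\theta_m^{(0)}$.

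Stage 2 (spectral transfer). Decompose
\[
  \langle (l+p^{\mu_n}_{\lambda_n}\circ c)(x),\gradtil g_{j,+}(x)\rangle_{\tilde g}
   = \langle (l+c)(x),\gradtil g_{j,+}(x)\rangle_{\tilde g}
   - \langle (I-p^{\mu_n}_{\lambda_n})c(x),\gradtil g_{j,+}(x)\rangle_{\tilde g}.
\]
The first summand is bounded below by $\epsilon_m^{(0)}$ on the slab from Stage 1. For the second summand, observe that $c$ is quadratic-like (Corollary \ref{c is quadratic-like}), so $c(J^+_m)$ is bounded in $L^2_k$; by Rellich it is precompact in $L^2_{k-1}$. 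The spectral projections $p^{\mu_n}_{\lambda_n}$ converge strongly to the identity on $L^2_{k-1}$ as $-\lambda_n,\mu_n\to\infty$, hence uniformly on the precompact set $c(J^+_m)$. Since $\gradtil g_{j,+}$ is uniformly bounded on $J^+_m$ in the norm dual to $\tilde g$, the second summand can be made smaller than $\epsilon_m^{(0)}/2$ uniformly on $J^+_m$ by choosing $N_m$ sufficiently large. Setting $\epsilon_m:=\epsilon_m^{(0)}/2$ and $\theta_m:=\theta_m^{(0)}$ gives the conclusion.

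The main obstacle will be Stage 2: the projection $p^{\mu_n}_{\lambda_n}$ is $L^2$-orthogonal but not obviously $\tilde g$-self-adjoint, so one must be careful to phrase the estimate using the equivalence of $\tilde g$ with a standard Sobolev inner product on the relevant bounded region (or equivalently, re-express through $\tilde g = \langle \widetilde\Pi\cdot,\widetilde\Pi\cdot\rangle_{L^2}$). A secondary subtlety already appeared in Stage 1 and is the reason I require $\theta$ to avoid \emph{all} integer translates of critical values of $g_{j,\pm}$: since the harmonic gauge group acts on $g_{j,\pm}$ by shifts in $\bar\tau\mathbb{Z}$ and we shift by $m\in\mathbb{Z}$, the set to avoid is countable, hence a generic $\theta$ works, matching the choice made just before Lemma \ref{lem no critical points rs}.
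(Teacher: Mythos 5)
Your proof is correct and uses the same underlying tools as the paper's argument (Rellich compactness, boundedness of $J^{+}_{m}$ in $L^{2}_{k}$, the quadratic-like compactness of $c$, strong convergence of the spectral projections, and the genericity of $\theta$), but organizes them into two stages where the paper does a single one-shot extraction: the paper picks a contradicting sequence $x_{i}\in J^{n_{i},+}_{m}\cap g_{1,+}^{-1}[\theta+m-\theta_{m,i},\theta+m]$ with $n_{i}\to\infty$ and $\theta_{m,i},\epsilon_{m,i}\to 0$ simultaneously, extracts a Rellich limit $x_{\infty}$, and shows it is a critical point on the level set $g_{1,+}^{-1}(\theta+m)$, contradicting the choice of $\theta$. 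You first secure a slab bound for the exact gradient $l+c=\gradtil\mathcal{L}$ and then swallow the error term $\langle (I-p^{\mu_{n}}_{\lambda_{n}})c(x),\gradtil g_{j,+}(x)\rangle_{\tilde g}$ uniformly on $J^{+}_{m}$, using precompactness of $c(J^{+}_{m})$ in $L^{2}_{k-1}$ and uniform convergence of $p^{\mu_{n}}_{\lambda_{n}}\to \mathrm{id}$ on precompact sets; this makes explicit that $N_{m}$ can be chosen after $\epsilon_{m},\theta_{m}$ are fixed, a convenient feature when the argument is later rerun for families of flows. Three remarks. In Stage~1 you do not need, and should not invoke, weak lower semicontinuity of $\|\gradtil\mathcal{L}\|^{2}_{\tilde g}$: the metric $\tilde g$ varies with the base point and $\gradtil\mathcal{L}$ is nonlinear, so this property is not immediate, and it is also unnecessary since the strong $L^{2}_{k-1}$-convergence $x_{i}\to x_{\infty}$ already gives $\gradtil\mathcal{L}(x_{i})\to\gradtil\mathcal{L}(x_{\infty})$ and $\gradtil g_{j,+}(x_{i})\to \gradtil g_{j,+}(x_{\infty})$ strongly in $L^{2}$, so the inner products converge and $x_{\infty}$ is seen to be critical directly from Lemma~\ref{lem g_j+ transverse}. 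Your observation that $\theta$ must avoid the critical values of $g_{j,\pm}$ together with all their integer translates (so that $\theta+m$ is a regular level for every $m$) is a genuine sharpening of the paper's phrasing, which asks only $g_{j,\pm}(x)\neq\theta$ and is literally sufficient only when $\bar\tau\in\mathbb{Z}$; the fix you suggest, choosing $\theta$ generic with respect to the countable set of translates, is the intended reading. Finally, your flagged concern about $p^{\mu_{n}}_{\lambda_{n}}$ not being $\tilde g$-self-adjoint is actually moot for the decomposition as you wrote it: you never move the projection across the inner product, you only need $|\langle a,b\rangle_{\tilde g}|\le\|a\|_{L^{2}}\|b\|_{L^{2}}$, which holds because $\widetilde{\Pi}$ is an $L^{2}$-orthogonal projection.
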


\begin{proof}
We only prove this lemma for $g_{1,+}$ and the other cases can be proved similarly. Suppose that the result is not true, then we can find sequences $n_{i}\rightarrow +\infty$, $\epsilon_{m,i},\theta_{m,i}\rightarrow 0$ and $\{x_{i}\}$ with $x_{i}\in J^{n_{i},+}_{m}\cap g^{-1}_{1,+}[\theta+m-\theta_{m,i},\theta+m]$ and $\langle(l+p^{\mu_{n_{i}}}_{\lambda_{n_{i}}}\circ c )(x_{i}),\gradtil  g_{1,+}(x_{i})\rangle_{\tilde{g}}\le \epsilon_{m,i}$. Since $\{x_{i}\}$ is contained in the $L^{2}_{k}$-bounded set $J_{m}^{+}$, we can pass to a convergent subsequence $x_{i}\rightarrow x_{\infty}$ in $L^{2}_{k-1}$ by the Rellich lemma. By continuity, we have $x_{\infty}\in g_{1,+}^{-1}(\theta+m)$ and $\gradtil g_{1,+}(x_{i})\rightarrow \gradtil g_{1,+}(x_{\infty})$ in $L^{2}_{k-2}$. Since $p^{\mu_{n}}_{\lambda_{n}}$ converges to the identity map pointwise, we also have $(l+p^{\mu_{n_{i}}}_{\lambda_{n_{i}}} \circ c )(x_{i}) \rightarrow (l+c)(x_{\infty}) = \gradtil \mathcal{L} (x_{\infty})$ in $L^{2}_{k-2}$. Therefore, we obtain
$$
        \langle(l+p^{\mu_{{ n_i}}}_{\lambda_{{ n_i}}}  \circ c )(x_{{n_i}}),\gradtil  g_{j,+}(x_{{ n_i}})\rangle_{\tilde{g}}
             \rightarrow
        \langle   \gradtil \mathcal{L} (x_{\infty}),\gradtil  g_{j,+}(x_{\infty})\rangle_{\tilde{g}},
$$
which implies that $\langle  \gradtil \mathcal{L}(x_{\infty}),  \gradtil  g_{j,+}(x_{\infty})\rangle_{\tilde{g}}  \le 0$ and $x_{\infty}$ is a critical point by Lemma~\ref{lem g_j+ transverse}. This is a contradiction with the choice of $\theta$.
\end{proof}

Now we start applying the Conley index theory to the flow on $V^{\mu_{n}}_{\lambda_{n}}$ generated by the vector field $-(l+p^{\mu_{n}}_{\lambda_{n}}\circ c)$. There is a technical point here. Since $V^{\mu_{n}}_{\lambda_{n}}$ is non-compact, this flow may go to infinity within a finite time. As in \cite{Manolescu1}, we can fix this by choosing a bump function $\iota_{m} \colon Coul(Y)\rightarrow [0,1]$ for each $m$ such that $\iota_{m}$ is supported in a bounded subset of $Coul(Y)$ and  $J^{\pm}_{m+1}$ is contained in the interior of $\iota_{m}^{-1}(1)$. We denote by $\varphi^{n}_{m}$ the flow on $V^{\mu_{n}}_{\lambda_{n}}$ generated by $-\iota_{m}\cdot (l+p^{\mu_{n}}_{\lambda_{n}}\circ c)$. Note that the flow $\varphi^n_{m'} $ on $J^{n, \pm}_m $ does not depend on $m'$ whenever $m' \ge m-1 $ so that its invariant set and its Conley index remain unchanged.

\begin{lem}\label{lem J^n,+m isolate}
For a positive integer $M$, there exist large numbers $N,T$ such that, for any positive integers $m\leq M$ and $n\geq N$, we have the following statements.
\begin{enumerate}[(a)]
\item \label{item J^n,+m isolate1} If $\gamma_{} \colon [-T,T]\rightarrow V^{\mu_{n}}_{\lambda_{n}}$ is an approximated Seiberg-Witten trajectory contained in $J^{n,+}_{m}$, then we have
$$
      \gamma_{}(0)\in Str(R_{0}) \cap \bigcap_{1 \leq j \leq b_1}g^{-1}_{j,+}(-\infty, \theta + m - \theta_{m}].
$$
In particular, $J^{n,+}_{m}$ is an isolating neighborhood for the flow $\varphi^{n}_{m}$.
\item \label{item J^n,+m isolate2} The set $\inv(\varphi^{n}_{m-1},J^{n,+}_{m-1})$ is an attractor in $\inv(\varphi^{n}_{m},J^{n,+}_{m})$ with respect to the flow $\varphi^{n}_{m} $.
\end{enumerate}
\end{lem}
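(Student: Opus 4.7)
The plan is to combine the boundedness of approximated trajectories from Corollary~\ref{boundedness for approximated trajectories} with the transversality input of Lemma~\ref{approximated flow goes in}, which forces the approximated flow to point strictly inward along each top face $\{g_{j,+}=\theta+m\}$ of $J^{n,+}_m$. Part~(b) is then a Morse-theoretic ``barrier'' argument using the same transversality applied at level $m-1$.

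For part~(a), apply Corollary~\ref{boundedness for approximated trajectories} to the $L^2_k$-bounded set $J^+_m$ for each $m\leq M$ to get thresholds $\bar N_m,\bar T_m$, and apply Lemma~\ref{approximated flow goes in} to get $\epsilon_m,\theta_m,N_m$; then set
\[
N := \max_{m\leq M}\{\bar N_m,\,N_m\}, \qquad T := 1+\max_{m\leq M}\bigl\{\bar T_m,\ \theta_m/\epsilon_m\bigr\}.
\]
For an approximated trajectory $\gamma\colon[-T,T]\to V^{\mu_n}_{\lambda_n}$ contained in $J^{n,+}_m$, Corollary~\ref{boundedness for approximated trajectories} directly yields $\gamma(0)\in Str(R_0)$. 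Suppose for contradiction that $g_{j,+}(\gamma(0))>\theta+m-\theta_m$ for some $j$; then $\gamma(0)$ lies in the annular slab $g_{j,+}^{-1}[\theta+m-\theta_m,\theta+m]$, and since $-\dot\gamma = l+p^{\mu_n}_{\lambda_n}c$ we obtain
\[
\frac{d}{ds}g_{j,+}(\gamma(-s))=\bigl\langle\gradtil g_{j,+}(\gamma(-s)),\,(l+p^{\mu_n}_{\lambda_n}c)(\gamma(-s))\bigr\rangle_{\tilde{g}}>\epsilon_m
\]
as long as $\gamma(-s)$ stays in the slab, by Lemma~\ref{approximated flow goes in}(i). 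Integrating forces $g_{j,+}(\gamma(-s))>\theta+m$ for some $s\leq\theta_m/\epsilon_m<T$, contradicting $\gamma([-T,T])\subset J^{n,+}_m$. The ``in particular'' clause follows because any $x\in\inv(\varphi^n_m,J^{n,+}_m)$ meets the hypothesis for every $T$, so $x$ lies in $Str(R_0)\cap\bigcap_j g_{j,+}^{-1}(-\infty,\theta+m-\theta_m]$, which is in the interior of $J^{n,+}_m$ because $R_0<\tilde R$ and $\theta_m>0$.

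For part~(b), since $\iota_{m-1}\equiv 1\equiv\iota_m$ on $J^+_m$, the generating vector fields of $\varphi^n_{m-1}$ and $\varphi^n_m$ agree on $J^{n,+}_m$; consequently $\inv(\varphi^n_{m-1},J^{n,+}_{m-1})=\inv(\varphi^n_m,J^{n,+}_{m-1})\subset S$, where $S:=\inv(\varphi^n_m,J^{n,+}_m)$. Take
\[
U:=S\cap\bigcap_{1\leq j\leq b_1}g_{j,+}^{-1}(-\infty,\theta+m-1),
\]
which is open in $S$; by part~(a) applied at level $m-1$, one has $\inv(\varphi^n_{m-1},J^{n,+}_{m-1})\subset U$. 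To prove $\omega(U)=\inv(\varphi^n_{m-1},J^{n,+}_{m-1})$, fix $x\in U$: if the forward $\varphi^n_m$-orbit $\gamma$ of $x$ first met $\{g_{j,+}=\theta+m-1\}$ at some time $t_0>0$, then $\gamma(t_0)$ would sit in the annulus $g_{j,+}^{-1}[\theta+m-1-\theta_{m-1},\theta+m-1]$, where Lemma~\ref{approximated flow goes in}(i) at level $m-1$ gives $\frac{d}{dt}g_{j,+}(\gamma(t_0))<-\epsilon_{m-1}$, contradicting the local-maximum property of $g_{j,+}\circ\gamma$ at $t_0$. Hence the forward orbit of $x$ stays in the closed set $J^{n,+}_{m-1}$, so $\omega(x)\subset J^{n,+}_{m-1}$, and being $\varphi^n_m$-invariant it lies in $\inv(\varphi^n_m,J^{n,+}_{m-1})=\inv(\varphi^n_{m-1},J^{n,+}_{m-1})$. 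The reverse inclusion is immediate: any $y\in\inv(\varphi^n_{m-1},J^{n,+}_{m-1})\subset U$ has its entire orbit in $U$, so $y\in\varphi(U,[t,\infty))$ for every $t>0$, whence $y\in\omega(U)$.

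The main technical point is the ``no-crossing'' step used in both parts, which rests entirely on Lemma~\ref{approximated flow goes in}; the need to pick a single pair $(N,T)$ valid for every $m\leq M$ is handled by taking maxima of the constants $\bar N_m,\bar T_m,N_m,\theta_m/\epsilon_m$ over the finite range. A minor subtlety is that part~(a) delivers $g_{j,+}(\gamma(0))\leq\theta+m-\theta_m$ only non-strictly, but since $\theta_m>0$ the closed slab $g_{j,+}^{-1}(-\infty,\theta+m-\theta_m]$ is still strictly interior to $g_{j,+}^{-1}(-\infty,\theta+m]$, which suffices both for the isolating-neighborhood assertion and for the inclusion $\inv(\varphi^n_{m-1},J^{n,+}_{m-1})\subset U$.
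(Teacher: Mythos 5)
Your proposal is correct and takes essentially the same approach as the paper's proof: part (a) combines the boundedness result for approximated trajectories with the transversality estimate of Lemma~\ref{approximated flow goes in} and integrates it backward in time, and part (b) applies the same estimate at level $m-1$ to show the flow enters $J^{n,+}_{m-1}$ across $\partial J^{n,+}_{m-1}\setminus\partial Str(\tilde R)$. You spell out the attractor verification in (b) via the explicit neighborhood $U$ and a direct check that $\omega(U)=\inv(\varphi^n_{m-1},J^{n,+}_{m-1})$, whereas the paper states that conclusion in one line, but the underlying argument is identical.
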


\begin{proof}
Let $\bar{T}$, $\bar{\lambda}$ and $\bar{\mu}$ be the large constants from Corollary~\ref{boundedness for approximated trajectories} with $S=J_{M}^{+}$.  Let $\theta_{m},\epsilon_{m}$ and $N_{m}$ be the constants obtained from Lemma~\ref{approximated flow goes in} for $m=1, \ldots ,M$. Put $T=\max\{ \bar{T}, \frac{\theta_{1}}{\epsilon_{1}}, \dots,\frac{\theta_{M}}{\epsilon_{M}}\}+1$. We choose a positive integer $N > \max{\{N_1 , \ldots , N_M \}}$ such that $\lambda_N < \bar{\lambda}$ and $\mu_N > \bar{\mu}$.
Let $m \leq M$ and $n > N$ be arbitrary positive integers.

(\ref{item J^n,+m isolate1}) Let $\gamma_{} \colon [-T,T]\rightarrow V^{\mu_{n}}_{\lambda_{n}}$ be an approximated Seiberg-Witten trajectory contained
in $J^{n,+}_{m}$. Corollary~\ref{boundedness for approximated trajectories} and the choice of $N,T$ ensure that $\gamma_{}(0)\in Str(R_{0})$. For the sake of contradiction, let us suppose that
 $g^{}_{j,+}(\gamma_{}(0)) > \theta+m-\theta_{m}$ for some $j\in \{1 , \ldots , {b_1} \}$. By Lemma \ref{approximated flow goes in}, the value of $g_{j,+}(\gamma(t)) $ decreases along the trajectory $\gamma $ on $[-T,0]$  with
$$\frac{dg_{j,+}(\gamma_{}(t))}{dt} = \langle-(l+p^{\mu_{n}}_{\lambda_{n}}\circ c )(\gamma(t)),\gradtil  g_{j,+}(\gamma(t))\rangle_{\tilde{g}} < -\epsilon_{m}.$$
Hence, we obtain $g^{}_{j,+}(\gamma_{}(-T)) > g^{}_{j,+}(\gamma_{}(0)) + T \epsilon_m >\theta +m $ from the fundamental theorem of calculus. This is a contradiction with our assumption that $\gamma(-T) \in J^+_m \subset g^{-1}_{j,+}(-\infty,\theta+m]$.

(\ref{item J^n,+m isolate2}) From Lemma~\ref{approximated flow goes in} and the choice of $N$, we have
$\langle-(l+p^{\mu_{n}}_{\lambda_{n}}\circ c )(x),\gradtil  g_{j,+}(x)\rangle_{\tilde{g}} < 0$ for any $x\in J^{n,+}_{m-1} \cap g^{-1}_{j,+}(\theta+m-1)$. Consequently, the flow $\varphi_{m}^{n}$ goes inside $J^{n,+}_{m-1}$ along $\partial{J^{n,+}_{m-1}} \setminus \partial Str(\tilde{R})$ and $\inv(\varphi^{n}_{m-1},J^{n,+}_{m-1})$ is an attractor in $\inv(\varphi^{n}_{m},J^{n,+}_{m})$ with respect to the flow $\varphi^{n}_{m}$.
\end{proof}

Consequently, we can acquire the $S^{1}$-equivariant Conley index $I_{S^{1}}(\varphi_{m}^{n},\inv(J^{n,+}_{m}))$ from a compact finite-dimensional subset $J^{n,+}_m $ when $n$ is large enough relative to $m$ as in Lemma~\ref{lem J^n,+m isolate}. Using the orthogonal
complement $\bar{V}^{0}_{\lambda}$ of $i\Omega^{1}_{h}(Y)$ in $V^{0}_{\lambda}$, we define
\begin{equation*} \label{def I^{+}}
I^{n,+}_{m} := \Sigma^{-\bar{V}^{0}_{\lambda_{n}}}I_{S^{1}}(\varphi_{m}^{n},\inv(J^{n,+}_{m}))
\end{equation*}
as an object of $\mathfrak{C}$.  Note that here a choice of index pair for $\inv(J^{n,+}_{m})$ is made to get the Conley index (see the remark following  Definition \ref{def conleyindex}).
Eventually, we will show that our invariants are independent of this choice up to canonical isomorphisms.

Let $i^{n,+}_{m}\colon I_{S^{1}}(\varphi^{n}_{m},\inv(J^{n,+}_{m})) \rightarrow I_{S^{1}}(\varphi^{n}_{m},\inv(J^{n,+}_{m+1}))$ be the attractor map and denote by $\tilde{i}^{n,+}_{m}$ a morphism
$\Sigma^{-\bar{V}^{0}_{\lambda_{n}}}i^{n,+}_{m} \in \morp_{\mathfrak{C}}(I^{n,+}_{m},I^{n,+}_{m+1}) $.
We will show that the object $I^{n,+}_{m} $ is stable in the following sense.

\begin{pro}\label{stability of the conley index}
For any positive integer $M>0$, there exists a positive integer $N$ such that, for any positive integers $m\leq M$ and $n\geq N$, there is a canonical isomorphism $\tilde{\rho}^{n,+}_{m}\in \morp_{\mathfrak{C}}(I^{n,+}_{m},I^{n+1,+}_{m})$. Moreover, we have the following commutative diagram
\begin{equation}\label{commutative diagram for attractor}
\xymatrix{
I^{n,+}_{m-1} \ar[d]_{\tilde{\rho}^{n,+}_{m-1}} \ar[r]^{\tilde{i}^{n,+}_{m-1}} & I^{n,+}_{m}\ar[d]^{\tilde{\rho}^{n,+}_{m}}\\
I^{n+1,+}_{m-1} \ar[r]^{\tilde{i}^{n+1,+}_{m-1}} & I^{n+1,+}_{m}.}
\end{equation}
\end{pro}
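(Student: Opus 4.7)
The plan is to combine continuation invariance of the equivariant Conley index with a product decomposition of the flow at one endpoint of the continuation. Concretely, on $V^{\mu_{n+1}}_{\lambda_{n+1}}$ we interpolate between the approximated flow $\varphi^{n+1}_m$ and a flow that respects the splitting $V^{\mu_{n+1}}_{\lambda_{n+1}} = V^{\mu_n}_{\lambda_n} \oplus W$, where $W := V^{\mu_{n+1}}_{\mu_n} \oplus V^{\lambda_n}_{\lambda_{n+1}}$. On $V^{\mu_n}_{\lambda_n}$ the endpoint flow will reduce to $\varphi^n_m$, while on $W$ it will be purely linear (generated by $-l$), and the desuspension by $\bar{V}^0_{\lambda_{n+1}}$ will exactly cancel the extra sphere factor arising from the $W$-directions.

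\textbf{Construction of $\tilde{\rho}^{n,+}_m$.} Choose $N$ via Lemma \ref{lem J^n,+m isolate} applied to $M$, so that $J^{n,+}_m$ and $J^{n+1,+}_m$ are isolating neighborhoods for $\varphi^n_m$ and $\varphi^{n+1}_m$ for all $m \leq M$, $n \geq N$. Consider on $V^{\mu_{n+1}}_{\lambda_{n+1}}$ the family of flows $\{\psi^s_m\}_{s \in [0,1]}$ generated by the vector fields
\[
    -\iota_m \cdot \left( l + \bigl( (1-s)\, p^{\mu_{n+1}}_{\lambda_{n+1}} + s\, p^{\mu_n}_{\lambda_n} \bigr) \circ c \right).
\]
Using the generalized statement of Corollary \ref{boundedness for approximated trajectories} noted in the remark after it, together with a parametrized version of Lemma \ref{approximated flow goes in}, we verify (after possibly enlarging $N$) that $J^{n+1,+}_m$ is an isolating neighborhood for every $\psi^s_m$, $s \in [0,1]$. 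Continuation invariance of the $S^1$-equivariant Conley index then gives a canonical isomorphism $I_{S^1}(\varphi^{n+1}_m, \inv(J^{n+1,+}_m)) \cong I_{S^1}(\psi^1_m, \inv(J^{n+1,+}_m))$.

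\textbf{Product decomposition at $s=1$.} Writing $x = (u,w) \in V^{\mu_n}_{\lambda_n} \oplus W$, the $s=1$ vector field takes the skew-product form
\[
    -\iota_m(u,w) \cdot \bigl( l(u) + p^{\mu_n}_{\lambda_n} c(u,w),\; l(w) \bigr),
\]
so the $w$-equation is linear up to the scalar factor $\iota_m$. Since $l$ has strictly positive eigenvalues on $V^{\mu_{n+1}}_{\mu_n}$ and strictly negative eigenvalues on $V^{\lambda_n}_{\lambda_{n+1}}$, the only full trajectory of the $w$-equation that remains bounded is $w \equiv 0$. Hence $\inv(\psi^1_m, J^{n+1,+}_m) \subset \{w=0\}$, and on this slice the dynamics coincides with $\varphi^n_m$, so $\inv(\psi^1_m, J^{n+1,+}_m) = \inv(\varphi^n_m, J^{n,+}_m)$. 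A further short continuation from $\psi^1_m$ to the genuine product flow $\varphi^n_m \times (-l|_W)$, using the product isolating block $J^{n,+}_m \times B_W$ for a small ball $B_W \subset W$, lets us invoke the product formula for Conley indices to obtain
\[
    I_{S^1}(\psi^1_m, \inv(J^{n+1,+}_m)) \cong I_{S^1}(\varphi^n_m, \inv(J^{n,+}_m)) \wedge (V^{\lambda_n}_{\lambda_{n+1}})^+,
\]
the sphere $(V^{\lambda_n}_{\lambda_{n+1}})^+$ being the Conley index of the hyperbolic fixed point $0 \in W$. Since $V^{\lambda_n}_{\lambda_{n+1}}$ consists of negative eigenvectors, $\bar{V}^0_{\lambda_{n+1}} = \bar{V}^0_{\lambda_n} \oplus V^{\lambda_n}_{\lambda_{n+1}}$; Lemma \ref{lem susp functor commute} and Proposition \ref{suspension desuspension functors cancels} then yield
\[
    I^{n+1,+}_m \;\cong\; \Sigma^{-\bar{V}^0_{\lambda_n}} \Sigma^{-V^{\lambda_n}_{\lambda_{n+1}}} \Sigma^{V^{\lambda_n}_{\lambda_{n+1}}} I_{S^1}(\varphi^n_m, \inv(J^{n,+}_m)) \;\cong\; I^{n,+}_m,
\]
and we define $\tilde{\rho}^{n,+}_m$ to be this composite isomorphism.

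\textbf{Commutativity and main obstacle.} Commutativity of (\ref{commutative diagram for attractor}) follows from naturality: by Lemma \ref{lem J^n,+m isolate}(\ref{item J^n,+m isolate2}), $\inv(J^{n,+}_{m-1})$ and $\inv(J^{n+1,+}_{m-1})$ are attractors in $\inv(J^{n,+}_m)$ and $\inv(J^{n+1,+}_m)$, and this attractor--repeller structure is preserved throughout the $s$-continuation and under the product decomposition at $s=1$ (the attractor in $W$ being $\{0\}$ both before and after). Since attractor maps in Conley index theory are natural with respect to continuations and commute with smash products, the isomorphisms $\tilde{\rho}^{n,+}_{m-1}$ and $\tilde{\rho}^{n,+}_m$ intertwine $\tilde{i}^{n,+}_{m-1}$ with $\tilde{i}^{n+1,+}_{m-1}$. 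The principal technical obstacle is the very first step: establishing that $J^{n+1,+}_m$ remains an isolating neighborhood uniformly across $s \in [0,1]$. This requires a parametrized upgrade of Proposition \ref{convegence of approximated trajectories} and Lemma \ref{approximated flow goes in}, using strong pointwise convergence $p^{\mu_n}_{\lambda_n} \to \operatorname{id}$ together with uniformity over the convex combination in $s$.
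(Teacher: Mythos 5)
Your proposal is correct and follows essentially the same route as the paper's proof. The paper also verifies that $J^{n+1,+}_m$ remains an isolating neighborhood along the interpolated family $s\,p^{\mu_{n+1}}_{\lambda_{n+1}} + (1-s)\,p^{\mu_n}_{\lambda_n}$ (via the remark following Corollary~\ref{boundedness for approximated trajectories}), then invokes the continuation property to produce $\rho^{n,+}_m \colon \Sigma^{V^{\lambda_n}_{\lambda_{n+1}}} I_{S^1}(\varphi^n_m, \inv(J^{n,+}_m)) \to I_{S^1}(\varphi^{n+1}_m, \inv(J^{n+1,+}_m))$, and proves commutativity of the square by the continuation property of attractor--repeller pairs from \cite[Theorem~6.10]{Salamon} --- precisely your points. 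The only difference is that the paper defers the product-decomposition step at the endpoint of the continuation (isolating the linear $W$-dynamics and identifying the extra $(V^{\lambda_n}_{\lambda_{n+1}})^+$ sphere factor) to the cited arguments of \cite[p.910]{Manolescu1} and \cite[Proposition~8]{Khandhawit2}, whereas you spell it out explicitly. You correctly identify that the positive-eigenvalue block $V^{\mu_{n+1}}_{\mu_n}$ contributes nothing to the index (attractor direction for $-l$) while the negative-eigenvalue block $V^{\lambda_n}_{\lambda_{n+1}}$ contributes the full sphere, and that the desuspension by $\bar V^0_{\lambda_{n+1}} = \bar V^0_{\lambda_n} \oplus V^{\lambda_n}_{\lambda_{n+1}}$ cancels it exactly; this matches the paper's formula.
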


\begin{proof} Following the remark after Corollary~\ref{boundedness for approximated trajectories}, we can extend the result of Lemma~\ref{lem J^n,+m isolate} to interpolated projections. With the integer $N$ depending on $M$ from Lemma~\ref{lem J^n,+m isolate}, we can deduce that $J^{n+1,+}_{m}$ is an isolating neighborhood for the flow generated by
$-\iota_{m}\cdot (l+(s p^{\mu_{n+1}}_{\lambda_{n+1}}+(1-s)p^{\mu_{n}}_{\lambda_{n}})\circ c )$ for any $n > N$ and $s \in [0,1]$.

The rest of proof follows from the arguments given in \cite[p.910]{Manolescu1} and \cite[Proposition~8]{Khandhawit2}.
By continuation property of the Conley index, we have a natural homotopy equivalence
$$\rho^{n,+}_{m} \colon \Sigma^{V^{\lambda_{n}}_{\lambda_{n+1}}}I_{S^{1}}(\varphi^{n}_{m},\inv(J^{n,+}_{m}))\rightarrow I_{S^{1}}(\varphi^{n+1}_{m},\inv(J^{n+1,+}_{m})).$$
The isomorphism $\tilde{\rho}^{n,+}_{m}$ is then given by the composition
\begin{equation*}
\begin{split}
\Sigma^{-\bar{V}^{0}_{\lambda_{n}}}I_{S^{1}}&(\varphi^{n}_{m},\inv(J^{n,+}_{m}))\rightarrow \Sigma^{-\bar{V}^{0}_{\lambda_{n}}}\Sigma^{-V^{\lambda_{n}}_{\lambda_{n+1}}}\Sigma^{V^{\lambda_{n}}_{\lambda_{n+1}}}I_{S^{1}}(\varphi^{n}_{m},\inv(J^{n,+}_{m}))\\
&\rightarrow\Sigma^{-\bar{V}^{0}_{\lambda_{n}}}\Sigma^{-V^{\lambda_{n}}_{\lambda_{n+1}}}I_{S^{1}}(\varphi^{n+1}_{m},\inv(J^{n+1,+}_{m}))=\Sigma^{-\bar{V}^{0}_{\lambda_{n+1}}}I_{S^{1}}(\varphi^{n+1}_{m},\inv(J^{n+1,+}_{m})),
\end{split}
\end{equation*}
where the first morphism is given by $\Sigma^{-\bar{V}^{0}_{\lambda_{n}}}\eta^{-1}$ (see Proposition \ref{suspension desuspension functors cancels}) and the second morphism equals $\Sigma^{-\bar{V}^{0}_{\lambda_{n}}}\rho^{n,+}_{m}$.
The diagram (\ref{commutative diagram for attractor}) commutes because of the continuation property of
attractor-repeller pairs \cite[Theorem~6.10]{Salamon}.

\end{proof}

For each positive integer $M$, we pick a positive integer $n_M$ larger than the constant $N$ from Proposition~\ref{stability of the conley index} and we require that $\{n_M\}$ is an increasing sequence. We are now ready to define the spectrum invariant.

\begin{defi}
The $S^{1}$-equivariant ind-spectrum $\underline{\text{swf}}^{A}(Y,\mathfrak{s}_{Y}, A_{0},g;S^{1})$ is defined to be an object of $\mathfrak{S}$ given by
\begin{equation}\label{attractor system}
        I^{n_{1},+}_{1} \rightarrow I^{n_{2},+}_{2} \rightarrow I^{n_{3},+}_{3} \rightarrow \cdots,
\end{equation}
where the morphism from $I^{n_{m},+}_{m}$ to $I^{n_{m+1},+}_{m+1}$ is a composition $\tilde{i}^{n_{m+1},+}_{m}\circ \tilde{\rho}^{n_{m+1}-1,+}_{m} \circ \cdots \circ \tilde{\rho}^{n_{m},+}_{m}$ of the morphisms in Proposition \ref{stability of the conley index}.

\label{def swf^A}
\end{defi}

We will prove in the next section that this gives a well-defined object of the category $\mathfrak{S}$ independent of the choices made in the construction up to canonical isomorphism.

To define another invariant $\underline{\text{swf}}^{R}(Y,\mathfrak{s}_{Y},A_{0},g;S^{1})$,  we follow almost the same steps for the construction of $\underline{\text{swf}}^{A} $ except that there are two main differences.
First, the set $\inv(\varphi^{n}_{m},J^{n,-}_{m})$ is a repeller in $\inv(\varphi^{n}_{m},J^{n,-}_{m+1})$, so the arrows in the system will be reversed. Second, we use $V^{0}_{\lambda_{n}}$ for desuspension instead of $\bar{V}^{0}_{\lambda_{n}}$. We define
\begin{equation*}
                    I^{n, -}_{m} := \Sigma^{-{V}^{0}_{\lambda_{n}}} I_{S^{1}}(\varphi_{m}^{n}, \inv(J^{n,-}_{m}))
                    \in
                    \ob \mathfrak{C},
\end{equation*}
where $n$ is large enough relative to $m$, and we have a morphism
\[
                  I^{n, -}_{m} \leftarrow I^{n,-}_{m+1}
\]
induced by the repeller map.
The following collection of results can be proved in the same way as the corresponding results for $J^{n,+}_m $.


\begin{pro} \label{properties in the repeller case}
For a positive integer $M$, there exist large numbers $N,T$ such that, for any positive integers $m\leq M$ and $n\geq N$, we have the following statements.

\begin{enumerate}[(a)]
\item  For any approximated Seiberg-Witten trajectory $\gamma:[-T,T] \rightarrow V^{\mu_{n}}_{\lambda_{n}}$ which is contained in $J^{n,-}_{m}$, we have
$$
     \gamma(0)\in Str(R_{0})\cap \bigcap_{1\leq j \leq b_1}g^{-1}_{j,-}(-\infty,\theta+m-\theta_{m}].
$$
In particular, $J^{n,-}_{m}$ is an isolating neighborhood for the flow $\varphi^{m}_{n}$.

\item The set $\inv(\varphi^{m}_{n},J^{n,-}_{m-1})$ is a repeller in $\inv(\varphi^{m}_{n},J^{n,-}_{m})$ with respect to the flow $\varphi^{m}_{n}$. Consequently, we have the repeller map
$$
         i^{n,-}_{m-1} \colon I_{S^{1}}(\varphi^{n}_{m},\inv(J^{n,-}_{m}))
              \rightarrow
         I_{S^{1}}(\varphi^{n}_{m},\inv(J^{n,-}_{m-1})).
$$

\item There is a canonical isomorphism $\tilde{\rho}^{n,-}_{m}\in \morp_{\mathfrak{C}}(I^{n,-}_{m},I^{n+1,-}_{m})$ such that the following diagram commutes

\begin{equation}\label{commutative diagram for repeller}
\xymatrix{
I^{n,-}_{m-1} \ar[d]_{\tilde{\rho}^{n,-}_{m-1}} & I^{n,-}_{m}\ar[d]^{\tilde{\rho}^{n,-}_{m}}\ar[l]_{\tilde{i}^{n,-}_{m-1}}\\
I^{n+1,-}_{m-1} & I^{n+1,-}_{m}\ar[l]_{\tilde{i}^{n+1,-}_{m-1}} ,}
\end{equation}
where $\tilde{i}^{n,-}_{m-1}$ is given by $\Sigma^{-V^{0}_{\lambda_{n}}}i^{n,-}_{m-1}$.
\end{enumerate}
\end{pro}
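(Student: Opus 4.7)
The plan is to mirror the argument used in the attractor case (Lemma \ref{lem J^n,+m isolate} and Proposition \ref{stability of the conley index}) with the roles of forward/backward time reversed, using the companion estimate Lemma \ref{approximated flow goes in}(ii) in place of (i). The only conceptual difference is that $g_{j,-}$ is monotone \emph{increasing} (rather than decreasing) along the approximated flow in the relevant strip, which converts the attractor behaviour of $\inv(\varphi^{n}_{m}, J^{n,+}_{m-1})$ into the repeller behaviour of $\inv(\varphi^{n}_{m}, J^{n,-}_{m-1})$.

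For part (a), I would take the same constants $\bar{T},\bar{\lambda},\bar{\mu}$ from Corollary \ref{boundedness for approximated trajectories} (applied to $S = J^{-}_{M}$) together with $\theta_{m},\epsilon_{m},N_{m}$ from Lemma \ref{approximated flow goes in}(ii), and set $T = \max\{\bar{T},\,\theta_{1}/\epsilon_{1},\ldots,\theta_{M}/\epsilon_{M}\}+1$ and $N$ large. The inclusion $\gamma(0) \in Str(R_{0})$ is immediate from Corollary \ref{boundedness for approximated trajectories}. Suppose by contradiction that $g_{j,-}(\gamma(0)) > \theta + m - \theta_{m}$ for some $j$. Along $\varphi^{n}_{m}$, one has
\[
   \frac{d}{dt} g_{j,-}(\gamma(t))
   = \langle -(l + p^{\mu_{n}}_{\lambda_{n}} \circ c)(\gamma(t)),\, \gradtil g_{j,-}(\gamma(t))\rangle_{\tilde{g}} > \epsilon_{m}
\]
as long as $g_{j,-}(\gamma(t))$ stays in $[\theta+m-\theta_{m}, \theta+m]$. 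Since $\gamma([0,T]) \subset J^{n,-}_{m}$ and hence $g_{j,-}(\gamma(t)) \le \theta + m$ for all such $t$, the function stays in the strip, and integrating forces $g_{j,-}(\gamma(T)) > \theta + m$, contradicting $\gamma(T) \in J^{n,-}_{m}$.

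For part (b), the same monotonicity shows that on $\partial J^{n,-}_{m-1} \setminus \partial Str(\tilde{R})$, which lies in $\bigcup_{j} g^{-1}_{j,-}(\theta + m - 1)$, the flow $\varphi^{n}_{m}$ strictly exits $J^{n,-}_{m-1}$ in forward time. Equivalently, in backward time the flow enters $J^{n,-}_{m-1}$, so any neighbourhood $U$ of $\inv(\varphi^{n}_{m}, J^{n,-}_{m-1})$ contained in $\inv(\varphi^{n}_{m}, J^{n,-}_{m})$ satisfies $\alpha(U) = \inv(\varphi^{n}_{m}, J^{n,-}_{m-1})$, yielding the repeller property and the repeller map $i^{n,-}_{m-1}$.

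For part (c), I would repeat verbatim the continuation argument of Proposition \ref{stability of the conley index}: the interpolated vector field $-\iota_{m}(l + ((1-s)p^{\mu_{n}}_{\lambda_{n}} + s p^{\mu_{n+1}}_{\lambda_{n+1}}) \circ c)$ keeps $J^{n+1,-}_{m}$ as an isolating neighbourhood for every $s \in [0,1]$ (via the remark after Corollary \ref{boundedness for approximated trajectories}), giving a continuation homotopy equivalence $\Sigma^{V^{\lambda_{n}}_{\lambda_{n+1}}} I_{S^{1}}(\varphi^{n}_{m}, \inv J^{n,-}_{m}) \cong I_{S^{1}}(\varphi^{n+1}_{m}, \inv J^{n+1,-}_{m})$; desuspending by $V^{0}_{\lambda_{n+1}}$ and using Proposition \ref{suspension desuspension functors cancels} produces $\tilde{\rho}^{n,-}_{m}$. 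Commutativity of (\ref{commutative diagram for repeller}) is the continuation property of attractor-repeller pairs (\cite[Theorem~6.10]{Salamon}) applied to repellers. The only point that requires a sanity check, and which I expect to be the main subtlety rather than a real obstacle, is the different desuspension convention $V^{0}_{\lambda_{n}}$ instead of $\bar{V}^{0}_{\lambda_{n}}$: here one must verify that the extra factor $i\Omega^{1}_{h}(Y) \subset V^{0}_{\lambda_{n}} \setminus \bar{V}^{0}_{\lambda_{n}}$ does not interfere with the naturality of $\tilde{\rho}^{n,-}_{m}$, which follows because this harmonic summand is independent of $n$ and the transposition/identification used in defining $\eta$ is compatible with the inclusion $\bar{V}^{0}_{\lambda_{n}} \hookrightarrow V^{0}_{\lambda_{n}}$.
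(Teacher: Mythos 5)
Your proof is correct and follows exactly the approach the paper intends: the paper simply states that "the following collection of results can be proved in the same way as the corresponding results for $J^{n,+}_m$," and your argument is precisely that mirror argument, with Lemma \ref{approximated flow goes in}(ii) supplying the sign-reversed monotonicity estimate and the time direction reversed in part (a). Your observation in part (c) that the desuspension by $V^0_{\lambda_n}$ rather than $\bar{V}^0_{\lambda_n}$ is harmless because the extra harmonic summand $i\Omega^1_h(Y)$ is $n$-independent and the continuation isomorphism only involves $V^{\lambda_n}_{\lambda_{n+1}}$ is the right justification.
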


For each positive integer $M$, we also choose a positive integer $n_M$ larger than the constant $N$ from Proposition~\ref{properties in the repeller case} so that $\{n_M\}$ is an increasing sequence.

\begin{defi}
The $S^{1}$-equivariant pro-spectrum $\underline{\text{swf}}^{R}(Y,\mathfrak{s}_{Y},A_{0},g;S^{1})$ is defined to be an object of $\mathfrak{S}^*$ given by
\begin{equation}\label{repeller system}
I^{n_{1},-}_{1}\leftarrow I^{n_{2},-}_{2} \leftarrow I^{n_{3},-}_{3}\leftarrow \cdots,
\end{equation}
where the connecting morphisms are defined in the same manner as in Definition~\ref{def swf^A}.
\end{defi}

We will also prove well-definedness of $\underline{\text{swf}}^{R}(Y,\mathfrak{s}_{Y},A_{0},g;S^{1})$ in the next section.

\subsection{The torsion case}  \label{subsection torsion case}

When the spin$^c$ structure $\mathfrak{s}$ is torsion, we will be able to further normalize the spectrum invariants $\underline{\text{swf}}^{A}$ and $\underline{\text{swf}}^{R}$  following the idea of \cite{Manolescu1}.
The resulting objects will not depend on $A_{0}$ and $g$.

We will need to define a rational number $n(Y,\mathfrak{s}_{Y},A_{0},g)$. Choose a 4-manifold $X$ with boundary $Y$ with $H^{3}(X,Y;\mathbb{Z})\cong H_{1}(X;\mathbb{Z})=0$. Such $X$ always exists as we can construct $X$ by attaching 2-handles on $D^{4}$ according the surgery diagram of $Y$. By the homology long exact sequence for the pair $(X,Y)$, we see that $H^{2}(X,\mathbb{Z})\rightarrow H^{2}(Y,\mathbb{Z})$ is surjective. Therefore, we can extend $\mathfrak{s}$ to a $\text{spin}^{c}$ structure $\mathfrak{s}_{X}$ over $X$ and extend $A_{0}$ to a connection $\hat{A}_{0}$ over $X$. Recall that we have a nondegenerate pairing
\begin{equation*}\label{pairing for manifold with boundary}
     \cup : \im  (H^{2}(X,Y;\mathbb{Q})\rightarrow H^{2}(X;\mathbb{Q}))\otimes\im  (H^{2}(X,Y;\mathbb{Q})
     \rightarrow H^{2}(X;\mathbb{Q}))\rightarrow \mathbb{Q}.
\end{equation*}
Denote by $b^{+}(X)$ (resp. $b^{-}(X)$) the dimension of a maximal positive (resp. negative) subspace with respect to this pairing and denote by $\sigma(X)$ the signature of this pairing. Notice that we can define $c_{1}(\mathfrak{s}_{X})^{2}=c_{1}(\mathfrak{s}_{X})\cup c_{1}(\mathfrak{s}_{X})\in \mathbb{Q}$ because $c_{1}(\mathfrak{s}_{X})|_{Y}=c_{1}(\mathfrak{s})$ is torsion. We define
\begin{equation}  \label{def n(Y)}
         n(Y,\mathfrak{s}, A_{0}, g):=
        \text{Ind}_{\mathbb{C}}(\hat{\slashed{D}}^{+}_{\hat{A}_{0}})-\frac{c_{1}(\mathfrak{s}_{X})^{2}-\sigma(X)}{8},
\end{equation}
where $\hat{\slashed{D}}^{+}_{\hat{A}_{0}}$ is the positive Dirac operator on $X$ coupled with $\hat{A}_{0}$ and $\operatorname{Ind}_{\mathbb{C}} (\hat{\slashed{D}}^{+}_{\hat{A}_{0}})$ is its index defined by using  spectral boundary condition as in \cite{APS}. It was proved in \cite{Manolescu1} that $n(Y,\mathfrak{s},A_{0},g)$ does not depend on the choices of $X,\mathfrak{s}_{X}$ and $\hat{A}_{0}$ (\cite{Manolescu1} only considered a rational homology sphere $Y$ but the proof works for a general 3-manifold $Y$ without any changes). In fact, we have
 \begin{equation}\label{virtual dimension}
       n(Y,\mathfrak{s},A_{0},g)=
       \frac{1}{2} \left(  \eta(\slashed{D})  -  \text{dim}_{\mathbb{C}}(\ker \slashed{D})  +  \frac{\eta_{\text{sign}}}{4} \right),
 \end{equation}
 where $\eta(\slashed{D})$ and $\eta_{\text{sign}}$ denote the eta-invariant of the Dirac operator and the odd signature operator respectively (see \cite{Manolescu1} and \cite{APS}).

The normalized invariant $\underline{\text{SWF}}^{A}$ and $\underline{\text{SWF}}^{R}$ will be obtained by formally desuspending $ \underline{\text{swf}}^{A}$ and $\underline{\text{swf}}^{R}$ with the rational number $n(Y,\mathfrak{s},A_{0},g)$ as follows.

\begin{defi}\label{normalized spectrum invariants}
We define the $S^{1}$-equivariant ind-spectrum and pro-spectrum  by
\begin{align*}
    \underline{\text{SWF}}^{A}(Y,\mathfrak{s};S^{1})    &:=
          \left(\underline{\text{swf}}^{A}(Y,\mathfrak{s},A_{0},g;S^{1}),0,n(Y,\mathfrak{s},A_{0},g) \right), \\
    \underline{\text{SWF}}^{R}(Y,\mathfrak{s};S^{1})      &:=
          \left(\underline{\text{swf}}^{R}(Y,\mathfrak{s},A_{0},g;S^{1}),0,n(Y,\mathfrak{s},A_{0},g)\right).
\end{align*}
as objects of $\mathfrak{S}$ and $\mathfrak{S}^*$ respectively.
\end{defi}

The proof of invariance of $\underline{\text{SWF}}^{A}$ and $\underline{\text{SWF}}^{R}$ will also be in the next section.

\subsection{The $Pin(2)$-spectrum invariants for spin structures}  \label{subsection Pin(2)-spectrum}

In this subsection, we will define $Pin(2)$-analogue of the spectrum invariants for a 3-manifold $Y$ equipped with a spin structure $\mathfrak{s}$. Since all the constructions are similar to the $S^1$-case, some of the discussions will be brief.

We define the group $Pin(2)$ as the subgroup $S^{1}\cup jS^{1}\subset \mathbb{H}$ of the algebra of quaternions containing $S^1$ as the set of unit complex numbers. We are interested in the following real representations of $Pin(2)$:
\begin{enumerate}
\item $\mathbb{R}$ the trivial one-dimensional representation;
\item $\tilde{\mathbb{R}}$ the nontrivial one-dimensional representation where $S^{1}$ acts trivially and $j$ acts as  multiplication by $-1$;
\item $\mathbb{H}$ the four-dimensional representation where $Pin(2)$ acts by left quaternionic multiplication.
\end{enumerate}

We introduce the category $\mathfrak{C}_{Pin(2)}$, $\mathfrak{S}_{Pin(2)}$ and $\mathfrak{S}_{Pin(2)}^*$ which are the $Pin(2)$-version of the categories $\mathfrak{C},\mathfrak{S}$ and $\mathfrak{S}^{*}$.  The objects of $\mathfrak{C}_{Pin(2)}$ are triples $(A, m, n)$ consisting of an even integer $m$, a rational number $n$ and  a pointed $Pin(2)$-space $A$ which is $Pin(2)$-homotopy equivalent to a finite $Pin(2)$-CW complex. The set $\morp_{\mathfrak{C}_{Pin(2)}}((A, m, n ), (A',m',n'))$ is given by
$$
\mathop{\text{colim}}_{u,v,w \rightarrow \infty}[(\mathbb{R}^{u}\oplus\tilde{\mathbb{R}}^{v}\oplus \mathbb{H}^{w})^{+}\wedge A,(\mathbb{R}^{u}\oplus\tilde{\mathbb{R}}^{v+m-m'}\oplus \mathbb{H}^{w+n-n'})^{+}\wedge A' )]_{Pin(2)}
$$
when $n - n' \in \mathbb{Z}$ and is empty otherwise.
The objects of $\mathfrak{S}_{Pin(2)}$ (resp. $\mathfrak{S}^{*}_{Pin(2)}$) are the sequential direct systems (resp. sequential inverse systems) in $\mathfrak{C}_{Pin(2)}$. We call an object of $\mathfrak{S}_{Pin(2)}$ a $Pin(2)$-equivariant ind-spectrum and call an object of $\mathfrak{S}^{*}_{Pin(2)}$ a $Pin(2)$-equivariant pro-spectrum. The sets of morphisms are defined in the same way as (\ref{the morphism in S}) and (\ref{moprhism in S*}). For an object $W$ of $\mathfrak{C}_{Pin(2)}$, $\mathfrak{S}_{Pin(2)}$ and $\mathfrak{S}_{Pin(2)}^*$, the natation $(W,m,n )$ will be used as in the $S^{1}$-case. A $Pin(2)$-representation $E$ is called \emph{admissible} if it is isomorphic to $\tilde{\mathbb{R}}^a \oplus \mathbb{H}^b$. For an admissible representation $E$, we can define the suspension functor $\Sigma^{E}$ and the desuspension functor $\Sigma^{-E}$ in the same manner, e.g. $\Sigma^{-E}(A, m, n )$ is given by $( \Sigma^{E^{S^{1}}}A, m+2a, n+b )$.   All the results from Section \ref{Section Cat Top} can be adapted to this setting.

We now turn to the Seiberg-Witten theory of a spin 3-manifold. Recall that the spin structure $\mathfrak{s}$ induces a torsion spin$^{c}$ structure on $Y$. With a slight abuse of notations, we also denote this spin$^{c}$ structure by $\mathfrak{s}$. We will have the same setup from the spin$^c$ structure \(\mathfrak{s}\) with the following new features coming from  a spin structure.

\begin{enumerate}
\item The structure group of $S_{Y}$ can be reduced to $SU(2)\cong S(\mathbb{H})$. Therefore, $S_{Y}$ is a quaternionic bundle. Here we follow the convention of \cite{Manolescu3} and let the structure group act by the right multiplication.

\item The bundle $\text{det}(S_{Y})$ has a canonical trivialization. The Levi-Civita connection on $TY$ then induces a canonical spin connection $A_{0}$ on $S_{Y}$ with $F_{A_{0}^{t}}=0$. We will always choose $A_{0}$ for our base connection.

\item We have an additional action $\jmath:\mathcal{C}_{Y}\rightarrow \mathcal{C}_{Y}$ sending $(a,\phi)$ to $(-a,j\phi)$. This action, together with the constant gauge group $S^{1}$, gives a $Pin(2)$-action on $\mathcal{C}_{Y}$. All the objects in the setup are $Pin(2)$-invariant, e.g. the functional $CSD_{\nu_{0}}$, the Coulomb slice $Coul(Y)$ and the $L^{2}_{k}$-inner product etc.
\end{enumerate}

In order to respect the additional $\jmath$-symmetry, we have two new requirements in our construction.

\begin{enumerate}

\item The perturbation $f$ should be invariant under $\jmath$. In other worlds, we should have $f(a,\phi)=f(-a,j\phi)$.

\item The sets $J^{n,\pm}_{m}$ should be invariant under $\jmath$.

\end{enumerate}

A slight adaption of \cite[Theorem 2.6]{Francesco} shows that for any real number $\delta$, we can find a $\jmath$-invariant extended cylinder function $\bar{f}$ such that $(\delta,\bar{f})$ is a good perturbation. Since we required the staircase function $\bar{g}$ from Section \ref{Subsection general spin^c} is even, it is not hard to see that $J^{n,\pm}_{m}$ is $\jmath$-invariant once the perturbation $f$ is $\jmath$-invariant.

We can now follow the construction from Section \ref{Subsection general spin^c} .   In particular, the sets $J^{n,\pm}_{m}$ are  isolating neighborhoods for the $Pin(2)$-invariant flow $\varphi_{m}^{n}$ when $n$ is sufficiently large relative to $m$ and we define
\begin{align*}
I^{n,+}_{m}(Pin(2)) &:= \Sigma^{-\bar{V}^{0}_{\lambda_{n}}}I_{Pin(2)}(\varphi^{m}_{n},\inv(J^{n,+}_{m})), \\
I^{n,-}_{m}(Pin(2)) &:= \Sigma^{-V^{0}_{\lambda_{n}}}I_{Pin(2)}(\varphi^{m}_{n},\inv(J^{n,-}_{m}))
\end{align*}
as objects of $\mathfrak{C}_{Pin(2)}$. As before, we obtain an object $\underline{\operatorname{swf}}^A(Y, \mathfrak{s}, A_0, g; Pin(2))$ of $\mathfrak{S}_{Pin(2)}$  given by
\begin{equation*}\label{Pin(2) direct system}
            I^{n_{1},+}_{1}(Pin(2))  \rightarrow   I^{n_{2},+}_{2}(Pin(2))  \rightarrow  \cdots
\end{equation*}
and an object $\underline{ \operatorname{swf}}^R(Y, \mathfrak{s}, A_0, g; Pin(2))$ of $\mathfrak{S}^*_{Pin(2)}$ given by
\begin{equation*}\label{Pin(2) inverse system}
         I^{n_{1},-}_{1}(Pin(2))   \leftarrow I^{n_{2},-}_{2}(Pin(2))  \leftarrow \cdots
\end{equation*}
for an increasing sequence of large positive integers $\{n_i\}$. We define spectrum invariants as in the torsion spin$^c$ case.

\begin{defi}
With the above setup, the $Pin(2)$-equivariant ind-spectrum and pro-spectrum are defined by
\[
  \begin{split}
     \underline{\operatorname{SWF}}^{A}(Y,\mathfrak{s}; Pin(2))
     &:= \left(\underline{ \operatorname{swf}}^A(Y, \mathfrak{s}, A_0, g; Pin(2)) , 0, \frac{n(Y,\mathfrak{s}, A_{0}, g)}{2} \right),  \\
     \underline{\text{SWF}}^{R}(Y,\mathfrak{s};Pin(2))
    &:=\left( \underline{ \operatorname{swf}}^R(Y, \mathfrak{s}, A_0, g; Pin(2)), 0, \frac{n(Y,\mathfrak{s},A_{0},g)}{2} \right).
    \end{split}
\]
 as objects of $\mathfrak{S}_{Pin(2)}$ and $\mathfrak{S}^{*}_{Pin(2)}$ respectively. Here $n(Y, \mathfrak{s}, A_0, g)$ is the rational number defined in (\ref{def n(Y)}).  As before, these objects are independent of the choices made in the construction  up to canonical isomorphism.
\end{defi}

\section{The invariance for the spectrum}\label{invariance}
In this section we will prove the invariance  of our ind-spectrum (pro-spectrum). In other words, we will show that the spectra given by different choices of parameters are canonically isomorphic to each other (as objects of the category in which they are defined). We focus on the $S^{1}$-equivariant case and the $Pin(2)$-case can be proved in the same way.

First, let us list the parameters in the order that the choices of a parameter can only depend on the parameters listed before it (for example, $\tilde{R}$ is any number greater $R_{0}$, where $R_{0}$ is the constant of Theorem \ref{Boundedness of trajectories}  depending on $g,A_{0}$ and $f$):

\begin{enumerate}[(I)]
\item  The Riemannian metric $g$ and the base connection $A_{0}$;
\item  The good perturbation $f \colon Coul(Y)\rightarrow \mathbb{R}$;
\item  The sequences of real numbers $\{\lambda_{n}\}, \{\mu_{n}\}$;
\item  The number $\tilde{R}$ (in the definition of $Str(\tilde{R})$);
\item  The harmonic forms $\{h_{j}\}$, the cutting function $\bar{g}$ and the cutting value $\theta$;
\item  The positive integers $n_{m}$ in (\ref{attractor system}) and (\ref{repeller system});
\item  {The index pairs} for the isolated invariant sets.
\end{enumerate}

The invariance for (VII) is a direct consequence of the invariance of the Conley
index (see Subsection \ref{subsection conley index} and \cite{Salamon}).  The commutative diagrams (\ref{commutative diagram for attractor}) and (\ref{commutative
diagram for repeller})  imply the invariance for (VI).

In subsection 6.1, we will make a digression into the discussion of the finite dimensional approximation for a family of flows. In subsection 6.2, we will prove the invariance for (III), (IV), (V). The invariance for (II) (which is the most interesting one) and (I) will be proved in subsection 6.3 and subsection 6.4 respectively. In subsection 6.5, we will discuss the restriction of our invariant to the $S^{1}$-fixed point sets.

\subsection{The finite dimensional approximation for a family of flows}\label{subsection family flows}
In this subsection, we extend finite dimensional approximation results in Section~\ref{Section approx SW traj} for a continuous family of flows. This setup will be useful for proving the invariance and calculating examples.

Let $S$ be a compact manifold (possibly with boundary) and consider a smooth
family of Riemannian metrics $\{g_{s}\}_{s \in S}$ and a smooth family of base connections $\{A_{0,s}\}_{s\in S}$. As before, we require that $\frac{i}{2\pi} F_{A_{0,s}^{t}}$ equals the harmonic form representing $c_{1}(\mathfrak{s})$. We denote by $Coul(Y,s)$ the ($L^{2}_{k}$-completed) Coulomb slice for  $(g_s, A_{0, s} )$. For each $s$, we have an elliptic operator $l_{s} \colon Coul(Y,s)\rightarrow Coul(Y,s)$ given by $( { *_s}d,\slashed{D}_{A_{0,s}})$, { where $*_s$ is the Hodge {operator} of $g_s$}. Although $\{Coul(Y,s)|s\in S\}$ is
a Hilbert bundle over $S$, by the Kuiper's theorem, this bundle is trivial and we can
identify it with $S\times Coul(Y)$ by fixing a trivialization.
We have the following generalization of Definition \ref{quadratic-like map}:
\begin{defi}\label{a family of quadratic-like maps}
Let $E$ be a vector bundle over $Y$. A family of smooth and bounded maps $\{ Q_{s} \colon Coul(Y,s)\rightarrow L^{2}_{k}(\Gamma(E))\}_{s \in S}$ is called a \emph{continuous family of quadratic-like maps} if $Q_s$ is quadratic-like for each $s \in S$ and, for each nonnegative integer $m <k$, we have a uniform convergence$ $ $(\frac{d}{dt})^{m}Q_{s_n}(\gamma_{n}(t))\rightarrow
(\frac{d}{dt})^{m}Q_{s_\infty} (\gamma_{\infty}(t))$ in $L^{2}_{k-2-m}$ whenever there is a uniform convergent of compact paths $(\frac{d}{dt})^{j}\gamma_{n}(t)\rightarrow (\frac{d}{dt})^{j}\gamma_{\infty}(t)$
uniformly in $L^{2}_{k-1-j}$ for each $j=0,1,...,m$ with $\gamma_n \colon I \rightarrow Coul(Y,s_n) $ and $s_n \rightarrow s_\infty $.

\end{defi}

We now let $\{Q_{s} \colon Coul(Y,s)\rightarrow L^{2}_{k}(\ker d^{*}\oplus \Gamma(S_{Y}))\}_{s \in S}$ be a continuous family of quadratic-like maps. As before, for real numbers $\lambda < 0 \leq \mu$,  we define $V^{\mu}_{\lambda}(s)\subset Coul(Y,s)$ to be the space spanned by the eigenvectors of $l_{s}$ with eigenvalue in $(\lambda,\mu]$. We also consider $\bar{V}^{0}_{\lambda}(s)$, which is the orthogonal complement of $i\Omega^{1}_{h}(Y)$ in $V^{0}_{\lambda}(s)$. Note that these spaces usually do not change continuously with $s$ because the dimension can jump at eigenvalues of $l_{s}$.

Throughout this subsection, we say that, for an interval $I$, a path $\gamma \colon I\rightarrow Coul(Y,s)$ is an actual trajectory if it satisfies $\frac{d}{dt}\gamma(t)=-(l+Q_{s})\gamma(t)$
and a path $\gamma \colon I\rightarrow V^{\mu}_{\lambda}(s)$ is an approximated trajectory if it satisfies $\frac{d}{dt}\gamma(t)=-(l+p^{\mu}_{\lambda}\circ Q_{s})\gamma(t)$ for some $\mu,\lambda$.
We denote by $\varphi(\lambda,\mu,s)$ the flow generated by $-\iota\cdot(l+p^{\mu}_{\lambda}\circ Q_{s})$, where $\iota$ is a bump function which equals $1$ on any bounded subset involved in our discussion.

\begin{thm}\label{finite dimensional approximation for a family of flows}
Let $B$ be a closed and bounded subset of $Coul(Y)$ and suppose that there exists a closed subset $A\subset \text{int}(B)$ such that, for any $s\in S$ and any actual trajectory $\gamma \colon \mathbb{R}\rightarrow Coul(Y,s)$ contained in $B$, we have $\gamma$ contained in $ \operatorname{int}(A)$.
Then there exist constants $T,  -\bar{\lambda}, \bar{\mu} \gg 0$ such that the following statements hold:

\begin{enumerate}[(i)]
\item \label{item prop familyflow1} For any  $\lambda<\bar{\lambda}$, $\mu>\bar{\mu}$  and $s\in S$, if an approximated trajectory $\gamma \colon [-T,T]\rightarrow V^{\mu}_{\lambda}(s)$ is contained in $B$, then we have $\gamma(0)\in A$. In particular, $B \cap V^{\mu}_{\lambda}$ is an isolating neighborhood for the flow $\varphi(\lambda,\mu,s)$;

\item \label{item prop familyflow2} The spectra $\Sigma^{-V^{0}_{\lambda}(s)}I_{S^{1}}(\varphi(\lambda, \mu, s ),\operatorname{inv}(B \cap V^{\mu}_{\lambda}(s)))$ and $\Sigma^{-\bar{V}^{0}_{\lambda}(s)}I_{S^{1}}(\varphi(\lambda, \mu,s ),\operatorname{inv}(B \cap V^{\mu}_{\lambda}(s)))$ do not depend on the choice of  $\lambda<\bar{\lambda}$ and $\mu>\bar{\mu}$ up to canonical isomorphisms in $\mathfrak{C}$. We denote these objects by $I(B ,s)$ and $\bar{I}(B,s) $ respectively.

\item \label{item prop familyflow3} For any path $\alpha \colon [0,1]\rightarrow S$, we have well defined isomorphisms
$$\rho(B,\alpha) \colon I(B,\alpha(0))\rightarrow \Sigma^{\spf(-\slashed{D},\alpha)\mathbb{C}}I(B,\alpha(1)),$$
$$\bar{\rho}(B,\alpha) \colon \bar{I}(B,\alpha(0))\rightarrow \Sigma^{\spf(-\slashed{D},\alpha)\mathbb{C}}\bar{I}(B,\alpha(1)),$$
where $\spf(-\slashed{D},\alpha)$ denotes the spectral flow of $-\slashed{D}$ along the path $\alpha$.
Moreover, the isomorphisms $\rho$ and $\bar{\rho}$ only depend on the homotopy class of $\alpha$ relative to its end points.
\end{enumerate}

\end{thm}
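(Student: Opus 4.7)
My plan is to establish the three parts in order, combining family versions of the earlier compactness results with Conley index continuation.

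For part (\ref{item prop familyflow1}), I would argue by contradiction in the spirit of Corollary~\ref{boundedness for approximated trajectories}. Suppose no such $T, \bar\lambda, \bar\mu$ exist. Then we can find sequences $s_n \in S$, $T_n \to \infty$, $-\lambda_n, \mu_n \to \infty$, and approximated trajectories $\gamma_n \colon [-T_n, T_n] \to V^{\mu_n}_{\lambda_n}(s_n)$ contained in $B$ with $\gamma_n(0) \notin A$. By compactness of $S$ we may assume $s_n \to s_\infty$. The family version of Proposition~\ref{convegence of approximated trajectories}, whose proof proceeds verbatim upon replacing Definition~\ref{quadratic-like map} by Definition~\ref{a family of quadratic-like maps}, together with a diagonalization argument, yields after a subsequence an actual trajectory $\gamma_\infty \colon \mathbb{R} \to Coul(Y,s_\infty)$ contained in $B$ with $\gamma_n(0) \to \gamma_\infty(0)$ in $L^2_k$. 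By hypothesis $\gamma_\infty(0) \in \operatorname{int}(A)$, contradicting $\gamma_n(0) \notin A$. The isolating property of $B \cap V^\mu_\lambda(s)$ is then immediate.

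For part (\ref{item prop familyflow2}), I fix $s$ and prove invariance under enlarging the spectral window. Given $\lambda_2 < \lambda_1 < \bar\lambda$ and $\bar\mu < \mu_1 < \mu_2$, the linear part $-l_s$ preserves the orthogonal splitting $V^{\mu_2}_{\lambda_2}(s) = V^{\lambda_1}_{\lambda_2}(s) \oplus V^{\mu_1}_{\lambda_1}(s) \oplus V^{\mu_2}_{\mu_1}(s)$. Using a continuation parameter to scale off the nonlinear term on the two extremal summands (the isolating property being preserved throughout by part (\ref{item prop familyflow1})) and invoking the product formula for the Conley index, one obtains
\[
   I_{S^1}(\varphi(\lambda_2,\mu_2,s),\operatorname{inv}(B \cap V^{\mu_2}_{\lambda_2}(s)))
   \cong (V^{\lambda_1}_{\lambda_2}(s))^+ \wedge I_{S^1}(\varphi(\lambda_1,\mu_1,s),\operatorname{inv}(B \cap V^{\mu_1}_{\lambda_1}(s))),
\]
since the flow on $V^{\mu_2}_{\mu_1}(s)$ is attractively linear (Conley index $S^0$) and the flow on $V^{\lambda_1}_{\lambda_2}(s)$ is repulsively linear. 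Desuspending by $V^0_{\lambda_2}(s) = V^0_{\lambda_1}(s) \oplus V^{\lambda_1}_{\lambda_2}(s)$ cancels the extra $(V^{\lambda_1}_{\lambda_2}(s))^+$ factor, producing the canonical isomorphism that defines $I(B,s)$; the argument for $\bar I(B,s)$ is identical.

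For part (\ref{item prop familyflow3}), I would cover $[0,1]$ by finitely many closed subintervals $[t_i, t_{i+1}]$ together with constants $\lambda_i < \bar\lambda$, $\mu_i > \bar\mu$ that are not eigenvalues of $l_{\alpha(t)}$ for any $t$ in the $i$-th subinterval. This is possible because the spectrum of $l_s$ depends continuously on $s$ and is discrete, and the path image is compact. On each subinterval, $V^{\mu_i}_{\lambda_i}(\alpha(t))$ forms a continuous subbundle of the trivialized Hilbert bundle and $B \cap V^{\mu_i}_{\lambda_i}(\alpha(t))$ remains an isolating neighborhood by part (\ref{item prop familyflow1}), so Conley continuation supplies a canonical isomorphism between the indices at the endpoints. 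Transitions between adjacent subintervals at $t_i$ are handled by part (\ref{item prop familyflow2}). The resulting discrepancy in the desuspension factor $V^0_\lambda(\alpha(t))$ across the whole path decomposes into a complex part, contributed by eigenvalues of $-\slashed{D}$ crossing zero and accounting for exactly $\spf(-\slashed{D},\alpha)$ copies of $\mathbb{C}$, and a real part from the $*d$-summand, which vanishes as the spectrum of $*d$ on the complement of the harmonic forms in $\ker d^*$ is symmetric about zero. Composing these gives the desired $\rho(B,\alpha)$ and $\bar\rho(B,\alpha)$.

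The main obstacle will be verifying that these isomorphisms depend only on the homotopy class of $\alpha$ rel endpoints. Given a homotopy $H \colon [0,1]^2 \to S$, I would apply the same cover-and-continuation scheme to a sufficiently fine grid of $[0,1]^2$ on which a uniform choice of $\lambda,\mu$ avoids the spectrum of $l_{H(t,u)}$, then check commutativity on each 2-cell via a two-parameter Conley continuation argument combined with part (\ref{item prop familyflow2}). Since spectral flow is itself a homotopy invariant rel endpoints, the suspension exponents match coherently across the grid, and composing over the grid identifies $\rho(B,\alpha_0)$ with $\rho(B,\alpha_1)$.
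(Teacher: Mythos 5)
Your overall strategy follows the paper's proof closely: contradiction-and-compactness for part (i), spectral-window enlargement for part (ii), and subdivision-plus-continuation for part (iii). The one substantive difference in part (ii) is that you split $V^{\mu_2}_{\lambda_2}$ into three summands and scale off the nonlinear term on the extremal pieces, invoking the product formula for the Conley index, whereas the paper cites the argument of Proposition~\ref{stability of the conley index}, which interpolates between the projections $p^{\mu}_{\lambda}$ and $p^{\mu'}_{\lambda'}$ in a single continuation. Both routes are valid and yield the same suspension factor.

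There is, however, a genuine error in your justification of why the $*d$-summand contributes no spectral flow. You claim the spectrum of $*_s d$ on the orthogonal complement of $i\Omega^1_h(Y)$ inside $i\ker d^*_s$ is symmetric about zero. This is false in general: the eta invariant $\eta_{\text{sign}}$ appearing in the formula (\ref{virtual dimension}) precisely measures this asymmetry and is typically nonzero. Moreover, even symmetry of the spectrum would not by itself rule out spectral flow, since eigenvalues could still pass through zero in pairs. The correct reason is topological rigidity of the kernel: for every metric $g_s$, the kernel of $*_s d$ on the coclosed $1$-forms equals the space of $g_s$-harmonic $1$-forms, which has constant dimension $b_1$, so no eigenvalue of $*_s d$ can cross zero along the path $\alpha$. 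Consequently the only contribution to the change in $[V^0_{\lambda_j}(\alpha(t))]$ across $[t_j,t_{j+1}]$ comes from the Dirac operator $\slashed{D}_{A_{0,\alpha(t)}}$, which produces exactly $\spf(-\slashed{D},\alpha([t_j,t_{j+1}]))$ copies of $\mathbb{C}$. With this repair, and after adding the (routine but not entirely automatic) verification that $\rho(B,\alpha)$ is independent of the choice of the partition $\{t_j\}$ and of the spectral cuts $\{\lambda_j,\mu_j\}$ for a fixed path $\alpha$ before invoking the homotopy argument --- the paper handles these by a refinement argument and the commutativity properties of the stability isomorphisms from Proposition~\ref{stability of the conley index} --- your outline gives a correct proof.
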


\begin{proof}
For the first part, the proof is similar to that of Corollary ~\ref{boundedness for approximated trajectories}: we suppose there exists no such $\bar{\lambda},\bar{\mu},T$. Then we can find a sequence of approximated trajectories $\gamma_{n}\colon [-T_{n},T_{n}]\rightarrow Coul(Y, s_{n})$ with $T_{n}, - \lambda_n, \mu_{n}\rightarrow +\infty$ such that $\gamma_{n}$ is contained in $B$ but $\gamma_{n}(0)\notin A$. Since $S$ is compact, we can assume $s_{n}\rightarrow s_{\infty}$ after passing to a subsequence. The properties in Definition~\ref{a family of quadratic-like maps} allow us to repeat the argument in the proof of Proposition~\ref{convegence of approximated trajectories} and find an actual trajectory $\gamma_{\infty} \colon \mathbb{R} \rightarrow Coul(Y,s_{\infty})$ as the limit of $\gamma_{n}$. Consequently, we have $\gamma$ contained in $B$ and $\gamma_{\infty}(0)\notin \operatorname{int}(A)$. This is a contradiction with our hypothesis.  Thus, the proof of (\ref{item prop familyflow1}) is finished.

The proof of (\ref{item prop familyflow2}) is a straight forward adaption of arguments from Proposition~\ref{stability of the conley index} and we omit it.
For (\ref{item prop familyflow3}), we will focus on the case $\rho(B,\alpha)$ as the other case can be proved similarly.
For brevity, we will denote by $E_\lambda^\mu (s)$ the Conley index $ I_{S^{1}}(\varphi(\lambda_{}, \mu, s),\operatorname{inv}(B \cap V^{\mu}_{\lambda} (s))) $. The isomorphism $\rho(B, \alpha)$ is constructed as follows: we consider the interval $[0,1]$ as the union of subintervals  $[t_{j},t_{j+1}]$ with $j=1,...,m$ such that, for each $j$, we can find $\mu_{j}>\bar{\mu}$ and $\lambda_{j}<\bar{\lambda}$  which are not eigenvalues of $l_{\alpha(t)}$ for any $t \in [t_{j},t_{j+1}]$.  Then $V^{\mu_{j}}_{\lambda_{j}}(\alpha(t))$ from $t=t_{j}$ to $t=t_{j+1}$ is a continuous family of linear subspaces and $\varphi(\lambda_{j}, \mu_j, \alpha(t))$ is a continuous family of flows on them. By the homotopy invariance of the Conley index \cite[Section~6]{Salamon}, we get an isomorphism
\begin{equation}\label{isomorphism 1}
 \rho_j \colon E^{\mu_j}_{\lambda_j} (\alpha(t_{j}))
        \xrightarrow{\simeq}
       E^{\mu_j}_{\lambda_j}(\alpha(t_{j+1})).
   \end{equation}
Notice that
$$
[V^{0}_{\lambda_j}(\alpha(t_{j}))]+ [\spf(-\slashed{D},\alpha([t_{j},t_{j+1}]))\mathbb{C}]=[V^{0}_{\lambda_{j}}(\alpha(t_{j+1}))]
$$
as elements of the representation ring of $S^{1}$.
We can desuspend both sides of (\ref{isomorphism 1}) and get an isomorphism$$I(B,\alpha(t_{j}))\rightarrow \Sigma^{\spf(-\slashed{D},\alpha([t_{j},t_{j+1}]))\mathbb{C}}I(B,\alpha(t_{j+1})).$$
The isomorphism $\rho(B,\alpha)$ is defined as the composition of the above isomorphisms for $j = 1, \ldots , m $.

We will see that $\rho(B, \alpha)$ is independent of the choices of $ t_j $, $ \lambda_j $ and $\mu_j $.  First, fix a choice of $\{ t_j \}$ and choose different choices of $\{ \lambda_j' \}_{}$ and $\{ \mu_j' \}_{}$.  Without loss of generality, we may assume that $\lambda_j < \lambda_j'$, $\mu_j > \mu_j'$.  As before, we have an isomorphism
$$ \rho'_j \colon E^{\mu'_j}_{\lambda'_j} (\alpha(t_{j}))
        \xrightarrow{\simeq}
       E^{\mu'_j}_{\lambda'_j}(\alpha(t_{j+1})).$$
As in Proposition~\ref{stability of the conley index}, we have isomorphisms for stability of {Conley} indices
\begin{align*}
\sigma_j &\colon E^{\mu_j}_{\lambda_j} (\alpha(t_{j})) \xrightarrow{\simeq}
  \Sigma^{ V_{\lambda_j}^{\lambda'_j} } E^{\mu'_j}_{\lambda'_j} (\alpha(t_{j})), \\
  \sigma_{j+1} &\colon E^{\mu_j}_{\lambda_j} (\alpha(t_{j+1})) \xrightarrow{\simeq}
  \Sigma^{ V_{\lambda_j}^{\lambda'_j} } E^{\mu'_j}_{\lambda'_j} (\alpha(t_{j+1})).
\end{align*}
Using the formula in \cite[Theorem 6.7]{Salamon}, we can easily see that  $\sigma_{j+1} \circ \rho_j$ is $S^1$-equivariantly homotopic to   $ \rho_{j}' \circ \sigma_{j}$. This implies that  $\Sigma^{-V_{\lambda_j}^0} \rho_j$ and  $\Sigma^{-V_{\lambda_j'}^{0}} \rho_j'$  are equal to each other as morphisms in $\mathfrak{C}$. Therefore $\rho(B, \alpha)$ does not depended on the choices of $\{ \lambda_j \}$ and $\{ \mu_j \}$.
Next we prove the independence of the choice of $\{ t_j \}$. Let us pick another sequence $\{ t_j' \}_{j=1}^{m'}$. Without loss of generality, we will only work on  the case $\{ t_j' \} \subset \{ t_j \}$, i.e. $\{ t_j \}$ is a finer subdivision. Let us suppose that
\[
          t_j = t_{j'}' < t_{j+1} < t_{j+2} = t_{j'+1}'
\]
for some $j' \in \{ 1, \dots, m' \}$. An equivariant version of \cite[Corollary~6.8]{Salamon} implies that $\rho_{j+1} \circ \rho_j$ is $S^1$-equivariantly homotopic to $\rho_{j'}'$. This discussion implies that $\rho(B, \alpha)$ is independent of the choice of $\{ t_j \}$.

Now suppose that we have two paths $\alpha_{0},\alpha_{1}$ which are homotopic to each other relative to their end points by a homotopy  $\alpha_{u}$ as $u \in [0,1] $. For any $(t_{0},u_{0}) \in [0,1]^2 $, one can also find $\mu_{}>\bar{\mu}$
and $\lambda_{}<\bar{\lambda}$ and a small neighborhood $O$ of $(t_{0},u_{0}) $ such that $\mu, \lambda$ are not eigenvalues of $l_{\alpha_u (t)} $ for any $(t,u)$ in $O$.
By the definition of $\rho$ and the homotopy invariance of the Conley index, we see that $\rho(B,\alpha_{u})$ does not change as $u$ varies inside $O$. By considering a finite cover of $[0,1]^2$ by such neighborhoods, we see that $\rho(B,\alpha_{0})=\rho(B,\alpha_{1})$. This finishes the proof of the theorem.
\end{proof}
The following corollary is directly implied by the homotopy invariance of the attractor-repeller map.
\begin{cor}  Let $B_{1}\subset B_{2}$ be two closed and bounded sets both satisfying the hypothesis of Theorem~\ref{finite dimensional approximation for a family of flows}. Suppose that for any sufficiently large $-\lambda, \mu$ and any $s\in S$, the set $\operatorname{inv}(\varphi(\lambda, \mu, s), B_{1}\cap V^{\mu}_{\lambda}(s))$ is an attractor
in  $\operatorname{inv}(\varphi(\lambda, \mu,s), B_{2}\cap V^{\mu}_{\lambda}(s))$. Then the desuspensions of the corresponding
attractor maps give well defined morphisms $i(s):I(B_{1},s)\rightarrow I(B_{2},s)$ and $\bar{i}(s):\bar{I}(B_{1},s)\rightarrow
\bar{I}(B_{2},s)$. Moreover, for any path $\alpha:[0,1]\rightarrow S$, we have
$$\rho(B_{2},\alpha)\circ  i(\alpha(0))=(\Sigma^{\spf(-\slashed{D},\alpha)\mathbb{C}}i(\alpha(1)))\circ \rho(B_{1},\alpha),$$
$$\bar{\rho}(B_{2},\alpha)\circ  \bar{i}(\alpha(0))=(\Sigma^{\spf(-\slashed{D},\alpha)\mathbb{C}}\bar{i}(\alpha(1)))\circ \bar{\rho}(B_{1},\alpha).$$

The repeller version of this result also holds given that  $\operatorname{inv}(\varphi(\lambda, \mu, s), B_{1}\cap V^{\mu}_{\lambda}(s))$
is a repeller in  $\operatorname{inv}(\varphi(\lambda, \mu,s), B_{2}\cap V^{\mu}_{\lambda}(s))$ for any $s\in S$.
\end{cor}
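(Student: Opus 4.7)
The plan is to construct $i(s)$ and $\bar{i}(s)$ as desuspensions of equivariant Conley-index attractor maps, verify they are independent of the auxiliary spectral data, and check that they intertwine the continuation isomorphisms $\rho,\bar{\rho}$ from Theorem~\ref{finite dimensional approximation for a family of flows}(\ref{item prop familyflow3}). For each $s$ and each admissible pair $(\lambda,\mu)$ supplied by Theorem~\ref{finite dimensional approximation for a family of flows}(\ref{item prop familyflow1}), the hypothesis gives an attractor-repeller pair inside $V^{\mu}_{\lambda}(s)$, so the $S^{1}$-equivariant version of Proposition~\ref{Attractor-repeller-exact sequence} produces an attractor map
\[
i^{\lambda,\mu}(s)\colon I_{S^{1}}(\varphi(\lambda,\mu,s),\inv(B_{1}\cap V^{\mu}_{\lambda}(s)))\to I_{S^{1}}(\varphi(\lambda,\mu,s),\inv(B_{2}\cap V^{\mu}_{\lambda}(s))).
\]
Desuspending by $V^{0}_{\lambda}(s)$ (resp.\ $\bar{V}^{0}_{\lambda}(s)$) yields a candidate for $i(s)$ (resp.\ $\bar{i}(s)$) in $\mathfrak{C}$.

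To see this is independent of $(\lambda,\mu)$, I replace it by a larger $(\lambda',\mu')$ and recall from the proof of Theorem~\ref{finite dimensional approximation for a family of flows}(\ref{item prop familyflow2}) that the corresponding stability isomorphisms are obtained by continuation of flows on expanding subspaces. Since attractor-repeller pairs deform continuously under continuation (Salamon \cite[Theorem~6.10]{Salamon}), the attractor map $i^{\lambda,\mu}(s)$ intertwines these stability isomorphisms for $B_{1}$ and $B_{2}$ up to $S^{1}$-equivariant homotopy, so $i(s)$ and $\bar{i}(s)$ descend to well-defined morphisms in $\mathfrak{C}$. The commutativity with $\rho(B_{i},\alpha)$ along a path $\alpha\colon[0,1]\to S$ is then verified by reusing the construction from Theorem~\ref{finite dimensional approximation for a family of flows}(\ref{item prop familyflow3}): subdivide $[0,1]$ into intervals $[t_{j},t_{j+1}]$ on which a uniform pair $(\lambda_{j},\mu_{j})$ avoids the spectrum of $l_{\alpha(t)}$. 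On each subinterval both $\inv(B_{1}\cap V^{\mu_{j}}_{\lambda_{j}}(\alpha(t)))$ and $\inv(B_{2}\cap V^{\mu_{j}}_{\lambda_{j}}(\alpha(t)))$ form continuous families of invariant sets with the attractor relation preserved by hypothesis, and the same continuation principle for attractor-repeller pairs gives an $S^{1}$-equivariant homotopy making the attractor map commute with the continuation equivalence from $t_{j}$ to $t_{j+1}$.

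Desuspending by $V^{0}_{\lambda_{j}}(\alpha(t))$ (resp.\ $\bar{V}^{0}_{\lambda_{j}}(\alpha(t))$) and composing across the partition, the spectral flow shift $\Sigma^{\spf(-\slashed{D},\alpha)\mathbb{C}}$ is picked up for exactly the same reason as in the construction of $\rho$, yielding
\[
\rho(B_{2},\alpha)\circ i(\alpha(0))=\bigl(\Sigma^{\spf(-\slashed{D},\alpha)\mathbb{C}}i(\alpha(1))\bigr)\circ\rho(B_{1},\alpha),
\]
and similarly for $\bar{\rho}$ and $\bar{i}$. The repeller version is proved identically, using the repeller map in place of the attractor map (so the direction of $i(s)$ and $\bar{i}(s)$ reverses, from $B_{2}$ to $B_{1}$). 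The only genuinely nontrivial input is the $S^{1}$-equivariant continuation invariance of attractor-repeller pairs together with their associated attractor maps, i.e.\ the equivariant analog of \cite[Theorem~6.10]{Salamon}; once this is granted, the remainder is formal bookkeeping of desuspensions and spectral flow, parallel to Proposition~\ref{stability of the conley index} and part~(\ref{item prop familyflow3}) of Theorem~\ref{finite dimensional approximation for a family of flows}.
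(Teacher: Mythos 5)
Your proof is correct and follows the same route the paper intends: the paper dispatches this corollary in a single sentence (``directly implied by the homotopy invariance of the attractor-repeller map''), and your argument is exactly the expanded version, grounding the construction of $i(s),\bar{i}(s)$ in Proposition~\ref{Attractor-repeller-exact sequence}, using Salamon's \cite[Theorem~6.10]{Salamon} for independence of the spectral data and for commutation with the continuation isomorphisms $\rho,\bar{\rho}$ across a subdivision of the path, with the repeller case obtained by reversing the arrow.
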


\subsection{The invariance for (III),(IV),(V)}\label{subsection invariance III,IV,V}

Notice that the three parameters in (V) only affect our results through the definition of the bounded set $J^{\pm}_{m}$. Suppose that we choose two different triples of parameters $(\{h_{j}\},\bar{g},\theta)$ and $(\{\tilde{h}_{j}\},\tilde{g},\tilde{\theta})$ and use them to define the sets $J^{+}_{m}$ and $\tilde{J}^{+}_{m}$ respectively. From these subsets, we construct two direct systems, which we denote by (\ref{attractor system}) and (\ref{attractor system}') respectively. Notice that $J^{+}_{m}$ and $\tilde{J}^{+}_{m}$ are bounded subsets of $Str(\tilde{R})$. We can find $0< m_{1}< m_{2}...$ and $0< \tilde{m}_{1}<\tilde{m}_{2}<...$ such that:
\begin{equation}\label{mixed system of sets}
J^{+}_{m_{1}}\subset \tilde{J}^{+}_{\tilde{m}_{1}}\subset J^{+}_{m_{2}}\subset \tilde{J}^{+}_{\tilde{m}_{2}}\subset \cdots ,\end{equation}
which also implies the following inclusions for any positive integer $n$
$$ J^{n,+}_{m_{1}}\subset \tilde{J}^{n,+}_{\tilde{m}_{1}}\subset J^{n,+}_{m_{2}}\subset \tilde{J}^{n,+}_{\tilde{m}_{2}}\subset \cdots {.}$$
Notice that for any $j>0$ and any $n,m$ large enough relative to $m_{j},\tilde{m}_{j}$. The flow $\varphi^{n}_{m}$ goes inside $J^{n,+}_{m_{j}}$ and $\tilde{J}^{n,+}_{\tilde{m}_{j}}$ along $\partial J^{n,+}_{m_{j}}\setminus \partial Str( \tilde{R} )$
and $\partial \tilde{J}^{n,+}_{\tilde{m}_{j}}\setminus \partial Str( \tilde{R})$ respectively. Therefore, the attractor maps, together with the isomorphisms $\tilde{\rho}^{*,+}_{*}$ (as defined in Proposition \ref{stability of the conley index}) give a direct system in the category $\mathfrak{C}$
\begin{equation}\label{mixed attractor system}
I^{n_{1},+}_{m_{1}}\rightarrow \tilde{I}^{\tilde{n}_{1},+}_{\tilde{m}_{1}}\rightarrow I^{n_{2},+}_{m_{2}}\rightarrow \tilde{I}^{\tilde{n}_{2},+}_{\tilde{m}_{2}}\rightarrow \cdots
\end{equation}
for suitable choices of $n_{1}<\tilde{n}_{1}<n_{2}<\tilde{n}_{2} < \cdots$, where the connecting maps are defined in a similar way as (\ref{attractor system}). Since the attractor maps are transitive  as mentioned after Proposition \ref{Attractor-repeller-exact sequence}, the composition of the connecting morphisms $I^{n_{m_{j}},+}_{m_{j}}\rightarrow \tilde{I}^{\tilde{n}_{j},+}_{\tilde{m}_{j}}\rightarrow I^{n_{j+1},+}_{m_{j+1}}$ is the same as the attractor map for $\text{inv}(J^{n,+}_{m_{j}})\subset \text{inv}(J^{n,+}_{m_{j+1}})$. Therefore, we see that (\ref{mixed attractor system}) contains both a subsystem of (\ref{attractor system}) and a subsystem of (\ref{attractor system}'). By Lemma \ref{subsyemtem}, this implies that (\ref{attractor system}) and (\ref{attractor system}') are canonically isomorphic as objects of $\mathfrak{S}$ . In other words, up to canonical isomorphisms, the spectrum invariants $\underline{\text{swf}}^{A}$ and $\underline{\text{SWF}}^{A}$ do not depend on the choice of $\{h_{j}\},\bar{g}$ and $\theta$. The case of $\underline{\text{swf}}^{R}$ and $\underline{\text{SWF}}^{R}$ can be shown similarly. We have proved the invariance for (V).

The proof of the invariance for (IV) is easy: Let $\tilde{R}_{0}<\tilde{R}_{1}$ be two numbers which are both larger than the constant $R_{0}$ from Theorem~\ref{Boundedness of trajectories}. Notice that when we choose a suitable choice of parameters   $(\{h_{j}\},\bar{g},\theta)$ for $\tilde{R}_{1}$, these parameters also work for $\tilde{R}_{0}$ since $\tilde{R}_{0}<\tilde{R}_{1}$. Denote by $J^{n,\pm}_{m}(\tilde{R}_{i})$ the corresponding bounded set corresponding for $i=0,1$. Then it is straightforward to see that, for any positive integer $m$ and any sufficiently large integer $n$ (relative to $m$), the sets $J^{n,\pm}_{m}(\tilde{R}_{0})$ and $J^{n,\pm}_{m}(\tilde{R}_{1})$ are both isolating neighborhoods of the same isolated invariant set. Therefore, their Conley indices are related to each other by canonical {isomorphisms} which are compatible with attractor-repeller maps. This implies the invariance for (IV).
\begin{rmk}\label{different strips}
Actually, from the above argument, we can replace $Str(R)$ in our construction with any set $C \subset Coul(Y)$ satisfying the following conditions:
\begin{enumerate}
\item For any bounded subset $A\subset i\Omega^{1}_{h}(Y)$, the set $p_{\mathcal{H}}^{-1}(A)\cap C$ is also bounded;
\item Any finite type Seiberg-Witten trajectory is contained in the interior of $C$.
\end{enumerate}
Also, we can define $\{J^{\pm}_{m}\}$ to be any sequence of bounded, closed subsets of $C$ such that $J_m^{\pm} \subset J_{m+1}^{\pm}$, $\cup_{m=1}^{\infty} J_m^{\pm} = C$ and for any $m>0$ and $n$ large enough relative to $m$ the flow $\varphi^{n}_{m}$ goes inside (resp. outside) $J^{n,+}_{m}$ (resp. $J^{n,-}_{m}$) along $\partial J^{n,+}_{m}\setminus \partial C$ (resp. $\partial J^{n,-}_{m}\setminus \partial C $).
\end{rmk}

As for (III),  we choose different sequences $\{\lambda_{n}\},\{\mu_{n}\}$ and $\{\tilde{\lambda}_{n}\},\{\tilde{\mu}_{n}\}$. By Lemma~\ref{subsyemtem}, we can pass to their subsequences and assume that $\lambda_{n+1}<\tilde{\lambda}_{n}<\lambda_{n}$ and $\mu_{n}<\tilde{\mu}_{n}<\mu_{n+1}$  for any $n$. Let $I^{n,+}_{m}$ and $\tilde{I}^{n,+}_{m}$ be the objects of $\mathfrak{C}$ obtained by desuspending the Conley indices corresponding to $\{\lambda_{n}\},\{\mu_{n}\}$ and $\{\tilde{\lambda}_{n}\},\{\tilde{\mu}_{n}\}$ respectively. We can repeat the proof of Proposition~\ref{stability of the conley index} and establish canonical isomorphisms $I^{n,+}_{m}\cong \tilde{I}^{n,+}_{m}$ and $I^{n+1,+}_{m} \cong \tilde{I}^{n+1,+}_{m}$ for any positive integer $m$ and any  sufficiently large integer $n$ (relative to $m$). Moreover, they form commutative diagrams similar to (\ref{commutative diagram for attractor}). This implies that $\underline{\text{swf}}^{A}$ and $\underline{\text{SWF}}^{A}$ are independent of (III). The repeller case follows in the same manner.

\subsection{The invariance for (II)}\label{subsection invariance II} In this subsection, we will consider any two choices of good perturbation $f_{j} \colon \mathcal{C}_{Y}\rightarrow \mathbb{R}$  for $j=1,2$.
Recall that $f_{j}(a,\phi)=\frac{\delta_{j}}{2}\|\phi\|_{L^{2}}^{2}+\bar{f}_{j}(a,\phi)$, where $\delta_{j}$ is a real constant and $\bar{f}_{j}$ is an extended cylinder function. We first assume that $\delta_{1}=\delta_{2}=\delta$. Since we do not know whether the space of good perturbation is path connected, the usual homotopy invariance argument does not work. Therefore, we follow a different approach here. Because the whole argument is relatively long and technical, we first sketch the the rough idea as follows.

Denote by ${\mathcal{L}}_{j}$ the restriction of $CSD_{\nu_{0},f_{j}}$ to $Coul(Y)$. Recall that we identify $i\Omega^{1}_{h}(Y)$ with $\mathbb{R}^{b_1}$ by choosing independent harmonic forms $\{ h_{j} \}$. For any real number $e\geq 1$, we will construct a family of ``mixed'' functionals $\stL^s_e $ for $s \in [0,1]$ such that $\stL^1_e = {\mathcal{L}}_{2} $ and $\stL^{0}_{e}$ equals ${\mathcal{L}}_{1}$ on $p_{\mathcal{H}}^{-1}([-e+1,e-1]^{b_1})$ and equals
${\mathcal{L}}_{2}$ on $p_{\mathcal{H}}^{-1}(\mathbb{R}^{b_1}\setminus (-e,e)^{b_1})$. Suppose that all finite type flow lines of $\mathcal{L}^{s}_{e}$ are contained in $Str( \tilde{R} )$ and consider an increasing sequence of bounded subsets
$$
J^{+}_{m_{1}}\subset \tilde{J}^{+}_{\tilde{m}_{1}}\subset  J^{+}_{m_{2}}\subset \tilde{J}^{+}_{\tilde{m}_{2}}\subset \cdots
$$
where $J^{+}_{m_{j}}$  and $\tilde{J}^{+}_{m_{j}}$ are the bounded subsets of $Str( \tilde{R} )$ corresponding to ${\mathcal{L}}_{1}$ and ${\mathcal{L}}_{2}$ respectively. We will require that, for each positive integer $j$, there exists a real number $e_{j} \ge 1 $ satisfying
$$
        J^{+}_{m_{j}}
     \subset p_{\mathcal{H}}^{-1}([-e_{j}+1,e_{j}-1]^{b})  \cap Str(\tilde{R})
      \subset p_{\mathcal{H}}^{-1}([-e_{j},e_{j}]^{b}) \cap Str( \tilde{R})
      \subset J^{+}_{\tilde{m}_{j}}.
$$
Let $\varphi^n (\mathcal{L}) $ be the approximated gradient flow of $\mathcal{L} $ on the compact set $J^{n,+}_{m_{}} $. Since ${\mathcal{L}}_{1}$ equals $\stL^{0}_{e_{j}}$ when restricted to $J^{+}_{m_{j}}$ and the flow goes inside $J^{n,+}_{m_{j}}$, we have an attractor map
\[
   I_{S^{1}}(\varphi^{n}({\mathcal{L}}_{1}),\text{inv}(J^{n,+}_{m_{j}})) = I_{S^{1}}(\varphi^{n}(\stL^{0}_{e_{j}}),\text{inv}(J^{n,+}_{m_{j}}))\rightarrow I_{S^{1}}(\varphi^{n}(\stL^{0}_{e_{j}}),\text{inv}(\tilde{J}^{n,+}_{\tilde{m}_{j}})).
\]
On the other hand, we have
$I_{S^{1}}(\varphi^{n}(\stL^{0}_{e_{j}}),\text{inv}(\tilde{J}^{n,+}_{\tilde{m}_{j}}))\cong I_{S^{1}}(\varphi^{n}(\mathcal{L}_{2}),\text{inv}(\tilde{J}^{n,+}_{\tilde{m}_{j}}))$
by continuity of Conley indices. We combine these  and obtain a map
$$
I_{S^{1}}(\varphi^{n}({\mathcal{L}}_{1}),\text{inv}(J^{n,+}_{m_{j}}))\rightarrow I_{S^{1}}(\varphi^{n}({\mathcal{L}}_{2}),\text{inv}(\tilde{J}^{n,+}_{\tilde{m}_{j}})).
$$

We also construct another family of functionals $\tilde{\stL}^{s}_{e}$ to obtain a map $I_{S^{1}}(\varphi^{n}({\mathcal{L}}_{2}),\text{inv}(\tilde{J}^{n,+}_{ \tilde{m}_{j}}))\rightarrow  I_{S^{1}}(\varphi^{n}({\mathcal{L}}_{1}),\text{inv}(J^{n,+}_{m_{j+1}}))$.
We will then prove that the composition
\[
      I_{S^{1}}(\varphi^{n}({\mathcal{L}}_{1}),\text{inv}(J^{n,+}_{m_{j}}))
              \rightarrow
      I_{S^{1}}(\varphi^{n}({\mathcal{L}}_{2}),\text{inv}(\tilde{J}^{n,+}_{ \tilde{m}_{j}}))
              \rightarrow
      I_{S^{1}}(\varphi^{n}({\mathcal{L}}_{1}),\text{inv}(J^{n,+}_{m_{j+1}}))
\]
is just the attractor map corresponding to ${\mathcal{L}}_{1}$. A similar result holds for ${\mathcal{L}}_{2}$. Therefore, we have constructed a ``mixed direct system'' in the category $\mathfrak{C}$ and the spectra corresponding to $f_{1},f_{2}$ are both subsequential colimit of it. Therefore, the invariance of $\underline{\text{swf}}^{A}$ is implied by Lemma~\ref{subsyemtem}. The $\underline{\text{swf}}^{R}$ case can be proved similarly.

There is one technical difficulty here. We need to find a uniform constant $R_{2}$ (independent of $e,s$) such that $Str(R_{2})$ contains all the finite type trajectories of $\stL^{s}_{e}$ and $\tilde{\stL}^{s}_{e}$. This will be taken care by Lemma~\ref{the uniform bound for trajectories} and Lemma~\ref{double bump functions}, which generalize Theorem~\ref{Boundedness of trajectories}.


Let us prepare some general results regarding the perturbations. Recall that we have a canonical isomorphism
$$
\pi_{0}(\mathcal{G}_{Y})\cong \pi_{0}(\mathcal{G}^{h}_{Y})\cong H^{1}(Y;\mathbb{Z}).
$$
 For any positive integer $m$, we denote by $m\mathcal{G}_{Y}$ (resp. $m\mathcal{G}_{Y}^{h}$) the subgroup of $\mathcal{G}_{Y}$ (resp. $\mathcal{G}_{Y}^{h}$) consisting of the connected components corresponding to $m\cdot H^{1}(Y;\mathbb{Z})$.

\begin{defi} \label{def m-periodic}
For a positive integer $m$, a continuous function $f \colon Coul(Y)\rightarrow \mathbb{R}$ is called $m$-periodic
if $f$ is invariant under the action of $m\mathcal{G}_{Y}^{h}$, which implies that $f\circ \Pi$ is invariant under $m\mathcal{G}_{Y}$.
\end{defi}

We will also need the following definition of tame functions.
\begin{defi} \label{m-periodic tame functions}
A smooth function $f \colon Coul(Y)\rightarrow \mathbb{R}$ is called a tame function if the formal gradient $\operatorname{grad} (f\circ \Pi)$  satisfies all the conditions of the tame perturbations \cite[Definition~10.5.1]{Kronheimer-Mrowka} except that it needs not be invariant under the full gauge group $\mathcal{G}_{Y}${,}   where $\Pi:\mathcal{C}_Y \rightarrow Coul(Y)$ is the non-linear Coulomb projection.

Furthermore, a continuous family of functions $\{ f_w \} $ parametrized by a compact manifold $W$ (possibly with boundary)  is called  a continuous family of tame functions if each function is tame and $\grad{(f_w \circ \Pi)} $ extends to a continuous family of maps on the cylinder $I \times Y $. In addition, we require that the constant $m_{2}$ and the function $\mu_{1}$ from \cite[Definition~10.5.1]{Kronheimer-Mrowka} are uniform with respect to $w \in W$.

%
%
\end{defi}

Now we describe a way to construct a continuous family of tame functions from any pair of extended cylinder function, given a family of smooth function.

\begin{lem} \label{lem f_w continuous family}
Let $W$ be a compact manifold and $\tilde{\tau}_{w} \colon i\Omega_{h}^{1}(Y)\cong\mathbb{R}^{b_1}\rightarrow \mathbb{R} $ be a smooth family of smooth functions parametrized by $w \in W $.
Then, we can choose a sequence of constants $\{C_{j}\}$ in the definition of the space of perturbations $\mathcal{P}$ (c.f. Definition~\ref{def space perturbation}) so that, for any $\delta \in \mathbb{R}$ and any $\bar{f}_{1},\bar{f}_{2}\in \mathcal{P}$, a family of functions $\tilde{f}_{w} \colon Coul(Y) \rightarrow \mathbb{R}$ given by
\begin{equation}\label{mixed tame functions}
         \tilde{f}_{w}(a,\phi)
     :=    \frac{\delta}{2} \| \phi \|^{2}_{L^{2}}  +
            (\tilde{\tau}_{w} \circ  \pi_{\mathcal{H}} (a))\cdot \bar{f}_{1}(a,\phi) +
            (1-\tilde{\tau}_{w} \circ\pi_{\mathcal{H}} (a))\cdot \bar{f}_{2}(a,\phi).
\end{equation}
is a continuous family of tame functions. Moreover, if $\tilde{\tau}_w $ is $m \mathbb{Z}^{b_1} $-periodic, then $\tilde{f}_w $ is $m$-periodic.
\end{lem}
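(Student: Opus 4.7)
The plan is to verify the two tameness conditions on $\operatorname{grad}(\tilde{f}_w\circ \Pi)$ piece by piece and then read off the periodicity.

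First I would compute the gradient. Writing $\tilde{f}_w = \frac{\delta}{2}\|\phi\|^2 + \tau_w(a)\bar{f}_1 + (1-\tau_w(a))\bar{f}_2$ with $\tau_w(a) := \tilde{\tau}_w\circ p_{\mathcal{H}}(a)$, the gradient of $\tilde{f}_w \circ \Pi$ on $\mathcal{C}_Y$ decomposes (after composing with $\Pi_*$ and using the chain rule) into three kinds of terms:
\begin{enumerate}[(a)]
\item the bounded quadratic piece coming from $\frac{\delta}{2}\|\phi\|^2$, which is a tame perturbation by \cite[Sec.~11]{Kronheimer-Mrowka};
\item the ``weighted'' pieces $\tau_w(a)\operatorname{grad}(\bar{f}_1\circ\Pi)$ and $(1-\tau_w(a))\operatorname{grad}(\bar{f}_2\circ\Pi)$;
\item the ``derivative of the cut-off'' piece $\bigl(\bar{f}_1(a,\phi)-\bar{f}_2(a,\phi)\bigr)\cdot(d\tau_w)_{p_{\mathcal{H}}(a)}\circ p_{\mathcal{H}}$, viewed as a 1-form valued vector field.
\end{enumerate}
The projection $p_{\mathcal{H}}$ is a bounded linear map into a finite-dimensional space, $\tilde{\tau}_w$ is smooth on $\mathbb{R}^{b_1}$, and $W$ is compact, so the cut-off $\tau_w$ and all of its spatial derivatives are uniformly bounded in $w$ as multiplication operators on every Sobolev space.

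Next I would verify the tame perturbation axioms of \cite[Def.~10.5.1]{Kronheimer-Mrowka} for (b) and (c). For (b), since $\tau_w$ depends only on the harmonic part of $a$, multiplying the known tame perturbation $\operatorname{grad}(\bar{f}_j\circ\Pi)$ by the scalar $\tau_w(a)$ preserves every bound: the resulting vector field is bounded, smooth as a map $L^2_m\to L^2_m$, and its $n$-th derivatives pick up at most a polynomial in $\|p_{\mathcal{H}}(a)\|$ times those of $\operatorname{grad}(\bar{f}_j\circ\Pi)$, so the polynomial growth estimate of Proposition~\ref{prop perturbproperty}(\ref{item propertyperturb2}) transfers. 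For (c), I would use that each $\bar{f}_j$ is bounded by Proposition~\ref{prop perturbproperty}(\ref{item bounded function}), hence $\bar{f}_1-\bar{f}_2$ is a bounded scalar function whose differentials of order $n$ on $L^2_m$ again satisfy $\|\mathcal{D}^n(\bar{f}_1-\bar{f}_2)\|\le C\, p_{m,n}(\|(a,\phi)\|_{L^2_m})$, and then pair with the bounded finite-rank factor $d\tau_w\circ p_{\mathcal{H}}$. The four-dimensional extension (property (\ref{item propertyperturb3}) plus the Kronheimer--Mrowka tame-perturbation conditions on $I\times Y$) follows because all factors extend: $\tau_w$ extends by pulling back along the slice-wise projection to harmonic forms, and the product of tame perturbations with bounded functions of the harmonic part is tame on the cylinder by an argument identical to \cite[Thm.~11.6.1]{Kronheimer-Mrowka}.

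The main obstacle is choosing the sequence $\{C_j\}$ so that these estimates are \emph{uniform} in the pair $(\bar{f}_1,\bar{f}_2)$ and in $w\in W$. The issue is that the estimate in Proposition~\ref{prop perturbproperty}(\ref{item propertyperturb2}) involved a constant depending on $\bar{f}$, while the piece (c) is bilinear in $\bar{f}_1,\bar{f}_2$ and involves the $C^n$-norm of $\tilde{\tau}_w$; so the relevant constant for each cylinder function $\hat{f}_j$ in the expansion should dominate the worst product of two such bounds, together with the $\sup_{w\in W}$ of finitely many derivatives of $\tilde{\tau}_w$. The fix, following the proof of Proposition~\ref{prop perturbproperty}, is to replace the earlier $C_j\ge\max\{C'_{l_1,l_2,l_3}:l_i\le j\}$ with the larger (but still finite) lower bound
\[
C_j\ \ge\ \max\bigl\{C'_{l_1,l_2,l_3}\cdot C'_{l_4,l_5,l_6}\cdot M_j\ :\ l_i\le j\bigr\},
\]
where $M_j=\sup_{w\in W,\,|n|\le j}\|\tilde{\tau}_w\|_{C^n}$; the same telescoping sum estimate as in Proposition~\ref{prop perturbproperty} then produces the uniform bound $\|\mathcal{D}^n\operatorname{grad}(\tilde{f}_w\circ\Pi)\|\le C\, p_{m,n}(\|(a,\phi)\|_{L^2_m})$ with $C$ depending only on $\|\bar{f}_1\|_{\mathcal{P}},\|\bar{f}_2\|_{\mathcal{P}}$ and $W$. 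Continuity in $w$ follows from the same estimate applied to differences. I will absorb this enlargement into the definition of $\{C_j\}$, which is harmless because only finitely many conditions (those of Proposition~\ref{prop perturbproperty}, Lemma~\ref{lem C_j W_M}, Lemma~\ref{double mixed tame functions}, and the new one here) need to be imposed simultaneously.

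Finally, periodicity is easy: the harmonic gauge group $\mathcal{G}^h_Y$ acts on $Coul(Y)$ by $h\cdot(a,\phi)=(a-h^{-1}dh,h\phi)$, and $p_{\mathcal{H}}(a-h^{-1}dh)=p_{\mathcal{H}}(a)-[h^{-1}dh]$ shifts $p_{\mathcal{H}}(a)$ by the corresponding lattice element of $2\pi i\,H^1(Y;\mathbb{Z})\cong \mathbb{Z}^{b_1}$. If $\tilde{\tau}_w$ is $m\mathbb{Z}^{b_1}$-periodic then $\tau_w$ is invariant under $m\mathcal{G}_Y^h$, while $\bar{f}_1,\bar{f}_2$ are fully $\mathcal{G}_Y$-invariant, so $\tilde{f}_w$ is invariant under $m\mathcal{G}_Y^h$ and hence $m$-periodic in the sense of Definition~\ref{def m-periodic}.
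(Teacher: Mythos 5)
Your proposal is correct in outline, but it is genuinely a different route from the paper's, and the difference is worth pointing out.

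You take the direct route: decompose $\operatorname{grad}(\tilde{f}_w\circ\Pi)$ into the quadratic piece, the weighted pieces, and the derivative-of-cutoff piece, and re-verify the tame-perturbation axioms term by term, enlarging $\{C_j\}$ so the resulting constants are uniform in $\|\bar{f}_1\|_{\mathcal{P}},\|\bar{f}_2\|_{\mathcal{P}}$ and in $w\in W$. This works in principle, but the heaviest part of the verification --- the four-dimensional conditions of \cite[Def.~10.5.1]{Kronheimer-Mrowka}, i.e.\ the estimates for the induced perturbation on cylinders $I\times Y$ --- is exactly the "complicated analysis" that \cite[Theorem~11.6.1]{Kronheimer-Mrowka} was designed to do for cylinder functions, and your write-up dispatches it in one sentence. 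The paper deliberately avoids repeating this analysis: it introduces an auxiliary spin$^c$ $3$-manifold $Y'$ with $b_1(Y')$ large enough to embed $W$ into the Picard torus $i\Omega_h^1(Y')/\mathcal{G}^{h,o}_{Y'}$, replaces the family $\{\tilde\tau_w\}$ by a single function $\tilde\tau$ on $i\Omega^1_h(Y)\times i\Omega^1_h(Y')$, and observes that the resulting products $(a,\phi)\times(a',\phi')\mapsto \tilde\tau(\pi_{\mathcal{H}}'(a'),\pi_{\mathcal{H}}(a))\cdot\hat{f}_j(a,\phi)$ are (essentially) cylinder functions on $\mathcal{C}(Y)\times\mathcal{C}(Y')$. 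The whole family then inherits tameness from one application of the existing theorem to the disjoint union $Y\sqcup Y'$, and uniformity in $w$ is automatic because $w$ has been absorbed into the $Y'$-variable. So: your approach is more elementary and avoids the auxiliary manifold, at the cost of having to re-examine several pages of Sobolev estimates on cylinders; the paper's approach adds a clever reformulation at the front end in exchange for getting the hard estimates for free. Also, a small technical remark on your $\{C_j\}$ enlargement: the cutoff piece $(\bar{f}_1-\bar{f}_2)\cdot d\tau_w\circ p_{\mathcal{H}}$ is \emph{linear}, not bilinear, in the pair $(\bar{f}_1,\bar{f}_2)$, so a product bound of the form $C'_{l_1,l_2,l_3}\cdot C'_{l_4,l_5,l_6}\cdot M_j$ is an over-enlargement --- harmless, but $\max\{C'_{l_1,l_2,l_3}\cdot M_j : l_i\le j\}$ would already suffice. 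Your periodicity argument matches the paper's and is fine.
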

\begin{proof}
This is actually a parametrized version of \cite[Theorem~11.6.1]{Kronheimer-Mrowka} and we will focus only on the
term $(\tilde{\tau}_{w} \circ  \pi_{\mathcal{H}} (a))\cdot \bar{f}_{1}(a,\phi)$.
To avoid repeating complicated analysis
there, we introduce a trick turning a family of functions into a single function. Let $Y'$ be another spin$^{c}$
3-manifold with $b_{1}(Y') > 2\dim W$ so that we can embed $W$ in the torus $i\Omega_{h}^{1}(Y')/\mathcal{G}_{Y'}^{h,o}$.
We now consider the family $\{\tilde{\tau}_{w}\}_{w \in W}$ as a single function on $i\Omega_{h}^{1}(Y) \times W$ and extend it to $\tilde{\tau} \colon i\Omega_{h}^{1}(Y)\times i\Omega_{h}^{1}(Y')\rightarrow
\mathbb{R}$. Recall that $\bar{f}_1 =  \sum_{j=1}^{\infty} \eta_j \hat{f}_j$, where $\hat{f}_j $ is a cylinder function of $Y$ with $\sum_{j=1}^{\infty} C_j \left\vert\eta_j\right\vert < \infty  $.
We define a function $$\hat{f}'_{j} \colon \mathcal{C}_{Y}\times \mathcal{C}_{Y'}\rightarrow \mathbb{R}$$
given by
$$
(a,\phi)\times (a',\phi')\mapsto \bar{\tau}(\pi_{\mathcal{H}}'(a'),\pi_{\mathcal{H}}(a))\cdot \hat{f}_{j}(a,\phi),
$$
where $\pi_{\mathcal{H}}' \colon i\Omega^{1}(Y')\rightarrow i\Omega^{1}_{h}(Y')$ denotes the projection onto harmonic forms on $Y'$.
These functions $\hat{f}'_j $ almost fit into the definition of cylinder functions (cf. \cite[Section~11]{Kronheimer-Mrowka}), on $\mathcal{C}(Y)\times \mathcal{C}(Y')$. We can still repeat the argument the proof of \cite[Theorem~11.6.1]{Kronheimer-Mrowka} and show that, by setting $\{C_{j}\}$ to increase fast enough, the formal gradient $\operatorname{grad}(\sum_{j}\eta_{j}\hat{f}'_{j})$ is a tame perturbation for the manifold $Y\cup Y'$ except that it is not invariant under the full gauge group. As a result, it is not hard to see that this actually implies that $(\tilde{\tau}_{w} \circ  \pi_{\mathcal{H}} (a))\cdot \bar{f}_{1}(a,\phi)$
 is a continuous family of tame functions.
\end{proof}

For a general functional $\mathcal{L} \colon Coul(Y)\rightarrow \mathbb{R}$, we can consider its negative gradient flow line $\gamma \colon I\rightarrow Coul(Y)$, described by the equation $\frac{d\gamma(t)}{dt}=-\gradtil\mathcal{L}(\gamma(t))$. Such a trajectory will be called an $\mathcal{L}$-trajectory. As before, we define the topological energy by

$$
   \mathcal{E}^{\text{top}}(\gamma,\mathcal{L})
      :=2(\mathop{\sup} \limits_{t \in I}\mathcal{L}( \gamma(t) ) - \mathop{\inf} \limits_{t\in I}\mathcal{L}( \gamma(t) ) ) .
$$
Recall that a trajectory is called finite type if it is contained in a bounded subset of $Coul(Y)$. We have the following uniform boundedness result for functionals perturbed by periodic functions.

\begin{pro}\label{uniform bound for compact set}
Let $\{{f}_{w} \}$ be a continuous family of $m$-periodic tame functions parametrized by a compact manifold $W$ and consider a family of functionals $\mathcal{L}_{w}=CSD_{\nu_{0}}|_{Coul(Y)} + f_{w}$.  Then for any $C>0$, there exist constants $R,C'$ such that for any $w \in W$ and any $\mathcal{L}_{w}$-trajectory $\gamma \colon [-1,1]\rightarrow Coul(Y)$ with topological energy $\mathcal{E}^{\text{top}}(\gamma,\mathcal{L}_{w})\leq C $, we have $\gamma(0)\in Str(R)$ and $|\mathcal{L}_{w}(\gamma(0))|\leq C'$.
\end{pro}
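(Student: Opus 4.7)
The plan is to argue by contradiction, combining the $m$-periodicity (to translate trajectories into a bounded fundamental domain) with the standard Seiberg-Witten compactness machinery, now in its family version. Suppose the statement fails: there exist sequences $w_n \in W$ and $\mathcal{L}_{w_n}$-trajectories $\gamma_n \colon [-1,1] \to Coul(Y)$ with $\mathcal{E}^{\text{top}}(\gamma_n, \mathcal{L}_{w_n}) \leq C$, such that, after passing to a subsequence, either $\gamma_n(0) \notin Str(n)$ for every $n$, or $|\mathcal{L}_{w_n}(\gamma_n(0))| \to \infty$. I will derive a contradiction in four steps.

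First, I exploit that $m\mathcal{G}_Y^{h,o}$ preserves each $\mathcal{L}_{w_n}$ and acts on the harmonic component $p_{\mathcal{H}}(\cdot)$ by translations in the lattice $m \cdot 2\pi i \im(H^1(Y;\mathbb{Z}) \to H^1(Y;\mathbb{R})) \cong m\mathbb{Z}^{b_1}$. For each $n$, I choose $h_n \in m\mathcal{G}_Y^{h,o}$ and replace $\gamma_n$ by $h_n \cdot \gamma_n$ so that $p_{\mathcal{H}}(\gamma_n(0))$ lies in a fixed bounded fundamental domain $D \subset \mathbb{R}^{b_1}$. Neither the topological energy nor the values of $\mathcal{L}_{w_n}$ along the trajectory are affected. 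Since $\|\tfrac{d}{dt}p_{\mathcal{H}}(\gamma_n)\|_{L^2} \leq \|\gradtil \mathcal{L}_{w_n}\|_{\tilde g}$, the analytic-energy bound implied by $C$ forces $p_{\mathcal{H}}(\gamma_n(t))$ to remain in a slightly larger fixed bounded set for all $t \in [-1,1]$. By compactness of $W$, I then extract a further subsequence with $w_n \to w_\infty \in W$.

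The analytic heart of the argument is establishing a uniform $L^2_k$-bound on $\gamma_n$ over a sub-interval such as $[-1/2, 1/2]$. From
\[
\mathcal{E}^{\text{top}}(\gamma_n, \mathcal{L}_{w_n}) = \int_{-1}^{1}\|\gradtil \mathcal{L}_{w_n}(\gamma_n(t))\|^2_{\tilde g}\, dt \leq C/2,
\]
a mean-value argument yields a time $t_n^\ast \in [-1,1]$ at which $\gradtil \mathcal{L}_{w_n}(\gamma_n(t_n^\ast))$ is $L^2$-small. Following the Kronheimer--Mrowka proof behind Theorem~\ref{Boundedness of trajectories} and its adaptation in \cite{Khandhawit2}, a Weitzenb\"ock and maximum-principle estimate produces a uniform $L^\infty$ bound on the spinor $\phi_n(t_n^\ast)$; combined with the bounded harmonic part carried over from the previous step and the gauge-fixing in $Coul(Y)$, an elliptic bootstrap using the tame-perturbation structure (Proposition~\ref{prop perturbproperty}) together with the $w$-uniform constants $m_2, \mu_1$ from Definition~\ref{m-periodic tame functions} upgrades this to a uniform $L^2_k$ bound on $\gamma_n(t_n^\ast)$. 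Propagating the bound along the trajectory via the quadratic-like estimates for $c$ in their family form (Definition~\ref{a family of quadratic-like maps}) then gives a uniform $L^2_k$ bound on $\gamma_n$ over $[-1/2, 1/2]$, in particular on $\gamma_n(0)$.

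With these uniform bounds secured, Rellich, Arzel\`a--Ascoli, and the family analogue of Proposition~\ref{convegence of approximated trajectories} allow me to extract a subsequence converging strongly in $L^2_{k-1}$ (and weakly in $L^2_k$) to an $\mathcal{L}_{w_\infty}$-trajectory $\gamma_\infty$. Uniform boundedness of $\|\gamma_n(0)\|_{L^2_k}$ already implies $\gamma_n(0) \in Str(R)$ for some fixed $R$, contradicting $\gamma_n(0) \notin Str(n)$, while continuity yields $\mathcal{L}_{w_n}(\gamma_n(0)) \to \mathcal{L}_{w_\infty}(\gamma_\infty(0))$, contradicting $|\mathcal{L}_{w_n}(\gamma_n(0))| \to \infty$. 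The hard part is the analytic step: extracting a uniform $L^\infty$ bound on the spinor from the topological energy alone, uniformly in $w \in W$. This uniformity is precisely what is built into Definition~\ref{m-periodic tame functions}, so the classical Kronheimer--Mrowka argument should adapt to the family setting without fundamental modification.
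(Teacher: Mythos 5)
Your overall contradiction scheme is the same as the paper's, and Steps~1 and~3 (translating via $m\mathcal{G}_Y^{h,o}$ so that the harmonic component stays in a bounded set, passing to $w_n \to w_\infty$, and extracting a convergent subsequence by Rellich/Arzel\`a--Ascoli) are sound. The genuine gap is in Step~2, which is precisely the step you yourself flag as ``the hard part,'' and the argument you sketch there does not go through.

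The issue is that you attempt to do the compactness at a single time slice. Finding a mean-value time $t_n^\ast$ where $\|\gradtil \mathcal{L}_{w_n}(\gamma_n(t_n^\ast))\|$ is small does not put $\gamma_n(t_n^\ast)$ close, in any controllable topology, to a Seiberg--Witten solution: there is no Palais--Smale condition available on $Coul(Y)$, and establishing such compactness near the critical set is exactly what is at issue. Nor does the Weitzenb\"ock/maximum-principle argument apply: the elliptic inequality $\Delta |\phi|^2 + \tfrac{s}{2}|\phi|^2 + \tfrac{1}{2}|\phi|^4 \le 0$ requires the configuration to actually satisfy the 3-dimensional SW equations; for a near-critical point the error terms are uncontrolled without a bootstrap that is itself circular. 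The subsequent claim that ``propagating the bound along the trajectory via the quadratic-like estimates'' yields a uniform $L^2_k$ bound on $[-1/2,1/2]$ is also too optimistic: item~(i) of Definition~\ref{quadratic-like map} says that quadratic-like maps send bounded sets to bounded sets, but the flow is $l + c$ with $l$ an unbounded operator, so there is no Gronwall-type $L^2_k$ propagation in the 3-dimensional picture.

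What the paper does (following Kronheimer--Mrowka's Theorem~10.7.1) is to lift $\gamma_n|_{[-1/2,1/2]}$ to $\mathcal{C}_Y$, regard the lift $\tilde\gamma_n$ as a 4-dimensional configuration $(\hat a_n, \hat\phi_n)$ over the cylinder $X = [-1/2,1/2]\times Y$, and then choose a 4-dimensional gauge transformation $\hat u_n$ whose restriction to $\{0\}\times Y$ lies in $m\mathcal{G}_Y$ and which simultaneously imposes the Coulomb condition $\hat d^{\,*}(\hat a_n - \hat u_n^{-1}d\hat u_n)=0$, the boundary condition $(\hat a_n - \hat u_n^{-1}d\hat u_n)(\mathbf{n})=0$, and the period normalization $\int_X (\hat a_n - \hat u_n^{-1}d\hat u_n)\wedge(*_4 \hat h_l)\in[0,m)$. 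The last condition is what injects the $m$-periodicity; the first two set up the elliptic problem. The $C^0$ bound on the spinor then comes from the 4-dimensional Weitzenb\"ock/maximum principle over the cylinder, and the $w$-uniform constants of Definition~\ref{m-periodic tame functions} make the Kronheimer--Mrowka bootstrap run uniformly in $w$, producing convergence of $(\hat a_n - \hat u_n^{-1}d\hat u_n,\ \hat u_n\hat\phi_n)$ in $L^2_{k+1/2}$ on interior cylinders, hence $L^2_k$ convergence of $u_n\cdot\gamma_n(0)$ for suitable $u_n\in m\mathcal{G}^h_Y$. To repair your proof, you should replace Step~2 with this 4-dimensional lift and gauge-fixing argument; the slice-by-slice attempt cannot substitute for it.
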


\begin{proof}
The proof is a slight adaption of \cite[Theorem~10.7.1]{Kronheimer-Mrowka}.  Suppose that the statement is not true. Then we can find a sequence  $\{\gamma_{j}\}$ of $\mathcal{L}_{w_{j}}$-trajectory $\gamma_{j} \colon[-1,1]\rightarrow Coul(Y)$
with $w_j \in W $  such that at least one of the following two situations happens:
\begin{itemize}
\item $\limsup_{j \rightarrow \infty} \|u_{j}\cdot \gamma_{j}(0)\|_{L^{2}_{k}} =\infty$  for any sequence $\{u_{j}\} \subset  m \mathcal{G}^{h}_{Y}$; \item $\limsup_{j \rightarrow \infty} | \mathcal{L}_{w_j}( \gamma_j(0)) | = \infty$.
\end{itemize}
Since $W$ is compact,  after passing to a subsequence, we may assume that $w_{j}\rightarrow w_{\infty}$.

We lift $\gamma_{j}$ to a path $\tilde{\gamma}_{j} \colon [-\frac{1}{2},\frac{1}{2}]\rightarrow \mathcal{C}_{Y}$, which is the negative gradient flow line of $CSD_{\nu_{0}} +  f_{w_{j}} \circ \Pi$. Note that we only consider an interior domain here to avoid a possible regularity issue. With $X=[-\frac{1}{2},\frac{1}{2}]\times Y$, we treat $\tilde{\gamma}_{j}$ as a section over $X$ and denote it by $(\hat{a}_{j},\hat{\phi}_{j})$. We can find a gauge transformation $\hat{u}_{j}$ over $X$  whose  restrictions to $\{0\}\times Y$ belong to $m\mathcal{G}_{Y}$ such that the following conditions hold:
\begin{enumerate}
\item
$\hat{d}^* ( \hat{a}_j -  \hat{u}_j^{-1} d \hat{u}_j) = 0$ on $X${ ;}

\item
$(\hat{a}_{j}-\hat{u}_{j}^{-1}d\hat{u}_{j})(\mathbf{n})=0$ on $\partial
X$, where $\mathbf{n}$ is the outward normal vector ;

\item For each for $l=1,\ldots, b_1 $, we have
 $\int_{\tilde{X}} (\hat{a}_{j}-\hat{u}_{j}^{-1}d\hat{u}_{j})\wedge (*_{4}\hat{h}_{l})\in [0,m)$ where $\hat{h}_{l}$ is the pull-back of $h_{l}$ on $X$;


\end{enumerate}
 The conditions in Definition~\ref{m-periodic tame functions} allow us to repeat the bootstrapping argument in the proof of \cite[Theorem~10.7.1]{Kronheimer-Mrowka} and obtain the following statement. After passing to a further subsequence, $(\hat{a}_{j}-\hat{u}_{j}^{-1}d\hat{u}_{j}, \hat{u}_{j}\cdot \hat{\phi}_{j})$ is  convergent in $L^{2}_{k+\frac{1}{2}}$ when restricted to any interior cylinder. In particular, this implies that $\Pi(\hat{u}_{j}|_{\{0\}\times Y}\cdot \tilde{\gamma}_{j}(0))$ is convergent in $L^{2}_{k}$. Notice that $\Pi(\hat{u}_{j}|_{\{0\}\times Y}\cdot \tilde{\gamma}_{j}(0))$ equals $u_{j}\cdot \gamma_{j}(0)$ for some $u_{j}\in m \mathcal{G}_{Y}^{h}$.  Also $\mathcal{L}_{w_j}( \gamma_j(0)) = \mathcal{L}_{w_j}( u_j \cdot \gamma_j(0))$ is a convergent sequence since $\mathcal{L}_w(a, \phi)$ is continuous in $w$ and $(a, \phi)$. Therefore, we arrive at a contradiction with the above two situations.

\end{proof}

We also have the following lemma, whose
proof is essentially the same as Lemma~\ref{c is quadratic-like} and we omit it.

\begin{lem}\label{a family of nonlinear term}
Let $\{{f}_{w}\}$ be a continuous family of tame functions. For
each $w \in W$, we define a nonlinear term $c_{w} \colon Coul(Y)\rightarrow L^{2}_{k}(i\ker d^{*}\oplus \Gamma(S_{Y}))$ of the gradient of $CSD_{\nu_{0}}|_{Coul(Y)} + f_{w}$ as in (\ref{def of c^1}) and (\ref{def of c^2}). Then $\{ c_{w}\}$ is a continuous family of quadratic-like maps.
\end{lem}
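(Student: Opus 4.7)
The plan is to adapt the proof of Corollary~\ref{c is quadratic-like} to the parametrized setting. All $w$-dependence of $c_w$ is concentrated in the term $\grad f_w$, since the operators $l$, $\rho$, and $\bar{\xi}$ appearing in formulas (\ref{def of c^1}) and (\ref{def of c^2}) are independent of $w$. Hence the heart of the proof is to show that $\{\grad f_w\}_{w\in W}$ is a continuous family of quadratic-like maps in the sense of Definition~\ref{a family of quadratic-like maps}; once this is done, the result will follow by closure of that property under post-composition with $w$-independent operators of nonpositive order (such as $\bar{\xi}$) and under pointwise products with sections that themselves form continuous families of quadratic-like maps.

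First I would check that each individual $c_w$ is quadratic-like, i.e.\ satisfies conditions (i)--(iii) of Definition~\ref{quadratic-like map}. Conditions (i) and (ii) follow by repeating the algebraic manipulations in the proof of Lemma~\ref{gradf is quadratic-like}, now using the polynomial bounds on $\mathcal{D}^n \grad f_w$ built into the definition of a tame perturbation (\cite[Definition 10.5.1]{Kronheimer-Mrowka}) in place of Proposition~\ref{prop perturbproperty}(ii); condition (iii) is exactly the four-dimensional extension property required of tame perturbations.

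Second I would verify the parameter-continuity clause of Definition~\ref{a family of quadratic-like maps}. Assume $s_n \to s_\infty$ in $W$ and $(\tfrac{d}{dt})^j \gamma_n \to (\tfrac{d}{dt})^j \gamma_\infty$ uniformly in $L^2_{k-1-j}$ for $j=0,\dots,m$. Expanding $(\tfrac{d}{dt})^m \grad f_{s_n}(\gamma_n(t))$ as a sum of multilinear terms $\mathcal{D}^s_{\gamma_n(t)}\grad f_{s_n}\!\bigl((\tfrac{d}{dt})^{\alpha_1}\gamma_n,\dots,(\tfrac{d}{dt})^{\alpha_s}\gamma_n\bigr)$, exactly as in Lemma~\ref{gradf is quadratic-like}, I would use the uniformity in $w$ of the constants $m_2$ and the function $\mu_1$ from Definition~\ref{m-periodic tame functions} to bound each multilinear piece, and then invoke the cylinder-continuity clause of that same definition to pass to the limit in $w$. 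Compactness of $W$ plays no special role beyond ensuring that every sequence admits a convergent subsequence.

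Third, with a family version of Lemma~\ref{gradf is quadratic-like} in hand, I would finish by assembling $c_w$ from formulas (\ref{def of c^1}) and (\ref{def of c^2}): the $w$-independent summands $\rho^{-1}(\phi\phi^*)_0$ and $\rho(a)\phi$ are trivially continuous families of quadratic-like maps; the operator $\bar{\xi}$ has order $-1$ and is $w$-independent, so its composition with a continuous family of quadratic-like maps is again such; and pointwise multiplication by $\phi$ preserves the property by the same argument as \cite[Lemma~10]{Khandhawit2}, which extends verbatim to families. The main obstacle I expect is the second step: ensuring that the polynomial bounds on $\mathcal{D}^n\grad f_w$ and the cylinder-continuity of $w\mapsto \grad f_w$ interact uniformly enough in $t$ to make the limit-passing rigorous. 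All of this, however, is precisely what Definition~\ref{m-periodic tame functions} was set up to guarantee, so the proof should be a routine parameterized rerun of Lemma~\ref{gradf is quadratic-like} and Corollary~\ref{c is quadratic-like}, which is why the authors elected to omit the details.
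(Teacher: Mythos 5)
Your proposal reconstructs exactly what the authors had in mind: they declare the proof to be "essentially the same as Lemma~\ref{c is quadratic-like}" and omit it, and your parametrized rerun — isolate the $w$-dependence in $\grad f_w$, establish that $\{\grad f_w\}$ is a continuous family of quadratic-like maps from the uniformity clauses of Definition~\ref{m-periodic tame functions}, then invoke closure under composition with the $w$-independent order-$(-1)$ operator $\bar{\xi}$ and under pointwise products — is precisely the argument of Corollary~\ref{c is quadratic-like} carried over to families. The only delicate point, which both you and the paper pass over lightly, is that the higher multilinear bounds on $\mathcal{D}^n\grad f_w$ used in the proof of Lemma~\ref{gradf is quadratic-like} come from the specific construction of $\mathcal{P}$ rather than from tameness alone; in practice this is harmless because every continuous family of tame functions appearing in the paper is built from $\mathcal{P}$-elements via Lemma~\ref{lem f_w continuous family}.
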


Now we construct explicit ``the mixed perturbation'' as follows. Choose a smooth function  $\tau \colon\mathbb{R}\rightarrow [0,1]$  satisfying $\tau|_{(-\infty,\frac{1}{4}]}\equiv 0$ and $\tau|_{[\frac{1}{2},\infty)}\equiv 1$. For any real number $e \geq 1$, we define a bump function $\tau_{e} \colon i\Omega^{1}_{h}(Y)\rightarrow [0,1]$ from $\tau$ by
$$\tau_{e}(x_{1},x_{2},...,x_{b_1})=\prod_{1\leq j\leq b_1}\tau(e+x_{j})\tau(e-x_{j}).$$
Each $\tau_e$   gives an induced tame function ${\tilde{f}}_{e}^{0} \colon Coul(Y)\rightarrow \mathbb{R}$ as in (\ref{mixed tame functions}), i.e.
\begin{equation*}
       {\tilde{f}}_{e}^{0}(a,\phi) :=
       \frac{\delta}{2} \|\phi\|_{L^{2}}^{2} +
       (\tau_{e} \circ p_{\mathcal{H}}(a,\phi)))  \cdot \bar{f}_{1}(a,\phi)+
       (1-\tau_{e} \circ p_{\mathcal{H}}(a,\phi)) \cdot \bar{f}_{2}(a,\phi),
\end{equation*}
where $\bar{f}_1 , \bar{f}_2 \in \mathcal{P}$.
With $f_j = \frac{\delta}{2} \|\phi\|_{L^{2}}^{2} + \bar{f}_j$, we note that the function ${\tilde{f}}_{e}^{0} $ equals $f_{1}$ on $p_{\mathcal{H}}^{-1}([-e+1,e-1]^{b})$ and equals
$f_{2}$ on $p_{\mathcal{H}}^{-1}(\mathbb{R}^{b}\setminus (-e,e)^{b})$. For $s\in [0,1]$, we also consider an interpolation
$\tau^s_e = (1-s)\tau_e $ and define
 \begin{equation} \label{eq familymixfunc}
    {\tilde{f}}_{e}^{s} = (1-s) {\tilde{f}}_{e}^{0}+s  f_{2} \text{ and }
     \stL^{s}_{e}=CSD_{\nu_{0}}|_{Coul(Y)}+{\tilde{f}}^{e}_{s}.
\end{equation}
Notice that ${\tilde{f}}^s_e $ is essentially a tame function induced from $\tau^s_e $ which is not $m$-periodic for any positive integer $m$. To utilize Proposition~\ref{uniform bound for compact set}, we will introduce an explicit family of smooth periodic functions such that the induced periodic tame functions agree with ${\tilde{f}}^s_e $ on desirable regions.

For any positive integer $M$, we consider a family of $(6M+6)$-periodic smooth functions parametrized by  compact manifold $W_M$ described as follows. The manifold $W_M$ is of the form $W_{M,1} \amalg W_{M,2} \amalg W_{M,3} $ where $W_{M,1} :=[1,M+1] \times [0,1] $ and $W_{M,2} := \{ (B,\sigma) \mid \emptyset \neq B \subset \{1, 2, \ldots , b_1 \} \text{ and } \sigma \colon B \rightarrow \{\pm1\} \} \times (\mathbb{R}/(6M+6)\mathbb{Z}) \times [0,1]$ and $W_{M,3} = \{1,2\}$. We construct a family of functions $\{ \tilde{\tau}_w \} $  as following:
\begin{itemize}

\item For each positive integer $M$ and  $(e,0) \in W_{M,1} $, we assign the unique $(6M+6)\mathbb{Z}^{b_1}$-periodic
function $\tilde{\tau}_{e} \colon \mathbb{R}^{b_1} \rightarrow \mathbb{R}\ $ which extends $\tau_{e}|_{[-3M-3, 3M+3]^{b_1}}$.

\item For each positive integer $M$, we pick a $(6M+6)$- periodic function $\bar{\tau}_M \colon \mathbb{R} \rightarrow [0,1] $ which extends $\tau|_{[-2M-2, 2M+2]}$. For each $(B,\sigma, \theta,0) \in W_{M,2} $,
we assign a function $\tilde{\tau}_{B,\sigma,\theta} \colon \mathbb{R}^{b_1}\rightarrow
[0,1]$ given by
$$
           \tilde{\tau }_{(B,\sigma,\theta)}(x_{1},...,x_{b_1}):=\prod_{j\in B} \bar{\tau}_M (\theta+\sigma(j) x_{j}).
$$

\item For general $w= (w' , s) \in W_{M,1} \coprod W_{M,2}$, we simply define $\tilde{\tau}_{(w',s)}:=(1-s)\tilde{\tau}_{w'}. $
\item We set $\tilde{\tau}_j \equiv 2-j $ for $j \in W_{M,3} $ so that $\tilde{f}_j = f_j $.
\end{itemize}

\begin{lem}\label{restriction to the cube}
For each positive integer $M$, any $(s,e) \in [0,1] \times [1,\infty)$ and $(e_{1},e_{2},...,e_{b_1})\in
\mathbb{R}^{b_1}$, there exists an element $w \in W_M $ such that the induced function $\tilde{f}_w $ equals ${\tilde{f}}_{e}^{s} $  on  $p_{\mathcal{H}}^{-1}([e_{1}-M,e_{1}+M]\times ...\times [e_{b_1}-M,e_{b_1}+M])$.
\end{lem}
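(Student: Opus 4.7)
My plan is to reduce the assertion to matching the coefficient of $\bar{f}_{1}$ in the two perturbations. Expanding (\ref{mixed tame functions}) and (\ref{eq familymixfunc}), one obtains at any point with $p_{\mathcal{H}}(a) = x$ the identity $\tilde{f}_{e}^{s}(a,\phi) - \tilde{f}_{w}(a,\phi) = \bigl((1-s)\tau_{e}(x) - \tilde{\tau}_{w}(x)\bigr)\bigl(\bar{f}_{1} - \bar{f}_{2}\bigr)(a,\phi)$, so it suffices to exhibit $w \in W_{M}$ with $\tilde{\tau}_{w}(x) = (1-s)\tau_{e}(x)$ on the cube $C := \prod_{j}[e_{j}-M, e_{j}+M]$. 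For $w = (w', s) \in W_{M,1}$ or $W_{M,2}$ the factor $(1-s)$ appears by construction, while $W_{M,3}$ only supplies the constants $0$ and $1$.

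\textbf{Case analysis.} I proceed by case analysis on the position of $(e_{1},\dots,e_{b_{1}})$ relative to the support of $\tau_{e}$. (Case 1) If some $|e_{j}| \geq e + M - \tfrac{1}{4}$, then the $j$-th factor $\tau(e+x_{j})\tau(e-x_{j})$ of $\tau_{e}$ vanishes identically on $[e_{j}-M, e_{j}+M]$, so $\tau_{e} \equiv 0$ on $C$; take $w = 2 \in W_{M,3}$. (Case 2) Otherwise, all $|e_{j}| < e + M - \tfrac{1}{4}$. If additionally $e \leq M+1$, then $|e_{j}| < 2M + \tfrac{3}{4}$, so $C \subset [-3M-3, 3M+3]^{b_{1}}$, the fundamental domain on which $\tilde{\tau}_{e}$ agrees with $\tau_{e}$ by definition; take $w = (e, s) \in W_{M,1}$. (Case 3) The remaining case is $e > M+1$ with all $|e_{j}| < e + M - \tfrac{1}{4}$. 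Since the two transition regions of $\tau(e+x_{j})\tau(e-x_{j})$, located near $x_{j} = \pm e$, are separated by $2e > 2M+2$, each coordinate sees at most one transition in $C$. Partition the indices into plateau indices $B_{1} = \{ j : |e_{j}| \leq e - \tfrac{1}{2} - M \}$ and transitional indices $B = \{ j : e - \tfrac{1}{2} - M < |e_{j}| < e + M - \tfrac{1}{4} \}$. If $B = \emptyset$ then $\tau_{e} \equiv 1$ on $C$, and I take $w = ((\{1\}, \sigma_{0}, \theta_{0}), s) \in W_{M,2}$ with $\sigma_{0}(1) = +1$ and $\theta_{0} \equiv M + 1 - e_{1} \pmod{6M+6}$, so that $\theta_{0} + x_{1} \in [1, 2M+1] \subset [\tfrac{1}{2}, 2M+2]$ and $\bar{\tau}_{M}(\theta_{0} + x_{1}) \equiv 1$. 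If $B \neq \emptyset$, I set $\sigma(j) = -\operatorname{sgn}(e_{j})$ for $j \in B$, $\theta \equiv e \pmod{6M+6}$, and $w = ((B, \sigma, \theta), s)$. On $C$, the argument satisfies $\theta + \sigma(j) x_{j} \equiv e + \sigma(j) x_{j} \in [\tfrac{1}{4} - 2M, \tfrac{1}{2} + 2M] \subset [-2M-2, 2M+2]$, so $\bar{\tau}_{M}(\theta + \sigma(j) x_{j}) = \tau(e + \sigma(j) x_{j})$, which is precisely the non-constant factor of $\tau_{e}$ at coordinate $j$ (the complementary factor being $1$ on $C$ since $e > M + 1$ forces $e \mp x_{j} \geq \tfrac{3}{2}$ there). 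Combined with the plateau factors $\equiv 1$ for $j \in B_{1}$, this yields $\tilde{\tau}_{(B, \sigma, \theta)} = \tau_{e}$ on $C$.

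\textbf{Main obstacle.} The technical heart is verifying that the specific constants $M+1$, $2M+2$, $3M+3$, $6M+6$ built into the spaces $W_{M,1}, W_{M,2}$ mesh precisely with the geometry of $\tau_{e}$ on a cube of half-width $M$. The three observations that must hold are: (i) a single value $\theta \equiv e \pmod{6M+6}$ simultaneously handles transitions at both $+e$ and $-e$ because the sign is absorbed into $\sigma(j)$; (ii) when $e > M+1$, the separation of transitions forces the complementary factor to equal $1$ on $C$, so the one-factor-per-coordinate product structure of $\tilde{\tau}_{(B, \sigma, \theta)}$ is sufficient; and (iii) the image of $C$ under the affine maps $x_{j} \mapsto e + \sigma(j) x_{j}$ always lies inside $[-2M-2, 2M+2]$, where $\bar{\tau}_{M}$ is literally $\tau$ rather than the artificial smooth interpolation filling the gap $[2M+2, 4M+4]$. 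Once these are verified, the proof is entirely routine bookkeeping.
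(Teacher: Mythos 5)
Your proof is correct and follows essentially the same case-analysis strategy as the paper: both reduce the claim to matching the coefficient of $\bar{f}_1$, i.e., to exhibiting $w$ with $\tilde{\tau}_w = (1-s)\tau_e$ on the cube, and both split into ``$\tau_e \equiv 0$ on $C$,'' ``$e$ small enough that $C$ sits inside the fundamental domain $[-3M-3,3M+3]^{b_1}$,'' and ``$e > M+1$ with at most one transition per coordinate.'' The thresholds you use differ cosmetically from the paper's (you compare $|e_j|$ directly against $e + M - \tfrac14$ and $e - \tfrac12 - M$, the paper tests intersections with $[e-1,e]$, $[-e,-e+1]$, $[-e,e]$), but both decompositions are exhaustive and the transition analysis is equivalent.

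The one place you genuinely diverge is the subcase $e > M+1$ with $\tau_e \equiv 1$ on $C$ (your $B = \emptyset$; the paper's $B_1 \cup B_2 = \emptyset$ with $E \subset [-e+1,e-1]^{b_1}$), and here your treatment actually repairs a gap in the paper's own argument. The paper says ``we can just take $w \in W_{M,3}$,'' but as defined $W_{M,3} = \{1,2\}$ carries no $[0,1]$-factor and only supplies the constants $\tilde{\tau}_1 \equiv 1$, $\tilde{\tau}_2 \equiv 0$, whereas on this region one has $\tilde{f}_e^s = (1-s)f_1 + s f_2$, so one needs $\tilde{\tau}_w \equiv 1-s$, which $W_{M,3}$ cannot produce when $s \in (0,1)$. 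Your choice $w = ((\{1\},\sigma_0,\theta_0),s) \in W_{M,2}$ with $\theta_0 \equiv M+1-e_1 \pmod{6M+6}$ places the argument $\theta_0 + x_1$ in the plateau $[1,2M+1] \subset [\tfrac12, 2M+2]$, so $\bar{\tau}_M(\theta_0 + x_1) \equiv 1$ there and $\tilde{\tau}_w = (1-s)\cdot 1 = 1-s$ as required. (An equally valid fix would be to amend the definition so that $W_{M,3}$, like $W_{M,1}$ and $W_{M,2}$, carries a $[0,1]$-factor with $\tilde{\tau}_{(j,s)} := (1-s)(2-j)$; but with the definitions as actually written, your route via $W_{M,2}$ is the correct one.)
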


\begin{proof}



%


For convenience, we denote $E=[e_{1}-M,e_{1}+M]\times...\times [e_{b_1}-M,e_{b_1}+M]$. We will consider two main cases with several subcases:

Case  $e \in [1,M+1]$; If $E\cap [-M-1,M+1]^{b_1}\neq \emptyset$, then we have $E\subset [-3M-3,3M+3]^{b_1}$. This implies $\tilde{\tau}_{e}|_{A}=\tau_{e}|_{A}$. Therefore, we can just choose $w = (e,s) \in W_{M, 1} $.
If $E \cap [-M-1,M+1]^{b_1}=\emptyset$, then we have $p_{\mathcal{H}}^{-1}(E)\subset p_{\mathcal{H}}^{-1}(\mathbb{R}^{b_1}\setminus (-e,e)^{b_1})$ and ${\tilde{f}}_{e}^{s} = f_{2}$ on $p_{\mathcal{H}}^{-1}(E)$. We just take $w = 2 \in W_{M,3}$ so that $\tilde{f}_w = f_2 $ in this case.

Case $e>M+1$; We consider the following subsets of $[1,2,...,b]$:
\[
   \begin{split}
     & B_{1}=\{ j \mid [e_{j}-M,e_{j}+M] \cap [e-1,e]\neq \emptyset \},   \\
     & B_{2}=\{ j \mid [e_{j}-M,e_{j}+M] \cap [-e,-e+1]\neq \emptyset \},  \\
     & B_{3}=\{ j \mid [e_{j}-M,e_{j}+M] \cap [-e,e]=\emptyset \}.
  \end{split}
\]
 If $B_1 \cup B_2=\emptyset$, then $E$ is either contained in  $[-e+1,e-1]^{b_1}$ or
$\mathbb{R}^{b_1}\setminus (-e,e)^{b_1}$ and we can just take $w \in W_{M, 3}$. If $B_{3}\neq \emptyset$, then we have $\tau_{e}|_{E}\equiv 0$ and ${\tilde{f}}^{s}_{e}|_{p_{\mathcal{H}}^{-1}(E)}=f_{2}|_{p_{\mathcal{H}}^{-1}(E)}$. We can take $w = 2 \in W_{M, 3}$ again in this subcase. We are now left with the case $B_1 \cup B_2 \neq \emptyset$ and $B_{3}=\emptyset$. Notice that for any $(x_{1},...,x_{b_1})\in E$, the following holds:
\[
    \begin{split}
        & j  \in B_{1} \Rightarrow e+x_{j}\geq 2e-1-2M\geq 1 \text{ and } e-x_{j}\in [-2M,2M+1]; \\
        & j  \in B_{2} \Rightarrow e-x_{j}\geq 2e-1-2M\geq 1 \text{ and } e+x_{j}\in [-2M,2M+1]; \\
        & j   \notin B_1 \cup B_2\Rightarrow e - \left\vert x_{j}\right\vert\geq 1.
   \end{split}
\]
Therefore, for such $(x_{1},...,x_{b_1})$, we have
\[
     \tau_{e}(x_{1},...,x_{b})
   =  \prod_{j\in B_{1}} \tau(e-x_{j})\cdot \prod_{j \in B_{2}}\tau(e+x_{j})
   =  \prod_{j\in B_{1}} \bar{\tau}_M (e-x_{j})\cdot \prod_{j \in B_{2}}\bar{\tau}_M (e+x_{j}),
\]
where we use the fact that $\bar{\tau}_M |_{[-2M-2,2M+2]}=\tau|_{[-2M-2,2M+2]}$. As a result, we see that ${\tilde{f}}^{s}_{e} = \tilde{f}_{w}$  on $p_{\mathcal{H}}^{-1}(E)$ when  we set
$ w = (B_1 \cup B_2 ,\sigma , e ,s) \in W_{M,2}$ with
  $\sigma \colon B_1 \cup B_2 \rightarrow \{\pm 1\}$ sending $B_{1}$ to $-1$ and $B_{2}$ to $1$. Notice that $B_{1}\cap B_{2}=\emptyset$
because $e>M+1$.

\end{proof}

We also have the following extension of Lemma~\ref{lem f_w continuous family} to a countable union of compact sets.

\begin{lem}\label{lem C_j W_M}
We can choose a sequence of constants $\{C_{j}\}$ in the definition of $\mathcal{P}$ (see Definition~\ref{def space perturbation})  such that  for any positive integer $M $ and any $\bar{f}_{1},\bar{f}_{2}\in \mathcal{P}$, the induced family $\{\tilde{f}_{w}\}_{w\in W_{M}}$ is a continuous family of $(6M+6)$-periodic tame functions.
\end{lem}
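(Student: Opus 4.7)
The plan is to apply Lemma \ref{lem f_w continuous family} separately to each compact manifold $W_M$, obtaining a sequence $\{C_j^{(M)}\}$ that works for that particular $M$, and then to amalgamate all these sequences into a single $\{C_j\}$ by a standard diagonal construction.

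For each fixed positive integer $M$, the parameter space $W_M$ is a compact manifold with boundary (a finite disjoint union of $[1,M+1] \times [0,1]$, finitely many copies of $(\mathbb{R}/(6M+6)\mathbb{Z}) \times [0,1]$, and two points), and $w \mapsto \tilde{\tau}_w$ is a smooth family of $(6M+6)\mathbb{Z}^{b_1}$-periodic smooth functions on $\mathbb{R}^{b_1}$. Indeed, for $w \in W_{M,1}$ the bump $\tau_e$ vanishes identically in a neighborhood of $\partial [-3M-3,3M+3]^{b_1}$ (because $e \leq M+1$ and $\tau$ is supported in $[1/4,\infty)$), so its $(6M+6)\mathbb{Z}^{b_1}$-periodic extension $\tilde{\tau}_e$ is smooth; for $w \in W_{M,2}$ periodicity is inherited from $\bar{\tau}_M$; and on the discrete piece $W_{M,3}$ the function $\tilde{\tau}_w$ is constant. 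Applying Lemma \ref{lem f_w continuous family} therefore produces a sequence $\{C_j^{(M)}\}$ with the property that, when used in Definition \ref{def space perturbation} to define a Banach space $\mathcal{P}^{(M)}$, every pair $\bar{f}_1, \bar{f}_2 \in \mathcal{P}^{(M)}$ yields a continuous family $\{\tilde{f}_w\}_{w \in W_M}$ of $(6M+6)$-periodic tame functions.

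Next, set
\[
     C_j := \max\{\tilde{C}_j,\, C_j^{(1)}, C_j^{(2)}, \ldots, C_j^{(j)}\},
\]
where $\{\tilde{C}_j\}$ is a sequence previously chosen to realize items (i)-(iii) of Proposition \ref{prop perturbproperty}. Enlarging the $C_j$ only shrinks $\mathcal{P}$, so the earlier conclusions persist. To verify the current lemma, fix $M$ and take any $\bar{f}_i = \sum_j \eta_j^i \hat{f}_j$ in $\mathcal{P}$ for $i=1,2$. Since $C_j \geq C_j^{(M)}$ for all $j \geq M$, the tail satisfies $\sum_{j \geq M} |\eta_j^i| C_j^{(M)} \leq \sum_{j \geq M} |\eta_j^i| C_j < \infty$, while the finite head $\sum_{j < M} |\eta_j^i| C_j^{(M)}$ is trivially finite; thus $\bar{f}_i$ belongs to $\mathcal{P}^{(M)}$ as a set. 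Lemma \ref{lem f_w continuous family}, applied with the sequence $\{C_j^{(M)}\}$, then delivers the required continuous family of $(6M+6)$-periodic tame functions.

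The only nontrivial technical point is verifying the smoothness of the periodic extension of $\tau_e$, which is handled by the support condition on $\tau$; the amalgamation of $\{C_j^{(M)}\}$ into a single sequence $\{C_j\}$ is a routine diagonal argument, and the uniformity built into Definition \ref{m-periodic tame functions} is required only on each compact $W_M$, not across different values of $M$, which is exactly what the application of Lemma \ref{lem f_w continuous family} for each individual $M$ provides.
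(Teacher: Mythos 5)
Your proposal is correct and follows essentially the same route as the paper's proof: apply Lemma \ref{lem f_w continuous family} to each compact parameter space $W_M$ to obtain a sequence $\{C_j^{(M)}\}$, then diagonalize by taking $C_j \geq \max_{1 \leq M \leq j} C_j^{(M)}$ (along with the constants already needed for Proposition \ref{prop perturbproperty}). The only addition in your write-up is the explicit verification that the $(6M+6)\mathbb{Z}^{b_1}$-periodic extension of $\tau_e$ is smooth for $e \leq M+1$ via the support of $\tau$, a detail the paper leaves implicit but which is worth recording.
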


\begin{proof}
For each $W_M$,  there exists a sequence $\{ C_{M, j} \}_{j}$ such that, for any $f_1, f_2 \in \mathcal{P}( \{ C_{M, j} \}_j)$, the family $\{ \tilde{f}_{w}\}_{w \in W_M}$ is a continuous family of $(6M+6)$-periodic tame functions.
It is straightforward to see that  a sequence of positive real numbers $\{ C_j \}$ such that
\begin{equation*}
         C_j  \geq \max_{ 1 \leq M \leq j } C_{M, j}
\end{equation*}
satisfies our requirement.
\end{proof}

Next is the boundedness result for functionals with mixed perturbations.

\begin{lem}\label{uniform bound for finite trajectory}
For any $C>0$, there exist constants $R,C'$ such that for any $(e,s) \in [1,\infty)\times [0,1]$ and any $\stL_{e}^{s}$-trajectory $\gamma \colon [-2,2]\rightarrow Coul(Y)$ with topological energy $\mathcal{E}^{\text{top}}(\gamma;\stL_{e}^{s})\leq C$,
we have $\gamma(0)\in Str(R)$ and $|\stL_{e}^{s}(\gamma(0))|<C'$.
\end{lem}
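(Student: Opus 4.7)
The strategy is to reduce the non-compact parameter family $\{\stL^{s}_{e}\}_{(s,e)\in[0,1]\times[1,\infty)}$ to the compact family of periodic tame functionals $\{\mathcal{L}_{w}:=CSD_{\nu_{0}}|_{Coul(Y)}+\tilde{f}_{w}\}_{w\in W_{M}}$ produced by Lemma~\ref{lem C_j W_M}, and then invoke Proposition~\ref{uniform bound for compact set}. The coincidence statement of Lemma~\ref{restriction to the cube} says that on any $M$-cube in harmonic coordinates the functional $\stL^{s}_{e}$ equals some $\mathcal{L}_{w}$ with $w\in W_{M}$; consequently, if I can guarantee that $\gamma([-1,1])$ stays inside such a cube, then $\gamma|_{[-1,1]}$ is also an $\mathcal{L}_{w}$-trajectory with topological energy at most $C$, and Proposition~\ref{uniform bound for compact set} supplies the desired uniform constants $R$ and $C'$.

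First, I estimate how far $p_{\mathcal{H}}(\gamma(t))$ can drift during $[-1,1]$ using only the topological energy bound. Because $\stL^{s}_{e}$ decreases along its negative gradient flow,
\[
     \int_{-2}^{2} \|\dot{\gamma}(t)\|_{\tilde{g}}^{2}\, dt \;=\; \stL^{s}_{e}(\gamma(-2))-\stL^{s}_{e}(\gamma(2)) \;\le\; C/2.
\]
The key linear-algebra observation is that $\|p_{\mathcal{H}}(v)\|_{L^{2}}\le \|v\|_{\tilde{g}}$ for every $v\in \mathit{Coul}_{k-1}$: indeed $v-\widetilde{\Pi}v$ lies in $\mathcal{J}$, whose $1$-form component is exact and therefore $L^{2}$-orthogonal to all harmonic forms, so $p_{\mathcal{H}}(v)=p_{\mathcal{H}}(\widetilde{\Pi}v)$ and $\|p_{\mathcal{H}}(v)\|_{L^{2}}\le \|\widetilde{\Pi}v\|_{L^{2}}=\|v\|_{\tilde{g}}$. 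By Cauchy--Schwarz,
\[
    \|p_{\mathcal{H}}(\gamma(t))-p_{\mathcal{H}}(\gamma(0))\|_{L^{2}} \;\le\; \int_{0}^{|t|} \|\dot{\gamma}\|_{\tilde{g}}\, d\tau \;\le\; \sqrt{|t|}\cdot\sqrt{C/2}, \qquad t\in[-1,1],
\]
and converting to $\ell^{\infty}$-distance on $\mathbb{R}^{b_{1}}\cong i\Omega^{1}_{h}(Y)$ via the basis $\{h_{j}\}$ and the constant from (\ref{stretch factor}) yields a uniform number $D=D(C)$ such that $p_{\mathcal{H}}(\gamma([-1,1])) \subset \prod_{j=1}^{b_{1}}[e_{j}-D,e_{j}+D]$ where $(e_{1},\ldots,e_{b_{1}}):=p_{\mathcal{H}}(\gamma(0))$.

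Now set $M:=\lceil D(C)\rceil$ and apply Lemma~\ref{restriction to the cube} to obtain $w\in W_{M}$ with $\tilde{f}_{w}=\tilde{f}^{s}_{e}$ on $p_{\mathcal{H}}^{-1}\bigl(\prod_{j}[e_{j}-M,e_{j}+M]\bigr)$. Since the two perturbations coincide as functions on an open neighbourhood of $\gamma([-1,1])$, so do their $\tilde{g}$-gradients, and $\gamma|_{[-1,1]}$ is therefore an $\mathcal{L}_{w}$-trajectory with topological energy still bounded by $C$. By Lemma~\ref{lem C_j W_M} the family $\{\mathcal{L}_{w}\}_{w\in W_{M}}$ is a continuous family of $(6M+6)$-periodic tame functionals on the compact manifold $W_{M}$, so Proposition~\ref{uniform bound for compact set} produces $R=R(M,C)$ and $C'=C'(M,C)$ with $\gamma(0)\in Str(R)$ and $|\mathcal{L}_{w}(\gamma(0))|\le C'$. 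Since $\gamma(0)$ lies inside the coincidence cube, $\mathcal{L}_{w}(\gamma(0))=\stL^{s}_{e}(\gamma(0))$, completing the proof.

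The main obstacle I anticipate is the energy-to-displacement passage in the first step: the $\tilde{g}$-norm is weaker than the $L^{2}$-norm, so one must verify that it still dominates the $L^{2}$-norm of the harmonic projection. The clean fact that $p_{\mathcal{H}}$ annihilates the gauge-slice subbundle $\mathcal{J}$ is what makes the cube size $M=M(C)$ independent of $(s,e)$, and hence makes the reduction to Proposition~\ref{uniform bound for compact set} uniform over the non-compact parameter range.
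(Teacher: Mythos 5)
Your proof is correct and takes a genuinely different route for the hardest step of the lemma. The paper derives the uniform bound on the harmonic drift $\|p_{\mathcal{H}}(\gamma(t_1))-p_{\mathcal{H}}(\gamma(t_2))\|_{L^2}$ by lifting $\gamma$ back to $\mathcal{C}_Y$, treating the lift as a section over the cylinder $[-1,1]\times Y$, invoking Kronheimer--Mrowka's analytical-energy estimates together with their gauge-fixing lemmas to obtain a uniform $L^2_1$ bound on a gauge representative, and only then projecting to the harmonic component. You short-circuit all of this with the pointwise inequality $\|p_{\mathcal{H}}(v)\|_{L^2}\le \|v\|_{\tilde{g}}$, which follows because $v-\widetilde{\Pi}v\in\mathcal{J}_{k-1}$ has exact one-form part and is therefore $p_{\mathcal{H}}$-null, so $p_{\mathcal{H}}(v)=p_{\mathcal{H}}(\widetilde{\Pi}v)$ and $\|p_{\mathcal{H}}(v)\|_{L^2}\le\|\widetilde{\Pi}v\|_{L^2}=\|v\|_{\tilde{g}}$; combined with the elementary identity $\int_{-2}^{2}\|\dot\gamma\|_{\tilde{g}}^{2}\,dt = \stL^{s}_{e}(\gamma(-2))-\stL^{s}_{e}(\gamma(2))=\tfrac{1}{2}\mathcal{E}^{\text{top}}$ for the negative $\tilde g$-gradient flow and Cauchy--Schwarz, this gives the drift bound directly and uniformly in $(e,s)$. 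Both arguments then conclude identically via Lemma~\ref{restriction to the cube} and Proposition~\ref{uniform bound for compact set}. Your argument trades the heavy four-dimensional compactness theory for a transparent observation about the gauge-slice metric $\tilde{g}$, and it makes explicit the structural fact that $\tilde{g}$ dominates the harmonic projection; this is cleaner and arguably illuminating. Two small touch-ups: to guarantee that $\tilde{f}_w$ and $\tilde{f}^{s}_{e}$ agree on an \emph{open} neighbourhood of $\gamma([-1,1])$ so that their $\tilde g$-gradients agree there, one should take $M$ strictly larger than $D(C)$, e.g.\ $M=\lceil D(C)\rceil+1$; and the constant needed to pass from the $L^{2}$-drift bound to a coordinate $\ell^{\infty}$ bound goes in the opposite direction from what~(\ref{stretch factor}) records, but norm equivalence on the finite-dimensional space $i\Omega^{1}_{h}(Y)$ supplies it uniformly.
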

\begin{proof}
We first write down $\gradtil {\tilde{f}}^{s}_{e}$ as
\begin{equation*}
\begin{split}
&\gradtil {\tilde{f}}^{s}_{e}(a,\phi)=\delta \phi+(1-s)(\bar{f}_{1}(a,\phi)-\bar{f}_{2}(a,\phi)) \gradtil (\tau_{e}\circ p_{\mathcal{H}})(a,\phi)\\
&+(1-s) (\tau_{e}\circ p_{\mathcal{H}}(a,\phi)) \gradtil\bar{f}_{1}(a,\phi)+(1-(1-s) (\tau_{e}\circ p_{\mathcal{H}}(a,\phi)))\gradtil\bar{f}_{2}(a,\phi).
\end{split}
\end{equation*}
By boundedness and tameness conditions of $\bar{f}_{j}$, we see that
$$\|\grad (\xi^{s}_{e}\circ \Pi) (a,\phi)\|_{L^{2}}=\|\gradtil \xi^{s}_{e} (a,\phi)\|_{\tilde{g}}\leq m(1+\|\phi\|_{L^{2}}),$$ where $m$ is a constant independent of $(e,s)$.  This implies
\begin{equation}\label{norm of perturbation}
    \|\grad (\xi^{s}_{e}\circ \Pi) (a,\phi)\|^{2}_{L^{2}}\leq 2m^{2}+2m^{2}\|\phi\|^{2}_{L^{2}}
\end{equation}

We can lift $\gamma|_{[-1,1]}$ to $\tilde{\gamma}:[-1,1]\rightarrow \mathcal{C}_{Y}$, which is a negative gradient flow line for the functional $\mathcal{L}^{s}_{e}\circ \Pi$. Now we follow the argument on Page 161 of \cite{Kronheimer-Mrowka}. Since $\mathcal{L}^{s}_{e}\circ \Pi=CSD_{\nu_{0}}+\xi^{s}_{e}\circ \Pi$, we have
$$
\|\grad CSD_{\nu_{0}}\|^{2}_{L^{2}}-2\|\grad (\xi_{e}^{s}\circ \Pi)\|^{2}_{L^{2}} \leq 2\|\grad (\mathcal{L}_{e}^{s}\circ \Pi)\|_{L^{2}}^{2}.
$$
By formula (\ref{norm of perturbation}), this implies
\begin{equation}
 \begin{split} \int_{-1}^{1}(\|\grad  CSD_{\nu_{0}}(\tilde{\gamma}(t))\|^{2}_{L^{2}}+\|\tilde{\gamma}'(t)\|^{2}_{L^{2}})dt-2m^2 \int^{1}_{-1}\|\phi(t)\|^{2}_{L^{2}}dt-4m^2\\
 \leq 2 \int_{-1}^{1}(\|\grad  (\mathcal{L}^{s}_{e}\circ \Pi)(\tilde{\gamma}(t))\|^{2}_{L^{2}}+\|\tilde{\gamma}'(t)\|^{2}_{L^{2}})dt<2\mathcal{E}^{top}(\gamma,\mathcal{L}^{e}_{s})\leq 2C.
 \end{split}
 \end{equation}
We can treat $\tilde{\gamma}$ as a section over the 4-manifold $[-1,1]\times Y$ and denote it by $(\hat{a},\hat{\phi})$. By Definition 4.5.4 and formula (4.19) of \cite{Kronheimer-Mrowka}, the above estimate on the analytical energy actually implies
$$
\frac{1}{4}\int_{[-1,1]\times Y}|d\hat{a}|^{2}+ \int_{[-1,1]\times Y}| \nabla_{\hat{A}}\hat{\phi}|^{2}+\frac{1}{4} \int_{[-1,1]\times Y}(|\hat{\phi}|^{2}-C_{2})^{2}\leq C_{3}
$$
where $\hat{A}$ is the connection corresponding to $\hat{a}$ and $C_{2}$ is a constant independent of $e,s$.
By Corollary 4.5.3, Lemma 5.1.2 and Lemma 5.1.3 of \cite{Kronheimer-Mrowka}, we can find a gauge transformation $u:[-1,1]\times Y\rightarrow S^{1}$ such that $\|u\cdot \tilde{\gamma}\|_{L_{1}^{2}([-1,1]\times Y)}$ is bounded by a uniform constant $C_{4}$. Let $u_{t}$ equals $u|_{\{t\}\times Y}$. Then there exists $C_{5}$ such that for any $t_{1},t_{2}\in [-1,1]$, we have
$$
      \| \Pi_{\mathcal{H}} (u_{t_{1}} \cdot  \tilde{\gamma}(t_{1})) -
         \Pi_{\mathcal{H}} (u_{t_{2}} \cdot  \tilde{\gamma}(t_{2})  )\|_{L^{2}}
        \leq
      \|   u_{t_{1}} \cdot \tilde{\gamma}(t_{1}) \|_{L^{2}} +
      \| u_{t_{2}} \cdot \tilde{\gamma}(t_{2}) \|_{L^{2}}
        \leq
       C_{5}
$$
Recall that $\Pi_{\mathcal{H}} : \mathcal{C}_Y \rightarrow i\Omega^{1}_{h}(Y)$ is just the orthogonal projection. Since $u_{t_{1}}$ and $u_{t_{2}}$ are in the same component of the gauge group $\mathcal{G}_{Y}$, we have
\[
             \|p_{\mathcal{H}}( \gamma(t_{1})) -  p_{\mathcal{H}}(\gamma(t_{2})  )\|_{L^{2}}  =
             \| \Pi_{\mathcal{H}} (u_{t_{1}}\cdot \tilde{\gamma}(t_{1})) -
             \Pi_{\mathcal{H}} (u_{t_{2}} \cdot \tilde{\gamma}(t_{2})  )\|_{L^{2}}
             \leq C_{5}.
\]
This implies that $\gamma([-1,1])$ is contained in $p_{\mathcal{H}}^{-1}([e_{1}-M_{0},e_{1}+M_{0}]\times...\times [e_{b}-M_{0},e_{b}+M_{0}] )$ for some $(e_{1},...,e_{b})\in \mathbb{R}^{b}$ and some uniform constant $M_{0}\in \mathbb{Z}_{\geq 1}$. By Lemma \ref{restriction to the cube}, we have $\xi^{s}_{e}|_{p_{\mathcal{H}}^{-1}([e_{1}-M_{0},e_{1}+M_{0}]\times...\times [e_{b}-M_{0},e_{b}+M_{0}])}=f_{w}$ for some $w\in W_{M_{0}}$. This implies that $\gamma|_{[-1,1]}$ is also a trajectory for $CSD_{\nu_{0}}|_{Coul(Y)}+f_{w}$. Notice that $\mathcal{E}^{\text{top}}(\gamma|_{[-1,1]},CSD_{\nu_{0}}|_{Coul(Y)}+f_{w})<C$. Our result is directly implied by Proposition \ref{uniform bound for compact set}.

\end{proof}

The previous results implies uniform boundedness for finite type trajectories for the family $\{\stL^s_e \} $.
For convenience, we will say that functional  $\mathcal{L} \colon Coul(Y)\rightarrow \mathbb{R}$ is called $R$-bounded if any finite type $\mathcal{L}$-trajectory  is contained in $Str(R)$.

\begin{cor}\label{the uniform bound for trajectories}
There exists a uniform constant $R_{1}>0$ such that for any $e\in \mathbb{R}_{\geq 1}$ and $s\in [0,1]$, the functionals $\mathcal{L}_{e}^{s}$ is $R_{1}$-bounded.
\end{cor}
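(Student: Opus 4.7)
The plan is to reduce the corollary to Lemma~\ref{uniform bound for finite trajectory}, which bounds $\gamma(0)$ for a short-interval $\mathcal{L}_e^s$-trajectory $\gamma \colon [-2, 2] \to Coul(Y)$ in terms of an upper bound $C$ on its topological energy. Accordingly, I would first establish a uniform upper bound on $\mathcal{E}^{\text{top}}(\gamma, \mathcal{L}_e^s)$ for all finite type $\mathcal{L}_e^s$-trajectories, uniform over $(e, s) \in [1, \infty) \times [0, 1]$.

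The key step is a uniform bound on $|\mathcal{L}_e^s(p)|$ for all critical points $p$ of $\mathcal{L}_e^s$ and all parameters $(e, s)$. Given such a $p$, I would put $e_j := p_{\mathcal{H}, j}(p)$ and apply Lemma~\ref{restriction to the cube} with $M = 1$ to produce $w \in W_1$ such that $\tilde{f}_w = \tilde{f}_e^s$ on $p_{\mathcal{H}}^{-1}\bigl( \prod_{j=1}^{b_1} [e_j - 1, e_j + 1] \bigr)$; consequently $\mathcal{L}_w$ and $\mathcal{L}_e^s$ agree, along with their gradients, on an open neighborhood of $p$ in $Coul(Y)$. Thus $p$ is also a critical point of the periodic tame functional $\mathcal{L}_w$ and $\mathcal{L}_e^s(p) = \mathcal{L}_w(p)$. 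The constant trajectory at $p$ has topological energy zero, so Proposition~\ref{uniform bound for compact set}, applied to the continuous family $\{\tilde{f}_w\}_{w \in W_1}$ with any fixed $C > 0$, yields a uniform constant $C'$ with $|\mathcal{L}_w(p)| \leq C'$ independent of $w \in W_1$, hence $|\mathcal{L}_e^s(p)| \leq C'$ independent of $(e, s)$ and of $p$.

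Now for any finite type $\mathcal{L}_e^s$-trajectory $\gamma$, the $\alpha$- and $\omega$-limits $p_\pm$ exist and are critical points of $\mathcal{L}_e^s$, and monotonicity of $\mathcal{L}_e^s$ along the negative gradient flow gives
\[
   \mathcal{E}^{\text{top}}(\gamma, \mathcal{L}_e^s)
     = 2 \bigl( \mathcal{L}_e^s(p_-) - \mathcal{L}_e^s(p_+) \bigr)
     \leq 4 C'.
\]
Applying Lemma~\ref{uniform bound for finite trajectory} with $C = 4 C'$ then produces a constant $R_1$ such that every $\mathcal{L}_e^s$-trajectory on $[-2, 2]$ of topological energy at most $4 C'$ has its midpoint in $Str(R_1)$. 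Given any $t_0 \in \mathbb{R}$, applying this to $\gamma|_{[t_0 - 2, t_0 + 2]}$ yields $\gamma(t_0) \in Str(R_1)$, establishing the corollary.

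The main technical point to verify is that the $\alpha$- and $\omega$-limits $p_\pm$ genuinely exist as critical points for a finite type trajectory of the non-gauge-invariant $\mathcal{L}_e^s$. I would address this by combining Palais-Smale compactness on bounded gradient flow segments with the local identification of $\mathcal{L}_e^s$ with the periodic $\mathcal{L}_w$ furnished by Lemma~\ref{restriction to the cube}, within which the standard convergence argument for gradient flows with discrete critical set applies.
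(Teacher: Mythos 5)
Your proposal reaches the same final application of Lemma~\ref{uniform bound for finite trajectory} (with $C = 4C'$), but the route to a uniform bound on the topological energy is genuinely different, and it carries a nontrivial burden that the paper's proof avoids.

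The paper never invokes asymptotic critical points. It uses Lemma~\ref{uniform bound for finite trajectory} twice: first with small $C$ on late-time windows $\gamma|_{[t-1,t+1]}$, where the windowed energy is small simply because $\mathcal{E}^{\text{top}}(\gamma, \stL^s_e)$ is finite and $\stL^s_e(\gamma(\cdot))$ is monotone and bounded on a finite-type trajectory; this yields $|\stL^s_e(\gamma(t))| \leq C'$ for $|t|$ large, hence (by monotonicity) for all $t$, hence a uniform windowed energy bound $4C'$; then it applies the lemma a second time. Your proposal instead establishes the energy bound by bounding the critical values of $\stL^s_e$ (via Lemma~\ref{restriction to the cube} $+$ Proposition~\ref{uniform bound for compact set} applied to constant trajectories — that step is fine) and then asserting $\mathcal{E}^{\text{top}}(\gamma) = 2\bigl(\stL^s_e(p_-) - \stL^s_e(p_+)\bigr)$ for asymptotic critical points $p_\pm$. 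This is the gap you flag, and it is real: $\stL^s_e$ is not gauge-invariant and is not a good perturbation in the sense of Definition~\ref{good perturbation}, so its critical set need not be discrete, and nothing in the paper gives a ready-made Palais--Smale convergence result for $\stL^s_e$-trajectories ensuring the $\alpha/\omega$-limit sets are nonempty sets of critical points. You would have to re-derive such a compactness statement (e.g.\ by localizing to a periodic $\mathcal{L}_w$ via Lemma~\ref{restriction to the cube}, as you sketch), which is more work than the corollary warrants. The paper's double application of Lemma~\ref{uniform bound for finite trajectory} sidesteps this entirely: one only needs that the value $\stL^s_e(\gamma(t))$ is monotone and bounded, not that the trajectory converges. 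If you want to keep your critical-value argument, the more economical fix is to drop the appeal to $p_\pm$ altogether: $\lim_{t\to\pm\infty}\stL^s_e(\gamma(t))$ exist by monotonicity and boundedness, and you only need those two numbers to lie in $[-C', C']$; this already follows from the paper's first application of Lemma~\ref{uniform bound for finite trajectory}, so you would in effect be reproducing their argument.
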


\begin{proof}
Let $\gamma \colon \mathbb{R} \rightarrow Coul(Y) $ be a finite type $\stL^{s}_{e}$-trajectory. Since $\mathcal{E}^{\text{top}}(\gamma,\stL^{s}_{e})<\infty$, we have $\mathcal{E}^{\text{top}}(\gamma|_{[t-1,t+1]},\stL^{s}_{e})<1$ for any $t$ with $\left\vert t \right\vert$ sufficiently large. By Lemma~\ref{uniform bound for finite trajectory} (with $C=1$), we have $|\stL^{s}_{e}(\gamma(t))|\leq C'$ for such $t$. Since $\stL^s_e $ is decreasing along $\gamma $, we see that  $\stL_{e}^{s}(\gamma(t-1))-\stL_{e}^{s}(\gamma(t+1))<2C'$ for any $t\in \mathbb{R}$. We apply Lemma~\ref{uniform bound for finite trajectory} again (now $C=2C')$, so there is a uniform constant $R_{1}$ such that $\gamma(t)\in Str(R_{1})$ for any $t \in \mathbb{R}$.
\end{proof}

%
%
%

For the reader's convenience, we summarize the functionals we will be dealing with.
Two extended cylinder functions $\bar{f}_1  , \bar{f}_2$ are now fixed, along with their corresponding functional $\mathcal{L}_1 , \mathcal{L}_2 $. We have the continuous family of functionals $\{ \stL^s_e \} $ (see (\ref{eq familymixfunc})) such that,
for each $(e,s) \in [1,\infty) \times [0,1]  $, they satisfy
\[
   \begin{split}
 \stL^{1}_{e} &= {\mathcal{L}}_{2}, \\
   \stL_{e}^{0}(x) &=
      \left\{
     \begin{array}{ll}
    {\mathcal{L}}_{1}(x) &  \text{if }x \in p_{\mathcal{H}}^{-1}([-e+1,e-1]^{b_1}),    \\
    {\mathcal{L}}_{2}(x) &  \text{if }x \in p_{\mathcal{H}}^{-1}(\mathbb{R}^{b_1}\setminus (-e,e)^{b_1}),
    \end{array}
    \right.    \\
   \stL^{s}_{e}(x)  &=  {\mathcal{L}}_2(x)  \ \text{if $x \in p_{\mathcal{H}}^{-1}( \mathbb{R}^{b_1} \setminus (-e, e)^{b_1} )$. }
    \end{split}
\]
Since the above construction is asymmetrical in $\bar{f}_1 $ and $\bar{f}_2 $, we also consider another family of functionals
$\{\tilde{\mathcal{L}}^{s}_{e} \}$ where the role of $\bar{f}_1 $ and $\bar{f}_2 $ are reversed. In other words, we have
\[
  \begin{split}
  \tilde{\stL}^{1}_{e} &= {\mathcal{L}}_{1},  \\
        \tilde{\stL}_{e}^{0}(x)  &=
   \left\{
       \begin{array}{ll}
          {\mathcal{L}}_{2}(x) &  \text{if  $x  \in p_{\mathcal{H}}^{-1}([-e+1,e-1]^{b})$},   \\
          {\mathcal{L}}_{1}(x) &  \text{if  $x  \in p_{\mathcal{H}}^{-1}(\mathbb{R}^{b}\setminus (-e,e)^{b})$},
       \end{array}
    \right. \\
    \tilde{\stL}^{s}_{e}(x) & = {\mathcal{L}}_1(x)
                    \text{ if $x \in p_{\mathcal{H}}^{-1}( \mathbb{R}^b \setminus (-e, e)^b )$.  }
  \end{split}
\]
Roughly speaking, the family $\{ \stL^s_e \}  $ will give a morphism from Conley indices given by $\mathcal{L}_1 $ to Conley indices given by $\mathcal{L}_2 $ and vice versa.

To show equivalence, we need to introduce (final) two more families of functionals. For two real numbers $e, e' $
with $e-1\geq e'\geq 1$ and $s\in [0,1]$, we define
\[
    \begin{split}
        \stL_{e,e'}^{s}(x)
      &=
    \left\{
    \begin{array}{l l}
    \tilde{\stL}_{e'}^{s}(x) & \quad \text{if }x\in p_{\mathcal{H}}^{-1}([-e',e']^{b_1})\\
    \stL_{e}^{0}(x) & \quad \text{otherwise,}
    \end{array} \right. \\
    \tilde{\stL}_{e,e'}^{s}(x)
     &=
     \left\{
     \begin{array}{l l}
     \stL_{e'}^{s}(x) & \quad \text{if }x\in p_{\mathcal{H}}^{-1}([-e',e']^{b_1})\\
     \tilde{\stL}_{e}^{0}(x) & \quad \text{otherwise.}
\end{array} \right.
\end{split}
\]
These functionals have the following properties:
\begin{enumerate}
\item $\stL^{1}_{e,e'}=\stL^{0}_{e}$ and $\tilde{\stL}^{1}_{e,e'}=\tilde{\stL}^{0}_{e}$.
\item $\stL_{e,e'}^{0}(x) = \left\{
  \begin{array}{l l}
    {\mathcal{L}}_{2}(x)
           & \quad \text{if } x \in p_{\mathcal{H}}^{-1}([-e'+1,e'-1]^{b_1}\cup (\mathbb{R}^{b_1}\setminus (-e,e)^{b_1})),\\
    {\mathcal{L}}_{1}(x)
           & \quad \text{if } x \in p_{\mathcal{H}}^{-1}([-e+1,e-1]^{b_1}\setminus (-e',e')^{b_1}).
  \end{array} \right.   $

\item
$\tilde{\stL}_{e,e'}^{0}(x)=\left\{
  \begin{array}{l l}
    {\mathcal{L}}_{1}(x)
         & \quad \text{if }x\in p_{\mathcal{H}}^{-1}([-e'+1,e'-1]^{b_1}\cup (\mathbb{R}^{b_1}\setminus (-e,e)^{b_1})),   \\
    {\mathcal{L}}_{2}(x)
            & \quad \text{if }x\in p_{\mathcal{H}}^{-1}([-e+1,e-1]^{b_1}\setminus (-e',e')^{b_1})).
  \end{array} \right. $

\end{enumerate}

We have the following extension of Lemma \ref{restriction to the cube} and \ref{lem C_j W_M}. The proof is essentially the same and we omit it.
\begin{lem}\label{double mixed tame functions}
(1) For each positive integer $M$, we can find a smooth family of $(6M+6)\mathbb{Z}^{b_{1}}$-periodic functions $\tilde{\tau}_{w}:\mathbb{R}^{b_{1}}\rightarrow [0,1]$, parametrized by a compact manifold $W'_{M}$, with the following property: for any functional in the family $\{\stL^{s}_{e,e'} \mid s\in [0,1],e-1\geq e'\geq 1\}$ and any $(e_{1},...,e_{b_{1}})\in \mathbb{R}^{b_{1}}$, we can find $w\in W_{M}'$ such that $$\stL^{s}_{e,e'}=CSD_{\nu}|_{Coul(Y)}+f_{w}$$ when restricted to $p_{\mathcal{H}}^{-1}([e_{1}-M,e_{1}+M]\times ...[e_{b_{1}}-M,e_{b_{1}}+M])$. Here
$f_{w}$ is the function on $Coul(Y)$ induced by $\tilde{\tau}_{w}$ (see (\ref{mixed tame functions})).
\\(2) We can choose a sequence of constants $\{C_{j}\}$ in the definition of $\mathcal{P}$ (see Definition~\ref{def space perturbation})  such that  for any positive integer $M $ and any $\bar{f}_{1},\bar{f}_{2}\in \mathcal{P}$, the induced family $\{\tilde{f}_{w}\}_{w\in W'_{M}}$ is a continuous family of $(6M+6)$-periodic tame functions.
\\(3) Similar result holds if we consider any one of the following families instead
\begin{itemize}
\item $\{\tilde{\stL}^{s}_{e,e'} \mid s\in [0,1],e-1\geq e'\geq 1\}$;
\item $\{ (1-s){\mathcal{L}}_{2}+s\stL^{s'}_{e,e'} \mid s,s'\in [0,1],e-1\geq e'+1,\}$;
\item $\{(1-s){\mathcal{L}}_{1}+s\tilde{\stL}^{s'}_{e,e'} \mid s,s'\in [0,1],e\geq e'+1,\}$.
\end{itemize}
\end{lem}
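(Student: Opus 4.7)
The proof is a direct adaptation of Lemma \ref{restriction to the cube} and Lemma \ref{lem C_j W_M}, the only new difficulty being that we must track two nested bump-function transitions at scales $e'$ and $e$ rather than a single one at scale $e$. I will describe (1) first; parts (2) and (3) then follow by routine adaptation.

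For (1), I would construct $W'_M$ as a finite disjoint union, indexed by combinatorial data that records, for each coordinate $j\in\{1,\ldots,b_1\}$, how the projection interval $[e_j-M,e_j+M]$ sits relative to the outer transition strip $[-e,-e+1]\cup[e-1,e]$ and the inner transition strip $[-e',-e'+1]\cup[e'-1,e']$. Since $e-1\geq e'\geq 1$ and $M$ is fixed, a case analysis shows there are finitely many qualitatively different configurations per coordinate, namely: deep inside $(-e'+1,e'-1)$, crossing the inner transition (with a sign), deep in the annulus $(-e+1,e-1)\setminus[-e',e']$, crossing the outer transition (with a sign), or lying outside $(-e,e)$. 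Each component of $W'_M$ is a product of the interval(s) parametrizing the admissible values of $(e,e')$ and the interpolation parameter $s$, together with discrete sign data on each coordinate. On each component I would exhibit explicit $(6M+6)\mathbb{Z}^{b_1}$-periodic model functions $\tilde{\tau}_w$ built from $(6M+6)$-periodic extensions of $\tau|_{[-2M-2,2M+2]}$, exactly as in Lemma \ref{restriction to the cube} but now with two nested truncated bumps. The verification that $\stL^s_{e,e'}$ coincides with $CSD_{\nu_0}|_{Coul(Y)}+f_w$ on the prescribed cube reduces to a direct check in each case, using that on the inner cube $[-e',e']^{b_1}$ the functional is $\tilde{\stL}^s_{e'}$ while outside it is $\stL^0_e$.

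For (2), I would invoke Lemma \ref{lem f_w continuous family} on each $W'_M$ to obtain constants $\{C_{M,j}\}_j$ guaranteeing that $\{\tilde{f}_w\}_{w\in W'_M}$ is a continuous family of $(6M+6)$-periodic tame functions whenever $\bar{f}_1,\bar{f}_2\in\mathcal{P}(\{C_{M,j}\}_j)$; then define the final sequence by $C_j:=\max_{1\le M\le j}\{C_{M,j},C'_{M,j}\}$, where $C'_{M,j}$ are the constants already produced in the proof of Lemma \ref{lem C_j W_M}. The same diagonalization argument used there shows this single sequence $\{C_j\}$ works uniformly in $M$ and in $(\bar{f}_1,\bar{f}_2)$, which is precisely the content of (2).

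For (3), the family $\tilde{\stL}^s_{e,e'}$ is obtained by swapping the roles of $\bar{f}_1$ and $\bar{f}_2$ in the construction above, so the same $W'_M$ (with roles permuted) works verbatim. The two additional families $(1-s)\mathcal{L}_2+s\stL^{s'}_{e,e'}$ and $(1-s)\mathcal{L}_1+s\tilde{\stL}^{s'}_{e,e'}$ introduce one extra interpolation parameter, which amounts to multiplying $W'_M$ by $[0,1]$ and replacing the bump part of each $\tilde{\tau}_w$ by $s\cdot\tilde{\tau}_w+(1-s)\cdot\text{const}$; tameness and $(6M+6)$-periodicity are preserved, so the same estimates apply. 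I expect the main obstacle to be purely combinatorial bookkeeping: when $e$ is only slightly larger than $e'$ (recall $e-1\ge e'$ suffices, so the transitions can be adjacent), a careful enumeration of coordinate-wise patterns is needed to ensure that $W'_M$ is compact and that every admissible tuple $(s,e,e',e_1,\ldots,e_{b_1})$ is realized by some $w\in W'_M$. The analytic content, by contrast, is entirely captured by the tame-perturbation estimates of Proposition \ref{prop perturbproperty} and Lemma \ref{lem f_w continuous family}, and no new analysis is required.
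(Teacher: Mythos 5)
Your proposal is correct and follows exactly the route the paper indicates (the paper states only that the proof ``is essentially the same'' as Lemmas \ref{restriction to the cube} and \ref{lem C_j W_M} and omits it): build $W'_M$ by a coordinate-wise case analysis against the two nested transition strips, invoke Lemma \ref{lem f_w continuous family} componentwise and diagonalize over $M$ for the choice of $\{C_j\}$, and handle the three families in (3) by role-swap and by adjoining one more $[0,1]$-factor with $\tilde{\tau}_{w,\mathrm{new}}=s\tilde{\tau}_w+(1-s)\cdot\mathrm{const}$. Your computation that $(1-s)\mathcal{L}_2+s\stL^{s'}_{e,e'}$ corresponds to $s\tilde{\tau}_w$ and $(1-s)\mathcal{L}_1+s\tilde{\stL}^{s'}_{e,e'}$ to $(1-s)+s\tilde{\tau}_w$ is accurate, and the only remaining work is the combinatorial enumeration you correctly flag, including the borderline case $e-1=e'$.
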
 With Lemma \ref{double mixed tame functions} at hand, we can repeat the proof of Corollary~\ref{the uniform bound for trajectories} and get the following result.
\begin{lem}\label{double bump functions}
There exists a uniform $R_{2}$ such that for $e-1\geq e'\geq 1$ and $s,s'\in [0,1]$, the functionals $\stL^{s}_{e,e'},\tilde{\stL}^{s}_{e,e'},
(1-s){\mathcal{L}}_{2}+s\stL^{s'}_{e,e'}$ and $(1-s){\mathcal{L}}_{1}+s\tilde{\stL}^{s'}_{e,e'}$ are
all $R_{2}$-bounded.
\end{lem}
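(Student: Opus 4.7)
The plan is to follow essentially the same two-step argument used to prove Corollary~\ref{the uniform bound for trajectories}, but now invoking Lemma~\ref{double mixed tame functions} in place of Lemma~\ref{restriction to the cube} and Lemma~\ref{lem C_j W_M}. The key observation is that all four families of functionals in the statement are covered uniformly by parts (1)--(3) of Lemma~\ref{double mixed tame functions}, so the same machinery applies to each.

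First, I would establish the analogue of Lemma~\ref{uniform bound for finite trajectory}: for every $C > 0$ there exist constants $R, C'$ (independent of the parameters $e, e', s, s'$) such that any trajectory $\gamma \colon [-2,2] \to Coul(Y)$ of any functional $\mathcal{L}$ in the four families, with $\mathcal{E}^{\mathrm{top}}(\gamma, \mathcal{L}) \leq C$, satisfies $\gamma(0) \in Str(R)$ and $|\mathcal{L}(\gamma(0))| < C'$. The proof copies that of Lemma~\ref{uniform bound for finite trajectory} verbatim: the perturbation term has gradient bounded by $m(1+\|\phi\|_{L^2})$ with $m$ uniform (since all interpolations and products involve the two fixed extended cylinder functions $\bar{f}_1, \bar{f}_2$ and the bounded cutoff functions $\tau_{e}, \tau_{e'}$), so the analytic energy estimate on $[-1,1] \times Y$ carries through and gives a uniform $L^2_1$ bound on $\tilde{\gamma}$ after suitable gauge fixing. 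Sobolev embedding then confines $\gamma([-1,1])$ inside some box $p_{\mathcal{H}}^{-1}\bigl(\prod_j [e_j - M_0, e_j + M_0]\bigr)$ with a universal $M_0$. At this point Lemma~\ref{double mixed tame functions}(1) or (3) applies: the restriction of $\mathcal{L}$ to that box coincides with $CSD_{\nu_0}|_{Coul(Y)} + f_w$ for some $w$ in the compact parameter manifold $W'_{M_0}$, and the induced family $\{f_w\}_{w \in W'_{M_0}}$ is a continuous family of $(6M_0 + 6)$-periodic tame functions by Lemma~\ref{double mixed tame functions}(2). Applying Proposition~\ref{uniform bound for compact set} to this compact family then yields the required $R, C'$.

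Second, I would upgrade the finite-energy statement to the finite-type statement exactly as in Corollary~\ref{the uniform bound for trajectories}. Given any finite-type trajectory $\gamma \colon \mathbb{R} \to Coul(Y)$ of any $\mathcal{L}$ in the four families, the topological energy is finite on all of $\mathbb{R}$, so for $|t|$ sufficiently large the energy on $[t-1, t+1]$ is less than $1$; applying the first step with $C = 1$ gives $|\mathcal{L}(\gamma(t))| \leq C'$ for such $t$. Since $\mathcal{L}$ is monotone non-increasing along $\gamma$, this forces $\mathcal{L}(\gamma(t_1)) - \mathcal{L}(\gamma(t_2)) < 2C'$ for all $t_1 < t_2$, so $\mathcal{E}^{\mathrm{top}}(\gamma|_{[t-1,t+1]}, \mathcal{L}) \leq 2C'$ for every $t \in \mathbb{R}$. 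A second application of the first step, now with $C = 2C'$, produces a uniform $R_2$ such that $\gamma(t) \in Str(R_2)$ for all $t$, and this $R_2$ is independent of the parameters and of which of the four families $\mathcal{L}$ belongs to.

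The main obstacle is purely bookkeeping: one must check that each of the four composite functionals can genuinely be localized on an arbitrary cube in $i\Omega^1_h(Y)$ to a member of some compact periodic family. This is precisely the content of Lemma~\ref{double mixed tame functions}(3), which was stated for exactly this purpose; once one trusts that lemma, the analytic step reduces to a verbatim repetition of the arguments already developed for Lemma~\ref{uniform bound for finite trajectory} and Corollary~\ref{the uniform bound for trajectories}, with no new estimates required.
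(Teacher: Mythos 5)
Your proposal is correct and matches the paper's (unwritten) proof, which is simply the remark that one repeats Corollary~\ref{the uniform bound for trajectories} using Lemma~\ref{double mixed tame functions} in place of Lemmas~\ref{restriction to the cube} and \ref{lem C_j W_M}. Your two-step plan — first the analogue of Lemma~\ref{uniform bound for finite trajectory} via the uniform gradient bound, energy estimate, localization to a cube, and Proposition~\ref{uniform bound for compact set}, then the bootstrapping to finite-type trajectories — is exactly the intended argument, with all four families handled uniformly by parts (1)--(3) of Lemma~\ref{double mixed tame functions}.
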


Now we start constructing a mixed direct system relating the spectra given by ${\mathcal{L}}_{1}$ and ${\mathcal{L}}_{2}$. As usual, we focus on the case of $\underline{\text{swf}}^{A}$.
We first choose a constant $\tilde{R}$  greater than $\max(R_{1},R_{2})$, where $R_{1}$ is the constant in Corollary~\ref{the uniform bound for trajectories} and $R_{2}$ is a constant that we will specify later in Lemma~\ref{double bump functions}. Let $J^{+}_{1}\subset J^{+}_{2}\subset \ldots$ and $\tilde{J}^{+}_{1}\subset \tilde{J}^{+}_{2}\subset \ldots$ be increasing sequences of bounded subsets corresponding to ${\mathcal{L}}_{1}$ and ${\mathcal{L}}_{2}$ respectively (see (\ref{cutting
set for attractor})). Although these bounded sets come from $ Str(\tilde{R})$, they are different as we use different cutting functions and different cutting values.
Since the sequences of subsets are increasing, we can find increasing sequences of positive integers $\{m_j\}$, $\{ \tilde{m}_j\}$, $ \{ e_j \}$ and $\{ \tilde{e}_j \}$ such that
\begin{equation}\label{inclusion relations}
\begin{split}
      &J^{+}_{m_{j}}\subset
         p_{\mathcal{H}}^{-1}([-e_{j}+1,e_{j}-1]^{b})\cap Str(\tilde{R})       \subset
          p_{\mathcal{H}}^{-1}([-e_{j},e_{j}]^{b})\cap Str(\tilde{R})              \subset
          \tilde{J}^{+}_{\tilde{m}_{j}}\\
&       \subset p_{\mathcal{H}}^{-1}([-\tilde{e}_{j}+1,\tilde{e}_{j}-1]^{b})  \cap Str(\tilde{R})            \subset
         p_{\mathcal{H}}^{-1}([-\tilde{e}_{j},\tilde{e}_{j}]^{b})    \cap    Str(\tilde{R})                           \subset
          J^{+}_{m_{j+1}}.
\end{split}
\end{equation}

Let $\{ \mu_n \}$ and $\{ \lambda_n\}$ be an increasing sequence and a decreasing sequence of real numbers with $-\lambda_n , \mu_n \rightarrow \infty $ and denote by $V^{\mu_{n}}_{\lambda_{n}} $ the corresponding eigenspace. For a functional $\mathcal{L} $ on $Coul(Y) $, we denote by $\varphi^n (\mathcal{L}) $ the flow generated by $\iota \circ p^{\mu_{n}}_{\lambda_{n}} \gradtil \mathcal{L} $ on $V^{\mu_{n}}_{\lambda_{n}} $ where $\iota$ is a bump function with value $1$ on a specific bounded set. Since we are only interested in the Conley index which will be independent of $\iota$, we can drop $\iota$ from our notation.


Consider $J^{n,+}_{m_{j}} = J^{+}_{m_{j}} \cap V^{\mu_{n}}_{\lambda_{n}} $ and $\tilde{J}^{n,+}_{m_{j}} = \tilde{J}^{+}_{m_{j}} \cap V^{\mu_{n}}_{\lambda_{n}} $. By Theorem~\ref{finite dimensional approximation for a family of flows}, we can fix a sufficiently large integer $n$ so that $J^{n,+}_{m_{j}} , \tilde{J}^{n,+}_{m_{j}}$ are isolating neighborhoods for all of the above families of approximated flows. For the family $\{ \stL^s_{e_j} \} $, we get a homotopy
equivalence from homotopy invariance of Conley indices
$$
\rho_{1} \colon I_{S^{1}}(\varphi^{n}(\stL^{0}_{e_{j}}),\text{inv}(\tilde{J}_{\tilde{m}_{j}}^{n,+}))
\xrightarrow\cong  I_{S^{1}}(\varphi^{n}(\mathcal{L}_{2}),\text{inv}(\tilde{J}_{\tilde{m}_{j}}^{n,+})),
$$
where we recall that  $\stL^{1}_{e_{j}}= {\mathcal{L}}_{2}$.
Since $\stL^{0}_{e_{j}}$ is equal to ${\mathcal{L}}_{1}$ on $p_{\mathcal{H}}^{-1}([-e_{j}+1,e_{j}-1]^{b_1})$,
which contains $J^{+}_{m_{j}}$, we see that the flow $\varphi^n(\stL^0_{e_j}) $ goes inside $J^{n,+}_{m_{j}}$ along the boundary $\partial J^{n,+}_{m_{j}}\setminus
\partial Str_{Y}(\tilde{R})$. Consequently, the subset $ J^{n,+}_{m_{j}} \subset \tilde{J}^{n,+}_{\tilde{m}_{1}} $ is an attractor with respect to $\varphi^n(\stL^0_{e_j}) $ and we obtain an attractor map
$$
\rho_{2} \colon I_{S^{1}}(\varphi^{n}({\mathcal{L}}_{1}),\text{inv}(J_{m_{j}}^{n,+}))\rightarrow
I_{S^{1}}(\varphi^{n}(\stL^{0}_{e_{j}}),\text{inv}(\tilde{J}_{\tilde{m}_{j}}^{n,+})).
$$
We combine the above two maps and obtain the following map
\begin{equation}\label{attracor map 1 in the mixed system}
\bar{i}^{n,+}_{m_{j}} :=\rho_{1}\circ \rho_{2} \colon I_{S^{1}}(\varphi^{n}({\mathcal{L}}_{1}),\text{inv}(J^{n,+}_{m_{j}}))\rightarrow I_{S^{1}}(\varphi^{n}({\mathcal{L}}_{2}),\text{inv}(\tilde{J}^{n,+}_{\tilde{m}_{j}})).
\end{equation}

Similarly, we use the family $\{ \tilde{\stL}^{s}_{\tilde{e}_{j}} \}$  to get a homotopy equivalence
$$
\tilde{\rho}_{1} \colon I_{S^{1}}(\varphi^{n}(\tilde{\stL}^{0}_{\tilde{e}_{j}}),\text{inv}(J_{m_{j+1}}^{n,+}))
\xrightarrow\cong  I_{S^{1}}(\varphi^{n}({\mathcal{L}}_{1}),\text{inv}(J_{m_{j+1}}^{n,+})).$$
Since $\tilde{J}_{\tilde{m}_{j}}^{n,+} \subset J_{m_{j+1}}^{n,+}$ is an attractor with respect to $\tilde{\stL}^{0}_{\tilde{e}_{j}} $, we also get an attractor map
$$
\tilde{\rho_{2}} \colon I_{S^{1}}(\varphi^{n}({\mathcal{L}}_{2}),\text{inv}(\tilde{J}_{\tilde{m}_{j}}^{n,+}))\rightarrow I_{S^{1}}(\varphi^{n}(\tilde{\stL}^{0}_{\tilde{e}_{j}}),\text{inv}(J_{m_{j+1}}^{n,+})).
$$
We compose the above two maps and get the following map
\begin{equation}\label{attractor map 2 in the mixed system}
\hat{i}^{n,+}_{\tilde{m}_{j}}:=\tilde{\rho}_{1}\circ \tilde{\rho}_{2} \colon I_{S^{1}}(\varphi^{n}({\mathcal{L}}_{2}),\text{inv}(\tilde{J}^{n,+}_{\tilde{m}_{j}}))\rightarrow I_{S^{1}}(\varphi^{n}({\mathcal{L}}_{1}),\text{inv}(J^{n,+}_{m_{j+1}})).
\end{equation}
After appropriate desuspension, we obtain a direct system in the category $\mathfrak{C}$
\begin{equation}\label{mixed direct system}
I^{n_{1},+}_{m_{1}}\rightarrow \tilde{I}^{\tilde{n}_{1},+}_{\tilde{m}_{1}}\rightarrow I^{n_{2},+}_{m_{2}}\rightarrow \tilde{I}^{\tilde{n}_{2},+}_{\tilde{m}_{2}}\rightarrow \cdots,
\end{equation}
where $I^{n,+}_{m}$ (resp. $\tilde{I}^{n,+}_{m}$) be the object of $\mathfrak{C}$ obtained from desuspending
the {Conley} indices of $J^{n,+}_{m}$ (resp. $\tilde{J}^{n,+}_{m}$) by $\bar{V}^{0}_{-\lambda_{n}}$ and  we can pick a suitable sequence of integers $0\ll n_{1} < \tilde{n}_{1} <n_{2} <\tilde{n}_{2} < \cdots$.
The main result of this section follows from the following proposition.

\begin{pro}\label{subsystem of mixed system}
The map $\hat{i}^{n,+}_{\tilde{m}_{j}}\circ \bar{i}^{n,+}_{m_{j}}$ is $S^{1}$-homotopic to attractor map for the attractor $\text{inv}(\varphi^{n}(\mathcal{L}_{1}),J^{n,+}_{m_{j}})\subset \text{inv}(\varphi^{n}(\mathcal{L}_{1}),J^{n,+}_{m_{j+1}})$.
\end{pro}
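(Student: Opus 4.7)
The plan is to introduce a single ``double-mixed'' functional $\tilde{\stL}^{0}_{\tilde{e}_{j}, e_{j}}$ that encodes the entire nested attractor structure in one flow; this is legitimate because (\ref{inclusion relations}) forces $\tilde{e}_{j}-1 \geq e_{j} \geq 1$. By property~(3) after the definition of $\tilde{\stL}^{s}_{e,e'}$, the functional $\tilde{\stL}^{0}_{\tilde{e}_{j}, e_{j}}$ equals $\mathcal{L}_{1}$ on the innermost region $p_{\mathcal{H}}^{-1}([-e_{j}+1, e_{j}-1]^{b_{1}})$ containing a neighborhood of $\partial J^{n,+}_{m_{j}} \setminus \partial Str(\tilde{R})$, equals $\mathcal{L}_{2}$ on the middle annulus $p_{\mathcal{H}}^{-1}([-\tilde{e}_{j}+1, \tilde{e}_{j}-1]^{b_{1}} \setminus (-e_{j}, e_{j})^{b_{1}})$ containing a neighborhood of $\partial \tilde{J}^{n,+}_{\tilde{m}_{j}} \setminus \partial Str(\tilde{R})$, and equals $\mathcal{L}_{1}$ again on the outer region $\mathbb{R}^{b_{1}} \setminus (-\tilde{e}_{j}, \tilde{e}_{j})^{b_{1}}$ containing a neighborhood of $\partial J^{n,+}_{m_{j+1}} \setminus \partial Str(\tilde{R})$. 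Combined with the uniform $R_{2}$-bound of Lemma~\ref{double bump functions} and Theorem~\ref{finite dimensional approximation for a family of flows}, this shows that for $n$ sufficiently large the chain $J^{n,+}_{m_{j}} \subset \tilde{J}^{n,+}_{\tilde{m}_{j}} \subset J^{n,+}_{m_{j+1}}$ forms a nested sequence of attractors for $\varphi^{n}(\tilde{\stL}^{0}_{\tilde{e}_{j}, e_{j}})$, and by the transitivity of the attractor map (the remark after Proposition~\ref{Attractor-repeller-exact sequence}) the composition of the two corresponding attractor maps coincides with the direct attractor map for this single flow.

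It then remains to identify, up to $S^{1}$-equivariant homotopy, both $\hat{i}^{n,+}_{\tilde{m}_{j}} \circ \bar{i}^{n,+}_{m_{j}}$ with the two-step attractor chain for $\varphi^{n}(\tilde{\stL}^{0}_{\tilde{e}_{j}, e_{j}})$, and the attractor map in the statement of the proposition (computed with $\varphi^{n}(\mathcal{L}_{1})$) with the direct attractor map for the same mixed flow. Both identifications come from the homotopy invariance of attractor--repeller decompositions (Salamon's \cite[Theorem~6.10]{Salamon}, used already in Proposition~\ref{stability of the conley index}) applied to the interpolating families supplied by Lemma~\ref{double mixed tame functions}: the families $\tilde{\stL}^{s}_{\tilde{e}_{j}, e_{j}}$, $(1-s)\mathcal{L}_{2} + s\, \stL^{s'}_{\tilde{e}_{j}, e_{j}}$ and $(1-s)\mathcal{L}_{1} + s\, \tilde{\stL}^{s'}_{\tilde{e}_{j}, e_{j}}$ deform $\tilde{\stL}^{0}_{\tilde{e}_{j}, e_{j}}$ to each of $\stL^{0}_{e_{j}}$, $\mathcal{L}_{2}$, $\tilde{\stL}^{0}_{\tilde{e}_{j}}$ and $\mathcal{L}_{1}$, i.e.\ to every flow appearing in the definitions of $\bar{i}^{n,+}_{m_{j}}$, $\hat{i}^{n,+}_{\tilde{m}_{j}}$ and the direct attractor map for $\varphi^{n}(\mathcal{L}_{1})$. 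For the last identification the key point is that along the family $(1-s)\mathcal{L}_{1} + s\, \tilde{\stL}^{0}_{\tilde{e}_{j}, e_{j}}$ the functional coincides with $\mathcal{L}_{1}$ on neighborhoods of both $\partial J^{n,+}_{m_{j}} \setminus \partial Str(\tilde{R})$ and $\partial J^{n,+}_{m_{j+1}} \setminus \partial Str(\tilde{R})$, so $J^{n,+}_{m_{j}} \subset J^{n,+}_{m_{j+1}}$ remains an attractor pair throughout.

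The main technical obstacle will be verifying that, through each of these one-parameter families and after passage to the finite-dimensional approximation on $V^{\mu_{n}}_{\lambda_{n}}$, the three sets $J^{n,+}_{m_{j}}$, $\tilde{J}^{n,+}_{\tilde{m}_{j}}$, $J^{n,+}_{m_{j+1}}$ simultaneously remain isolating neighborhoods with the prescribed attractor structure. This combines the parametric finite-dimensional approximation of Theorem~\ref{finite dimensional approximation for a family of flows} with the uniform $R_{2}$-bound of Lemma~\ref{double bump functions}, together with the observation that on the relevant boundary pieces (away from $\partial Str(\tilde{R})$) the interpolating functionals are constant along the homotopy and equal to the specific $\mathcal{L}_{i}$ controlling the corresponding attractor property; this control on the actual flow then transfers to the approximated flow once $n$ is chosen sufficiently large relative to $m_{j+1}$.
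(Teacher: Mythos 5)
Your proposal is correct and follows essentially the same strategy as the paper: introduce the double-mixed functional $\tilde{\stL}^{0}_{\tilde{e}_{j},e_{j}}$, observe the three sets form a nested attractor chain for it, use transitivity of attractor maps for that single flow, and then transfer to $\bar{i}^{n,+}_{m_{j}}$, $\hat{i}^{n,+}_{\tilde{m}_{j}}$ and the $\mathcal{L}_{1}$-attractor map by continuation along the families of Lemma~\ref{double mixed tame functions}. The paper's commutative diagram and its step~(5) (the two-parameter interpolation $rs\mathcal{L}_{1}+(1-rs)\tilde{\stL}^{s}_{\tilde{e}_{j},e_{j}}$) are precisely the careful implementation of the consistency check you flag as ``the main technical obstacle.''
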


\begin{proof} We consider the following commutative (up to $S^{1}$-homotopy) diagram.
\\
\\
\begindc{\commdiag}[500]
\obj(0,5)[aa]{$I_{S^{1}}(\varphi^{n}(\mathcal{L}_{1}),\text{inv}(J^{n,+}_{m_{j}}))$}
\obj(0,4)[ab]{$I_{S^{1}}(\varphi^{n}(\stL^{0}_{e_{j}}),\text{inv}(J^{n,+}_{m_{j}}))$}
\obj(3,4)[bb]{$I_{S^{1}}(\varphi^{n}(\stL^{0}_{e_{j}}),\text{inv}(\tilde{J}^{n,+}_{\tilde{m}_{j}}))$}
\obj(6,4)[cb]{$I_{S^{1}}(\varphi^{n}(\stL^{1}_{e_{j}}),\text{inv}(\tilde{J}^{n,+}_{\tilde{m}_{j}}))$}
\obj(0,3)[ac]{$I_{S^{1}}(\varphi^{n}(\tilde{\stL}^{0}_{\tilde{e}_{j},e_{j}}),\text{inv}(J^{n,+}_{m_{j}}))$}
\obj(3,3)[bc]{$I_{S^{1}}(\varphi^{n}(\tilde{\stL}^{0}_{\tilde{e}_{j},e_{j}}),\text{inv}(\tilde{J}^{n,+}_{\tilde{m}_{j}}))$}
\obj(6,3)[cc]{$I_{S^{1}}(\varphi^{n}(\mathcal{L}_{2}),\text{inv}(\tilde{J}^{n,+}_{\tilde{m}_{j}}))$}
\obj(0,2)[ad]{$I_{S^{1}}(\varphi^{n}(\tilde{\stL}^{0}_{\tilde{e}_{j},e_{j}}),\text{inv}(J^{n,+}_{m_{j+1}}))$}
\obj(3,2)[bd]{$I_{S^{1}}(\varphi^{n}(\tilde{\stL}^{0}_{\tilde{e}_{j}}),\text{inv}(J^{n,+}_{m_{j+1}}))$}
\obj(6,2)[cd]{$I_{S^{1}}(\varphi^{n}(\tilde{\stL}^{0}_{\tilde{e}_{j}}),\text{inv}(\tilde{J}^{n,+}_{\tilde{m}_{j}}))$}
\obj(3,1)[be]{$I_{S^{1}}(\varphi^{n}(\mathcal{L}_{1}),\text{inv}(J^{n,+}_{m_{j+1}}))$}
\obj(6,1)[bf]{$I_{S^{1}}(\varphi^{n}(\tilde{\stL}^{1}_{\tilde{e}_{j}}),\text{inv}(J^{n,+}_{m_{j+1}}))$}\mor(1,4)(2,4){$\rho_{2}$}
\mor(4,4)(5,4){$\rho_{1}$}
\mor(1,3)(2,3){$\rho_{6}$}
\mor(5,2)(4,3){$\rho_{3}$}
\mor(1,2)(2,2){$\rho_{5}$}
\mor(5,2)(4,2){$\tilde{\rho}_{2}$}
\mor(4,2)(5,1){$\tilde{\rho}_{1}$}
\mor(0,3)(0,2){$\rho_{7}$}
\mor(2,3)(1,2){$\rho_{4}$}
\mor(1,2)(2,1){$\rho_{8}$}
\mor{be}{bf}{}[+1,9]
\mor(0,5)(0,4){}[+1,9]
\mor(0,4)(0,3){}[+1,9]
\mor(3,4)(3,3){}[+1,9]
\mor(6,4)(6,3){}[+1,9]
\mor(6,3)(6,2){}[+1,9]
\enddc

The maps are defined as follows.
\begin{enumerate}
\item Different flows are generated by the same vector field when restricted to some isolating neighborhood. This defines all the identifications ``$=$'' in the diagram.

\item The maps $\rho_{1},\rho_{2},\tilde{\rho}_{1},\tilde{\rho}_{2}$ are defined as before.

\item The maps $\rho_{3},\rho_{5}$ are the homotopy equivalences given by the deformation $\tilde{\stL}^{s}_{\tilde{e}_{j},e_{j}}$, $s\in [0,1]$.

\item The maps $\rho_{4},\rho_{6},\rho_{7}$  are the attractor maps for the flow $\varphi^{n}(\tilde{\stL}^{0}_{\tilde{e}_{j},e_{j}})$.

\item The map $\rho_{8}$ is homotopy equivalence given by the deformation
\begin{equation}\label{deformation1}
(1-s)\mathcal{L}_{1}+s\tilde{\stL}^{0}_{\tilde{e}_{j},e_{j}}, \text{ }s\in [0,1].
\end{equation}

\end{enumerate}
Now we check that the above diagram commutes:
\begin{enumerate}
\item The maps $\rho_{2}$ and $\rho_{6}$  are the defined as attractor maps for the flows $\varphi^{n}(\stL^{0}_{e_{j}})$ and $\varphi^{n}(\tilde{\stL}^{0}_{\tilde{e}_{j},e_{j}})$ respectively. Since these two flows are generated by the same vector field when restricted to  $\tilde{J}^{n,+}_{\tilde{m}_{j}}$, we see that $\rho_{2}$ is $S^{1}$-homotopic to $\rho_{6}$, written as $\rho_{2}\cong \rho_{6}$.
\item Because the attractor maps for the same flow are transitive, we have  $\rho_{7}\cong\rho_{4}\circ \rho_{6}$.
\item We deform $\tilde{\stL}^{0}_{\tilde{e}_{j}}= \tilde{\stL}^{1}_{\tilde{e}_{j},e_{j}}$ to $\tilde{\stL}^{0}_{\tilde{e}_{j},e_{j}}$ through the family $\tilde{\stL}^{s}_{\tilde{e}_{j},e_{j}}$. In the process of this deformation, nothing is changed on the set $p_{\mathcal{H}}^{-1}(\mathbb{R}^{b}\setminus (-e_{j},e_{j})^{b})$, which contains both  $\partial J^{n,+}_{m_{j+1}}\setminus \partial Str(\tilde{R})$ and $\partial \tilde{J}^{n,+}_{\tilde{m}_{j}}\setminus \partial Str(\tilde{R})$. Therefore, we obtain a family of attractor maps: we get $\rho_{4}$ when $s=0$ and get $\tilde{\rho}_{2}$ when $s=1$. Notice that $\rho_{3}$ and $\rho_{5}$ are the homotopy equivalences induced by this deformation. The identity $\tilde{\rho}_{2}\cong \rho_{5}\circ \rho_{4}\circ \rho_{3}$ can be proved using the homotopy invariance of the attractor map.
\item The map $\rho_{3}$ is induced by the deformation $\tilde{\stL}^{s}_{\tilde{e}_{j},e_{j}}$ with $s$ going from $1$ to $0$. We just get $\stL^{s}_{e_{j}}$ if we restrict this deformation to the set $\tilde{J}^{n,+}_{\tilde{m}_{j}}$. Therefore, we have $\rho_{1}\cong\rho_{3}^{-1}$.
\item Notice that $\tilde{\rho}_{1}\circ \rho_{5}$ is the homotopy equivalence induced by the following deformation:
\begin{equation}\label{deformation2}
\tilde{\stL}^{0}_{\tilde{e}_{j},e_{j}}|_{J^{+}_{m_{j+1}}}\rightarrow \tilde{\stL}^{1}_{\tilde{e}_{j},e_{j}}|_{J^{+}_{m_{j+1}}}=\tilde{\stL}^{0}_{\tilde{e}_{j}}|_{J^{+}_{m_{j+1}}}\rightarrow \tilde{\stL}^{1}_{\tilde{e}_{j}}|_{J^{+}_{m_{j+1}}}=\mathcal{L}_{1}|_{J^{+}_{m_{j+1}}}.
\end{equation}
In order to prove the identity $\rho_{8} { \cong}  \tilde{\rho}_{1}\circ \rho_{5}$, we just need to show that the homotopy equivalences $I_{S^{1}}(\varphi^{n}(\tilde{\stL}^{0}_{\tilde{e}_{j},e_{j}}),\text{inv}(J^{n,+}_{m_{j+1}})) \xrightarrow{\cong }I_{S^{1}}(\varphi^{n}(\mathcal{L}_{1}),\text{inv}(J^{n,+}_{m_{j+1}}))$ which are induced by  deformations (\ref{deformation1}) and (\ref{deformation2}) are $S^{1}$-homotopic to each other. To see this, for any $r\in[0,1]$, we consider the following 2-step deformation.

\begin{enumerate}

\item
First deform $\tilde{\stL}^{0}_{\tilde{e}_{j},e_{j}}$ to $r\mathcal{L}_{1}+(1-r)\tilde{\stL}^{1}_{\tilde{e}_{j},e_{j}}=\tilde{\stL}^{r}_{\tilde{e}_{j}}$ through the family $rs\mathcal{L}_{1}+(1-rs)\tilde{\stL}^{s}_{\tilde{e}_{j},e_{j}}$, with $s$ going from $0$ to $1$.

\item
Then deform $\tilde{\stL}_{\tilde{e}_{j}}^{r}$ to $\tilde{\stL}_{\tilde{e}_{j}}^{1}=\mathcal{L}_{1}$ through the family $\tilde{\stL}_{\tilde{e}_{j}}^{s}$, with $s$ going from $r$ to $1$.

\end{enumerate}
Setting $r$ to be $0$ and $1$ in the above deformation, we will get (\ref{deformation2}) and  (\ref{deformation1}) respectively. As before, the flow near $\partial J^{+}_{m_{j+1}}\setminus \partial Str_{Y}(\tilde{R})$ is not changed. By Lemma \ref{double bump functions}, all the functionals involved in the above deformation are $R_{2}$-bounded. Since $\tilde{R}>R_{2}$, $J^{n,+}_{m_{j+1}}$ is an isolating neighborhood for all these functionals when $n$ is large enough. Therefore, as $r$ goes from $0$ to $1$, we get a $S^{1}$-homotopy between the homotopy equivalences induced by (\ref{deformation1}) and (\ref{deformation2}).
\end{enumerate}
We have proved that the diagram is commutative up to $S^{1}$-homotopy. As a corollary, the map $\hat{i}^{n,+}_{\tilde{m}_{j}}\circ \bar{i}^{n,+}_{m_{j}}=\tilde{\rho}_{1}\circ \tilde{\rho}_{2}\circ \rho_{1}\circ \rho_{2}$ is $S^{1}$-homotopic to $\rho_{8}\circ\rho_{7}$.
Now we consider the attractor map for the flow $\mathcal{L}_{1}$, which we denote by
$$
i^{+}:I_{S^{1}}(\varphi^{n}(\mathcal{L}_{1}),\text{inv}(J^{n,+}_{m_{j}}))\rightarrow I_{S^{1}}(\varphi^{n}(\mathcal{L}_{1}),\text{inv}(J^{n,+}_{m_{j+1}})).
$$
We deform $\mathcal{L}_{1}$ to $\tilde{\stL}^{0}_{\tilde{e}_{j},e_{j}}$ through the family $(1-s)\mathcal{L}_{1}+s\tilde{\stL}^{0}_{\tilde{e}_{j},e_{j}}$ $(s \in [0,1])$. Notice that for any $s$, $(1-s)\mathcal{L}_{1}+s\tilde{\stL}^{0}_{\tilde{e}_{j},e_{j}}$ equals $\mathcal{L}_{1}$ on the set $p_{\mathcal{H}}^{-1}([-e_{j}+1,e_{j}-1]^{b}\cup (\mathbb{R}^{b}\setminus (-\tilde{e}_{j},\tilde{e}_{j})^{b}))$, which contains both $\partial J^{+}_{m_{j}}\setminus \partial Str(\tilde{R})$ and $\partial J^{+}_{m_{j+1}}\setminus \partial Str(\tilde{R})$. Therefore, we get a family of attractors:
$$
\text{inv}(\varphi^{n}((1-s)\mathcal{L}_{1}+s\tilde{\stL}^{0}_{\tilde{e}_{j},e_{j}}),J^{n,+}_{m_{j}})\subset \text{inv}(\varphi^{n}((1-s)\mathcal{L}_{1}+s\tilde{\stL}^{0}_{\tilde{e}_{j},e_{j}}),J^{n,+}_{m_{j+1}}).
$$
By the homotopy invariance of the attractor maps, we see that $i^{+}$ also is homotopic to $\rho_{8}\circ \rho_{7}$. This finish the proof of the proposition.
\end{proof}

Proposition~\ref{subsystem of mixed system} actually implies that the direct system (\ref{mixed direct system}) contains a subsystem whose colimit gives the ind-spectrum $\underline{\text{swf}}^{A}$ for the perturbation $f_{1}$. Similarly, we can prove that the ind-spectrum for the perturbation $f_{2}$ is also a subsequential colimit of (\ref{mixed direct system}). Therefore, by Lemma~\ref{subsyemtem}, we see that $f_{1}$ and $f_{2}$ gives the same ind-spectrum up to canonical isomorphism.

Finally, we address the situation when $f_{1}(a,\phi)=\frac{\delta_{1}}{2}\|\phi\|_{L^{2}}^{2}+\bar{f}_{1}(a,\phi)$ and $f_{2}(a,\phi)=\frac{\delta_{2}}{2}\|\phi\|_{L^{2}}^{2}+\bar{f}_{2}(a,\phi)$ with $\delta_{1}\neq \delta_{2}$. This can now be proved the standard homotopy invariance argument as follows.
We set $\delta_{t}=(2-t)\delta_{1}+(t-1)\delta_{2}$.  For each $t_{0} \in [1,2]$, we can find an extended cylinder function $\bar{f}$ such that the pair $(\delta_{t_{}},\bar{f})$ gives a perturbed Chern-Simons-Dirac functional whose critical points are all nondegenerate in the sense of \cite[Definition~12.1.1]{Kronheimer-Mrowka} for any $t$ near $t_0$. Here we essentially use the compactness result for critical points, which is a special case of \cite[Proposition~11.6.4]{Kronheimer-Mrowka}. Hence, we can find a subdivision $1=t_{1} < \cdots < t_{n}=2$ and $\bar{f}_{1}', \dots ,\bar{f}'_{n-1}\in \mathcal{P}$ with $\bar{f}_{1}' = \bar{f}_{1} $ and $\bar{f}_{n-1}' = \bar{f}_2 $ such that the pair $(\delta_{t},\bar{f}'_{j})$ gives a good perturbation for any $t\in [t_{j},t_{j+1}]$. By homotopy invariance of the Conley index, we see that $(\delta_{t_{j}},\bar{f}'_{j})$ and $(\delta_{t_{j+1}},\bar{f}'_{j})$ give the same ind-spectrum $\underline{\text{swf}}^{A}$. Since we already showed that the ind-spectrum does not depend on the choice of the extended cylinder function when $\delta$ is fixed, we can conclude that $f_{1}$ and $f_{2}$ give the same $\underline{\text{swf}}^{A}$ (up to canonical isomorphisms). This finishes the proof of the invariance for (II).
\subsection{The invariance for (I)}\label{subsection invariance I}
Now we discuss what happens when we vary the metric $g$ and the base connection $A_{0}$. Let $(A^{}_{0},g_{0})$ and $(A^{}_{1},g_{1})$ be two pairs of base connections and metrics. We can connect them by a smooth path $\alpha(s)=(A_{s}^{},g_{s})$ with $s\in [0,1]$. As in the proof of the invariance for $\delta$, we can divide $[0,1]$ into small subintervals $[s_{j},s_{j+1}]$ such that, for each subinterval $[s_{j},s_{j+1}]$, we can fix the choice of the auxiliary data $(f,\bar{g},\theta,\tilde{R},\{\lambda_{n}\}, \{ \mu_n \})$.

As $s$ varies between $s_{j}$ and $s_{j+1}$, we get a continuous family of Coulomb slices $Coul(Y,s)$ and a family of sequences of bounded sets
$$
J^{+}_{1,s}\subset J^{+}_{2,s}\subset \cdots.
$$
For any positive integer $n$, we have a (usually not continuous) family of finite-dimensional spaces $V^{\mu_{n}}_{\lambda_{n}}(s)$. As before, we denote by $\bar{V}^{0}_{\mu_{n}}(s)$ the orthogonal complement of $i\Omega^{1}_{h}(Y)$ in $V^{0}_{\mu_{n}}(s)$. Let $J^{n,+}_{m,s}=J^{+}_{m,s}\cap V^{\mu_{n}}_{\lambda_{n}}(s)$ and $\varphi_{n,s}$ be the approximated Seiberg-Witten flow on $V^{\mu_{n}}_{\lambda_{n}}(s)$. The following lemma is a direct consequence of Theorem~\ref{finite dimensional approximation for a family of flows}.
\begin{lem}
For any positive integer $m$ and a sufficiently large integer $n$ relative to $m$, we have
\begin{equation*}
\Sigma^{-\bar{V}^{0}_{\mu_{n}}(s_{j})}I_{S^{1}}(\varphi_{n,s_{j}},\operatorname{inv}(J^{n,+}_{m,s_{j}}))
\cong
\Sigma^{\spf(-\slashed{D},\alpha[s_{j},s_{j+1}])\mathbb{C}}\Sigma^{-\bar{V}^{0}_{\mu_{n}}(s_{j+1})}I_{S^{1}}(\varphi_{n,s_{j+1}},\operatorname{inv}(J^{n,+}_{m,s_{j+1}}))
\end{equation*}
as objects of $\mathfrak{C}$.
\end{lem}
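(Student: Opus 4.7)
The plan is to apply Theorem~\ref{finite dimensional approximation for a family of flows}(iii) directly, with parameter space $S = [s_j, s_{j+1}]$, to the family of Seiberg-Witten vector fields $l_s + c_s$ on the trivialized Hilbert bundle. Once the hypotheses of that theorem are verified, its output $\bar{\rho}(B, \alpha|_{[s_j, s_{j+1}]})$ will be precisely the isomorphism claimed in the lemma, up to identifying $\operatorname{inv}(B \cap V^{\mu_n}_{\lambda_n}(s))$ with $\operatorname{inv}(J^{n,+}_{m,s})$.

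First I would produce a single closed bounded $B \subset Coul(Y)$ together with $A \subset \operatorname{int}(B)$ satisfying the trapping hypothesis of Theorem~\ref{finite dimensional approximation for a family of flows}. On the subinterval $[s_j, s_{j+1}]$ the good perturbation $f$ and auxiliary data $(\bar{g}, \theta, \tilde{R})$ have been fixed, while $\mathcal{L}_s$, $p_\mathcal{H}$, and the cutting functions $g_{j, \pm, s}$ vary smoothly with $s$; consequently the sets $J^+_{m, s}$ form a continuous, and hence uniformly bounded, family of closed subsets of $Coul(Y)$. Take $B$ to be a slight closed thickening of $\bigcup_{s \in [s_j, s_{j+1}]} J^+_{m, s}$ so that each $J^+_{m, s}$ lies in $\operatorname{int}(B)$. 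A parametrized version of Theorem~\ref{Boundedness of trajectories}, obtained by specializing Proposition~\ref{uniform bound for compact set} to the continuous family of $\mathcal{G}_Y$-invariant tame functionals $\{CSD_{\nu_0} + f \circ \Pi\}_{s \in [s_j, s_{j+1}]}$, furnishes a uniform $R$ such that every finite-type $\mathcal{L}_s$-trajectory lies in $\operatorname{int}(Str(R))$. Setting $A = B \cap Str(R)$ gives the required pair.

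Second, I would invoke Theorem~\ref{finite dimensional approximation for a family of flows}. Part~(i) produces $T, -\bar{\lambda}, \bar{\mu}$ depending only on $B, A$ and $[s_j, s_{j+1}]$; choose $n$ large enough (relative to $m$) that $\lambda_n < \bar{\lambda}$ and $\mu_n > \bar{\mu}$, which is legitimate because $B$ was fixed once $m$ was chosen. Part~(iii) then supplies a canonical isomorphism
\[
\bar{\rho}\bigl(B, \alpha|_{[s_j, s_{j+1}]}\bigr) \colon \bar{I}(B, s_j) \xrightarrow{\;\cong\;} \Sigma^{\spf(-\slashed{D}, \alpha[s_j, s_{j+1}])\mathbb{C}} \, \bar{I}(B, s_{j+1}),
\]
where by definition $\bar{I}(B, s) = \Sigma^{-\bar{V}^{0}_{\lambda_n}(s)} I_{S^1}\bigl(\varphi_{n,s}, \operatorname{inv}(B \cap V^{\mu_n}_{\lambda_n}(s))\bigr)$. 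Finally, by an $s$-by-$s$ application of Lemma~\ref{lem J^n,+m isolate}, enlarged to the family using the uniformity from step one, the approximated flow $\varphi_{n,s}$ still enters $J^{n,+}_{m,s}$ transversely along $\partial J^{n,+}_{m,s} \setminus \partial Str(\tilde{R})$, so $\operatorname{inv}(B \cap V^{\mu_n}_{\lambda_n}(s))$ and $\operatorname{inv}(J^{n,+}_{m,s})$ coincide and their Conley indices are canonically identified. Substituting this identification into $\bar{\rho}$ yields the isomorphism stated in the lemma.

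The main obstacle is step one: promoting Theorem~\ref{Boundedness of trajectories} and the trapping property of $J^+_m$ from a fixed $(g, A_0)$ to the compact family $\{(g_s, A_{0,s})\}_{s \in [s_j, s_{j+1}]}$. This is genuinely a uniform estimate and not merely a pointwise one; however it is precisely the content of Proposition~\ref{uniform bound for compact set} combined with Lemma~\ref{a family of nonlinear term} applied to the family $c_s$. Once the single bounded set $B$ is in hand, the remainder of the argument is a mechanical translation of Theorem~\ref{finite dimensional approximation for a family of flows}(iii) into the notation of the lemma and requires no new analysis.
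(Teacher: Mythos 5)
Your proposal takes the same route the paper implicitly takes -- the paper's proof is literally the one-sentence claim that the lemma is a direct consequence of Theorem~\ref{finite dimensional approximation for a family of flows} -- so the overall strategy (choose a single trapping pair $(A,B)$, apply part~(iii), then identify the invariant sets) matches. The first two steps of your proposal are sound, and the uniform boundedness you invoke via Proposition~\ref{uniform bound for compact set} is the right ingredient.

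The gap is in your final step, where you conclude that $\operatorname{inv}(B\cap V^{\mu_n}_{\lambda_n}(s))$ and $\operatorname{inv}(J^{n,+}_{m,s})$ coincide ``because the approximated flow enters $J^{n,+}_{m,s}$ transversely.'' That transversality alone shows $J^{n,+}_{m,s}$ is an isolating neighborhood; it does not rule out orbits living entirely in the thickened shell $B\setminus J^{n,+}_{m,s}$. Moreover, your $A=B\cap Str(R)$ is too large to force the identification: the trapping conclusion of Theorem~\ref{finite dimensional approximation for a family of flows}(i) only puts $\operatorname{inv}(B\cap V^{\mu_n}_{\lambda_n}(s))$ inside $A$, and $A\cap V^{\mu_n}_{\lambda_n}(s)$ still contains points with $g_{j,+,s}>\theta+m$. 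What is actually needed is the parametrized analogue of Lemma~\ref{lem J^n,+m isolate}(a): approximated Seiberg--Witten trajectories contained in $B$ for a long time have $\gamma(0)\in Str(R_0)\cap\bigcap_j g_{j,+,s}^{-1}(-\infty,\theta+m-\theta_m]$, which \emph{is} contained in $\operatorname{int}(J^{n,+}_{m,s})$ for every $s$ in the subinterval. Equivalently, $A$ should be taken to include the cutting-function constraint (not just the $Str(R)$ constraint), and the thickening of $B$ must be kept smaller than the band width $\theta_m$ appearing in Lemma~\ref{approximated flow goes in}, with the subdivision $[s_j,s_{j+1}]$ fine enough that $g_{j,+,s}$ varies by less than that band. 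With those choices in place, the backward-in-time monotonicity of $g_{j,+,s}$ along the approximated flow (Lemma~\ref{approximated flow goes in}) shows any orbit entering the shell exits $B$, and the invariant sets do coincide. Your write-up gestures at these facts but does not assemble them, so as written the final identification is not established.
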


The above isomorphisms induce an isomorphism
$$
\underline{\text{swf}}^{A}(Y,\mathfrak{s},A_{s_{j}},g_{s_{j}};S^{1})\cong\Sigma^{\spf(-\slashed{D},\alpha([s_{j},s_{j+1}]))\mathbb{C}}\underline{\text{swf}}^{A}(Y,\mathfrak{s},A_{s_{j+1}},g_{s_{j+1}};S^{1}).
$$
 By additivity of spectral flow, we can conclude that
 \begin{equation}
  \label{change by suspension (attractor)}
\underline{\text{swf}}^{A}(Y,\mathfrak{s}_{Y},A_{0}^{},g_{0};S^{1})
\cong
\Sigma^{\spf(-\slashed{D},\alpha)\mathbb{C}}\underline{\text{swf}}^{A}(Y,\mathfrak{s}_{Y},A^{}_{1},g_{1};S^{1}).
\end{equation}
Therefore, $\underline{\text{swf}}^{A}$ can only change by suspension or desuspension of copies of $\mathbb{C}$ when we vary the pair $(A_{0},g_0)$. Now  we discuss the following two cases separately.

\textbf{(1) $\mathfrak{s}$ is torsion}:   In this case, we recall that there is a well defined quantity $n(Y,\mathfrak{s},A_{0},g)$. By excision argument as in \cite{Manolescu1}, we have
$$
n(Y,\mathfrak{s},A^{}_{0},g_{0}) + \spf(-\slashed{D},\alpha)=n(Y,\mathfrak{s},A^{}_{1},g_{1}).
$$
This implies
\[
     \begin{split}
&
(\underline{\text{swf}}^{A}(Y, \mathfrak{s}, A_{0}^{}, g_{0}; S^{1}), 0, n(Y,\mathfrak{s},A^{}_{0},g_{0}))
\cong
(\underline{\text{swf}}^{A}(Y,\mathfrak{s},A_{1}^{}, g_{1}; S^{1}), 0, n(Y,\mathfrak{s},A^{}_{1},g_{1}))
    \end{split}
\]
and the same result holds for $\underline{\text{swf}}^{R}$.
This finishes the proof of invariance of  $\underline{\text{SWF}}^{A}(Y,\mathfrak{s};S^{1})$ and $\underline{\text{SWF}}^{R}(Y,\mathfrak{s};S^{1})$ in the torsion case.

\textbf{(2) $\mathfrak{s}$ is non-torsion}: In this case, let $l=\text{g.c.d}\{(c_{1}(\mathfrak{s})\cup h)[Y] \mid h\in H^{1}(Y;\mathbb{Z})\}$. We pick a  harmonic gauge  transformation $u_{0}\in \mathcal{G}^{h,o}_{Y}=H^{1}(Y;\mathbb{Z})$ such that $(c_{1}(\mathfrak{s})\cup [u_{0}])[Y]=l$ and denote by $Coul(Y,A_{0})$ and $Coul(Y,u_{0}(A_{0}))$ the Coulomb slices with the base connections $A_{0}$ and $u_{0}(A_{0})=A_{0}-u_{0}^{-1}du_{0}$ respectively. (Actually, these two slices correspond the same subspace of $\mathcal{C}_{Y}$. However, since the base connections are different, this subspace is identified with $L^{2}_{k}(i \ker d^* \oplus \Gamma(S))$ in different ways. For this reason, we distinguish them for clarity.) The gauge transformation $u_{0}:Coul(Y,A_{0})\rightarrow Coul(Y,u_{0}(A_{0}))$ preserves  the functional $CSD_{\nu_{0},f}$, its formal gradient, the subspace $i\Omega^{1}_{h}(Y)$ , the finite dimensional subspaces $V^{\mu_{n}}_{\lambda_{n}}$ and both the $L^{2}$-metric and the non-linear metric $\|\cdot\|_{\tilde{g}}$. From this fact, we get a natural isomorphism
\begin{equation}
\label{eq gaugeequivswf}
     \begin{split}
      &   \underline{\text{swf}}^{A}(Y,\mathfrak{s},A_{0}, g; S^{1})
           \cong
            \underline{\text{swf}}^{A}(Y,\mathfrak{s},u_{0}(A_{0}), g; S^{1}).
   \end{split}
\end{equation}

Let $\alpha$ be any path going from $A_{0}$ to $u_{0}(A_{0})$. As the spectral flow  $\spf(-\slashed{D}_{A},\alpha)$ can be calculated using excision and the Atiyah-Singer index theorem (see  of \cite[Lemma~14.4.6]{Kronheimer-Mrowka}), it is not hard to check that $\spf(-\slashed{D}_{A},\alpha)=\frac{l}{2}$. Combining the above two equivalences with (\ref{change by suspension (attractor)}) and (\ref{eq gaugeequivswf}), we get
\[
    \begin{split}
       &
        \underline{\text{swf}}^{A}(Y,\mathfrak{s},A_{0},g;S^{1})
        \cong
        \Sigma^{\frac{l}{2}\mathbb{C}}\underline{\text{swf}}^{A}(Y, \mathfrak{s},A_{0}, g; S^{1})
    \end{split}
\]
and similar results hold for $\underline{\text{swf}}^{R} $. This proves the periodicity result in the main theorem.

\section{The linearized Seiberg-Witten flow and its associated spectra}\label{section linearized flow}

 Let $(Y,\mathfrak{s},A_0,g)$ be fixed as in the previous  sections, except that $\mathfrak{s} $ is assumed to be torsion.  We first recall a decomposition on the Coulomb slice
\begin{equation}
Coul(Y)=  L^2_k(i \im d^{*} \oplus i\Omega^{1}_{h}(Y)\oplus \Gamma(S_{Y})).
\end{equation}
The linearized Seiberg-Witten flow we will consider is obtained by scaling the (perturbed) Seiberg-Witten flow in the direction of $i \im d^{*} \oplus \Gamma(S_{Y}) $.

  A general setup of the linearized Seiberg-Witten flow can be described as follows. Pick a real number $\delta$ and consider $\mathbf{D}(h)= \slashed{D}_{A_{0}+h}+\delta$ as a smooth family of self-adjoint elliptic operators on the spinor bundle parametrized by $i\Omega^{1}_{h}(Y)$. Choose an even Morse function $g_{H} \colon \mathbb{R} \rightarrow \mathbb{R} $ such that $g_H (\theta +1)=g_H (\theta) $ (usually we use $g_H (\theta) = -\cos(2\pi \theta)$).   After identifying $i\Omega^{1}_{h}(Y) $ with $\mathbb{R}^{b_1} $ and the action of  $\mathcal{G}_{Y}^{h,o}$ with addition by $\mathbb{Z}^{b_1}
$,    we consider a function $f_H \colon i\Omega^{1}_{h}(Y)\rightarrow \mathbb{R}$ given by $f_H (\theta_{1}, \ldots ,\theta_{b_1}) = \sum^{b_1}_{i=1} g_H (\theta_i)$. The function $f_H$ can be viewed as a Morse function on the Picard torus of $Y$. The linearized Seiberg-Witten flow is given by a trajectory on $Coul(Y)$ satisfying
 \begin{equation}\label{eq general linearized flow}
-\frac{d}{dt}\left(\beta(t),h(t),\phi(t)\right)=\left(*d\beta(t), \grad f_H(h(t)),\mathbf{D}(h(t))\phi(t)\right).
\end{equation}

Recall that a trajectory $\gamma$ is called finite type if the image $\gamma(\mathbb{R})$ is contained in a bounded subset of $Coul(Y)$. We will be interested in a particular situation where all the finite type trajectories are reducible and lie in the space of harmonic 1-forms.
\begin{lem}\label{trajectories for the linearized flow are reducible}
Suppose that the family of self-adjoint operators $2\mathbf{D}^{2}+ \rho(\grad f_H) $ is positive definite for all  $ h \in  i\Omega^{1}_{h}(Y)$. Then, any finite type trajectory of the linearized Seiberg-Witten flow is contained in $i\Omega^{1}_{h}(Y)$.
\end{lem}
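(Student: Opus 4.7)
The plan is to show that for any finite type trajectory $(\beta(t), h(t), \phi(t))$ of \eqref{eq general linearized flow}, both the $i\im d^{*}$-component $\beta$ and the spinor component $\phi$ vanish identically. The first step is to exploit the block structure of \eqref{eq general linearized flow}: the $\beta$-equation $-\dot\beta = *d\beta$ decouples entirely from the rest, the $h$-equation $\dot h = -\grad f_H(h)$ is autonomous on $i\Omega^{1}_{h}(Y)$, and the spinor equation $-\dot\phi = \mathbf{D}(h(t))\phi$ is linear in $\phi$ with coefficients depending only on $h(t)$. Since the trajectory is finite type, its projection $h(t)$ takes values in a bounded (hence relatively compact) subset $K \subset i\Omega^{1}_{h}(Y)$.

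For the spinor part, which is the main case, I would set $u(t) := \|\phi(t)\|_{L^{2}}^{2}$. Using self-adjointness of $\mathbf{D}(h)$ and the identity $\dot{\mathbf{D}}(h(t)) = \rho(\dot h(t)) = -\rho(\grad f_H(h(t)))$, one differentiation yields $\dot u = -2\langle \phi, \mathbf{D}(h)\phi\rangle_{L^{2}}$, and a second differentiation gives
\begin{equation*}
\ddot u(t) \;=\; 2\,\bigl\langle \phi(t),\; \bigl(2\mathbf{D}(h(t))^{2} + \rho(\grad f_H(h(t)))\bigr)\phi(t)\bigr\rangle_{L^{2}}.
\end{equation*}
The positivity hypothesis, combined with continuous dependence of the smallest eigenvalue of this family of elliptic operators on the parameter $h \in K$, produces a constant $c > 0$ such that $\ddot u(t) \geq 2c\, u(t)$ for all $t \in \mathbb{R}$. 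Thus $u \geq 0$ is convex; since the trajectory is finite type, $u$ is also bounded on $\mathbb{R}$, hence constant; but then $\ddot u \equiv 0$ together with $0 = \ddot u \geq 2cu \geq 0$ forces $u \equiv 0$, i.e.\ $\phi \equiv 0$.

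For the $1$-form component $\beta$, the analogous estimate is purely linear: $*d$ is symmetric on the $L^{2}$-completion of $\ker d^{*}$ with kernel equal to $i\Omega^{1}_{h}(Y)$, so its restriction to $i\im d^{*}$ has discrete spectrum bounded away from zero by some $\lambda_{0} > 0$. Setting $v(t) := \|\beta(t)\|_{L^{2}}^{2}$, differentiating twice and using self-adjointness gives $\ddot v(t) = 4\,\|{*d}\,\beta(t)\|_{L^{2}}^{2} \geq 4\lambda_{0}^{2}\, v(t)$, and the same bounded–convex argument forces $\beta \equiv 0$. The main subtlety throughout is converting the pointwise-in-$h$ positivity hypothesis into a \emph{uniform} positive lower bound along the trajectory; this is where compactness of the range of $h(t)$ (equivalently, the $\mathcal{G}^{h,o}_{Y} \cong \mathbb{Z}^{b_{1}}$-invariance of the operator family) is used.
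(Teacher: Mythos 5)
Your proof follows essentially the same strategy as the paper's. The key computation is identical: both compute
\[
\frac{d^{2}}{dt^{2}}\|\phi(t)\|_{L^{2}}^{2}
= 2\bigl\langle \bigl(2\mathbf{D}(h(t))^{2}+\rho(\grad f_H(h(t)))\bigr)\phi(t),\phi(t)\bigr\rangle_{L^{2}},
\]
and both conclude by observing that a nonnegative, bounded, convex function on $\mathbb{R}$ must be constant, whence $\phi\equiv 0$. Your version extracts a uniform constant $c>0$ so as to write the clean differential inequality $\ddot u\ge 2cu$; the paper instead just uses pointwise strict positivity ($\ddot u\ge 0$ with equality iff $\phi(t)=0$) and the same convexity endgame, which avoids having to invoke compactness/periodicity of the parameter family. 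Your remark that the uniformity comes from the $\mathcal{G}^{h,o}_{Y}$-equivariance (equivalently, periodicity in $h$ of the operator family) is correct and slightly cleaner than relying only on boundedness of $h(\mathbb{R})$, though both routes work. For the $1$-form component, the paper disposes of $\beta$ in one sentence by noting $*d$ has no kernel on $i\,\im d^{*}$; you spell out the parallel second-derivative estimate $\ddot v = 4\|{*d}\,\beta\|_{L^{2}}^{2}\ge 4\lambda_{0}^{2}v$, which is a valid and arguably more uniform presentation, but is not a genuinely different method — it is the same spectral-gap fact expressed via convexity rather than via exponential growth of eigencomponents.
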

\begin{proof}
Let $\gamma(t)=(\beta(t),h(t),\phi(t))$ be such a trajectory. We want to show that $\beta(t)=0$ and $\phi(t)=0$ for any $t$. Since $\gamma$ is of finite type, we have $\mathop{\sup} \limits_{t \in \mathbb{R}} \|\beta(t)\|_{L^{2}}^{2}<\infty$ and $\mathop{\sup} \limits_{t \in \mathbb{R}} \|\phi(t)\|_{L^{2}}^{2}<\infty$. Since $*d $ has no kernel on $i \im d^{*} $, it is easy to deduce that $\beta \equiv0 $.

For the spinor part, we compute the second derivative of its $L^2$-norm
\begin{align*}
\frac{d^2}{dt^{2}}\|\phi(t)\|_{L^{2}}^{2}&=-2\frac{d}{dt}\langle \mathbf{D}(h(t))\phi(t),\phi(t) \rangle_{L^{2}}\\
&=-2\langle(\frac{d}{dt}\mathbf{D}(h(t)))\phi(t)+\mathbf{D}(h(t))(\frac{d}{dt}\phi(t)),\phi(t) \rangle _{}-2\langle\mathbf{D}(h(t))\phi(t),\frac{d}{dt}\phi(t) \rangle _{}\\
&=2\langle (\mathcal{D}_{\grad f_H} \mathbf{D} + 2\mathbf{D}^2)(h(t)) \phi(t),\phi(t)\rangle_{} \\
&= 2\langle (\rho(\grad f_H) + 2\mathbf{D}^2)(h(t)) \phi(t),\phi(t)\rangle_{}.
\end{align*}
By the assumption, we have $\frac{d^2}{dt^{2}}\|\phi(t)\|_{L^{2}}^{2}\geq 0$ and the equality hods if and only if $\phi(t) = 0$. It is not hard to see that $\phi \equiv 0$.
 \end{proof}

 We can decompose the linearized Seiberg-Witten vector field (right hand side of (\ref{eq general linearized flow})) as $l + c_0 $ where $l(\beta, h, \phi) = (*d \beta, 0, \slashed{D}_{A_{0}}\phi) $ is linear and $c_0$ is a nonlinear part given by
\begin{equation}\label{linearized c}c_{0}(\beta,h,\phi):=(0,\grad f_H(h),(\rho(h)+\delta)\phi).\end{equation}
It is not hard to check that $c_{0}$ is quadratic-like. As a result, we can obtain Conley indices from finite dimensional approximation.   We then construct associated spectra as before, although some  modification to the choices of a strip of balls and cut-off functions are made in order to simplify computation.

Recall that $V^{\mu}_{\lambda}$ is the span of the eigenspaces of $l$
with eigenvalues in the interval $(\lambda,\mu]$. Furthermore, we will consider the space $V^{\mu , \nu}_{\lambda}$ spanned by the eigenvectors of $*d|_{\ker d^{*}}$ with eigenvalue in $(\lambda_{},\mu]$
and the eigenvectors of $\slashed{D}_{A_{0}}$ with eigenvalue in $(\lambda_{}, \nu]$. The space $\bar{V}^{\mu}_{\lambda}$ and  $\bar{V}^{\mu , \nu}_{\lambda_{}}$
is the orthogonal complement of $i\Omega^{1}_{h}(Y)$ in $V^{\mu}_{\lambda}$ and $V^{\mu,\nu}_{\lambda_{}}$ respectively.

\begin{enumerate}
\item Choose  real numbers $\theta^{\pm}\in (0,1)$ with $\frac{d}{dt}g_H (\theta^{+})>0$ and $\frac{d}{dt}g_H (\theta^{-})<0$.
\item For a suitable real number $\bar{\epsilon}>0$, we define a set
$$
               \widebar{Str}(\bar{\epsilon}):=
          \{u \cdot (\beta,h,\phi)  \mid  u \in \mathcal{G}^{h,o}_{Y},  \| \beta \|_{L^{2}_{k}}\leq 1,
                                                         h \in   [0,1]^{b_1}, \|\phi\|_{L^{2}_{k}}\leq \bar{\epsilon} \}
$$
and let
\[
       \bar{J}_{m}^{\pm}=
       \widebar{Str}(\bar{\epsilon})   \cap   p_\mathcal{H}^{-1}([-\theta^{\pm}-m,\theta^{\pm}+m]^{b_1}).
\]

\item For sequences $\{\lambda_{n}\}$, $\{ \mu_n \}$ with $-\lambda_n , \mu_n \rightarrow \infty $, we consider $\bar{J}^{n,\pm}_{m}=\bar{J}^{\pm}_{m}\cap V^{\mu_{n}}_{\lambda_{n}}$ as before. Let $\bar\varphi^{n}_{m}$ be the flow on $V^{\mu_{n}}_{\lambda_{n}}$ generated by the vector field $\iota_{m} \cdot (l+p^{\mu_{n}}_{\lambda_{n}}\circ c_{0})$ where $\iota_{m}$ is a bump function with value 1 on $\bar{J}^{\pm}_{m+1}$. When $n$ is sufficiently large (depending on $m$), the compact sets $\bar{J}^{n,\pm}_{m}$ are isolating neighborhoods under the flow $\bar\varphi^{n}_{m}$. This allows us to define spectra
\[
     \bar{I}^{n,+}_{m} :=\Sigma^{-\bar{V}^{0,-\delta}_{\lambda_{n}}}I_{S^{1}}(\bar\varphi^{n}_{m},\inv (\bar{J}^{n,+}_{m})), \
     \bar{I}^{n,-}_{m} :=\Sigma^{-V^{0,-\delta}_{\lambda_{n}}}I_{S^{1}}(\bar\varphi^{n}_{m},\inv (\bar{J}^{n,-}_{m})).
\]
As before, there are isomorphisms $\bar{I}^{n,\pm}_{m}  \cong \bar{I}^{n+1,\pm}_{m}$.

\item It is straightforward to check that the flow $\bar\varphi^{n}_{m}$ goes inside $\bar{J}^{n,+}_{m}$
along $\partial \bar{J}^{n,+}_{m}\setminus \partial \bar{J}^{n,+}_{m+1}$ and goes outside $\bar{J}^{n,-}_{m}$ along $\partial
\bar{J}^{n,-}_{m}\setminus \partial \bar{J}^{n,-}_{m+1}$. Therefore, the
attractor  maps give a direct system
\begin{equation} \label{eq bar I +}
      \bar{I}^{n_{1},+}_{1}   \rightarrow   \bar{I}^{n_{2},+}_{2}  \rightarrow   \cdots
\end{equation}
and the repeller maps give an inverse system
\begin{equation} \label{eq bar I -}
    \bar{I}^{n_{1},-}_{1}   \leftarrow \bar{I}^{n_{2},-}_{2}   \leftarrow \cdots
\end{equation}
for a suitable increasing sequence of positive integers $\{ n_m \}$.

\item When $\mathfrak{s}$ is a spin structure, we define an direct system and inverse system of $Pin(2)$-equivariant spectra $\bar{I}^{n,\pm}_{m}(Pin(2))$ in the same manner:
\begin{equation} \label{eq bar I Pin(2)}
     \begin{split}
   &    \bar{I}_1^{n_1, +}(Pin(2)) \rightarrow \bar{I}^{n_2, +}_2(Pin(2)) \rightarrow \cdots,   \\
   &    \bar{I}_1^{n_1,-}(Pin(2)) \leftarrow \bar{I}^{n_2, -}_2(Pin(2)) \leftarrow \cdots.
     \end{split}
\end{equation}

\end{enumerate}


To keep track of the number of eigenvalues near 0, we introduce a definition.

\begin{defi} Let $L$ be a self-adjoint elliptic operator, we define a signed count with multiplicity of the eigenvalues of $L$, i.e.
$$
m(L,\delta):=\left\{\begin{array}{l l}
   \# \text{ eigenvalues of }L \text{ in } (-\delta,0]  & \quad \text{if }\delta\geq 0\\
  -(\# \text{ eigenvalues of }L \text{ in } (0,-\delta]) & \quad \text{if }\delta< 0\\
  \end{array} \right..
  $$
\end{defi}
The following theorem is the main tool for our calculations of the spectrum invariants when the Seiberg-Witten Floer spectra agree with the linearized ones.

\begin{thm}\label{spectrum for the linearized flow}
Let $Y$ be a 3-manifold equipped with a torsion spin$^c$ structure $\mathfrak{s}$. Suppose that we can find an  even Morse function $g_{H} \colon \mathbb{R} \rightarrow \mathbb{R} $ with $g_H (\theta +1)=g_H (\theta) $   and an extended cylinder function $\bar{f}\in \mathcal{P}$ satisfying the following conditions:\begin{enumerate}[(i)]
\item The function  $f_H \colon i\Omega^{1}_{h}(Y)\rightarrow
\mathbb{R}$ given by $f_H (\theta_{1}, \ldots ,\theta_{b_1})
= \sum^{b_1}_{i=1} g_H (\theta_i)$ satisfies the hypothesis of Lemma~\ref{trajectories for the linearized flow are reducible};

\item There exists $\epsilon'>0$ such that $\bar{f}(a,\phi)=0$ for any $(a,\phi)$ with $\|\phi\|_{L^{2}}\leq \epsilon'$;

\item All critical points and finite type gradient flow lines of the functional
\begin{equation} \label{eq functional for deforming}
\bar{\mathcal{L}}(a,\phi) := CSD|_{Coul(Y)}(a,\phi)+\frac{\delta}{2}\|\phi\|_{L^{2}}^{2}+ f_H (p_\mathcal{H} (a,\phi))+\bar{f}(a,\phi)
\end{equation}
on the Coulomb slice $Coul(Y)$ are contained in $i\Omega^{1}_{h}(Y)$.
\end{enumerate}
Then, we have
\[
    \begin{split}
     \underline{\operatorname{SWF}}^{A}(Y,\mathfrak{s};S^{1})
       &  \cong
    ( \bar{I}^+,  0,  n(\mathfrak{s},A_{0},g)+m(\slashed{D},\delta)),   \\
   \underline{\operatorname{SWF}}^{R}(Y,\mathfrak{s};S^{1})
     &\cong
     (\bar{I}^-,  0,  n(\mathfrak{s},A_{0},g)+m(\slashed{D},\delta)),
     \end{split}
\]
as objects in $\mathfrak{S}$ and $\mathfrak{S}^*$ respectively. Here $\bar{I}^{+}$ and $\bar{I}^-$ are  the inductive system (\ref{eq bar I +}) and the inverse system (\ref{eq bar I -}) of the spectra {associated} to the linearized flow constructed above.

Furthermore, when $\mathfrak{s}$ is a spin structure, then we also have
\[
    \begin{split}
        \underline{\operatorname{SWF}}^{A}(Y,\mathfrak{s};Pin(2))
          &  \cong
            \left( \bar{I}^+(Pin(2)), 0, \frac{n(\mathfrak{s}, A_{0}, g)+m(\slashed{D},\delta)}{2} \right), \\
        \underline{\operatorname{SWF}}^{R}(Y,\mathfrak{s};Pin(2))
          &\cong
        \left( \bar{I}^-(Pin(2)),0,\frac{n(\mathfrak{s}, A_{0}, g)+m(\slashed{D},\delta)}{2} \right).
    \end{split}
\]
as objects in $\mathfrak{S}_{Pin(2)}$ and $\mathfrak{S}_{Pin(2)}^*$ {respectively}. Here $\bar{I}^+(Pin(2))$ and $\bar{I}^-(Pin(2))$ are the direct system and inverse system of spectra in (\ref{eq bar I Pin(2)}) respectively.
\end{thm}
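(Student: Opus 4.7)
The plan is to compute $\underline{\text{SWF}}^{A/R}$ using the functional $\bar{\mathcal{L}}$ of (7.2) and then deform its gradient vector field to the linearized vector field $l+c_0$, applying the continuation invariance of the Conley index developed in Section 6.1. First I would verify that $\bar{\mathcal{L}}$ is an admissible perturbation: hypothesis (iii) makes its critical points discrete modulo gauge (they are lifts of the isolated critical points of the Morse function $f_H$ on the Picard torus), and its finite-type gradient trajectories all lie in $i\Omega^1_h(Y)\subset\inti(\widebar{Str}(\bar{\epsilon}))$. By the generalized construction that allows $\widebar{Str}(\bar{\epsilon})$ in place of $Str(\tilde R)$ and $\bar J^{\pm}_m$ in place of $J^{\pm}_m$, one then obtains $\underline{\text{swf}}^A(Y,\mathfrak{s},A_0,g;S^1)\cong \Sigma^{-\bar V^0_{\lambda_n}}I_{S^1}(\varphi^n(\bar{\mathcal{L}}),\inv(\bar J^{n,+}_m))$, and analogously for $\underline{\text{swf}}^R$ with $V^0_{\lambda_n}$.

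Next I would introduce the family of vector fields $X_s:=l+(1-s)c_0+s\,c_{\bar{\mathcal{L}}}$ for $s\in[0,1]$. The key observation is that $c_0$ and $c_{\bar{\mathcal{L}}}$ coincide on $i\Omega^1_h(Y)$: hypothesis (ii) kills $\gradtil\bar{f}$ near $\phi=0$, the CSD and $\tfrac{\delta}{2}\|\phi\|^2$ contributions vanish at reducible points $(0,h,0)$, and the only surviving term in either nonlinearity is $\grad f_H(h)$ in the harmonic direction. Thus every $X_s$ preserves the reducible locus and restricts there to the same flow, the descent flow of $f_H$ on the Picard torus.

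The hard step is to show that $\bar J^{\pm}_m$ remains an isolating neighborhood for every $X_s$ in the approximation $V^{\mu_n}_{\lambda_n}$, for $n$ large relative to $m$ and uniformly in $s\in[0,1]$. Following the family-of-flows framework of Section 6.1, this would be proved by contradiction: a failure of uniform isolation would, after passing to a limit, produce a finite-type $X_{s_\infty}$-trajectory $\gamma_\infty\colon\mathbb R\to Coul(Y)$ lying in $\bar J^{\pm}_m$ and meeting $\partial\bar J^{\pm}_m$. For $s_\infty=0$ this is ruled out by Lemma~\ref{trajectories for the linearized flow are reducible}, and for $s_\infty=1$ by hypothesis (iii). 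The main obstacle is the case of intermediate $s_\infty$, where $X_{s_\infty}$ is no longer a gradient flow and neither argument applies directly. I would overcome this by extending the energy inequality $\frac{d^2}{dt^2}\|\phi\|^2\geq 0$ of Lemma~\ref{trajectories for the linearized flow are reducible} to the whole family, exploiting that hypothesis (i) is a convex open condition on self-adjoint operators and that the $\phi$-component of $c_{\bar{\mathcal{L}}}-c_0$ is controlled by hypothesis (ii) and vanishes on $i\Omega^1_h(Y)$; a parallel analysis of the $\beta$-component, where the leading $-*d\beta$ forces exponential contraction onto $\ker(*d|_{\ker d^*})\cap i\im d^*=0$, then forces $\gamma_\infty\subset i\Omega^1_h(Y)\subset\inti(\bar J^{\pm}_m)$, contradicting $\gamma_\infty(0)\in\partial\bar J^{\pm}_m$.

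Once isolation is established for the whole family, the continuation invariance of Section 6.1 supplies a canonical $S^1$-equivariant isomorphism $I_{S^1}(\varphi^n(\bar{\mathcal{L}}),\inv(\bar J^{n,\pm}_m))\cong I_{S^1}(\bar\varphi^n_m,\inv(\bar J^{n,\pm}_m))$, compatible with the attractor/repeller maps and hence assembling into an isomorphism of the full direct or inverse systems. The remaining desuspension shift comes from comparing $\bar V^0_{\lambda_n}$ (used in the main construction) with $\bar V^{0,-\delta}_{\lambda_n}$ (used in the definition of $\bar I^{n,+}_m$): these differ by a complex $S^1$-representation of complex dimension $m(\slashed{D},\delta)$, giving $\Sigma^{-\bar V^0_{\lambda_n}}I\cong(\Sigma^{-\bar V^{0,-\delta}_{\lambda_n}}I,0,m(\slashed{D},\delta))$ in $\mathfrak C$. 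Combining with the normalization $\underline{\text{SWF}}^A=(\underline{\text{swf}}^A,0,n(\mathfrak{s},A_0,g))$ yields the first isomorphism of the theorem, and the repeller case is parallel. The $Pin(2)$-equivariant statement follows from the same argument applied to a $\jmath$-invariant $\bar{f}$ (available by the $Pin(2)$-transversality cited in Section 5.3); the $\slashed D$-eigenspaces then pair into $\mathbb H$-modules under the quaternionic structure, so the shift becomes $m(\slashed D,\delta)/2$ copies of $\mathbb H$.
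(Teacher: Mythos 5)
Your high-level plan — compute $\underline{\text{SWF}}^{A/R}$ from $\bar{\mathcal{L}}$ and then deform its vector field to $l+c_0$ via continuation invariance of the Conley index, absorbing the difference between $\bar{V}^0_{\lambda_n}$ and $\bar{V}^{0,-\delta}_{\lambda_n}$ into a shift of $m(\slashed{D},\delta)$ copies of $\mathbb{C}$ (or $m(\slashed{D},\delta)/2$ copies of $\mathbb{H}$ in the $Pin(2)$ case) — is the paper's strategy, and your accounting of the desuspension shift and of the $\jmath$-equivariant perturbation in the spin case is correct.

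The genuine gap is in the choice of deformation. You interpolate affinely, $X_s = l + (1-s)c_0 + s\,c_{\bar{\mathcal{L}}}$, but the paper's family is \emph{not} this interpolation: the paper takes
\[
c_s(\beta,h,\phi)= \bigl(0 , \grad f_H(h), (\rho(h)+\delta)\phi\bigr) + s\bigl(\pi_{\im d^{*}}\rho^{-1}(\phi\phi^{*})_{0},\; s\,\pi_{\mathcal{H}}\rho^{-1}(\phi\phi^{*})_{0},\;\rho(\beta)\phi+ s\,\bar{\xi}(\rho^{-1}(\phi\phi^{*})_{0})\phi\bigr),
\]
with weights $s$ on the $\pi_{\im d^*}$- and $\rho(\beta)$-terms but $s^2$ on the $\pi_{\mathcal{H}}$- and $\bar{\xi}$-terms. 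This asymmetric scaling is chosen precisely so that the map $(\beta,h,\phi)\mapsto (s^{-1}\beta, h, s^{-1}\phi)$ conjugates the $(l+c_s)$-flow on $\widebar{Str}(\bar{\epsilon})$ to the $(l+c_1)=\gradtil\bar{\mathcal{L}}$-flow on $\widebar{Str}(s^{-1}\bar{\epsilon})$. Hypothesis~(iii) is a statement about all of $Coul(Y)$, so it applies to the dilated strip and yields, for every $s\in(0,1]$, that finite-type $(l+c_s)$-trajectories inside $\widebar{Str}(\bar{\epsilon})$ lie in $i\Omega^1_h(Y)$; the case $s=0$ is Lemma~\ref{trajectories for the linearized flow are reducible}. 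That is the entire content of the isolation argument — there is no contradiction argument or energy estimate needed for intermediate $s$.

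Your affine interpolation does not enjoy this conjugacy, and the workaround you sketch does not close. Differentiating $\|\phi(t)\|_{L^2}^2$ twice along an $X_s$-trajectory on the strip (where $\bar{f}\equiv 0$ by hypothesis~(ii)) gives, writing $\mathbf{D}=\slashed{D}+\rho(h)+\delta$,
\[
\frac{d^2}{dt^2}\|\phi\|_{L^2}^2 = 2\bigl\langle (2\mathbf{D}^2+\rho(\grad f_H))\phi,\phi\bigr\rangle_{L^2} + 2s\bigl\langle \rho(\pi_\mathcal{H}\rho^{-1}(\phi\phi^*)_0)\phi,\phi\bigr\rangle_{L^2} + 4s\,\mathrm{Re}\bigl\langle \rho(\beta)\phi+\bar{\xi}\phi,\,\mathbf{D}\phi\bigr\rangle_{L^2}.
\]
Hypothesis~(i) controls only the first term. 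The cross term $4s\,\mathrm{Re}\langle\rho(\beta)\phi+\bar{\xi}\phi,\mathbf{D}\phi\rangle$ has no sign and cannot be absorbed: near where $\mathbf{D}$ has small eigenvalue, completing the square spends all of $4\|\mathbf{D}\phi\|^2$ and leaves $-s^2\|\rho(\beta)\phi+\bar\xi\phi\|^2 + 2\langle\rho(\grad f_H)\phi,\phi\rangle$, and $\langle\rho(\grad f_H)\phi,\phi\rangle$ changes sign because $f_H$ is a Morse function, not monotone. Hypothesis~(i) being ``open and convex'' is of no help, because the extra piece $s(\rho(\beta)+\bar\xi)$ entering the $\phi$-evolution is skew-adjoint, not a perturbation of $\mathbf{D}$ as a self-adjoint operator. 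The parallel $\beta$-argument also misstates the mechanism: $*d$ on $i\,\im d^*$ is hyperbolic with eigenvalues of both signs, not contracting, and for $s>0$ the $\beta$-equation acquires a nonlinear source $s\,\pi_{\im d^*}\rho^{-1}(\phi\phi^*)_0$ which couples it to $\phi$. Finally, you do not address the separate check that along the sides $p_\mathcal{H}^{-1}(\partial[-\theta^\pm-m,\theta^\pm+m]^{b_1})$ the approximated flow continues to exit or enter $\bar J^{n,\pm}_m$ uniformly in $s$; this is why the paper needs $\bar\epsilon$ small relative to $|\tfrac{d}{dt} g_H(\theta^\pm)|$ and is what guarantees the attractor/repeller structure is preserved throughout the deformation. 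Replacing your interpolation with the paper's rescaling-based family $c_s$ repairs all of these points at once.
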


\begin{proof}
We first consider the spectrum invariants $\underline{\text{swf}}^{}(Y,\mathfrak{s},g,A_{0};S^{1})$ associated to the functional $\bar{\mathcal{L}}$.
By Remark~\ref{different strips} and the hypothesis, we can use the strip $\widebar{Str}(\bar{\epsilon})$ together with level sets $p_\mathcal{H}^{-1}([-\theta^{\pm}-m,\theta^{\pm}+m]^{b_1})$ for constructing these spectrum invariants. Note that the set of critical points of this functional is discrete
since they correspond to the critical points of the Morse function $f_H$.

Let $\{h_j\}$ be a chosen basis of $i\Omega^{1}_{h}(Y)$. Since a function $\pi_\mathcal{H} (\rho^{-1}(\phi\phi^{*})_{0}) $ is bounded on the strip and is homogeneous of degree 2, we can choose $\bar{\epsilon}$ such that $\left\Vert\pi_\mathcal{H} (\rho^{-1}(\phi\phi^{*})_{0})\right\Vert  \le \left\vert \frac{d}{dt}g_H (\theta^{\pm}) \right\vert / \sum \left\Vert h_j \right\Vert_{L^2} $ for any $(a,\phi) \in \widebar{Str}(\bar{\epsilon})$. Furthermore, we require that $\bar{\epsilon} < \epsilon' $.

We now consider a continuous family of vector fields $l+c_{s}$ parametrized by $s\in [0,1]$ on $\widebar{Str}(\bar{\epsilon})$ with $l = (*d,0,\slashed{D}) $ and $c_{s}$ is a family of quadratic-like maps  given by
\begin{equation*}\label{rescaled nonlinear term}
\begin{split}c_{s}(\beta,h,\phi)=&(0 , \grad \tilde{f}(h), (\rho(h)+\delta)\phi)\\
&+s(\pi_{\im d^{*}}\rho^{-1}(\phi\phi^{*})_{0},s\pi_{\mathcal{H}}\rho^{-1}(\phi\phi^{*})_{0},\rho(\beta)\phi+ s^{}\bar{\xi}(\rho^{-1}(\phi\phi^{*})_{0})\phi)
\end{split}\end{equation*}
(see (\ref{solving laplace}) for the definition of $\bar{\xi}$). Observe that $l+c_1$ is the gradient of $\bar{\mathcal{L}}$ as $\bar f$ vanishes on $\widebar{Str}(\bar{\epsilon})$ whereas $l + c_0 $ is exactly the linearized Seiberg-Witten vector field described earlier. For $0<s<1$, the scaling automorphism gives the following commutative diagram:

\begin{equation*}\label{rescaling diagram}
\xymatrix{
 Coul(Y) \ar[d]_{l+c} \ar[r]& Coul(Y)\ar[d]^{l+c_{s}}\\
L^{2}_{k-1}(i\ker (d^{*})\oplus \Gamma(S_{Y})) \ar[r]& L^{2}_{k-1}(i\ker (d^{*})\oplus \Gamma(S_{Y})),}
\end{equation*}
where both the horizontal maps send $(\beta,h,\phi)$ to $(s\cdot \beta,h,s\cdot \phi)$. In other words, the flow given by $l + c_s $ on $\widebar{Str}(\bar{\epsilon})$ corresponds to the flow $l + c_1 $ on $\widebar{Str}(s^{-1} \bar{\epsilon})$.

All that is left is to apply the homotopy invariance of Conley index. We will focus on the case $\underline{\text{swf}}^{A}$ as the repeller case can be done similarly.
Since $\frac{d}{dt} |_{t = \theta^+}  g_H (t) > 0 $, it is easy to find $\epsilon_1 > 0$ such that any gradient trajectory of $f_H$ in $[-\theta^{+}-m,\theta^{+}+m]^{b_1}$ actually lies in $[-\theta^{+}-m+\epsilon_{1},\theta^{+}+m-\epsilon_{1}]^{b_1} $. We have to check that the inner product $\left\langle p_\mathcal{H} \circ(l +p^{\mu_{n}}_{\lambda_{n}}\circ c_s)(a,\phi)  , h_j \right\rangle  $ is positive on $p_\mathcal{H}^{-1} (\theta^+ h_j) \cap \widebar{Str}(\bar{\epsilon})$.
We can see that
\begin{align*} \left\langle p_\mathcal{H} \circ(l +p^{\mu_{n}}_{\lambda_{n}}\circ
c_s)(a,\phi)  , h_j \right\rangle &= \left\langle \grad f_H (\theta^+ h_j) + s \pi_\mathcal{H}
(\rho^{-1}(\phi\phi^{*})_{0}),h_j \right\rangle \\
& \ge \frac{d}{dt}g_H (\theta^{+}) -  \left\Vert  \pi_\mathcal{H}
(\rho^{-1}(\phi\phi^{*})_{0}) \right\Vert \left\Vert h_j \right\Vert
\end{align*}
is indeed positive by our choice of $\bar{\epsilon} $.



Therefore, we have an isomorphism $\underline{\text{swf}}^{A}(Y,\mathfrak{s},g,A_{0};S^{1})\cong ( \bar{I}^+,0,m(\slashed{D},\delta))$. Note that $m(\slashed{D},\delta)$ appears because we desuspend the Conley index by $-\bar{V}^{0,-\delta}_{\lambda_{n}}$  instead of  $\bar{V}^{0}_{\lambda_{n}}$ in the definition of $\bar{I}^{n,+}_{m}$.

\end{proof}

In practice, we will further deform the family of operators $\mathbf{D}$ to simplify the calculation of the Conley index of $\inv (\bar{J}^{n,\pm}_{m},\bar\varphi^{n}_{m})$ as follows; Let $\{\mathbf{Q}_{s}(h)\}$ be a smooth family of 0-th order symmetric operator defined on $\Gamma(S_{Y})$ parametrized by $(s,h)\in [0,1]\times i\Omega^{1}_{h}(Y)$ with $\mathbf{D} = \slashed{D} + \mathbf{Q}_0 $.
When $2(\slashed{D}+\mathbf{Q}_s)^{2}+ \rho(\grad f_H) $ is positive definite
for any $h\in [-\theta^{\pm}-m,\theta^{\pm}+m]^{b_1}$, we can repeat our construction for the linearized Seiberg-Witten vector field to obtain spectra $\bar{I}^{n,\pm}_{m'}(s)$ associated to $\slashed{D}+\mathbf{Q}_s$
for any $m' \le m$ and $n$ is sufficiently large.
The following lemma is immediate from Theorem~\ref{finite dimensional approximation for a family of flows}.
\begin{lem}\label{deformation of linearized flow}
Suppose that $2(\slashed{D}+\mathbf{Q}_s)^{2}+ \rho(\grad f_H) $ is positive definite for any $s \in [0,1]$ and $h\in [-\theta^{\pm}-m,\theta^{\pm}+m]^{b_1}$. We also assume that  $\mathbf{Q}_{s}(0)$ does not depends on $s$ so that the desuspension indices are constant. Then, for any $s,s'\in [0,1]$, we have isomorphisms $\tau^{n,\pm}_{m'}(s,s'): \bar{I}^{n,\pm}_{m'}(s)\rightarrow \bar{I}^{n,\pm}_{m'}(s')$ when $m'\leq m$ and $n$ sufficiently large relative to $m'$. Furthermore, these isomorphisms  satisfy the following commutative diagram
\begin{equation*}\label{rescaling diagram}
\xymatrix{
  \bar{I}^{n,+}_{m'-1}(s)\ar[d]_{\tau^{n,+}_{m'-1}(s,s') } \ar[r]^{ }& \bar{I}^{n,+}_{m'}(s)\ar[d]^{\tau^{n,+}_{m'}(s,s') }\\
\bar{I}^{n,+}_{m'-1}(s') \ar[r]_{ }& \bar{I}^{n,+}_{m'}(s'),}
\end{equation*}
where the horizontal maps are attractor maps. A similar diagram holds for the repeller maps. When $\mathfrak{s}$ is spin, similar results hold for the $Pin(2)$-equivariant spectra.
\end{lem}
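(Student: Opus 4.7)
The plan is to view $s \in [0,1]$ as a parameter and apply Theorem \ref{finite dimensional approximation for a family of flows} (together with the corollary immediately following it) to the continuous family of vector fields $l + c_{0,s}$ on $Coul(Y)$, where $c_{0,s}$ is the nonlinear piece associated to $\slashed{D} + \mathbf{Q}_s$ in analogy with (\ref{linearized c}), namely
\[
    c_{0,s}(\beta,h,\phi) = \bigl(0,\, \grad f_H(h),\, (\mathbf{Q}_s(h) + \delta)\phi\bigr).
\]
First I would verify that $\{c_{0,s}\}_{s\in[0,1]}$ is a continuous family of quadratic-like maps in the sense of Definition \ref{a family of quadratic-like maps}; this is straightforward because $\mathbf{Q}_s(h)$ is a smooth family of zeroth-order operators depending smoothly on $s$ and $h$, and the remaining term $\grad f_H(h)$ does not involve $s$, so the required uniform convergence properties reduce to the single-parameter case established implicitly in Section \ref{Section approx SW traj}.

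Next I would check the hypothesis of Theorem \ref{finite dimensional approximation for a family of flows}: take $B := \bar{J}^{\pm}_{m}$ and choose $A$ to be a slightly smaller closed subset of $\bar{J}^{\pm}_{m}$ with the same interior behaviour at $\partial \bar{J}^{\pm}_{m'} \setminus \partial \bar{J}^{\pm}_{m'+1}$ for $m' \leq m$. The positivity assumption on $2(\slashed{D}+\mathbf{Q}_s)^{2}+ \rho(\grad f_H)$ for all $s \in [0,1]$ and all $h \in [-\theta^{\pm}-m,\theta^{\pm}+m]^{b_1}$, combined with an argument as in Lemma \ref{trajectories for the linearized flow are reducible}, ensures that every actual (full) trajectory of $l + c_{0,s}$ contained in $B$ is in fact contained in $i\Omega^1_h(Y)$, and hence in the interior of $A$. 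Theorem \ref{finite dimensional approximation for a family of flows}(\ref{item prop familyflow1})--(\ref{item prop familyflow2}) then provides, for all sufficiently large $n$, a well-defined Conley index $I(\bar{J}^{\pm}_{m'}, s) := \bar{I}^{n,\pm}_{m'}(s)$ and part (\ref{item prop familyflow3}) provides continuation isomorphisms along any path $\alpha\colon[0,1]\to[0,1]$.

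The main point requiring care is the desuspension: the statement of Theorem \ref{finite dimensional approximation for a family of flows}(\ref{item prop familyflow3}) carries a twist by $\Sigma^{\spf(-\slashed{D},\alpha)\mathbb{C}}$ coming from spectral flow. In our situation the relevant linear operator at the reducible locus $h = 0$ is $\slashed{D} + \mathbf{Q}_s(0)$, and the assumption that $\mathbf{Q}_s(0)$ is independent of $s$ means this operator does not vary with $s$, so its spectral flow along the path $\alpha(s) = (1-s)s_0 + s s_1$ is zero. Consequently the isomorphism $\tau^{n,\pm}_{m'}(s,s')$ lands in $\bar{I}^{n,\pm}_{m'}(s')$ without any extra shift, which is exactly the statement of the lemma. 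I expect verifying this vanishing of spectral flow — i.e.\ that the indexing conventions in the definition of $\bar{I}^{n,\pm}_{m'}$ match up with the linear data at the reducible locus under the continuation — to be the only subtle point of the argument.

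Finally, the commutativity of the square relating $\tau^{n,\pm}_{m'-1}$ and $\tau^{n,\pm}_{m'}$ with the attractor (resp.\ repeller) maps follows directly from the corollary after Theorem \ref{finite dimensional approximation for a family of flows}, applied to the nested pair $\bar{J}^{\pm}_{m'-1} \subset \bar{J}^{\pm}_{m'}$: both are isolating blocks for the family, the inclusion induces an attractor (resp.\ repeller) relation for every $s$ by the argument used to build (\ref{eq bar I +}) and (\ref{eq bar I -}), and the corollary gives precisely the desired naturality of the continuation isomorphism with respect to the attractor/repeller map. The $Pin(2)$-equivariant version is identical once one uses the equivariant version of the results in Section \ref{Section Cat Top} and observes that all the constructions are $Pin(2)$-invariant when $\mathfrak{s}$ is spin.
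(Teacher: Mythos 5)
Your proposal is correct and follows the paper's own strategy, which is simply to invoke Theorem~\ref{finite dimensional approximation for a family of flows} and the corollary that follows it for the family $\{l + c_{0,s}\}_{s\in[0,1]}$, with the hypothesis of Lemma~\ref{trajectories for the linearized flow are reducible} guaranteeing that $B=\bar J^\pm_m$ together with a slightly smaller $A$ satisfies the theorem's compactness hypothesis. One small clarification on the desuspension discussion: since neither the metric nor $A_0$ varies with $s$, the operator $l$ (hence $\slashed{D}_{A_0}$) is constant in $s$, so the twist $\Sigma^{\spf(-\slashed{D},\alpha)\mathbb{C}}$ in Theorem~\ref{finite dimensional approximation for a family of flows}(\ref{item prop familyflow3}) is automatically trivial regardless of $\mathbf{Q}_s$; the assumption that $\mathbf{Q}_s(0)$ be $s$-independent is needed for a slightly different reason, namely so that the desuspension index used to define $\bar I^{n,\pm}_{m'}(s)$ (which is chosen according to the spectrum of $\slashed{D}+\mathbf{Q}_s(0)$ near zero) is the same for every $s$, making $\bar I^{n,\pm}_{m'}(s)$ and $\bar I^{n,\pm}_{m'}(s')$ objects of $\mathfrak C$ with matching indices between which an isomorphism in $\mathfrak{C}$ can exist at all. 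Also, the $+\delta$ in your formula for $c_{0,s}$ is redundant, since $\mathbf Q_s$ already absorbs it (e.g.\ $\mathbf Q_0(h) = \rho(h)+\delta$); neither of these cosmetic points affects the correctness of the argument.
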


Next, we describe a situation when Theorem~\ref{spectrum for the linearized flow} implies that the spectrum invariants are just some suspension of sphere spectra.

\begin{thm}\label{Morse-Bott reducible}
Let $Y$ be a 3-manifold with a torsion spin$^c$ structure $\mathfrak{s}$ and a Riemannian metric $g$. Suppose that there exists a real number $\delta$ such that the following conditions hold:
\begin{enumerate}[(i)]
\item The functional $CSD+\frac{\delta}{2}\|\phi\|_{L^{2}}^{2}$ has only reducible critical points;
\item The operator $\slashed{D}_{A}+\delta$ has no kernel for any $A$ with $F_{A^{t}}=0$.
\end{enumerate}
Then we have
\begin{align*}
         \underline{\operatorname{SWF}}^{A}(Y,\mathfrak{s};S^{1})
        &\cong
        (S^{0},0,n(Y,\mathfrak{s},A_{0},g)+m(\slashed{D}_{A_0}, \delta))\in \ob(\mathfrak{S}),\\
        \underline{\operatorname{SWF}}^{R}(Y,\mathfrak{s};S^{1})
       &\cong
       (S^{0},0,n(Y,\mathfrak{s}, A_{0}, g)+m(\slashed{D}_{A_0},\delta))\in \ob(\mathfrak{S}^{*}),
\end{align*}
where $A_{0}$ is any base connection with $F_{A^{t}_{0}}=0$.

Moreover, if $\mathfrak{s}$ is a spin structure, we also have
\begin{align*}
              \underline{\operatorname{SWF}}^{A}(Y,\mathfrak{s};Pin(2))
              &\cong
              \left(  S^{0},0,\frac{n(Y,\mathfrak{s},A_{0},g)+m(\slashed{D}_{A_0},\delta)}{2}  \right)
              \in    \ob(\mathfrak{S}_{Pin(2)}),\\
              \underline{\operatorname{SWF}}^{R}(Y,\mathfrak{s};Pin(2))
              &\cong
              \left(  S^{0},0,\frac{n(Y,\mathfrak{s},A_{0},g)+m(\slashed{D}_{A_0},\delta)}{2}  \right)
              \in  \ob(\mathfrak{S}^{*}_{Pin(2)}).\end{align*}
\end{thm}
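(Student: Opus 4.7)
The plan is to invoke Theorem~\ref{spectrum for the linearized flow} after producing $g_{H}$ and $\bar{f}$ satisfying its three hypotheses, and then to compute the linearized spectra $\bar{I}^{n,\pm}_{m}$ explicitly using the deformation supplied by Lemma~\ref{deformation of linearized flow}. I would first take $g_{H}(\theta) = -\epsilon \cos(2\pi\theta)$ with $\epsilon > 0$ small. Hypothesis~(ii) of the present theorem says $\mathbf{D}(h) = \slashed{D}_{A_{0}+h}+\delta$ is invertible for every harmonic $1$-form $h$; since $\mathcal{G}^{h,o}_{Y} \cong H^{1}(Y;\mathbb{Z})$ acts on flat connections preserving the Dirac spectrum, $\|\mathbf{D}(h)^{-1}\|$ descends to a continuous function on the Picard torus and is uniformly bounded. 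Hence $2\mathbf{D}(h)^{2}$ is uniformly positive-definite, and for $\epsilon$ small $\rho(\grad f_{H})$ is sub-dominant so that $2\mathbf{D}(h)^{2}+\rho(\grad f_{H}) > 0$ for all $h$; Lemma~\ref{trajectories for the linearized flow are reducible} then yields condition~(i) of Theorem~\ref{spectrum for the linearized flow}.

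For conditions~(ii) and (iii) of Theorem~\ref{spectrum for the linearized flow}, I would take $\bar{f} \in \mathcal{P}$ generic inside the subspace of extended cylinder functions that vanish on $\{\|\phi\|_{L^{2}} \le \epsilon'\}$. Hypothesis~(i) rules out irreducible critical points of $CSD + \tfrac{\delta}{2}\|\phi\|_{L^{2}}^{2}$ globally, and hypothesis~(ii) provides $L^{2}$-invertibility of $\slashed{D}_{A_{0}+h}+\delta$ at reducibles; combined with compactness of critical points modulo gauge, for small $g_{H}$ and $\bar{f}$ the perturbed critical set of $\bar{\mathcal{L}}$ remains in the reducible locus, and the spinor-direction hyperbolicity upgrades this to the trajectory statement by a contradiction-and-limit argument on finite-type trajectories with uniformly bounded topological energy.

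With Theorem~\ref{spectrum for the linearized flow} applicable, it remains to compute $\bar{I}^{n,\pm}_{m}$. I would apply Lemma~\ref{deformation of linearized flow} with $\mathbf{Q}_{s}(h) := (1-s)\rho(h)$, interpolating between $\mathbf{D}_{0}(h) = \slashed{D}_{A_{0}+h}+\delta$ (the original) and $\mathbf{D}_{1} = \slashed{D}_{A_{0}}+\delta$ (independent of $h$); positive-definiteness of $2\mathbf{D}_{s}(h)^{2} + \rho(\grad f_{H})$ along the deformation again follows from Picard-torus periodicity. At $s=1$ the flow on $V^{\mu_{n}}_{\lambda_{n}}$ decouples into three commuting pieces: the linear flow $-{*}d\beta$ on the coexact eigenspaces, the Morse flow $-\grad f_{H}$ on the box $[-\theta^{\pm}-m,\theta^{\pm}+m]^{b_{1}}$, and the linear flow $-(\slashed{D}_{A_{0}}+\delta)\phi$ on the spinor eigenspaces. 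The box is an isolating block with empty exit set in the attractor case (since $g_{H}'(\theta^{+}) > 0$ together with $1$-periodicity makes the flow enter at every boundary face) and with all of $\partial$ as exit set in the repeller case; since the box is contractible, the corresponding Conley indices are $\simeq S^{0}$ and $\simeq S^{b_{1}}$ respectively. The two linear factors contribute spheres on their unstable subspaces $\bar{V}^{0}_{\lambda_{n},\beta}$ and $V^{-\delta}_{\lambda_{n},\phi}$. The product property of the Conley index then yields
\[
I_{S^{1}}(\bar{\varphi}^{n}_{m},\inv(\bar{J}^{n,+}_{m})) \cong S^{\bar{V}^{0,-\delta}_{\lambda_{n}}}, \qquad I_{S^{1}}(\bar{\varphi}^{n}_{m},\inv(\bar{J}^{n,-}_{m})) \cong S^{V^{0,-\delta}_{\lambda_{n}}},
\]
so after desuspending by $\bar{V}^{0,-\delta}_{\lambda_{n}}$ and $V^{0,-\delta}_{\lambda_{n}}$ respectively, $\bar{I}^{n,\pm}_{m} \cong S^{0}$ in $\mathfrak{C}$. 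The attractor/repeller maps are induced by inclusions of nested boxes and reduce to the identity on $S^{0}$, so the systems (\ref{eq bar I +}) and (\ref{eq bar I -}) are canonically isomorphic to the constant system $S^{0}$ in $\mathfrak{S}$ and $\mathfrak{S}^{*}$, and Theorem~\ref{spectrum for the linearized flow} delivers both identifications. The $Pin(2)$-equivariant case is identical: the choices of $g_{H}$, $\bar{f}$, and the deformation $\mathbf{Q}_{s}$ are all $\jmath$-invariant, and each factor's Conley index is a $Pin(2)$-representation sphere or $S^{0}$ with the required equivariant structure.

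The main obstacle will be the second paragraph, namely producing a single perturbation $\bar{f} \in \mathcal{P}$ that both makes the critical set of $\bar{\mathcal{L}}$ discrete modulo gauge and forces every finite-type trajectory of $\bar{\mathcal{L}}$ to lie entirely in $i\Omega^{1}_{h}(Y)$. Condition~(i) of the present theorem only constrains critical points (not trajectories) of the unperturbed functional, so verifying Theorem~\ref{spectrum for the linearized flow}(iii) must carefully combine the spinor-direction hyperbolicity supplied by condition~(ii) with a robust compactness and genericity argument in the Banach space $\mathcal{P}$.
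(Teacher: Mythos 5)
Your overall architecture is exactly right: verify the hypotheses of Theorem~\ref{spectrum for the linearized flow}, then deform the family $\mathbf{D}(h)$ to the constant family $\slashed{D}_{A_0}+\delta$ via Lemma~\ref{deformation of linearized flow}, split the approximated flow into the three commuting factors, and identify each Conley index. The Conley-index bookkeeping in your third paragraph (box is an isolating block with empty exit set for the attractor, whole boundary exit set for the repeller; the $b_1$ discrepancy is absorbed by desuspending by $V^0_{\lambda_n}$ versus $\bar V^0_{\lambda_n}$; the shift $m(\slashed{D},\delta)$ comes from desuspending by $V^{0,-\delta}_{\lambda_n}$ instead of $V^0_{\lambda_n}$) matches the paper. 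A small slip: since $\mathbf{D}(h)=\slashed{D}_{A_0}+\rho(h)+\delta$ you should write $\mathbf{Q}_s(h)=(1-s)\rho(h)+\delta$, not $(1-s)\rho(h)$, so that $\mathbf{Q}_s(0)=\delta$ independently of $s$ as Lemma~\ref{deformation of linearized flow} requires.

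The real issue, which you flag yourself in your last paragraph, is the perturbation $\bar f$. You propose taking a generic extended cylinder function $\bar f$ vanishing on $\{\|\phi\|_{L^2}\le\epsilon'\}$, but this is both unnecessary and problematic: genericity gives you control over critical points, not directly over trajectories, and you would then have to re-run some version of the nilmanifold-style expected-dimension argument to eliminate irreducible flow lines. The paper simply takes $\bar f = 0$. Condition (iii) of Theorem~\ref{spectrum for the linearized flow} is then supplied by a direct compactness argument (this is the paper's Step~1): if irreducible finite-type trajectories of $\mathcal{L}_{\delta,\epsilon_n}:=CSD+\tfrac{\delta}{2}\|\phi\|^2_{L^2}+\epsilon_n f_H(p_\mathcal{H}(a,\phi))$ existed with $\epsilon_n\to 0$, then at a point $t_n$ where $\tfrac{d^2}{dt^2}\|\phi_n(t)\|^2_{L^2}\le 0$ one extracts (via the standard $4$-dimensional compactness and the fact that the topological energy of $\gamma_n$ goes to zero) a limit that is a constant reducible trajectory, and then the identity (\ref{eq 2deriv upstair}), combined with the uniform bound $\langle(\slashed{D}_{A_0+h}+\delta)^2\phi,\phi\rangle\ge\sigma_0\|\phi\|^2_{L^2}$ that you yourself derive from hypothesis~(ii) and compactness of the Picard torus, forces $\tfrac{d^2}{dt^2}|_{t=t_n}\|\phi_n\|^2_{L^2}>0$ for $n$ large, a contradiction. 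In other words, the "spinor-direction hyperbolicity plus contradiction-and-limit argument" you gesture at is not merely one input into a larger genericity scheme — it is the whole argument, and it needs no $\bar f$. Hypothesis~(i) is stable under small perturbation by $\epsilon f_H$ precisely because of hypothesis~(ii), and the reducible critical set of $\mathcal{L}_{\delta,\epsilon}$ is automatically discrete modulo gauge (it is the critical set of $f_H$ on the Picard torus), so $(\epsilon f_H, 0)$ is already a good perturbation. Dropping the generic $\bar f$ closes the gap you identified and shortens the proof.
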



\begin{proof} Since $\slashed{D}_A + \delta$ has no kernel for any flat connection, we can find a positive number
$\sigma_0 $ such that
\begin{equation} \label{eq uniform posdef flat}
\langle (\slashed{D}_{A_0 + h} + \delta)^{2}\phi,\phi\rangle_{L^{2}}\geq \sigma_{0}\|\phi\|^{2}_{L^{2}}
\end{equation}
for any $ h \in i\Omega^{1}_{h}(Y)$ and $\phi \in L^2 (\Gamma(S_Y))$. We divide the proof into two steps.

\textbf{Step 1} We would like to show that there exists $\epsilon_{0}>0$
such that, for any $\epsilon\in [0,\epsilon_{0}]$, all critical points and finite type negative gradient flow lines of a
functional
$$
\mathcal{L}_{\delta, \epsilon_{}} :=CSD+\frac{\delta}{2}\|\phi\|_{L^{2}}^{2}+\epsilon f_H({p}_\mathcal{H} (a,\phi))
$$
on $Coul(Y)$ are contained in $i\Omega^{1}_{h}(Y)$ with $f_H \colon i\Omega^{1}_{h}(Y)\cong \mathbb{R}^{b_1}\rightarrow
\mathbb{R}$ given explicitly by $f_H(\theta_{1},\ldots,\theta_{b_1})=-\sum_{j=1}^{b_1} \cos(2\pi \theta_{j})$. This will hold once we can show that $\frac{d^2}{dt^{2}}\|\phi_{}(t)\|^{2}_{L^{2}}>0$ for any irreducible trajectory of finite type $\gamma = (a,\phi) \colon \mathbb{R}\rightarrow Coul(Y) $ of the gradient flow of the functional $\mathcal{L}_{\delta, \epsilon_{}} $.

Suppose that there are a sequence of irreducible trajectories of finite type $\gamma_n = (a_n,\phi_n)$ of the negative gradient flow of  $\mathcal{L}_{\delta, \epsilon_{n}} $ with positive $\epsilon_n \rightarrow 0 $ and a sequence of real numbers $\{t_n\} $ such that
$$
             \left. \frac{d^2}{dt^{2}} \right|_{t = t_n} \|\phi_{n}(t)\|^{2}_{L^{2}}\leq 0.
$$
Since $\gamma_{n}$ is of finite type,
we have $\lim_{t\rightarrow \pm \infty} \gamma_n(t) = \mathfrak{a}^\pm_n$ in $C^{\infty}$, where $\mathfrak{a}_{n}^\pm$ are critical
points of $\mathcal{L}_{\delta,\epsilon_{n}}$. After passing to a subsequence, we can assume that $\mathfrak{a}_{n}^\pm \rightarrow \mathfrak{a}^{\pm}_{\infty}$, where $\mathfrak{a}^{\pm}_{\infty}$ are critical points of $\mathcal{L}_{\delta,0}$. By hypothesis,  $\mathfrak{a}^\pm_{\infty}$ are reducible and $\mathcal{L}_{\delta,0}(\mathfrak{a}^+_{\infty})=\mathcal{L}_{\delta,0}(\mathfrak{a}^-_{\infty})$.
This implies $\mathcal{L}_{\delta,0}(\mathfrak{a}^+_{n})-\mathcal{L}_{\delta,0}(\mathfrak{a}^-_{n})\rightarrow 0$.
In other words, the topological energy of $\gamma_{n}$ approaches zero.

Now we treat a finite trajectory $\gamma_{n}|_{[t_{n}-1,t_{n}+1]}$, denoted by $\hat{\gamma}_{n}$, as a configuration of a 4-manifold $[-1,1]\times Y$. By the standard compactness result of the 4-dimensional
Seiberg-Witten equations, after passing to a subsequence and applying suitable gauge transformations,  $\hat{\gamma}_{n}$
converges to a negative gradient flow line of $\mathcal{L}_{\delta,0}$ in $C^{\infty}$ on any interior domain.
It is not hard to check that the topological energy of $\hat{\gamma}_n $ also approaches 0, so $\hat{\gamma}_n $ in fact converges to a constant trajectory.
In particular, there is $u_{n}\in \mathcal{G}^{h,o}_{Y}$ such that $u_n \cdot \gamma_{n}(t_{n})$  to a critical
point $(h,0)$ of $\mathcal{L}_{\delta,0}$ with $h\in i\Omega^{1}_{h}(Y)$ by the hypothesis.

Since the quantity $\|\phi_{n}(t_{n})\|^{2}_{L^{2}}$ is gauge invariant, we can instead consider a lift of $\gamma_{n}$
to a path $\tilde{\gamma}_{n}=(\tilde{a}_{n},\tilde{\phi}_{n}) \colon \mathbb{R}\rightarrow
\mathcal{C}_{Y}$
which is a negative gradient flow line of the functional $\mathcal{L}_{\delta,\epsilon_{n}}$ on $\mathcal{C}_Y $ with $\tilde{\gamma}_{n}(t_{n})=u_n \cdot \gamma_{n}(t_{n})$.
In particular, we have
\begin{align*}
-\frac{d}{dt}\tilde{\phi}_{n}(t)&=(\slashed{D}_{A_{0}+\tilde{a}_{n}(t)}+\delta)\tilde{\phi}_{n}(t), \\
- \frac{d}{dt}\tilde{a}_{n}(t) &=*d(\tilde{a}_n(t))+\rho^{-1}(\tilde{\phi}(t)\tilde{\phi}(t)^{*})_{0}+ \epsilon_n \grad
f_{H}(\tilde{h}_{n}(t)),
\end{align*}
where $\tilde{h}_{n}(t)$ is the projection of $\tilde{a}_n (t)$ onto $i\Omega^{1}_{h}(Y)$. By a calculation similar to one in the proof Lemma \ref{trajectories
for the linearized flow are reducible}, we obtain
\begin{equation} \label{eq 2deriv upstair}
\begin{split}
   \left.\frac{d^{2}}{dt^{2}} \right|_{t = t_n}   \|\tilde{\phi}_{n} (t)  \|_{L^{2}}^{2}= &
4\langle(\slashed{D}_{A_{0}+\tilde{a}_{n}(t_{n})}+\delta)^{2}\tilde{\phi}_{n}(t_{n}),\tilde{\phi}_{n}(t_{n})\rangle_{L^{2}}+\|\tilde{\phi}_{n}(t_{n})\|_{L^{2}}^{4}\\
&+2\langle\rho(*d\tilde{a}_{n}(t_n)+\epsilon_n \grad
f_{H}(\tilde{h}_{n}(t_n)))\tilde{\phi}_{n}(t_{n}),\tilde{\phi}_{n}(t_{n})\rangle_{L^{2}}.
\end{split}
\end{equation}
Since $\tilde{a}_{n}(t_{n})\rightarrow h \in i\Omega^{1}_{h}(Y)$, we have $*d\tilde{a}_{n}(t_n)+\epsilon_n \grad
f_{H}(\tilde{h}_{n}(t_n)) \rightarrow 0$. Moreover, we can deduce from (\ref{eq uniform posdef flat}) that
$$
        \langle(\slashed{D}_{A_{0}+\tilde{a}_{n}(t_{n})}+\delta)^{2} \tilde{\phi}_{n}(t_{n}),
                     \tilde{\phi}_{n}(t_{n})\rangle_{L^{2}}
             \geq
             \frac{\sigma_0}{2}\|\tilde{\phi}_{n}(t_n)\|_{L^{2}}^{2},
$$
for $n$ sufficiently large. Notice that $\tilde{\phi}_{n}(t_{n})\neq 0$ since $\gamma_{n}$
is irreducible by the unique continuation property \cite[Lemma~10.8.1]{Kronheimer-Mrowka}. Therefore, for a sufficiently large $n$, we can conclude that
$\left. \frac{d^2}{dt^{2}} \right|_{ t=t_n}  \|\phi_{n}(t)\|_{L^{2}}^{2}>0$, which is a contradiction.

\textbf{Step 2} Let $\epsilon_0$ be a constant from Step~1. We pick a positive real number $\epsilon$ satisfying
 $$
 \epsilon< \min\{\epsilon_0, \frac{\sigma_0}{ \sup\limits_{h\in i\Omega^{1}_{h}(Y)}2\|\grad f_{H}(h)\|_{C^{0}}}\} $$ so that critical points and finite type gradient trajectories of $\mathcal{L}_{\delta, \epsilon_{}} $ lie in $i\Omega^{1}_{h}(Y)$ and the family of operators
$$
2(\slashed{D}_{A_0 + h} + \delta)^{2} + \epsilon \rho( \grad f_H(h))
$$
is positive definite. Consequently, we can apply Theorem~\ref{spectrum for the linearized flow} and compute Conley indices of the linearized flow.


Let us focus on the case of $\underline{\text{\text{SWF}}}^{A}(Y,\mathfrak{s};S^{1})$
as the other cases are almost identical. To simplify the calculation, we further deform the family of linear operators
$\slashed{D}_{A_0 + h} + \delta$ through a family $\slashed{D}_{A_0 + (1-s)h} + \delta$ as $0 \le s \le 1 $. We see that, in fact,
$$
        2(\slashed{D}_{A_0 + (1-s)h} + \delta)^{2}+ \epsilon \rho( \grad f_H(h))
$$
is positive definite for all $s \in [0,1]$ and $h \in i\Omega^{1}_{h}(Y) $.
Therefore, Lemma \ref{deformation of linearized flow} allows us to consider
an approximated linearized flow $\bar{\varphi}_{m,1}^{n}$ associated to the constant family $\slashed{D}_{A_0 } + \delta$.
This flow actually splits into the product of the following three flows:
\begin{enumerate}
\item The negative gradient flow of $\epsilon f_H$ on $i\Omega^{1}_{h}(Y)$;
\item The linear flow on $L^{2}_{k}(\im (d^{*}))\cap V^{\mu_{n}}_{\lambda_{n}}$ associated to the operator $-(*d)$;
\item The linear flow on $L^{2}_{k}(\Gamma(S_{Y}))\cap V^{\mu_{n}}_{\lambda_{n}}$ associated to the operator $-(\slashed{D}+\delta)$.
\end{enumerate}
From this, we see that
$$
I_{S^{1}}(\bar{\varphi}_{m,1}^{n},\inv (\bar{J}^{n,+}_{m}))\cong (\bar{V}^{0,-\delta}_{\lambda_{n}})^{+}.
$$
Moreover, the attractor map $I_{S^{1}}(\bar{\varphi}_{m,0}^{n},\inv (\bar{J}^{n,+}_{m}))\rightarrow I_{S^{1}}(\bar{\varphi}_{m,0}^{n},\inv
(\bar{J}^{n,+}_{m+1}))$ is just the identity map. Hence, we can conclude that $\underline{\text{\text{SWF}}}^{A}(Y,\mathfrak{s};S^{1})\cong
(S^{0},0,n(Y,\mathfrak{s},A_{0},g)+m(\delta))$.
\end{proof}

\begin{ex}[$S^2 \times S^1$]
Let $\mathfrak{s}$ be the unique torsion spin$^{c}$ structure on $Y=S^{2}\times S^{1}$ and
$g$ be a Riemannian metric with constant positive scalar curvature. By the Weitzenb$\ddot{\text{o}}$ck formula, the triple
$(Y,\mathfrak{s},g)$ satisfies the conditions in Theorem \ref{Morse-Bott reducible} with $\delta=0$. Therefore, we only need to compute the number $n(Y,\mathfrak{s},A_{0},g)$ to completely describe the spectrum invariants. We choose the base
connection $A_{0}$ such that the induced connection $A_{0}^{t}$ is flat with trivial holonomy (up to gauge transformations,
there are two choices of such connection  and we pick any one of them). There exists an orientation reversing isometry on
$Y$ preserving $(\mathfrak{s},A_{0})$, so $\mathfrak{s}$ and $A_{0}$ correspond in a natural way to a spin$^{c}$
structure and a spin$^{c}$ connection on $-Y$. This implies that $n(Y,\mathfrak{s},A_{0},g)=n(-Y,\mathfrak{s},A_{0},g)$.
On the other hand, by formula (\ref{virtual dimension}), we have $n(Y,\mathfrak{s},A_{0},g)+n(-Y,\mathfrak{s},A_{0},g)=\text{dim}_{\mathbb{C}}(\ker
\slashed{D})=0$ as $\slashed{D}$ has no kernel. Therefore, we get $n(Y,\mathfrak{s},A_{0},g)=0$ and conclude that
 $$\underline{\text{\text{SWF}}}^{A}(S^{2}\times S^{1},\mathfrak{s};S^{1})\cong (S^{0},0,0),\text{ } \underline{\text{\text{SWF}}}^{R}(S^{2}\times
S^{1},\mathfrak{s};S^{1})\cong (S^{0},0,0).$$
Notice that $\mathfrak{s}$ can be lifted to a spin structure in two ways, denoted by $\mathfrak{s}^{j}$ $(j=1,2)$.
Then, we have
  $$\underline{\text{\text{SWF}}}^{A}(S^{2}\times S^{1},\mathfrak{s}^{j};Pin(2))\cong (S^{0},0,0),\text{ } \underline{\text{\text{SWF}}}^{R}(S^{2}\times
S^{1},\mathfrak{s}^{j};Pin(2))\cong (S^{0},0,0).$$
\end{ex}

\section{More examples}\label{section examples}
\subsection{The Seiberg-Witten monopoles on Seifert fibered spaces}\label{Seiberg-Witten on Seifert space}
When $Y$ is a Seifert fibered space, the solutions of the Seiberg-Witten equations on $Y$, which are also called the Seiberg-Witten monopoles, are explicitly described by Mrowka, Ozv$\acute{\text{a}}$th and Yu in \cite{MOY}. In this subsection, we will review their set up and some of their results that will be useful for us.

Let $\Sigma$ be an oriented 2-dimensional orbifold of genus $g$ with $n$ marked points, whose associated multiplicities are $\alpha_{1},...,\alpha_{n}$. Its Euler characteristic is given by
$$\chi(\Sigma)=2-2g-\sum_{j=1}^{n}(\frac{1}{\alpha_{j}}-1).$$
An orbifold Hermitian line bundle $N$ over $\Sigma$ can be specified by a set of integers $(b,\beta_{1},...,\beta_{n})$ with $0\leq \beta_{j}< \alpha_{j}$. The orbifold degree of this bundle is given by
$$
\deg (N)=b+\sum_{j=1}^{n}\frac{\beta_{j}}{\alpha_{j}}.
$$

From now on, suppose that the integer $\beta_{j}$ is coprime to $\alpha_{j}$ for every $j$. Then, the unit circle bundle  $S(N)$ is naturally a smooth 3-manifold, called a Seifert fibered space. We will also denote the manifold $S(N)$ by $Y$ and we have
 $$ H^{1}(Y;\mathbb{Z})=\left\{\begin{array}{l l}
    H^{1}(\Sigma;\mathbb{Z}) & \quad \text{if }\deg (N)\neq 0\\
    H^{1}(\Sigma;\mathbb{Z})\oplus \mathbb{Z} & \quad \text{if }\deg (N)=0
  \end{array} \right.;
$$
$$H^{2}(Y;\mathbb{Z})=(\text{Pic}^{t}(\Sigma)/\mathbb{Z}[N])\oplus \mathbb{Z}^{2g}.$$
Here $\text{Pic}^{t}(\Sigma)$ denotes the topological Picard group of $\Sigma$, i.e. the group of topological isomorphism classes of orbifold line bundles over $\Sigma$, and the subgroup $\text{Pic}^{t}(\Sigma)/\mathbb{Z}[N]\subset H^{2}(S(N);\mathbb{Z})$ is identified by the image of the pull-back of the projection $p\colon Y \rightarrow \Sigma $
$$
\text{Pic}^{t}(\Sigma)\xrightarrow{p^*}[\text{line bundles over }Y]\xrightarrow{c_{1}}H^{2}(Y;\mathbb{Z}).
$$

Let $g_{\Sigma}$ be a constant curvature metric on $\Sigma$ with volume $\pi$. For a positive real $r$, we have a natural metric $g_{r}$ on $Y$ given by $r^{2}\alpha^{2}\oplus p^{*}(g_{\Sigma})$, where $i\alpha\in i\Omega^{1}(Y)$ is a constant curvature connection on $S(N)$. We will only have to pick $r$ to be sufficiently small to  computation in \cite{Nicolescu1}.
Instead of the Levi-Civita connection $\nabla^{LC}$, a connection $\nabla^{\circ}$ on $TY$ which is trivial in the fiber direction and equals to the pull back of the Levi-Civita connection on $\Sigma$ when restricted to $\ker
\alpha$ is used in \cite{MOY}. For any spin$^{c}$ structure $\mathfrak{s}$ with spinor bundle $S_{Y}$, there is a natural one-to-one correspondence between connections on $S_{Y}$ spinorial with respect to $\nabla^{LC}$ and connections on $S_{Y}$ spinorial with respect to $\nabla^{\circ}$ by identifying those inducing the same connection on $\text{det}(S_{Y})$. From \cite[Lemma~5.2.1]{MOY}, we have an identity
\begin{equation}\label{two types of dirac operator}
\tilde{\slashed{D}}_{\tilde{A}}=\slashed{D}_{A}+\delta_{r},
\end{equation}
where $\tilde{\slashed{D}}$ is the Dirac operator induced by $\nabla^{\circ} $ and $\tilde{A}$ is the $\nabla^{\circ} $-spinorial connection corresponding to $A$  and $\delta_{r}=\frac{1 }{2}r\deg (N)$ is a constant. Therefore, under this identification, solutions of the Seiberg-Witten equations described in \cite{MOY} actually correspond to critical points of the functional $CSD+\frac{\delta_{r}}{2}\|\phi\|_{L^{2}}^{2}$.

We also have a canonical spin$^{c}$ structure $\mathfrak{s}_{0}$ on $Y$ with spinor bundle $S_{Y}^{0}\cong p^{*}(K_{\Sigma}^{-1})\oplus
\mathbb{C}$, where $K_{\Sigma}$ is the canonical bundle of $\Sigma$ (recall that $\deg (K_{\Sigma})=-\chi(\Sigma)$).
When $\deg (N)$ is nonzero, the spin$^{c}$ structure $\mathfrak{s}_{0}$ is torsion. Moreover, $\text{Pic}^{t}(\Sigma)/\mathbb{Z}[N]$
is a finite {abelian} group and there is a one-to-one correspondence between $\text{Pic}^{t}(\Sigma)/\mathbb{Z}[N]$ and the set of torsion  spin$^{c}$ structures by identifying $[E_0]$ with $p^{*}(E_{0})\otimes S_{Y}^{0} $.
With this understood, the following proposition is a special case of Theorem 5.9.1 and Corollary 5.8.5 of \cite{MOY}
\begin{pro}\label{Some Seifert space only have reducible critical points}
Let $Y=S(N)$ be a Seifert fibred space corresponding to an orbifold line bundle $N$ over $\Sigma$ with nonzero degree. Let $Y$ be equipped with a torsion spin$^{c}$ structure induced by an orbifold line bundle $E_{0}$ over $\Sigma$. Then we have the following results:
\begin{enumerate}
\item The functional $CSD+\frac{\delta_{r}}{2}\|\phi\|_{L^{2}}^{2}$ has only reducible critical points if there is no orbifold line bundle $E$ over $\Sigma$ such that $[E]\equiv [E_0]  \pmod{\mathbb{Z}[N]}$ and $0\leq \deg (E)<-\frac{\chi(\Sigma)}{2}$.

\item The operator $\slashed{D}_{A}+\delta_{r}$ has no kernel for any flat connection $A$ if there is no orbifold
line bundle $E$ over $\Sigma$ such that $[E]\equiv [E_0]  \pmod{\mathbb{Z}[N]}$ and $\deg(E)=-\frac{\chi(\Sigma)}{2}$.

\end{enumerate}

\end{pro}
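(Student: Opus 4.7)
The plan is to reduce both assertions to the cited classification theorems of Mrowka--Ozsv\'ath--Yu via the identification (\ref{two types of dirac operator}) and the natural bijection between torsion spin$^c$ structures and classes in $\operatorname{Pic}^{t}(\Sigma)/\mathbb{Z}[N]$ recalled above. First I would unwind definitions: a critical point of $CSD+\frac{\delta_{r}}{2}\|\phi\|_{L^{2}}^{2}$ on $Coul(Y)$ satisfies $*da+\rho^{-1}(\phi\phi^{*})_{0}=0$ and $(\slashed{D}_{A_{0}+a}+\delta_{r})\phi=0$, and by (\ref{two types of dirac operator}) the second equation is exactly $\tilde{\slashed{D}}_{\widetilde{A_{0}+a}}\phi=0$. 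Thus critical points modulo $\mathcal{G}_Y$ are in one-to-one correspondence with the solutions of the Seiberg--Witten equations as written in \cite{MOY} for the $\nabla^{\circ}$-spinorial connection. Reducibility translates directly: $\phi\equiv 0$ on either side corresponds to the same notion.

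For part~(1), I would then invoke MOY's classification of irreducible solutions (Theorem~5.9.1 of \cite{MOY}): on $S(N)$ with the metric $g_r$, irreducible Seiberg--Witten monopoles for the spin$^c$ structure $p^{*}(E_{0})\otimes S_{Y}^{0}$ are parametrized by orbifold effective divisors of a certain degree, which in turn correspond bijectively to orbifold line bundles $E$ over $\Sigma$ with $[E]\equiv [E_{0}]\pmod{\mathbb{Z}[N]}$ and $0\le\deg(E)<-\tfrac{\chi(\Sigma)}{2}$. Nonexistence of such $E$ therefore forces all critical points to be reducible. For part~(2), I would apply Corollary~5.8.5 of \cite{MOY} which identifies $\ker\tilde{\slashed{D}}_{\tilde{A}}$ for a reducible (i.e.\ flat) connection $A$ with spaces of holomorphic sections of certain orbifold line bundles on $\Sigma$; these spaces are nontrivial precisely when a line bundle $E$ with $[E]\equiv [E_{0}]\pmod{\mathbb{Z}[N]}$ and $\deg(E)=-\tfrac{\chi(\Sigma)}{2}$ exists. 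The absence of such $E$ therefore rules out any kernel, giving the stated result after translating back via (\ref{two types of dirac operator}).

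The only real subtlety, and the main thing to verify carefully, is the bookkeeping of the constant shift $\delta_{r}=\tfrac{1}{2}r\deg(N)$ together with the difference between $\nabla^{LC}$- and $\nabla^{\circ}$-spinorial connections. Concretely, I would check that the spin$^c$ connection $A_{0}$ chosen here (with $F_{A_{0}^{t}}=0$, since $c_{1}(\mathfrak{s})$ is torsion) corresponds under the MOY identification to the connection they use as a reference, so that the degree inequalities in their theorems come out exactly as $0\le\deg(E)<-\tfrac{\chi(\Sigma)}{2}$ and $\deg(E)=-\tfrac{\chi(\Sigma)}{2}$ without any shift. Once this dictionary is set up, both parts follow by direct quotation.
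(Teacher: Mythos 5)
Your proposal matches the paper's own proof, which simply observes (after the preparatory discussion establishing the identity $\tilde{\slashed{D}}_{\tilde{A}}=\slashed{D}_{A}+\delta_{r}$ and the parametrization of torsion spin$^c$ structures by $\operatorname{Pic}^{t}(\Sigma)/\mathbb{Z}[N]$) that the two assertions are special cases of Theorem~5.9.1 and Corollary~5.8.5 of \cite{MOY}. Your careful flagging of the $\nabla^{LC}$ versus $\nabla^{\circ}$ bookkeeping is precisely the content of that preparatory discussion, so the argument is the same.
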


Later, we will also consider some examples of $S(N)$ with $\deg (N)=0$, specifically, those manifolds with flat metric ($\deg (N)=\chi(\Sigma)=0$). By the Weitzenb$\ddot{\text{o}}$ck formula, the (unperturbed) Chern-Simons-Dirac functionals on these manifolds also have no irreducible critical points.

\subsection{Large degree circle bundles over surface}\label{subsection large degree circle bundles}
Let $\Sigma$ be a smooth surface of genus $g > 0$ and $N_{d}$ be the complex line bundle with degree $d > 0$. As we explained in last subsection, the torsion spin$^{c}$ structures on $Y=S(N_{d})$ can be parametrized by $\mathbb{Z}/d\mathbb{Z}$ in a natural way. We denote them by $\mathfrak{s}_{0},\mathfrak{s}_{1}...,\mathfrak{s}_{d-1}$ accordingly. In this subsection, we consider the torsion spin$^{c}$ structures $\mathfrak{s}_q $ such that $q \ge g$.
By Proposition~\ref{Some Seifert space only have reducible critical points}, we see that the triple $(Y,\mathfrak{s}_{q} , g_r )$ satisfies the conditions of Theorem~\ref{Morse-Bott reducible} with $\delta= \delta_r = \frac{1}{2}rd$.

To describe the spectrum invariants, we will calculate the quantity $n(Y,\mathfrak{s}_{q},A_{0},g_r)+m(\slashed{D},\delta_r)$ following the approach from \cite{Nicolescu1} and \cite{Nicolescu2}. We introduce a family of connections on $TY$ parametrized by $s \in [0,1]$ given by
$$\nabla^{s}=(1-s)\nabla^{\circ}+s\nabla^{LC},$$
where $\nabla^{\circ}$ and $\nabla^{LC}$ are {described} earlier.
Let $A_{0,s}$ be the connection on $S_{Y}$ which is spinorial with respect to $\nabla^{s}$ and induces the same connection as $A_{0}^{t}$ on $\text{det}(S_{Y})$ and let $\slashed{D}^{s}$ be the Dirac operator corresponding to $A_{0,s}$. From (\ref{two types of dirac operator}), we see that
\begin{equation}\label{dirac operator change with s}\slashed{D}^{s}=(1-s)\tilde{\slashed{D}}+s\slashed{D}^{}=\slashed{D}+(1-s)\delta_r.
\end{equation}

Now consider the cylinder 4-manifold $X=I\times Y$. The  family of connections $\{\nabla^{s}\}$ induces a connection $\widehat{\nabla}$ on $T_{}X$ (with temporal gauge). Similarly, the family of connections $\{A_{0,s}\}$ induces a connection $\hat{A}$ on the spinor bundle $S_{X}$. Let $\hat{\slashed{D}}^{+}$ be the positive Dirac operator coupled with $\hat{A}$. We have the following observations:\begin{enumerate}
\item The operator $\tilde{\slashed{D}}^{}$ has no kernel (by Proposition \ref{Some Seifert space only have reducible critical points});
\item Let $\hat{A}^{t}$ be the induced connection on $\text{det}(S^{+}_{X})$. Then, we see that $F_{\hat{A}^{t}}|_{\{s\}\times Y}=0$ for any $s\in [0,1]$, which implies $F_{\hat{A}^{t}}\wedge F_{\hat{A}^{t}}=0$;
\item As pointed out in \cite[Remark~2.10]{Nicolescu1}, although $\widehat{\nabla}$ is not the Levi-Civita connection for the product metric on $X$, we can still use the Atiyah-{Patodi}-Singer index theorem (cf. \cite{APS}) to calculate the index of $\hat{\slashed{D}}^{+}$.
\end{enumerate}
As a result, the Atiyah-{Patodi}-Singer theorem\footnote{The sign convention of this index formula in \cite{Manolescu1} is different from that in \cite{APS} and \cite{Nicolescu1} because $Y$ is oriented in different ways. For consistency, here we follow the convention in \cite{Manolescu1}.} gives
\begin{equation}\label{index formula}
\text{Ind}_{\mathbb{C}}(\hat{\slashed{D}}^{+})=-\frac{1}{24}\int_{X}p_{1}(\hat{A})+\frac{\eta(\slashed{D})-\text{dim}_{\mathbb{C}}(\ker \slashed{D})}{2}-\frac{\eta(\tilde{\slashed{D}}^{})}{2},
\end{equation}
where $p_{1}(\hat{A})$ is the first Pontryagin form for $\hat{A}$.

On the other hand, the operator $\hat{\slashed{D}}^{+}$ can be written as $\frac{d}{ds}+\slashed{D}^{s}$ under suitable bundle identification. Then, the index equals the spectral flow
\begin{equation}\label{index formula 2}\text{Ind}_{\mathbb{C}}(\hat{\slashed{D}}^{+})=-(\# \text{ eigenvalues of }\slashed{D} \text{ in } [-\delta_r,0]) =-m(\slashed{D} , \delta_{r}),
\end{equation}
where we note that $-\delta_{r}$ is not an eigenvalue of $\slashed{D}$.
Recall that $$n(Y,\mathfrak{s}_{q},A_{0},g_r) = \frac{\eta(\slashed{D})-\text{dim}_{\mathbb{C}}(\ker
\slashed{D})}{2} + \frac{\eta_{\text{sign}}}{8} .$$
Combining this with (\ref{index formula}) and (\ref{index formula 2}), we get
$$
m(\slashed{D} , \delta_{r})+n(Y,\mathfrak{s}_{q},A_{0},g_r)=\frac{\eta(\tilde{\slashed{D}})}{2}+\frac{1}{24}\int_{X}p_{1}(\hat{A})+ \frac{\eta_{\text{sign}}}{8} .
$$
Here we assume that $r$ is sufficiently small and apply Lemma 2.3, formula (2.22)  and Appendix A of \cite{Nicolescu1} to obtain
\begin{align*}
\eta( \tilde{\slashed{D}}) &=\frac{d}{6}+\frac{(g-1-q)(d+g-1-q)}{d},\\
\frac{1}{24}\int_{X}p_{1}(\hat{A})&=\frac{d}{12}(d^{2}r^{4}-(2-2g)r^{2}).
\end{align*}
From \cite[p.108]{Nicolescu1} and \cite{Ouyang}, we also have
$$
\frac{\eta_{\text{sign}}}{8}=\frac{d-3}{24}-\frac{d}{12}(d^{2}r^{4}-(2-2g)r^{2}).
$$
Combining the above formulas, we can conclude
$$n(Y,\mathfrak{s}_{q},A_{0},g_r)+m(\delta_r)=\frac{d-1}{8}+\frac{(g-1-q)(d+g-1-q)}{2d}.$$
In summary, for $0<g\leq q<d$, we have
$$\underline{\text{\text{SWF}}}^{A}(S(N_{d})),\mathfrak{s_{q}};S^{1})\cong (S^{0},0,c(g,d,q)),\; \underline{\text{\text{SWF}}}^{R}(S(N_{d})),\mathfrak{s_{q}};S^{1})\cong (S^{0},0,c(g,d,q)),$$
where $c(g,d,q) $ is the number $\frac{d-1}{8}+\frac{(g-1-q)(d+g-1-q)}{2d} $ appeared above.


\subsection{Circle bundles over torus and other nil manifolds}\label{subsection nil manifolds}
When $Y=S(N)$ where $p\colon N\rightarrow \Sigma$ is a complex line bundle over an orbifold $\Sigma$ with $\chi(\Sigma)=0$ and $\deg (N)=d\neq 0$, the manifold $Y$ supports the so-called ``nil-geometry''. For simplicity, will assume that $d>0$ and $\Sigma$ is orientable. The case $d<0$ is similar (see the remark after formula (\ref{swfr for nil})) and the nonorientable case can be done by passing to
a suitable (orientable) double cover. Our main focus will be the case when $\Sigma $ is smooth, i.e. $Y$ is a circle bundle over the torus.

\subsubsection{Preparation} Let $Y$ be a circle bundle over torus. In this case, the torsion
spin$^{c}$ structures of $Y$ can be parametrized by $\mathbb{Z}/d\mathbb{Z}$ denoted by $\mathfrak{s}_{0}, \ldots,\mathfrak{s}_{d-1}$.
The spectrum invariants for $\mathfrak{s}_{q}$ with $1\leq q\leq d-1$ are already calculated in
Section \ref{subsection large degree circle bundles} and we will only consider the spin$^c$ structure $\mathfrak{s}_0 $ here. As earlier, $Y$ is equipped with a canonical metric $g_r$ and a canonical spinor bundle $S^0_Y $.

Let the torus $\Sigma$ be given by $\left( \mathbb{R}/\sqrt{\pi}\mathbb{Z} \right) \times \left( \mathbb{R}/\sqrt{\pi}\mathbb{Z}\right)$ with coordinate $(x_1 , x_2)$.
To identify $i\Omega^{1}_{h}(Y)$ with $\mathbb{R}^{2}$, we choose harmonic forms $h_{j}$ to be $2i\sqrt{\pi}\, p^{*}(dx_{j})$. We have a canonical trivialization of $K_{\Sigma}$, which also induces a trivialization of $S_{Y}^{0}\cong \underline{\mathbb{C}}\oplus p^{*}(K_{\Sigma}^{-1})$ and a trivialization of $\text{det}(S_{Y}^{0}) \cong \underline{\mathbb{C}}$, where $\underline{\mathbb{C}}$ is the trivial vector bundle on $Y$ with fiber $\mathbb{C}$.  Under this trivialization, the Clifford multiplication is given by
$$\rho(h_{1})=\left(\begin{array} {lr}
 0 & -2i\sqrt{\pi} \\
 2i\sqrt{\pi}& 0
\end{array}\right)\text{, }\rho(h_{2})=\left(\begin{array} {lr}
 0 & -2\sqrt{\pi} \\
 -2\sqrt{\pi} & 0
\end{array}\right).
$$
We set the base connection $A_{0}$ to be the connection which is spinorial with respect to $\nabla^{LC}$ and induces the trivial connection on $\text{det}(S_{Y}^{0})$. Then the corresponding connection $\tilde{A}_{0}$ spinorial with respect to $\nabla^{\circ}$ is just the trivial connection on $S_{Y}^{0}$.

Let $\Gamma_{c}(S_{Y}^{0})$ be the subspace of $\Gamma(S_{Y}^{0})$ consisting of sections which are constant along each fiber of $Y = S(N)$. We see that  $\Gamma_{c}(S_{Y}^{0})$ is the same as a space of function from $\mathbb{T}^2 $ to $\mathbb{C}^2 $, so Fourier series give an $L^2$-orthogonal decomposition
\begin{equation} \label{eq decomp consectionV}
\Gamma_{c}(S_{Y}^{0}) = \bigoplus_{\vec{v}\in \mathbb{Z}^{2}}V_{\vec{v}},
\end{equation}
where $V_{\vec{v}} $ is the two-dimensional vector space spanned by $\phi_{\vec{v}, +}(x) = (e^{2\sqrt{\pi}i\langle x,\vec{v}\rangle},0) $
and $\phi_{\vec{v}, -}(x)  = (0,e^{2\sqrt{\pi}i\langle x,\vec{v}\rangle}) $. We also have an orthogonal decomposition
\begin{equation} \label{eq decomp consectionC0}
 \Gamma_{}(S_{Y}^{0}) = \Gamma_{c}(S_{Y}^{0}) \oplus \Gamma_{0}(S_{Y}^{0}),
\end{equation}
where $\Gamma_{0}(S_{Y}^{0}) $ is the subspace of $\Gamma_{}(S_{Y}^{0}) $ consisting of sections which integrate to 0 along each fiber of $S(N)$. We have the following observations regarding the Dirac operators and these subspaces.

\begin{lem}\label{dirac operator acting on spherical harmonics}
For any $h \in i\Omega^{1}_{h}(Y) $, we consider Dirac operators $\tilde{\slashed{D}}_{\tilde{A}_{0}+h}=\slashed{D}_{A_{0}+h}+\delta_{r}$.
\begin{enumerate}
\item  The operator $\tilde{\slashed{D}}_{\tilde{A}_{0}+h}$ preserves the subspaces $\Gamma_{0}(S_{Y}^{0})$ and ${V}_{\vec{v}}$;
\item \label{item property diracnil2} The operator $\tilde{\slashed{D}}_{\tilde{A}_{0}+h}$ has no kernel when restricted to $L^{2}_{k}(\Gamma_{0}(S_{Y}^{0}))$;
\item  When $h = \theta_1 h_1 + \theta_2 h_2$ and $\vec v = (v_1 , v_2)$, the operator $\tilde{\slashed{D}}_{\tilde{A}_{0}+h}$ restricted to ${V}_{\vec{v}}$ is given by a matrix
$$
\left(\begin{array} {lr}
 0 & -2\sqrt{ \pi}(  \theta_{2} + v_{2}) - 2\sqrt{\pi}(\theta_{1}+v_{1})i\\
 -2\sqrt{ \pi}(\theta_{2}+v_{2})  + 2\sqrt{\pi}(\theta_{1}+v_{1})i  & 0
\end{array}\right).
$$
\end{enumerate}
\end{lem}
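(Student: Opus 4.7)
The proof rests on the symmetries built into the setup. Since $\nabla^\circ$ is trivial along the fibers and equals the pullback of the Levi--Civita connection on $\ker\alpha$, $\tilde{A}_0$ is the trivial connection on $S_Y^0$, and $h$ is pulled back from $\Sigma$, the operator $\tilde{\slashed{D}}_{\tilde{A}_0+h}$ is equivariant under the fundamental vertical field $T$ (equivalently, under the principal $S^1$-action on $Y=S(N)$). When restricted to fiber-constant sections, it also commutes with the horizontal lifts of the two translation Killing fields on the base torus.

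Part~(1) is then immediate: $\Gamma_c(S_Y^0)$ is the weight-zero subspace of the fiber $S^1$-action and $\Gamma_0(S_Y^0)$ is the sum of its nonzero-weight subspaces, so both are preserved. Inside $\Gamma_c(S_Y^0)$, each $V_{\vec{v}}$ is a joint Fourier eigenspace of the base translation action, hence also preserved. For part~(3) I would work in the adapted orthonormal coframe $\{p^*dx_1,\,p^*dx_2,\,r\alpha\}$, in which the operator splits as
\[
\tilde{\slashed{D}}_{\tilde{A}_0+h}
=\rho(p^*dx_1)\bigl(\partial_{E_1}+h(E_1)\bigr)
+\rho(p^*dx_2)\bigl(\partial_{E_2}+h(E_2)\bigr)
+\rho(r\alpha)\,\tfrac{1}{r}\partial_T.
\]
On $\phi_{\vec{v},\pm}$ the vertical term vanishes, $\partial_{E_j}$ acts by $2\sqrt{\pi}\,i\,v_j$, and $h(E_j)=2i\sqrt{\pi}\,\theta_j$. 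Substituting the identity $\rho(h_j)=2i\sqrt{\pi}\,\rho(p^*dx_j)$ and the explicit matrices for $\rho(h_1),\rho(h_2)$ listed in the setup produces the matrix in (3) after a short computation.

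The main obstacle is part~(2). The plan is to refine $\Gamma_0=\bigoplus_{n\ne 0}\Gamma_n$ into weight-$n$ subspaces of the fiber action, each preserved by the equivariant operator. Write $\tilde{\slashed{D}}_{\tilde{A}_0+h}=H+V$ with $H$ the horizontal part above and $V=\rho(\alpha)\partial_T$. On $\Gamma_n$ one has $V=in\,\rho(\alpha)$, and the Clifford relation $\rho(\alpha)^2=-|\alpha|_{g_r}^{2}=-1/r^2$ gives $V^2=n^2/r^2$. The crucial input is that $\{H,\rho(\alpha)\}=0$: the pointwise Clifford contribution $\{\rho(p^*dx_j),\rho(\alpha)\}=-2g_r(p^*dx_j,\alpha)$ vanishes by orthogonality of the coframe, while the derivative contribution $\sum_j\rho(p^*dx_j)\rho(\nabla^\circ_{E_j}\alpha)$ vanishes because $\nabla^\circ\alpha=0$, a consequence of the horizontal--vertical splitting of $TY$ being $\nabla^\circ$-parallel by construction. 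Granting this anticommutation, squaring yields
\[
\tilde{\slashed{D}}_{\tilde{A}_0+h}^{\,2}\big|_{\Gamma_n}
=H^2+in\,\{H,\rho(\alpha)\}+V^2
=H^2+\tfrac{n^2}{r^2}\;\ge\;\tfrac{n^2}{r^2}>0
\]
for every $n\ne 0$, so $\ker\tilde{\slashed{D}}_{\tilde{A}_0+h}\cap\Gamma_0=0$. The delicate step in executing this plan is verifying $\nabla^\circ\alpha=0$ cleanly from the definition of $\nabla^\circ$ in \cite{MOY}, which requires unpacking how $\nabla^\circ$ acts across the mixed horizontal--vertical directions; once this is in hand, the rest of the argument is a purely algebraic anticommutator computation.
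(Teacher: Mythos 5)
Your argument is correct but takes a genuinely different route for part~(2). The paper simply defers that part to the proof of Proposition~5.8.4 of \cite{MOY}, whereas you give a self-contained Bochner-type argument: after refining $\Gamma_0(S_Y^0)$ into the nonzero $S^1$-weight subspaces $\Gamma_n$, you exploit $\{H,\rho(\alpha)\}=0$ --- the pointwise Clifford anticommutator of $\rho(p^*dx_j)$ with $\rho(\alpha)$ vanishes by orthogonality of the coframe, and the first-order contribution vanishes because $\nabla^\circ\alpha=0$ (since $T$ is $\nabla^\circ$-parallel and $\nabla^\circ$ is metric-compatible) --- to obtain $\tilde{\slashed{D}}^2_{\tilde{A}_0+h}\big|_{\Gamma_n}=H^2+n^2/r^2\geq n^2/r^2>0$. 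The citation route is shorter to write down, but yours is transparent, avoids reference-chasing, and produces an explicit uniform spectral gap $|n|/r$ on each $\Gamma_n$ --- exactly the sort of estimate that is used again (as the existence of $\sigma_1$) in the proof of Lemma~\ref{achieve positive definite condition} a few lines later. Your treatments of parts~(1) and~(3), via equivariance under the fiber $S^1$-action and a direct computation in the adapted coframe using $\rho(h_j)=2i\sqrt{\pi}\,\rho(p^*dx_j)$, coincide with the ``simple calculation'' the paper gestures at.
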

\begin{proof} Statement~(\ref{item property diracnil2}) is implied by the proof of Proposition~5.8.4 of \cite{MOY}.
The other statements can be verified by simple calculation.
\end{proof}

As in Section \ref{section linearized flow}, we will consider a functional of the form
$$
\mathcal{L}_{\delta, \epsilon_{}} :=CSD+\frac{\delta}{2}\|\phi\|_{L^{2}}^{2}+\epsilon f_H({p}_\mathcal{H} (a,\phi)),
$$
where $f_H (\theta_1 , \theta_2 ) = -\frac{1}{2\sqrt{2}\pi}(\cos{2\pi\theta_1}+\cos{2\pi\theta_2}) $. From Proposition~\ref{Some Seifert space only have reducible critical points}, the functional $\mathcal{L}_{\delta_{r},0}$ only has reducible critical points (recall that  $\delta_{r}=\frac{ 1}{2}rd$).
Although, the operator $\slashed{D}_{A_{0}+h}+\delta_{r} $ has nontrivial kernel for some $h \in i\Omega^{1}_{h}(Y)$, we  can construct a suitable perturbation on $\mathcal{L}_{\delta_{r},0}$ which allows us to apply Theorem~\ref{spectrum for the linearized flow}. The first step is the following lemma, whose proof is almost identical to the proof of
\cite[Proposition~37.1.1]{Kronheimer-Mrowka}, and we omit it.
\begin{lem}\label{lem delepsilon reducible}
There exists $\delta_{1}>0$ such that, for any $\delta\in (0,\delta_{1})$, there exists $\epsilon_{1}(\delta)>0$ so that the functional
$$\mathcal{L}_{\delta_{r}-\delta,\epsilon}=\mathcal{L}_{\delta_{r},0}-\frac{\delta}{2}\|\phi\|_{L^{2}}^{2}+\epsilon f_H({p}_\mathcal{H} (a,\phi))$$
has only reducible critical points for all $\epsilon \in (0,\epsilon_{1}(\delta)) $.
\end{lem}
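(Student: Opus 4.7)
The plan is to argue by contradiction, following \cite[Proposition~37.1.1]{Kronheimer-Mrowka}. Suppose the conclusion fails; then there exist sequences $\delta_n \to 0^+$ and $\epsilon_n \to 0^+$ together with irreducible critical points $x_n=(a_n,\phi_n)$ of $\mathcal{L}_{\delta_r-\delta_n,\epsilon_n}$. After translating by $\mathcal{G}_Y^{h,o}$ to place $p_{\mathcal{H}}(a_n)$ in a fundamental domain of the Picard torus, the uniform smoothness and boundedness of the perturbations $-\frac{\delta_n}{2}\|\phi\|^2$ and $\epsilon_n f_H\circ p_{\mathcal{H}}$, combined with standard Seiberg-Witten compactness and elliptic bootstrapping for critical points, yield a $C^\infty$-convergent subsequence $x_n \to x_\infty=(a_\infty,\phi_\infty)$, whose limit is a critical point of $\mathcal{L}_{\delta_r,0}$. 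Proposition \ref{Some Seifert space only have reducible critical points} then forces $x_\infty$ to be reducible: $\phi_\infty\equiv 0$ and $a_\infty\in i\Omega^1_h(Y)$.

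To recover nontrivial information after this collapse, renormalize by $\psi_n:=\phi_n/\|\phi_n\|_{L^2}$. The Dirac component $(\slashed{D}_{A_0+a_n}+\delta_r-\delta_n)\psi_n=0$ together with the $C^\infty$ convergence of $a_n$ yields, along a further subsequence, $\psi_n\to\psi_\infty$ in $C^\infty$ with $\|\psi_\infty\|_{L^2}=1$ and $\psi_\infty \in \ker(\slashed{D}_{A_0+a_\infty}+\delta_r)$. Hence $a_\infty$ must lie at one of the finitely many (modulo the harmonic gauge lattice) harmonic points where this Dirac operator has a nontrivial kernel; by Lemma \ref{dirac operator acting on spherical harmonics}, at such a point the kernel is confined to some two-dimensional $V_{\vec v}$ of fiberwise-constant sections, on which $\rho$ acts by explicit $2\times 2$ matrices.

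I expect the main obstacle to be extracting the contradiction from the harmonic projection of the first critical-point equation,
\[
     p_{\mathcal{H}}\bigl(\rho^{-1}(\phi_n\phi_n^*)_0\bigr) + \epsilon_n\grad(f_H)(p_{\mathcal{H}}(a_n)) = 0,
\]
via a careful scale analysis. Passing to a further subsequence so that $r_n := \|\phi_n\|_{L^2}^2/\epsilon_n$ converges in $[0,\infty]$, one treats three regimes: when $r_n\to\infty$, dividing by $\|\phi_n\|_{L^2}^2$ forces $p_{\mathcal{H}}\bigl(\rho^{-1}(\psi_\infty\psi_\infty^*)_0\bigr)=0$, which is incompatible with the explicit Clifford action on $V_{\vec v}$ for a unit spinor; when $r_n\to 0$, dividing by $\epsilon_n$ forces $a_\infty$ to be a critical point of $f_H$, hence a half-integer lattice point; and the intermediate regime imposes a convex combination of both constraints. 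Choosing $\delta_1$ small enough that the kernel points of $\slashed{D}_{A_0+h}+\delta_r$ are separated from the critical set of $f_H$ (and such that the quadratic constraint cannot be matched at kernel points) produces a contradiction in every regime, and hence the desired $\delta_1$ and $\epsilon_1(\delta)$.
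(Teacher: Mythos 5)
Your overall framework — argue by contradiction, extract a convergent subsequence of irreducible critical points, renormalize $\psi_n=\phi_n/\|\phi_n\|$, and analyze the harmonic projection of the critical equation — is the right kind of argument and matches the flavor of the Kronheimer--Mrowka proof the paper cites. However, the regime trichotomy contains two concrete errors. In the regime $r_n\to 0$ you assert that dividing by $\epsilon_n$ "forces $a_\infty$ to be a critical point of $f_H$, hence a half-integer lattice point." The critical set of $f_H(\theta)=-\tfrac{1}{2\sqrt 2\pi}\sum\cos 2\pi\theta_j$ is the full set $\tfrac{1}{2}\mathbb{Z}^{b_1}$, which \emph{contains} the integer lattice points (the minima). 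Since the Dirac kernel condition has already placed $a_\infty$ at an integer lattice point (WLOG $a_\infty=0$), the condition $\grad f_H(a_\infty)=0$ is automatically satisfied and no contradiction results. The subsequent proposal to choose $\delta_1$ so that "the kernel points of $\slashed{D}_{A_0+h}+\delta_r$ are separated from the critical set of $f_H$" is likewise unavailable: that kernel locus is $\mathbb{Z}^{b_1}\subset\tfrac{1}{2}\mathbb{Z}^{b_1}$. In the regime $r_n\to\infty$, the claim that $p_{\mathcal{H}}(\rho^{-1}(\psi_\infty\psi_\infty^*)_0)=0$ "is incompatible with the explicit Clifford action on $V_{\vec v}$ for a unit spinor" is also not automatic: the unit spinors $\phi_{\vec 0,\pm}$ have vanishing harmonic projection.

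What closes the argument is the extra constraint coming from the Dirac kernel condition, which your write-up does not use. By the explicit $2\times 2$ matrix of Lemma~\ref{dirac operator acting on spherical harmonics}, the kernel locus of $\tilde{\slashed{D}}_{\tilde A_0+h}-\delta_n$ on $V_{\vec 0}$ near $\vec 0$ is the circle $|h_n|\approx\delta_n/(2\sqrt\pi)>0$, and a kernel eigenvector there satisfies $|c_+|=|c_-|$, so $c_+\bar c_-\neq 0$; a direct computation with the off-diagonal Clifford matrices $\rho(h_1),\rho(h_2)$ shows that $p_{\mathcal{H}}(\rho^{-1}(\psi_n\psi_n^*)_0)$ is a \emph{positive} multiple of $h_n$. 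Since $\grad f_H(h_n)$ near $0$ is also a positive multiple of $h_n$ (the Hessian of $f_H$ at its minimum is positive definite; this is the same positivity $\langle\rho(\grad f_H(h))\phi,\phi\rangle>0$ established on the kernel locus in the proof of Lemma~\ref{achieve positive definite condition}), the two terms of the harmonic projection of the critical equation have positive $L^2$ inner product and so cannot sum to zero unless $h_n=0$, which the kernel condition excludes for $\delta_n>0$. This directional argument runs uniformly over all scales and removes the need for the trichotomy on $r_n$ altogether.
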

We also have the following lemma:
\begin{lem}\label{achieve positive definite condition}
There exists $\delta_{2}>0$ such that, for any $\delta\in (0,\delta_{2})$, there exists $\epsilon_{2}(\delta)>0$ so that
we have
\begin{equation}\label{positive definite condition (2)}
\langle (2(\tilde{\slashed{D}}_{\tilde{A}_{0}+h}-\delta)^{2}+\epsilon \rho( \grad f_H(h)))\phi,\phi\rangle_{L^{2}}\geq C(\delta,\epsilon)\|\phi\|^{2}_{L^{2}},
\end{equation}
for all $\epsilon \in (0,\epsilon_{2}(\delta))$, $h\in i\Omega^{1}_{h}(Y)$ and $\phi\in L^{2}_{k}(\Gamma(S^{0}_{Y}))$, and $C(\delta,\epsilon)$ is a positive constant depending only on $\delta,\epsilon$.
\end{lem}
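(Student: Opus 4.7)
I will exploit the $L^{2}$-orthogonal decomposition
\[
   L^{2}_{k}(\Gamma(S_{Y}^{0}))
      = L^{2}_{k}(\Gamma_{0}(S_{Y}^{0}))
         \oplus \bigoplus_{\vec{v}\in\mathbb{Z}^{2}} V_{\vec{v}}
\]
from (\ref{eq decomp consectionC0})--(\ref{eq decomp consectionV}), which is preserved both by $\tilde{\slashed{D}}_{\tilde{A}_{0}+h}$ (Lemma~\ref{dirac operator acting on spherical harmonics}(1)) and by $\rho(\grad f_{H}(h))$, since $\grad f_{H}(h)\in i\Omega^{1}_{h}(Y)=\textnormal{span}\{h_{1},h_{2}\}$ acts as a pointwise constant matrix in our trivialization and thus commutes with the Fourier projections. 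The required lower bound therefore splits over these summands. On $\Gamma_{0}$, Lemma~\ref{dirac operator acting on spherical harmonics}(2) gives $\ker\tilde{\slashed{D}}_{\tilde{A}_{0}+h}|_{\Gamma_{0}}=0$ for every $h$; the spectral gap at $0$ varies continuously in $h$ and is periodic under $h\mapsto h+h_{j}$ via the gauge transformation $u_{j}=e^{-2\sqrt{\pi}ix_{j}}$ (which, being pulled back from $\Sigma$, preserves $\Gamma_{0}$), so compactness of a fundamental domain in the torus of $h$-parameters yields a uniform bound $|\tilde{\slashed{D}}_{\tilde{A}_{0}+h}|_{\Gamma_{0}}|\ge\lambda_{0}>0$. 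Taking $\delta_{2}\le\lambda_{0}/2$ and $\epsilon_{2}(\delta)\le\lambda_{0}^{2}/(4\sup_{h}\|\rho(\grad f_{H}(h))\|)$ then settles the $\Gamma_{0}$ summand.

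On each $V_{\vec{v}}$, I use Lemma~\ref{dirac operator acting on spherical harmonics}(3) together with the Clifford identities $\rho(h_{j})^{2}=4\pi I$ and $\rho(h_{1})\rho(h_{2})+\rho(h_{2})\rho(h_{1})=0$ (directly verified from the explicit matrices preceding that lemma) to rewrite the restriction of the operator at parameter $\vec\theta=(\theta_{1},\theta_{2})$ as the single $2\times2$ matrix
\[
     \bigl(2(\tilde{\slashed{D}}_{\tilde{A}_{0}+h}-\delta)^{2}+\epsilon\rho(\grad f_{H}(h))\bigr)\Big|_{V_{\vec{v}}}
     = c_{0}(\vec\xi)\,I + \rho\bigl(-4\delta\,h(\vec\xi)+\epsilon\,\grad f_{H}(\vec\xi)\bigr),
\]
where $\vec\xi:=\vec\theta+\vec{v}$, $c_{0}(\vec\xi)=8\pi|\vec\xi|^{2}+2\delta^{2}$, and I have used $\mathbb{Z}^{2}$-periodicity of $\grad f_{H}$ to replace $\grad f_{H}(\vec\theta)$ by $\grad f_{H}(\vec\xi)$. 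Writing $\grad f_{H}(\vec\xi)=\alpha(\sin 2\pi\xi_{1})h_{1}+\alpha(\sin 2\pi\xi_{2})h_{2}$ for a metric-dependent constant $\alpha>0$, the eigenvalues of this matrix are $c_{0}(\vec\xi)\pm 2\sqrt{\pi}\bigl|{-}4\delta\vec\xi+\epsilon\vec g(\vec\xi)\bigr|$ with $\vec g(\vec\xi)=\alpha(\sin 2\pi\xi_{1},\sin 2\pi\xi_{2})$. The task reduces to showing the smaller eigenvalue is uniformly positive over $\vec\xi\in\mathbb{R}^{2}$. For $|\vec\xi|\ge\eta_{0}$, with $\eta_{0}>0$ fixed independently of $\delta$ and $\epsilon$, the triangle inequality and completing the square give the lower bound $2(2\sqrt{\pi}|\vec\xi|-\delta)^{2}-2\sqrt{2\pi}\alpha\epsilon$, which, once $\delta\le\sqrt{\pi}\eta_{0}/2$, is at least $2\pi\eta_{0}^{2}/2-O(\epsilon)>0$ for $\epsilon$ small.

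The main obstacle is the small-$|\vec\xi|$ regime, which contains the ``bad circle'' $2\sqrt{\pi}|\vec\xi|=\delta$ on which the Dirac-squared contribution vanishes. The crucial ingredient here is the alignment $\vec g(\vec\xi)=2\pi\alpha\vec\xi+O(|\vec\xi|^{3})$, coming from the Morse critical point of $f_{H}$ at $\vec\xi=0$: for $\epsilon<2\delta/(\pi\alpha)$ this produces a partial cancellation
\[
   \bigl|{-}4\delta\vec\xi+\epsilon\vec g(\vec\xi)\bigr|
    \le (4\delta-2\pi\alpha\epsilon)|\vec\xi|\bigl(1+O(|\vec\xi|^{2})\bigr),
\]
rather than the addition delivered by the triangle inequality alone. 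Substituting and absorbing the cubic correction by shrinking $\eta_{0}$, the smaller eigenvalue is bounded below on $\{|\vec\xi|\le\eta_{0}\}$ by $2(2\sqrt{\pi}|\vec\xi|-\delta)^{2}+3\pi^{3/2}\alpha\epsilon|\vec\xi|$. A direct one-variable minimization of this expression in $|\vec\xi|\ge 0$ gives a positive value of order $\alpha\epsilon\delta$, furnishing the required $C(\delta,\epsilon)>0$ and completing the proof.
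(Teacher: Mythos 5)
Your proof is correct, and it follows the same overall architecture as the paper's (orthogonal decomposition $\Gamma(S_Y^0)=\Gamma_0\oplus\bigoplus V_{\vec v}$, a uniform spectral gap on $\Gamma_0$ obtained via compactness of the parameter torus, and a separate analysis on the $V_{\vec v}$'s), but the treatment of the $V_{\vec v}$ summands is genuinely different. The paper keeps the argument soft: it notes that $\langle 2(\tilde{\slashed{D}}_{\tilde{A}_0+h}-\delta)^2\phi,\phi\rangle$ vanishes exactly on the compact set $K=\{(h,\phi)\in S_{\vec 0}\times S(V_{\vec 0}):(\tilde{\slashed{D}}_{\tilde{A}_0+h}-\delta)\phi=0\}$, and then establishes $\langle\rho(\grad f_H(h))\phi,\phi\rangle>0$ on $K$ by an indirect dynamical trick --- the negative gradient flow line of $f_H$ through $h\in S_{\vec 0}$ enters the ball, which forces the sign of the derivative of the bottom eigenvalue along the flow. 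Compactness and continuity then yield a positive lower bound in a neighborhood $U$ of $K$, with the Dirac-squared term giving a uniform bound on the complement, and $\epsilon$ is chosen small at the very end. Your proof instead turns everything into an explicit scalar minimization: the key observation is the clean identity $2(\tilde{\slashed{D}}_{\tilde{A}_0+h}-\delta)^2+\epsilon\rho(\grad f_H)=c_0(\vec\xi)I+\rho(-4\delta h(\vec\xi)+\epsilon\grad f_H(\vec\xi))$ on $V_{\vec v}$ (a scalar plus a single Clifford multiplication), whose eigenvalues are $c_0\pm 2\sqrt\pi\lVert\cdot\rVert$. The crucial sign input --- that $\rho(\grad f_H)$ is positive on the positive eigenspace of $\tilde{\slashed{D}}_{\tilde{A}_0+h}$ when $h$ is near a lattice point --- is the same insight that underlies the paper's gradient-flow trick, but you extract it by Taylor expansion $\grad f_H(\vec\xi)=2\pi\alpha\vec\xi+O(|\vec\xi|^3)$, giving the partial cancellation $|-4\delta\vec\xi+\epsilon\vec g(\vec\xi)|\le(4\delta-2\pi\alpha\epsilon)|\vec\xi|+O(\epsilon|\vec\xi|^3)$. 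The net effect is a quantitative version of the paper's lemma with $C(\delta,\epsilon)\sim\alpha\epsilon\delta$, whereas the paper only asserts existence of $C(\delta,\epsilon)$. The paper's softer argument is slightly more portable (it is essentially repeated in the flat-manifold computations in Section 8.4 with different lattice geometry, where keeping the argument free of explicit coordinates pays off), while your argument is self-contained and gives explicit constants; both are valid. One small imprecision worth flagging: the factor $(1+O(|\vec\xi|^2))$ multiplying $(4\delta-2\pi\alpha\epsilon)|\vec\xi|$ should really be written as an additive cubic error $\epsilon C|\vec\xi|^3$ independent of $\delta$, since the multiplicative form hides a $\delta$-dependence of the implied constant; once you bound $\epsilon C|\vec\xi|^3\le\epsilon C\eta_0^2|\vec\xi|$ and shrink $\eta_0$ so that $2\sqrt\pi C\eta_0^2<\pi^{3/2}\alpha$, the stated lower bound $2(2\sqrt\pi|\vec\xi|-\delta)^2+3\pi^{3/2}\alpha\epsilon|\vec\xi|$ does follow cleanly, so this is a cosmetic issue rather than a gap.
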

\begin{proof}
Since we have an $L^{2}$-orthogonal decomposition
$$\Gamma(S_{Y}^{0})= \Gamma_{0}(S_{Y}^{0}) \oplus \bigoplus_{\vec{v}\in \mathbb{Z}^{2}}V_{\vec{v}} ,$$
which is preserved by both $\tilde{\slashed{D}}_{\tilde{A}_{0}+h}-\delta$ and $\rho( \grad f_H(h))$, we just need to prove the statement on each of the summand.

The case $ L^{2}_{k}(\Gamma_{0}(S_{Y}^{0}))$ is easy. By Lemma~\ref{dirac operator acting on spherical harmonics} and the compactness of $i\Omega^{1}_{h}(Y)/\mathcal{G}^{h,o}_{Y}$, we can find a constant $\sigma_{1}>0$ such that, for any $h\in i\Omega^{1}_{h}(Y)$, the restriction of $\tilde{\slashed{D}}_{\tilde{A}_{0}+h}$ to $L^{2}_{k}(\Gamma_{0}(S_{Y}^{0}))$ has no eigenvalue lying in $(-2\sigma_{1},2\sigma_{1})$. Therefore, for any $0 <\delta < {\sigma_1}$ and $0<\epsilon < \sigma_1^2$, we have

$$
\langle (2(\tilde{\slashed{D}}_{\tilde{A}_{0}+h}-\delta)^{2}+ \epsilon \rho( \grad f_H(h)))\phi,\phi\rangle_{L^{2}}\geq \sigma_{1}^{2}\|\phi\|_{L^{2}}^{2}, \; \forall \phi\in L^{2}_{k}(\Gamma_{0}(S_{Y}^{0})).
$$

For the subspace $V_{\vec{v}}$,  we first check the case $\vec{v} = \vec0 $.
For $h=\theta_{1}h_{1}+\theta_{2}h_{2}$, by Lemma~\ref{dirac operator acting on spherical harmonics}, the matrix of $(\tilde{\slashed{D}}_{\tilde{A}_{0}+h}-\delta)|_{V_{\vec{0}}}$ is given by
$$
\left(\begin{array} {lr}
 -\delta & -2\sqrt{\pi}(\theta_{2} + \theta_{1}i)\\
 -2\sqrt{\pi}(\theta_{2}  - \theta_{1}i) & -\delta
\end{array}\right).
$$
The eigenvalues of this matrix is $-\delta\pm 2\sqrt{\pi(\theta_{1}^{2}+\theta_{2}^{2})}$. The kernel of $\tilde{\slashed{D}}_{\tilde{A}_{0}+h}-\delta$ is nontrivial if and only if $h$ is on the sphere $S_{\vec{0}}$ of radius $\frac{\delta}{2\sqrt{\pi}}$  centered at $0$. Let $S(V_{\vec{0}})$ be the unit sphere in $V_{\vec 0}$.
We notice that
\begin{equation}\label{positive definite condition (4)}
\langle 2(\tilde{\slashed{D}}_{\tilde{A}_{0}+h}-\delta)^{2}\phi,\phi\rangle_{L^{2}}\geq 0
\text{ for any }(h,\phi)\in i\Omega^{1}_{h}(Y)\times S(V_{\vec{0}})
\end{equation}
where equality holds if and only if $(h,\phi)$ belongs to the compact set
$$K:=\{(h,\phi)\in S_{\vec{0}}\times S(V_{\vec{0}}) \mid (\tilde{\slashed{D}}_{\tilde{A}_{0}+h}-\delta)\phi=0\}.$$

For any $(h,\phi) \in K$, we consider the negative gradient flow line $\gamma$ of $f_H$ with $\gamma(0) = h$.
Here we choose $\delta<\sqrt{\pi}$ so that $ \left\langle h,  \gamma' (0)\right\rangle < 0 $, i.e. $\gamma$ goes inside $S_{\vec{0}}$ at $h$. For $t>0$, we see that the operator
$(\tilde{\slashed{D}}_{\tilde{A}_{0}+\gamma(t)}-\delta)|_{V_{\vec{0}}}$
has eigenvalues $-\delta\pm 2\sqrt{\pi}\left\Vert\gamma(t)\right\Vert$, which are both negative. Then, the function
$-\delta+ 2\sqrt{\pi}\left\Vert\gamma(t)\right\Vert - \langle (\tilde{\slashed{D}}_{\tilde{A}_{0}+\gamma(t)}-\delta)\phi,\phi\rangle $ is nonnegative for $t\ge0$. Since the value of this function is 0 at $t=0$, its derivative at $t=0$ must be nonnegative as well. After computing the derivative, we have $\langle\rho( \grad f_H(h)))\phi,\phi\rangle_{}>-\left\langle h,  \gamma' (0)\right\rangle $. Therefore, we can conclude that
\begin{equation}\label{positive definite condition (3)}
          \langle\rho( \grad f_H(h)) \phi,  \phi  \rangle_{L^{2}}>0 \text{ for any }(h,\phi)\in K.
\end{equation}
Now we can find a small neighborhood $U\subset  i\Omega^{1}_{h}(Y)\times S(V_{\vec{0}})$ of $K$ and a positive number
$\sigma_{2}$ such that
$$
        \langle\rho( \grad f_H(h))  \phi, \phi\rangle_{L^{2}}  > \sigma_2 \text{ for any }(h,\phi)\in U.
$$
Similar to the case $ L^{2}_{k}(\Gamma_{0}(S_{Y}^{0}))$, there exists $\sigma_3>0$ such that $$
\langle 2(\tilde{\slashed{D}}_{\tilde{A}_{0}+h}-\delta)^{2}\phi,\phi\rangle_{L^{2}}\geq 2\sigma_3^2,$$
for any $\delta \in (0,\sigma_3) $ and $(h,\phi)\in i\Omega^{1}_{h}(Y)\times S(V_{\vec{0}})\setminus U.$
Let $\epsilon > 0$ be a positive number such that
\[
       \epsilon \left| \left< \rho(\grad f_H (h)) \phi, \phi \right>_{L^2} \right| < \sigma_3^2
\]
for any $(h, \phi) \in i\Omega^1_h(Y) \times S(V_{\vec{0}})$. Then
$$
      \langle (  2(\tilde{\slashed{D}}_{\tilde{A}_{0}+h} -  \delta)^{2} + \epsilon \rho( \grad f_H(h)))\phi,  \phi  \rangle_{L^{2}}
                    \geq
            \epsilon \sigma_2
$$
for any $(h,\phi)\in i\Omega^{1}_{h}(Y)\times S(V_{\vec{0}})$. By applying elements in $\mathcal{G}^{h,o}$, we see that similar estimate (with the same constants) holds for general $V_{\vec{v}}$. This finishes the proof of the lemma.
\end{proof}
\begin{rmk}
For any function $\xi:Y\rightarrow \mathbb{R}$, by applying the gauge transformation $u=e^{-i\xi}$ on (\ref{positive definite condition (2)}), we also get $\langle (2(\tilde{\slashed{D}}_{\tilde{A}_{0}+h+id\xi}-\delta)^{2} + \epsilon \rho( \grad f_{H}(h)))\phi,\phi\rangle_{L^{2}}>C(\delta,\epsilon)\|\phi\|^{2}_{L^{2}}$. This observation will be useful soon.
\end{rmk}

We now fix a choice of constants $(\delta_{},\epsilon_{})$ with $0<\delta_{}<\min(\delta_{1},\delta_{2})$ and $0<\epsilon_{}<\min(\epsilon_{1}(\delta_{}),\epsilon_{2}(\delta_{}))$, where $\delta_j$ and $\epsilon_j(\delta)$ are the constants from Lemma~\ref{lem delepsilon reducible} and Lemma~\ref{achieve positive definite condition}.
All critical points $\mathcal{L}_{\delta_{r}-\delta_{},\epsilon_{}}$ are then reducible, so they are of the form $(h_{p,q},0)\in Coul(Y)$  where $h_{p,q}=\tfrac{ph_{1}+qh_{2}}{2}$ for each $p,q \in \mathbb{Z} $. Modulo the action of the whole gauge group, there are four equivalent classes: $[(h_{0,0},0)],[(h_{0,1},0)],[(h_{1,0},0)]$ and $[(h_{1,1},0)]$. The relative gradings between them are given by
\begin{equation}\label{relative grading}
\begin{split}&\grade ([(h_{1,1},0)],[(h_{1,0},0)])=\grade ([(h_{1,1},0)],[(h_{0,1},0)])=1,\\
&\grade ([(h_{1,0},0)],[(h_{0,0},0)])=\grade ([(h_{0,1},0)],[(h_{0,0},0)])=-1.
\end{split}
\end{equation}
Notice that $\grade ([(h_{1,0},0)],[(h_{0,0},0)])$ and $\grade ([(h_{0,1},0)],[(h_{0,0},0)])$ does not coincide with the relative grading for the Morse function $f_H$ (which is $1$). This is because of the appearance of the spectral flow.

There may still be finite type irreducible trajectories between these critical points. Our next aim is to find a suitable perturbation $\bar{f}$ to eliminate these trajectories. To do this, we need to use the results regarding the ``blown-up moduli space''. We quickly describe the situation in our case and refer to \cite{Kronheimer-Mrowka} for the Morse case and \cite{Francesco} for the Morse-Bott case.

The blown-up quotient configuration space $\mathcal{B}^{\sigma}(Y)$ is obtained by blowing up the  quotient space $\mathcal{B}(Y)=\mathcal{C}(Y)/\mathcal{G}_{Y}$ along the reducible locus. The vector field $-\grad  \mathcal{L}_{\delta_{r}-\delta_{},\epsilon_{}}$ can be lifted to a vector field on $\mathcal{B}^{\sigma}(Y)$, which we denote by $-\grad ^{\sigma}\mathcal{L}_{\delta_{r}-\delta_{},\epsilon_{}}$. The set of zero locus of this vector field is given by the union of the critical manifolds $[C_{p,q,\mu}]$
where $\mu$ is an eigenvalue of $\tilde{\slashed{D}}_{\tilde{A}_{0}+h_{p,q}}-\delta_{}$ and $p,q \in \{0,1\}$. The critical manifold $[C_{p,q,\mu}]$, lies in the preimage of $[(h_{p,q},0)]$ under the blow-down map, has (real) dimension $2(\text{multiplicity of } \mu)-2$. This manifold is called boundary stable (resp. boundary unstable) if $\mu>0$ (resp. $\mu<0$). Note that $\tilde{\slashed{D}}_{\tilde{A}_{0}+h_{p,q}}-\delta_{}$ has no kernel.

Denote by $\breve{\mathcal{M}}([C_{p,q,\mu}],[C_{p',q',\mu'}])$ the moduli space of unparametrized trajectories going from $[C_{p,q,\mu}]$ to $[C_{p',q',\mu'}]$.
We will be interested in the expected dimension of the moduli space $\breve{\mathcal{M}}([C_{p,q,\mu}],[C_{p',q',\mu'}])$ in the case $\mu<0<\mu'$. From \cite[Chapter~2, Proposition~3.12]{Francesco} (be careful with the notation there),
we have
\begin{equation} \label{expected dimension}
\begin{split}\edim (\breve{\mathcal{M}}([C_{p,q,\mu}],[C_{p',q',\mu'}]))=&\grade ([(h_{p,q},0)],[(h_{p',q'},0)])-2\\&- 2\#\{\text{eigenvalues of } \slashed{D}_{A_{0}+h_{p,q}}+\delta_{r}-\delta_{} \text{ in } (\mu,0)\}\\&- 2\#\{\text{eigenvalues of } \slashed{D}_{A_{0}+h_{p,q}}+\delta_{r}-\delta_{} \text{ in } (0,\mu')\}\end{split}.
\end{equation}
Observe that the dimension in this case is always negative by (\ref{relative grading}). We have the following result regarding these moduli spaces.

\begin{lem}\label{irreducible trajectories can't be too close to reducible}
There exists $\epsilon'>0$ such that, for any trajectory $\gamma=(a,\phi)\colon\mathbb{R}\rightarrow Coul(Y)$ representing a point of $\breve{\mathcal{M}}([C_{p,q,\mu}],[C_{p',q',\mu'}])$ with $\mu<0<\mu'$, we have $\mathop{\sup}\limits_{t\in\mathbb{R}}\|\phi(t)\|_{L^{2}}>2\epsilon'$.
\end{lem}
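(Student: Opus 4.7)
The plan is to argue by contradiction, combining compactness of Seiberg-Witten moduli spaces on the blow-up with a rescaling of the spinor component. Suppose no such $\epsilon'$ exists. Then there is a sequence of trajectories $\gamma_n = (a_n, \phi_n)$, each representing an element of some moduli space $\breve{\mathcal{M}}([C_{p_n,q_n,\mu_n}], [C_{p'_n,q'_n,\mu'_n}])$ with $\mu_n < 0 < \mu'_n$, such that $M_n := \sup_t \|\phi_n(t)\|_{L^2} \to 0$. Since the reducible critical point set consists of only the four classes $[(h_{p,q},0)]$, $p, q \in \{0,1\}$, and for each of these the small eigenvalues of $\tilde{\slashed{D}}_{\tilde{A}_0 + h_{p,q}} - \delta$ in any bounded window form a finite set, after passing to a subsequence I may assume the endpoint data $([C_{p,q,\mu}], [C_{p',q',\mu'}])$ is independent of $n$.

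Next, I would apply the standard compactness theorem for Seiberg-Witten trajectories on the blown-up configuration space $\mathcal{B}^\sigma(Y)$ (\cite[Chapter~16]{Kronheimer-Mrowka} in the Morse setting, and the Morse-Bott extension developed in \cite{Francesco}): after suitable time translations and gauge transformations, a subsequence of $\gamma_n$ converges in $C^\infty_{\mathrm{loc}}$ to a broken trajectory $\gamma_\infty$ in $\mathcal{B}^\sigma(Y)$ connecting $[C_{p,q,\mu}]$ to $[C_{p',q',\mu'}]$. Because $M_n \to 0$ and the spinor norm descends continuously under the blow-down, each component of $\gamma_\infty$ has identically vanishing blown-down spinor, so $\gamma_\infty$ lies entirely in the boundary (reducible) stratum of $\mathcal{B}^\sigma(Y)$. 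The rescaled spinors $\hat\phi_n := \phi_n / M_n$ are uniformly bounded in sup norm and satisfy a linearized Dirac-type equation along $\gamma_n$; by the interior elliptic estimates used in the proof of Proposition~\ref{convegence of approximated trajectories}, a further subsequence converges in $C^\infty_{\mathrm{loc}}$ to a nontrivial unit spinor $\hat\phi_\infty$ along $\gamma_\infty$. The pair $(\gamma_\infty, \hat\phi_\infty)$ then represents an honest broken trajectory lying on the boundary of $\mathcal{B}^\sigma(Y)$ from $[C_{p,q,\mu}]$ to $[C_{p',q',\mu'}]$.

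The main obstacle is to rule out the existence of such a broken boundary trajectory, and this is handled by a dimension count. Combining the expected dimension formula (\ref{expected dimension}) with the relative grading computation (\ref{relative grading}) yields
\[
  \edim \breve{\mathcal{M}}([C_{p,q,\mu}], [C_{p',q',\mu'}])
  \leq \grade([(h_{p,q},0)], [(h_{p',q'},0)]) - 2 \leq -1,
\]
and each additional break in a broken trajectory further decreases the total expected dimension by one. Standard transversality for the Morse-Bott Seiberg-Witten equations \cite[Chapter~2]{Francesco}, achieved by a generic choice of extended cylinder perturbation $\bar f$ within the class preserving both Lemma~\ref{lem delepsilon reducible} and Lemma~\ref{achieve positive definite condition}, then forces every such boundary moduli space of negative expected dimension to be empty. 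This contradicts the existence of our limiting broken boundary trajectory, yielding the desired $\epsilon'$.
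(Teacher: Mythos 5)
Your compactness setup — lifting to $\mathcal{C}_Y$, taking a subsequential limit, observing the limit must be reducible because $M_n\to 0$, then rescaling the spinor $\hat\phi_n=\phi_n/M_n$ — has the right geometric picture and matches the beginning of the paper's Step~2. But the way you close the argument does not work, and it also inverts the logical structure of the section.

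The problem is your appeal to transversality. You claim that ``a generic choice of extended cylinder perturbation $\bar f$'' makes the boundary moduli spaces Smale-regular and hence empty. This is false for the boundary moduli, independently of the dimension count: the $L^2$-gradient of an extended cylinder function has vanishing spinor component at every reducible configuration (it must, by $S^1$-equivariance), so $\bar f$ does \emph{not} change the linearized Dirac operator $\mathbf D(h)$ along reducible trajectories, and the moduli of reducible blow-up trajectories from $[C_{p,q,\mu}]$ to $[C_{p',q',\mu'}]$ is completely determined by the spectral data of $\mathbf D$ along the Morse flow of $f_H$ — it cannot be perturbed away by the class of perturbations available here. Relatedly, your proof is circular within the section: in the paper, the transversality statement you want is Lemma~\ref{achieve Smale transversality}, and both its statement (via the class $\mathcal P_O$) and its proof depend on the $\epsilon'$ produced by Lemma~\ref{irreducible trajectories can't be too close to reducible}. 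You cannot use that transversality to establish this lemma. There is also a subsidiary gap: your claim that each break in a broken trajectory lowers the expected dimension by one is not correct when segments are boundary-obstructed, which is exactly the case for an unstable-to-stable segment.

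The correct closing move is much more elementary and is already available to you: the positive-definiteness condition $2\mathbf D(h)^2 + \epsilon\rho(\operatorname{grad} f_H(h)) \ge C>0$ of Lemma~\ref{achieve positive definite condition}. The paper first uses this via Lemma~\ref{trajectories for the linearized flow are reducible} to rule out any reducible (boundary) trajectory in $\breve{\mathcal M}([C_{p,q,\mu}],[C_{p',q',\mu'}])$ directly — exactly the broken-boundary limit your compactness argument produces. It then runs a convexity argument on the original, unrescaled sequence: using $\beta_n\to 0$ and $d\tilde a_n\to 0$, one shows from the identity for $\frac{d^2}{dt^2}\|\phi_n(t)\|_{L^2}^2$ that this second derivative is eventually strictly positive for all $t$, contradicting boundedness of $\|\phi_n(t)\|_{L^2}^2$. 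Your rescaled spinor $\hat\phi_\infty$ could be handled the same way (it solves the linear Dirac flow along a reducible trajectory, so the positive-definiteness forces it to vanish), but once you have the positive-definiteness input the rescaling is unnecessary — the convexity bound on the unrescaled $\phi_n$ already gives the contradiction, avoiding both the subtleties of tracking nontriviality of the rescaled limit through a broken trajectory and the entire transversality apparatus.
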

\begin{proof}

First, we show that a trajectory representing a point in $\breve{\mathcal{M}}([C_{p,q,\mu}],[C_{p',q',\mu'}])$ must be irreducible. A reducible point in $\breve{\mathcal{M}}([C_{p,q,\mu}],[C_{p',q',\mu'}])$   gives rise to a finite type trajectory $(\beta , h , \phi )\colon \mathbb{R}\rightarrow i \im d^{*} \oplus i\Omega^{1}_{h}(Y)\oplus \Gamma(S_{Y}^0)$
such that
\begin{equation}\label{trajectory for a general linearized flow}
\frac{d}{dt}(\beta(t),h(t),\phi(t))=-(*d\beta(t), \epsilon \grad f_H (h(t)),(\tilde{\slashed{D}}_{\tilde{A}_{0}+h(t)}-\delta_{})\phi(t)),
\end{equation}
where $\| \phi(t) \|_{L^2} = 1$.
From Lemma~\ref{achieve positive definite condition}, we can use Lemma~\ref{trajectories for the
linearized flow are reducible} to conclude that there can be no such trajectory.

Now we can consider only irreducible trajectories. Suppose the contrary that there is a sequence of irreducible trajectories $\gamma_{n}(t)=(a_{n}(t),\phi_{n}(t))$ with $\mathop{\lim}\limits_{n\rightarrow +\infty}( \sup \limits_{t \in \mathbb{R}} \|\phi_{n}(t)\|_{L^{2}}) = 0$. As in the proof of Theorem~\ref{Morse-Bott reducible}, we can lift $\gamma_{n}$ to a negative gradient flow line $\tilde{\gamma}_{n}=(\tilde{a}_{n},\tilde{\phi}_{n})\colon\mathbb{R}\rightarrow \mathcal{C}_{Y}$  of $ \mathcal{L}_{\delta_{r}-\delta_{0},\epsilon_{}}$. Notice that the topological energy of $\tilde{\gamma}_{n}$ is bounded above by $\tfrac{\sqrt{2}\epsilon_{}}{\pi}$. By \cite[Proposition 16.2.1]{Kronheimer-Mrowka}, after passing to a subsequence and applying suitable gauge transformations, the sequence $\tilde{\gamma}_{n}$ converges to a (possibly broken) flow line $\tilde{\gamma}_{\infty}$. From our assumption on the limit of $\sup \limits_{t \in \mathbb{R}} \|\phi_{n}(t)\|_{L^{2}}$, the trajectory $\tilde{\gamma}_{\infty}$ is reducible. Any such reducible flow line is contained in $L^{2}_{k}(\ker d)\times \{0\} $. In particular, if we decompose $\tilde{a}_{n}(t)$ as $h_{n}(t)+\beta_{n}(t) + id\xi_{n}(t) \in i\Omega^{1}_{h}(Y) \oplus \im d^{*} \oplus id \Omega^0(Y) $,
then we have $\mathop{\lim}\limits_{n\rightarrow +\infty}(\sup \limits_{t \in \mathbb{R}}  \|\beta_{n}(t)\|_{L^{2}_{k}})=0$.
This also implies $\mathop{\lim}\limits_{n\rightarrow +\infty}(\sup \limits_{t \in \mathbb{R}} \|d\tilde{a}_{n}(t)\|_{C^{0}})=0$.

From (\ref{eq 2deriv upstair}), we have
\begin{equation}\begin{split}
\frac{d^2}{dt^{2}}\|\phi_{n}(t)\|_{L^{2}}^{2}&=
2\langle\left(2(\tilde{\slashed{D}}_{\tilde{A}_{0}+\tilde{a}_{n}(t)}-\delta)^{2}+\rho( \grad \epsilon f_H(h_{n}(t)))\right)\tilde{\phi}_{n}(t),\tilde{\phi}_{n}(t)\rangle_{L^{2}}\\
& \quad +\|\tilde{\phi}_{n}(t)\|_{L^{2}}^{4}+2\langle\rho(*d\tilde{a}_{n}(t))\tilde{\phi}_{n}(t),\tilde{\phi}_{n}(t)\rangle_{L^{2}}.
\end{split}
\end{equation}
Notice that
\begin{equation*}
\begin{split}
\langle (\tilde{\slashed{D}}_{\tilde{A}_{0}+\tilde{a}_{n}(t)}-\delta_{})^{2}\tilde{\phi}_{n}(t),\tilde{\phi}_{n}(t)\rangle_{L^2}
&=\|(\tilde{\slashed{D}}_{\tilde{A}_{0}+\tilde{a}_{n}(t)}-\delta_{})\tilde{\phi}_{n}(t) \|^2_{L^2} \\
&= \|(\tilde{\slashed{D}}_{\tilde{A}_{0}+h_{n}(t)+id\xi_{n}(t)}-\delta_{})\tilde{\phi}_{n}(t)+\rho(\beta_{n}(t))\tilde{\phi}_{n}(t) \|_{L^2}^{2}\\
&\geq \langle(\tilde{\slashed{D}}_{\tilde{A}_{0}+h_{n}(t)+id\xi_{n}(t)}-\delta_{})^{2}\tilde{\phi}_{n}(t),\tilde{\phi}_{n}(t)\rangle_{L^2}-\|\beta_{n}(t)\|^{2}_{C^{0}}\|\tilde{\phi}_{n}(t)\|_{L^2}^{2}.
\end{split}
\end{equation*}
This gives
\begin{equation}\begin{split}
\frac{d^2}{dt^{2}}\|\phi_{n}(t)\|_{L^{2}}^{2}&\geq
2\langle(2(\tilde{\slashed{D}}_{\tilde{A}_{0}+h_{n}(t)+id\xi_{n}(t)}-\delta_{})^{2}+\rho( \grad \epsilon f_H(h_{n}(t))))\tilde{\phi}_{n}(t),\tilde{\phi}_{n}(t)\rangle_{L^2}\\
& \quad +\|\tilde{\phi}_{n}(t)\|_{L^2}^{4}+2\langle\rho(*d\tilde{a}_{n}(t))\tilde{\phi}_{n}(t),\tilde{\phi}_{n}(t)\rangle_{L^{2}}-4\|\beta_{n}(t)\|^{2}_{C^{0}}\|\tilde{\phi}_{n}(t)\|_{L^{2}}^{2}\\
&\geq \left(2C(\delta,\epsilon_{3})-2\|d\tilde{a}_{n}(t)\|_{C^{0}}-4\|\beta_{n}(t)\|^{2}_{C^{0}}\right)\|\tilde{\phi}_{n}(t)\|_{L^{2}}^{2},
\end{split}
\end{equation}
where we make use of remark after Lemma~\ref{achieve positive definite condition}.
Therefore, when $n$ is sufficiently large, the above inequality implies that  $\frac{d^2}{dt^{2}}\|\phi_{n}(t)\|_{L^{2}}^{2}>0$ for any $t$. This is impossible because $\sup \limits_{  t \in \mathbb{R} }  \|\phi_{n}(t)\|_{L^{2}}^{2}<\infty$.
\end{proof}

To construct final perturbation, we reintroduce the Banach space $\mathcal{P} $ of extended cylinder functions (see Section \ref{sec setup}). Define a subset $O=\{(a,\phi)\in \mathcal{C}_{Y} \mid \|\phi\|_{L^{2}}<\epsilon'\}$ where $\epsilon'$ is the constant from Lemma~\ref{irreducible trajectories can't be too close to reducible} and
a closed subspace $\mathcal{P}_{O}=\{\bar{f}\in \mathcal{P} \mid \bar{f}|_{O}\equiv
0\}$ of $\mathcal{P}$.  By \cite[Proposition~11.6.4]{Kronheimer-Mrowka}, we can find an open neighborhood $\mathcal{U}$ of $0$ in $\mathcal{P}_{O}$
such that for any $\bar{f}$ in this neighborhood, the functional $\mathcal{L}_{\delta_{r}-\delta_{},\epsilon_{}}+\bar{f}$
has no critical points outside $O$. Therefore, the critical points are just $(h_{p,q},0)$ for $p,q\in \mathbb{Z}$ with the corresponding critical manifolds $[C_{p,q,\mu}]$ as in the case of $\mathcal{L}_{\delta_{r}-\delta_{},\epsilon_{}}$. Analogously, we denote by $\breve{\mathcal{M}}_{\bar{f}}([C_{p,q,\mu}],[C_{p',q',\mu'}])$ the moduli space of trajectories of
$\grad ^{\sigma}(\mathcal{L}_{\delta_{r}-\delta_{},\epsilon_{}}+\bar{f})$.

\begin{lem}\label{achieve Smale transversality}
For any pair of critical manifolds $[C_{p,q,\mu}]$ and $[C_{p',q',\mu'}]$ with $\mu<0<\mu'$, there exists a residue subset of $\mathcal{P}_{O}$ such that the moduli space $\breve{\mathcal{M}}_{\bar{f}}([C_{p,q,\mu}],[C_{p',q',\mu'}])$ is empty.
\end{lem}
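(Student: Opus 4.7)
The plan is to adapt the Kronheimer--Mrowka transversality machinery to our restricted space of perturbations $\mathcal{P}_O$. First, recall from the preceding lemma that any trajectory representing a point of $\breve{\mathcal{M}}_{\bar f}([C_{p,q,\mu}],[C_{p',q',\mu'}])$ with $\mu<0<\mu'$ must be irreducible, and moreover satisfies $\sup_{t\in\mathbb{R}}\|\phi(t)\|_{L^2}>2\epsilon'$. This is the crucial input: although perturbations in $\mathcal{P}_O$ vanish identically on the tube $O$ around the reducible locus, every candidate trajectory enters the complement of $O$ at some interior time, so such perturbations actually act nontrivially on the trajectory space.

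I would form the parametrized (universal) moduli space
\[
\breve{\mathcal{M}}^{\text{univ}}([C_{p,q,\mu}],[C_{p',q',\mu'}])
=\bigsqcup_{\bar f\in\mathcal{U}}\breve{\mathcal{M}}_{\bar f}([C_{p,q,\mu}],[C_{p',q',\mu'}])\times\{\bar f\}
\]
inside a suitable Banach space completion and verify that it is a smooth Banach manifold. For this I would compute the linearization of the $\bar f$-perturbed Seiberg--Witten trajectory equation in both the configuration direction and the $\bar f$ direction, and show that the total linearization is surjective at every point. The configuration linearization is Fredholm of index equal to the expected dimension (\ref{expected dimension}); to kill any residual cokernel I need to produce, for every nonzero element $w$ in the $L^2$-cokernel, a cylinder function $\hat f_j\in\mathcal{P}_O$ whose gradient has nonzero pairing with $w$ along the trajectory.

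The key step, and the main technical obstacle, is exactly this surjectivity argument in the restricted setting. I would run the usual Unique Continuation argument along the trajectory (as in \cite[Proposition 15.1.3]{Kronheimer-Mrowka}) to reduce the problem to producing a cylinder function that separates any two fixed configurations lying outside~$O$. Lemma \ref{irreducible trajectories can't be too close to reducible} guarantees that there is an open interval of times at which $\gamma(t)$ sits outside $\overline O$; on this interval $\gamma$ is a smooth configuration with $\|\phi\|_{L^2}\geq\epsilon'$, and the density statement of \cite[Theorem 11.6.1]{Kronheimer-Mrowka} produces the required cylinder function supported in $\mathcal{P}_O$ (after multiplying by a bump cutting off the neighborhood $O$, which does not destroy the tameness estimate encoded by the sequence $\{C_j\}$ in Proposition~\ref{prop perturbproperty}).

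Once the universal moduli space is shown to be a smooth Banach manifold, I apply the Sard--Smale theorem to the projection
\[
\pi\colon\breve{\mathcal{M}}^{\text{univ}}([C_{p,q,\mu}],[C_{p',q',\mu'}])\longrightarrow\mathcal{U},
\]
which is Fredholm of index equal to $\edim\breve{\mathcal{M}}_{\bar f}$. By (\ref{expected dimension}) together with (\ref{relative grading}), this expected dimension satisfies
\[
\edim\breve{\mathcal{M}}_{\bar f}([C_{p,q,\mu}],[C_{p',q',\mu'}])\leq 1-2=-1<0,
\]
so for every regular value $\bar f$ the fiber $\pi^{-1}(\bar f)=\breve{\mathcal{M}}_{\bar f}([C_{p,q,\mu}],[C_{p',q',\mu'}])$ is a smooth manifold of negative dimension and therefore empty. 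Sard--Smale then produces the desired residual subset of $\mathcal{U}\subset\mathcal{P}_O$, and hence of $\mathcal{P}_O$, completing the proof.
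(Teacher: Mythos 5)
Your overall strategy — parametrized moduli space, surjectivity of the total linearization, Sard--Smale, and emptiness from negative expected dimension — matches the paper's proof exactly, and your use of Lemma~\ref{irreducible trajectories can't be too close to reducible} to ensure that trajectories escape $O$ is the right starting observation. The dimension bound $\edim\le 1-2=-1$ via (\ref{expected dimension}) and (\ref{relative grading}) is also correct.

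The gap is in the step where you produce an element of $\mathcal{P}_O$ that acts nontrivially on the trajectory: ``multiplying by a bump cutting off the neighborhood $O$'' is not a legitimate move inside the cylinder-function framework. The set $O$ is defined by the condition $\|\phi\|_{L^2}<\epsilon'$, and $\|\phi\|_{L^2}$ is not one of the coordinates of the map $p_0 \colon \mathcal{C}_Y\to \mathbb{C}^m\times(H^1(Y;\mathbb{R})/H^1(Y;\mathbb{Z}))\times\mathbb{R}^n$ from which cylinder functions are built (the $\mathbb{C}$-coordinates are $L^2$-pairings with chosen sections $\Phi_j$, not norms). Multiplying a cylinder function by a bump in $\|\phi\|_{L^2}$ therefore generically leaves $\mathcal{P}$, so you cannot invoke the tameness estimates of Proposition~\ref{prop perturbproperty}, and you have not actually produced elements of $\mathcal{P}_O$. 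The fix, which is the content of the paper's proof, is to build the vanishing into the choice of $p_0$: take the first section $\Phi_1$ of $\tilde{S}_Y$ to agree with $\phi(t_0)/\|\phi(t_0)\|_{L^2}$ over $Y\times\{\pi_{\mathcal{H}}a(t_0)\}$ and to have unit $L^2$-norm on every fiber. By Cauchy--Schwarz this forces $p_0(O)\subset U:=B(\epsilon')\times\mathbb{C}^{m-1}\times(H^1(Y;\mathbb{R})/H^1(Y;\mathbb{Z}))\times\mathbb{R}^n$, while $p_0(\gamma(t_0))\notin U$ because $\|\phi(t_0)\|_{L^2}>\epsilon'$; composing $p_0$ with a compactly supported function vanishing on $U$ then yields a genuine cylinder function in $\mathcal{P}_O$ that is nonzero along $\gamma$, and this is enough to run the usual surjectivity argument of \cite[Theorem~15.1.1]{Kronheimer-Mrowka}.
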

\begin{proof}
Using the fact that the index of a Fredholm operator does not change under homotopy, it is easy to see that the expected dimension of $\breve{\mathcal{M}}_{\bar{f}}([C_{p,q,\mu}],[C_{p',q',\mu'}])$ does not depend on $\bar{f}$. In particular, it is always negative when $\mu<0<\mu'$ by our discussion earlier. Therefore, we just need prove that $\breve{\mathcal{M}}_{\bar{f}}([C_{p,q,\mu}],[C_{p',q',\mu'}])$ is Smale regular (c.f. \cite[Definition~3.11]{Francesco}). The proof of this fact is very similar to the proof of
\cite[Theorem~15.1.1]{Kronheimer-Mrowka} and \cite[Theorem~3.1.7]{Francesco}, i.e. one introduces a parametrized moduli space and then applies the Sard-Smale theorem.

The main difference is that we require an extended cylinder function to satisfy $\bar{f}(a,\phi)=0$ as long as $\|\phi\|_{L^{2}}<\epsilon'$ instead of asking $\bar{f}$ to vanish in a small neighborhood of the critical manifolds. To see that this new requirement does not affect the result, we recall how the cylinder functions are constructed (c.f. \cite{Kronheimer-Mrowka}): after choosing a set of sections $\Phi_{1},...,\Phi_{m}$ of the bundle $\tilde{S}_{Y}:=(i\Omega^{1}_{h}(Y)\times S_{Y})/\mathcal{G}^{h,o}_{Y}$ over $Y\times (H^{1}(Y; \mathbb{R})/H^{1}(Y; \mathbb{Z}))$ and a set of coexact forms $a_{1},...,a_{n}$, we get a map
$$p_{0} \colon \mathcal{C}_{Y}\rightarrow \mathbb{C}^{m}\times (H^{1}(Y; \mathbb{R})/H^{1}(Y; \mathbb{Z}))\times \mathbb{R}^{n}.$$
 A cylinder function is obtained by composing $p_{0}$ with a compact supported function on $\mathbb{C}^{m}\times (H^{1}(Y; \mathbb{R})/H^{1}(Y; \mathbb{Z}))\times \mathbb{R}^{n}$.

On the other hand, it is straightforward to deduce from Lemma~\ref{irreducible trajectories can't be too close to reducible} that
any trajectory $\gamma = (a,\phi) \colon \mathbb{R}\rightarrow \mathcal{C}_{Y}$ representing a point in
$\breve{\mathcal{M}}_{\bar{f}}([C_{p,q,\mu}],[C_{p',q',\mu'}])$ also satisfies $\sup \limits_{t \in \mathbb{R}}  {\|\phi(t)\|_{L^{2}}}>\epsilon'$ for $\bar{f} \in \mathcal{P}_O $. We now pick a section of the bundle $\tilde{S}_{Y}$ which equals $\frac{\phi(t_{0})}{\|\phi(t_{0})\|_{L^{2}} }$ when restricted to $Y\times \{\pi_\mathcal{H} a(t_{0})\}$ and whose restriction to any $Y\times \{*\}$ has unit $L^{2}$-norm. By specifying $\Phi_{1}$ to be this section, we see that the image $p_0 (O)$ lies in a set
$$
 U:=B(\epsilon')\times \mathbb{C}^{m-1}\times (H^{1}(Y; \mathbb{R})/H^{1}(Y; \mathbb{Z}))\times \mathbb{R}^{n},
 $$
where $B(\epsilon')$ is the ball of radius $\epsilon'$ in $\mathbb{C}$.
Therefore, by composing $p_{0}$ with a function which vanishes on $U$, we get a cylinder function which vanishes on $O$. Moreover, since $p_{0}(\gamma)\nsubseteq U$, this kind of cylinder functions are enough to repeat the argument on Page 269 of \cite{Kronheimer-Mrowka} and prove the transversality result we need.
\end{proof}

The following result is now immediate from the previous lemma.

\begin{pro} \label{eliminate the irreducible trajectories}
There exists an extended cylinder function $\bar{f}\in \mathcal{P}_O$ such that all finite type gradient flow lines of  the functional $\mathcal{L}_{\delta_{r}-\delta_{},\epsilon_{}}+\bar{f}$ are contained in $i\Omega^{1}_{h}(Y)$.
\end{pro}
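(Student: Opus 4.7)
The plan is to find $\bar f$ in $\mathcal{U}\subset\mathcal{P}_O$ (the neighborhood of $0$ for which the only critical points of $\mathcal{L}_{\delta_r-\delta,\epsilon}+\bar f$ are the reducibles $(h_{p,q},0)$, $p,q\in\mathbb{Z}$) so that every irreducible finite type trajectory is ruled out by Lemma~\ref{achieve Smale transversality}, while every reducible finite type trajectory automatically lies in $i\Omega^1_h(Y)$. The reducible case is essentially free: since $\bar f\in\mathcal{P}_O$ vanishes on $O=\{\|\phi\|_{L^2}<\epsilon'\}$, the formal gradient $\grad\bar f$ vanishes on the reducible locus, so a reducible finite type trajectory $\gamma(t)=(\beta(t),h(t),0)$ satisfies $-\dot\beta(t)=*d\,\beta(t)$ and $-\dot h(t)=\epsilon\,\grad f_H(h(t))$; the first equation combined with finite-typeness forces $\beta\equiv 0$ (since $*d$ has no kernel on $i\im d^*$), so $\gamma\subset i\Omega^1_h(Y)$ as desired.

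To handle irreducible trajectories, first enumerate the relevant pairs of critical submanifolds. There are four $\mathcal{G}_Y$-classes $[(h_{p,q},0)]$ with $p,q\in\{0,1\}$, and for each class the boundary (un)stable critical submanifolds $[C_{p,q,\mu}]$ of $\mathcal{B}^\sigma(Y)$ are indexed by the eigenvalues $\mu$ of the elliptic operator $\tilde{\slashed D}_{\tilde A_0+h_{p,q}}-\delta$, which form a discrete subset of $\mathbb{R}$. Hence the collection of pairs $\bigl([C_{p,q,\mu}],[C_{p',q',\mu'}]\bigr)$ with $\mu<0<\mu'$ is countable. By Lemma~\ref{achieve Smale transversality}, for each such pair the set
\[
\mathcal{R}_{(p,q,\mu),(p',q',\mu')}:=\bigl\{\bar f\in\mathcal{P}_O\bigm| \breve{\mathcal M}_{\bar f}([C_{p,q,\mu}],[C_{p',q',\mu'}])=\emptyset\bigr\}
\]
is residual in $\mathcal{P}_O$. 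Since $\mathcal{P}_O$ is a closed subspace of the separable Banach space $\mathcal{P}$ (hence a Baire space), the countable intersection $\mathcal{R}:=\bigcap\mathcal{R}_{(p,q,\mu),(p',q',\mu')}\cap\mathcal{U}$ is still residual, and in particular nonempty. Pick any $\bar f\in\mathcal{R}$.

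Suppose for contradiction that there is an irreducible finite type trajectory $\gamma:\mathbb{R}\to Coul(Y)$ of $\mathcal{L}_{\delta_r-\delta,\epsilon}+\bar f$. Its topological energy is finite, so by the standard compactness/convergence results for gradient trajectories of the (perturbed) Chern--Simons--Dirac functional (\cite[Chapter~16]{Kronheimer-Mrowka}), after applying a suitable gauge transformation $\gamma(t)$ converges in $C^\infty$ to critical points of $\mathcal{L}_{\delta_r-\delta,\epsilon}+\bar f$ as $t\to\pm\infty$; by the choice of $\bar f\in\mathcal U$ these limits are reducibles $(h_{p_\pm,q_\pm},0)$. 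Lifting $\gamma$ to the blown-up configuration space $\mathcal{B}^\sigma(Y)$, the resulting trajectory exits from a boundary-unstable critical submanifold $[C_{p_-,q_-,\mu}]$ (with $\mu<0$) and enters a boundary-stable one $[C_{p_+,q_+,\mu'}]$ (with $\mu'>0$). This produces a point in $\breve{\mathcal M}_{\bar f}([C_{p_-,q_-,\mu}],[C_{p_+,q_+,\mu'}])$, which is empty by our choice of $\bar f$. This contradiction, combined with the first paragraph, completes the proof.

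The main conceptual obstacle I expect is ensuring that irreducible finite type trajectories do give rise to points in the moduli spaces controlled by Lemma~\ref{achieve Smale transversality}: one must be sure that the limits at $t\to\pm\infty$ really are reducible critical points (so $\bar f\in\mathcal{U}$ is used here) and that the eigenvalues $\mu<0$ and $\mu'>0$ read off the ``tangential'' asymptotics on the blow-up are well defined. Beyond that, the argument is essentially a Baire-category packaging of the transversality lemma together with the unique-continuation-type statement supplied by Lemma~\ref{irreducible trajectories can't be too close to reducible}, which was already used to justify restricting to perturbations in $\mathcal{P}_O$.
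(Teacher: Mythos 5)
Your proposal is correct and follows essentially the same strategy as the paper: use Lemma~\ref{achieve Smale transversality} to make all moduli spaces $\breve{\mathcal{M}}_{\bar f}([C_{p,q,\mu}],[C_{p',q',\mu'}])$ with $\mu<0<\mu'$ empty, so any finite-type trajectory must be reducible and hence lies in $i\Omega^1_h(Y)$. You have usefully spelled out two steps the paper leaves implicit — the Baire-category intersection over the countably many pairs of critical manifolds, and the observation that reducible finite-type trajectories have vanishing coexact component because $*d$ has no kernel on $i\operatorname{im} d^*$ and $\bar f$ vanishes on $O$ — but the underlying argument is the same.
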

\begin{proof}
Notice that any finite type irreducible gradient flow line of $\mathcal{L}_{\delta_{0}-\delta,\epsilon_{3}}+\bar{f}$
gives a point in $\breve{\mathcal{M}}_{\bar{f}}([C_{p,q,\mu}],[C_{p',q',\mu'}])$ with $\mu<0<\mu'$. By Lemma~\ref{achieve Smale transversality}, the moduli space is empty, so there are only reducible flow lines, which have to lie in $i\Omega^{1}_{h}(Y)$.
\end{proof}
\begin{rmk}\label{negative delta,epsilon}
By reversing the orientation of $Y$ and repeating the arguments above, we see that one can also choose $\delta_{},\epsilon_{}>0$ and $\bar{f}'\in \mathcal{P}_O$ such that gradient flow lines of $\mathcal{L}_{\delta_{r}+\delta_{},-\epsilon_{}}+\bar{f'}$ lie in $i\Omega^{1}_{h}(Y)$. This observation will be useful when we calculate $\underline{\text{\text{SWF}}}^{R}$.
\end{rmk}

\subsubsection{Computation of the invariants}
We will first consider the case $\underline{\text{\text{SWF}}}^{A}(Y,\mathfrak{s}_{0};S^{1})$. Proposition~\ref{eliminate the irreducible trajectories} and Lemma~\ref{achieve positive definite condition} allow us to apply Theorem~\ref{spectrum for the linearized flow} to work on the linearized flow  given by
 \begin{equation*}
-\frac{d}{dt}\left(\beta(t),h(t),\phi(t)\right)=\left(*d\beta(t),  \grad \epsilon  f_H(h(t)),\mathbf{D}(h(t))\phi(t)\right),
\end{equation*}
where  $\mathbf{D}(h)=\tilde{\slashed{D}}_{\tilde{A}_{0}+h}-\delta_{}$ with $\delta , \epsilon $ and $f_H$ previously chosen.
Recall that we will have to compute Conley indices of finite dimensional approximation of the flow on the bounded region $\bar{J}_{m}^{+}=\widebar{Str}({\epsilon}')\cap p_\mathcal{H}^{-1}([-\frac{1}{4}-m,\frac{1}{4}+m]^{2})$, where $\epsilon' $ is from Lemma~\ref{irreducible trajectories can't be too close to reducible} and we fix $ \theta^+ = \frac{1}{4}$ (see Section \ref{section linearized flow}).

To simplify the calculation of the corresponding Conley index, we consider subspaces$$
W=\bigoplus_{\vec{v}\in \mathbb{Z}^2 \cap [-m,m]^{2}}V_{\vec{v}}, \quad
W'=\bigoplus_{\vec{v}\in \mathbb{Z}^2 \setminus [-m,m]^{2}}V_{\vec{v}}
$$
and note that, from (\ref{eq decomp consectionV}) and (\ref{eq decomp consectionC0}), we have an orthogonal decomposition $\Gamma_{}(S_{Y}^{0}) = W \oplus W' \oplus \Gamma_{0}(S_{Y}^{0}) $. To deform $\mathbf{D}(h) $, we define a family of operators $ \mathbf{D}^s (h) $ parametrized by $[0,1] \times i\Omega^1_h (Y)$ as following
$$
\mathbf{D}^{s}(h)\phi=\left\{\begin{array}{l l}
   (\tilde{\slashed{D}}_{\tilde{A}_{0}+h}-\delta_{})\phi & \quad \text{if }\phi\in W,\\
   (\tilde{\slashed{D}}_{\tilde{A}_{0}+sh}-\delta_{})\phi & \quad \text{if }\phi\in W' \oplus \Gamma_{0}(S_{Y}^{0}) .\\
  \end{array} \right.
$$
Notice that for any $h\in [-\frac{1}{4}-m,\frac{1}{4}+m]^{2}$, the operator $\mathbf{D}^{s}(h)$, when restricted to $W' \oplus \Gamma_{0}(S_{Y}^{0})$, has no eigenvalue in $[-\epsilon_{1},\epsilon_{1}]$, where $\epsilon_{1}$ is a constant independent of $m$. Therefore, by setting $\epsilon$ small and applying Lemma~\ref{deformation of linearized flow}, we can consider $\mathbf{D}^{0}(h)$ instead of $\mathbf{D}(h)$.

Fix an integer $n$ large enough so that $W\subset V^{\mu_{n}}_{\lambda_{n}}$. The approximated linearized flow on $V^{\mu_{n}}_{\lambda_{n}}$ corresponding to $\mathbf{D}^{0}(h)$ can be split into the product of the following flows:
\begin{enumerate}
\item The linear flow on $(W'\oplus \Gamma_{0}(S_{Y}^{0}))\cap V^{\mu_{n}}_{\lambda_{n}}$ given by the operator $-(\tilde{\slashed{D}}_{\tilde{A}_{0}}-\delta_{3});$
    \item The linear flow on $L^{2}_{k}(\im d^{*})\cap V^{\mu_{n}}_{\lambda_{n}}$ given by the operator $-d^{*}$;
\item The flow $\varphi$ on a finite dimensional space $i\Omega^{1}_{h}(Y)\oplus W$ generated by the vector field $(- \epsilon \grad f_H,-(\tilde{\slashed{D}}_{\tilde{A}_{0}+h}-\delta_{}))$.
\end{enumerate}
Since the first two flows are linear, the corresponding Conley indices are just $S^{0}$ after suitable desuspension.
Therefore, the stable Conley index $\bar{I}^{n,+}_{m}$ is determined by the third flow. More precisely, we have
$$
    \bar{I}^{n,+}_{m}
      =
    \Sigma^{-W^{-}}I_{S^{1}}(\varphi,\inv (\bar{J}^{+}_{m} \cap (i\Omega^{1}_{h}(Y)\oplus W))),
$$
where $W^{-}\subset W$ is spanned by the negative eigenvectors of $(\tilde{\slashed{D}}_{\tilde{A}_{0}}-\delta)|_{W}$.

By Lemma~\ref{achieve positive definite condition} and \cite[Lemma~12]{Khandhawit2}, we can deduce that
$[-\frac{1}{4}-m,\frac{1}{4}+m]^{2}\times B(W)$ is an isolating block for $\inv (\bar{J}^{+}_{m}\cap (i\Omega^{1}_{h}(Y)\oplus W))$, where $B(W)$ is the unit ball inside $W$. Moreover, by \cite[Lemma~4]{Khandhawit2}, we have
$I_{S^{1}}(\varphi,\inv (\bar{J}^{+}_{m}\cap (i\Omega^{1}_{h}(Y)\oplus W)))\cong \Sigma n^{-}$, where $n^-$ is the exit set of $[-\frac{1}{4}-m,\frac{1}{4}+m]^{2}\times B(W)$ with respect to the flow $\varphi$ and $\Sigma$ denotes the unreduced suspension. By the definition of the flow $\varphi$, we see that
$$
   n^{-}
     =
     \left\{  \left.   (h,\phi)   \in [-\frac{1}{4}-m,\frac{1}{4}+m]^{2}\times S(W) \right|
        \langle \phi, (\tilde{\slashed{D}}_{\tilde{A}_{0}+h}-\delta_{})\phi\rangle\leq 0
        \right\},
$$
where $S(W)$ is the unit sphere in $W$.

We now start to deform $n^{-}$ to simpler spaces.
Let $W^{-}(h)$ be the space spanned by nonpositive eigenvectors of $(\tilde{\slashed{D}}_{\tilde{A}_{0}+h}-\delta_{3})|_{W}$. We consider the following subset
$$
n^{-}_{1}:=\{(h,\phi)\in n^- \mid \phi \in W^{-}(h)\}.
$$
By Lemma~\ref{dirac operator acting on spherical harmonics}, the operator $(\tilde{\slashed{D}}_{\tilde{A}_{0}+h}-\delta_{})|_{W}$ can be  represented by the matrix$$\mathop{\bigoplus}\limits_{-m\leq v_{1},v_{2}\leq m}\left(\begin{array} {lr}
 -\delta_{} & -2\sqrt{ \pi}(\theta_{2}+v_{2})-2\sqrt{\pi}(\theta_{1}+v_{1})i\\
 -2\sqrt{ \pi}(\theta_{2}+v_{2})+2\sqrt{\pi}(\theta_{1}+v_{1})i& -\delta_{}
\end{array}\right).$$
Then, we see that
$$
S(W^{-}(h))=\left\{\begin{array}{l l}
   S(\mathbb{C}^{(2m+1)^{2}+1})&\quad \text{if } h\in B_{\vec{v}} \text{ for some }\vec{v} \in \mathbb{Z}^2 \cap [-m,m]^{2},\\
   S(\mathbb{C}^{(2m+1)^{2}})& \quad \text{otherwise},\\
  \end{array} \right.
$$
where $B_{\vec{v}}$ denotes the ball in $\mathbb{R}^2$ centered at $\vec{v}$ with radius $\frac{\delta_{}}{2\sqrt{\pi}}$. A careful (but elementary) check shows that $n^{-}_{1}$ is a deformation retract of $n^{-}$. Note that this deformation retraction does not preserve the projection $p_\mathcal{H}$.

To further deform $n^{-}_{1}$, we choose a point $z_0\in[-\frac{1}{4}-m,\frac{1}{4}+m]^{2}$ outside the union of balls $ \bigcup_{\vec{v}\in \mathbb{Z}^2 \cap [-m,m]^{2}} B_{\vec{v}}$ and connect $z_0$ to each of $B_{\vec{v}} $ by disjoint pathes $\gamma_{\vec{v}}$ containing in $[-\frac{1}{4}-m,\frac{1}{4}+m]^{2}$. It is not hard to see that the subset
\begin{equation}\label{deformation retract of the cube}
\Lambda := \bigcup_{\vec{v}\in \mathbb{Z}^2 \cap [-m,m]^{2}}(\gamma_{\vec{v}}\cup B_{\vec{v}})
\end{equation}
is a deformation retract of $[-\frac{1}{4}-m,\frac{1}{4}+m]^{2}$. This induces a deformation retract of $n^{-}_{1}$ to
$n^{-}_{2}:=n^{-}_{1}\cap p_1^{-1}(\Lambda)$, where
$p_1 \colon [-\frac{1}{4}-m,\frac{1}{4}+m]^{2}\times B(W) \rightarrow [-\frac{1}{4}-m,\frac{1}{4}+m]^{2}$ is the projection.
Since $\Lambda$ is homotopy equivalent to the wedge sum of $(2m+1)^{2}$ balls, we see that $n^{-}_{2}$ is homotopy equivalent to
$$n^{-}_{3}:=\bigvee^{(2m+1)^{2}}_{S(\{0\}\times\mathbb{C}^{(2m+1)^{2}})}S(\mathbb{C}^{(2m+1)^{2}+1}).$$
The above notation denotes the space obtained by gluing $(2m+1)^{2}$ copies of $S(\mathbb{C}^{(2m+1)^{2}+1})$ along their subset $S(\{0\}\times\mathbb{C}^{(2m+1)^{2}})$. Later, we will use similar notations again without explaining.
Since $W^{-}\cong \mathbb{C}^{(2m+1)^{2}+1}$, we have
$$
     \bar{I}^{n,+}_{m}
               \cong
              \Sigma^{-\mathbb{C}^{(2m+1)^{2}+1}}
               \left(
               \bigvee^{(2m+1)^{2}}\limits_{(\{0\}\times\mathbb{C}^{(2m+1)^{2}})^{+}}  (\mathbb{C}^{(2m+1)^{2}+1})^{+}
                 \right)
               \cong
                \Sigma^{-\mathbb{C}}  \left(   \bigvee^{(2m+1)^{2}}\limits_{S^{0}}\mathbb{C}^{+}   \right).
$$
To determine the attractor maps, we notice that $[-\frac{1}{4}-m+1,\frac{1}{4}+m-1]^{2}\times B(W) $ is an isolating block for $\inv (J^{+}_{m-1}\cap (i\Omega^{1}_{h}(Y)\oplus W))$ and the corresponding exiting set is just $n^{-}\cap p_1^{-1}([-\frac{1}{4}-m+1,\frac{1}{4}+m-1]^{2})$. By this observation, we see that the morphism induced by the attractor map  $\bar{I}^{n,+}_{m-1}\rightarrow \bar{I}^{n,+}_{m}$ is just given by the desuspension of the natural inclusion $\bigvee^{(2m-1)^{2}}\limits_{S^{0}}\mathbb{C}^{+}\rightarrow \bigvee^{(2m+1)^{2}}\limits_{S^{0}}\mathbb{C}^{+}$. Therefore, we have
$$
     \underline{\text{\text{SWF}}}^{A}(Y,\mathfrak{s}_{0}; S^{1})
       \cong
       \left(  \bigvee^{\infty}\limits_{S^{0}}\mathbb{C}^{+},
       0,   n(Y,\mathfrak{s}_{0},A_{0},g)+m(\slashed{D}_{A_{0}},\delta_{r}-\delta_{})+1  \right),
$$
where  $\bigvee^{\infty}\limits_{S^{0}}\mathbb{C}^{+}\in \ob\mathfrak{S}$ denotes  the following direct system
$$
                    \mathbb{C}^{+}     \rightarrow
                    \bigvee^{2}\limits_{S^{0}}\mathbb{C}^{+}  \rightarrow
                   \bigvee^{3}\limits_{S^{0}}\mathbb{C}^{+}    \rightarrow \cdots,
$$
whose connecting maps are given by the natural inclusions.

Since $\delta_{r},\delta_{}>0$ and $\delta_{}$ is small, we have $m(\slashed{D}_{A_{0}}, \delta_{r}-\delta_{})=m(\slashed{D}_{A_{0}},\delta_{r})$. We can repeat the calculation in Section \ref{subsection large degree circle bundles} to find the value of the number $n(Y,\mathfrak{s}_{0},A_{0},g)+m(\slashed{D}_{A_{0}},\delta_{r})$. The only difference is that $\slashed{D}^{0}=\tilde{\slashed{D}}_{\tilde{A}_{0}}$ now has kernel of dimension 2. Therefore, the equations (\ref{index formula}) and (\ref{index formula 2}) become
\begin{align*}
\text{Ind}_{\mathbb{C}}(\hat{\slashed{D}}^{+}) &=-\frac{1}{24}\int_{X}p_{1}(\hat{A})+\frac{\eta(\slashed{D})-\text{dim}_{\mathbb{C}}(\ker
\slashed{D})}{2}-\frac{\eta(\slashed{D}^{0})-2}{2}, \\
\text{Ind}_{\mathbb{C}}(\hat{\slashed{D}}^{+})&= -m(\delta_{r})-2.
\end{align*}
Consequently, we obtain
  $$n(\mathfrak{s}_{0},A_{0},g)+m(\delta_{r})+1=c(1,d,0)-2=\frac{d-17}{8}.$$
Therefore, we can conclude that
$$
               \underline{\text{\text{SWF}}}^{A}(Y,\mathfrak{s}_{0};S^{1})          \cong
               \left(  \bigvee^{\infty}\limits_{S^{0}} \mathbb{C}^{+}, 0, \frac{d-17}{8}    \right) \text{ for } d>0.
$$

Our next task is to compute $\underline{\text{\text{SWF}}}^{R}(Y,\mathfrak{s}_{0};S^{1})$. To have simpler description, we will replace $(\delta, \epsilon) $ in the $\underline{\text{\text{SWF}}}^{A}$ case by $(-\delta,-\epsilon)$ as discussed in Remark~\ref{negative delta,epsilon}). By setting $\theta^- = \frac{1}{4} $, we also consider
$\bar{J}^{-}_{m}=p_\mathcal{H}^{-1}([-\frac{1}{4}-m,\frac{1}{4}+m]^{2})\cap \widebar{Str}(\epsilon')$.
Notice that this linearized Seiberg-Witten flow goes outside $\bar{J}^{-}_{m}$ along $p_\mathcal{H}^{-1}(\partial[-\frac{1}{4}-m,\frac{1}{4}+m]^{2}).$
Let $W$ be as before and $\tilde{W}^{-}$ be the subspace spanned by negative eigenvectors of $(\tilde{\slashed{D}}_{A_{0}}+\delta_{})|_{W}$.
By similar argument as in the previous case, we obtain
$$
\bar{I}^{n,-}_{m}\cong \Sigma^{-(\tilde{W}^{-}\oplus i\Omega^{1}_{h}(Y))}I_{S^{1}}(\tilde{\varphi},\inv (\bar{J}^{-}_{m}\cap (i\Omega^{1}_{h}(Y)\oplus W))),
$$
where $\tilde{\varphi}$ is the flow on $i\Omega^{1}_{h}(Y)\oplus W$ generated by
$(\epsilon\grad f_H(h),(\tilde{\slashed{D}}_{\tilde{A}_{0}+h}+\delta_{})\phi)$.

Instead of finding the Conley index $I_{S^{1}}(\tilde{\varphi},\inv (\bar{J}^{-}_{m}\cap (i\Omega^{1}_{h}(Y)\oplus W)))
$ directly, we consider the reverse flow $-\tilde{\varphi}$ and use some duality results. Just like $\varphi$, the flow $-\tilde{\varphi}$ goes inside the isolating block $[-\frac{1}{4}-m,\frac{1}{4}+m]^{2}\times B(W)$ along $(\partial[-\frac{1}{4}-m,\frac{1}{4}+m]^{2})\times B(W)$. With the same argument as in the previous case, we have that$$
I_{S^{1}}(-\tilde \varphi,\inv (\bar{J}^{-}_{m}\cap (i\Omega^{1}_{h}(Y)\oplus W)))\cong \bigvee^{(2m+1)^{2}}\limits_{(\{0\}\times\mathbb{C}^{(2m+1)^{2}})^{+}}(\mathbb{C}^{(2m+1)^{2}+1})^{+}.
$$
According to \cite{Cornea} and \cite{Manolescu3} (see also \cite[{Proposition} 3]{Khandhawit2}), this space is the equivariant  $(i\Omega^{1}_{h}(Y)\oplus W)$-dual (see Page~209 of \cite{May} for definition) of $I_{S^{1}}( \tilde{\varphi},\inv (\tilde{J}^{-}_{m}\cap (i\Omega^{1}_{h}(Y)\oplus W)))$. Since we have a decomposition $$i\Omega^{1}_{h}(Y)\oplus W\cong i\Omega^{1}_{h}(Y)\oplus \tilde{W}^{-}\oplus \mathbb{C}^{(2m+1)^{2}+1},$$ we can see that
$$
        \bar{I}^{n,-}_{m}   \cong \Sigma^{\mathbb{C}} \left(   \bigvee^{(2m+1)^{2}}\limits_{S^{0}}\mathbb{C}^{+}  \right)^{*}
$$
where $E^{*}$ denotes the equivariant Spanier-Whitehead dual spectrum of $E$. In order to calculate $\left( \bigvee^{(2m+1)^{2}}\limits_{S^{0}}\mathbb{C}^{+}  \right)^{*}$, we give the following lemma, which is a simple consequence of Theorem~4.1 and Lemma~4.9 of \cite[Chapter 3]{LMS}.
\begin{lem}\label{spanier-whitehead duality}
Let $E$ be a finite $S^{1}$-CW complex embedded into $S(V)$, the unit sphere of an $S^{1}$-representation space $V$. Then, $(V^{+}\setminus E)$ and $\Sigma E$ are equivariant  $V$-dual to each other. Moreover, if $E' \subset E $ is an inclusion of $S^{1}$-CW complex, then the natural inclusion $\Sigma E'\rightarrow \Sigma E$ is dual to the natural inclusion $(V^{+}\setminus E)\rightarrow (V^{+}\setminus E')$. Similar results hold for the $Pin(2)$-equivariant case.
\end{lem}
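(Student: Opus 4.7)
The strategy is to invoke the general equivariant Spanier--Whitehead duality theorem \cite[Ch.~III, Theorem~4.1]{LMS} in the specific geometric situation where the dualizing embedding is into the unit sphere $S(V)$ of the representation $V$. The plan is to first unpack the LMS statement, then exhibit the pairing map and its coevaluation, and finally read off the functoriality assertion from \cite[Ch.~III, Lemma~4.9]{LMS}.

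First I would set up the duality map. Choose an equivariant tubular neighborhood $N$ of $E$ in $V$ (which exists since $E$ is a finite $S^1$-CW complex equivariantly embedded in $S(V) \subset V$, and the action of the compact Lie group $S^1$ admits equivariant smoothing on CW complexes). The Pontryagin--Thom collapse for the embedding $E \hookrightarrow V$ gives an $S^1$-equivariant map
\[
      \epsilon \colon V^{+}  \longrightarrow  N/\partial N  \simeq  E_+ \wedge V^+,
\]
while collapsing $E$ inside $V^+$ produces an equivariant coevaluation
\[
      \eta \colon V^{+}  \longrightarrow  (V^+ \setminus E) \wedge \Sigma E,
\]
constructed from the homotopy $E \times [0,1] \hookrightarrow V$, $(x,t)\mapsto (1-t)x$ (radial contraction from $S(V)$ to the origin), identifying $\Sigma E$ with the mapping cone of the inclusion $E \hookrightarrow \{0\}$. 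This is exactly the geometric pairing used in \cite[Ch.~III, Theorem~4.1]{LMS} to identify $V^+ \setminus E$ as an equivariant $V$-dual of $\Sigma E$; checking that $\eta$ satisfies the required triangle identities reduces to homotoping inside a tubular neighborhood of $E$ in $V$, which is the content of the cited theorem. The same argument applies verbatim for $G = Pin(2)$ acting on a real representation, since the LMS reference is formulated for arbitrary compact Lie groups.

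For the naturality statement, suppose $E' \subset E$ is an inclusion of $S^1$-CW subcomplexes of $S(V)$. The coevaluations $\eta$ for $E$ and $E'$ fit into an obvious commutative diagram together with the inclusion $\Sigma E' \hookrightarrow \Sigma E$ and the inclusion $V^+ \setminus E \hookrightarrow V^+ \setminus E'$ (of complements, reversed). Passing to stable homotopy classes and applying \cite[Ch.~III, Lemma~4.9]{LMS}, which describes duals of maps in terms of the pairings, one reads off that the dual of $\Sigma E' \hookrightarrow \Sigma E$ is precisely $V^+ \setminus E \hookrightarrow V^+ \setminus E'$.

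The only genuine issue is technical: ensuring the embedding $E \hookrightarrow S(V)$ admits an equivariant tubular neighborhood and that the collapse maps land in the advertised spaces up to equivariant homotopy. This is standard for finite $G$-CW complexes embedded in smooth $G$-manifolds (after thickening to an equivariant regular neighborhood), so no new analysis is needed; the $Pin(2)$-case is identical since $Pin(2)$ is likewise compact. Once these technical points are in place, the lemma is essentially a translation of the LMS framework into the geometric language used in the paper.
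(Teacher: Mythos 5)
Your proposal takes the same route as the paper: the paper's entire proof of this lemma is the one-line citation ``a simple consequence of Theorem~4.1 and Lemma~4.9 of \cite[Chapter~3]{LMS},'' and you invoke exactly those two results. The extra geometric detail you add to unpack the LMS argument has two imprecisions worth flagging: the Pontryagin--Thom collapse lands in $N/\partial N$, which is the Thom space of the normal bundle of $E$ in $V$ (not $E_+ \wedge V^+$ as written), and the unreduced suspension $\Sigma E$ is the double cone (or the cofiber of $E \hookrightarrow CE$), not the mapping cone of a map $E \to \{0\}$ (that cofiber is the contractible cone $CE$). Neither affects the overall correctness of the approach, since the paper delegates all of this to LMS, but they should be repaired if you want a self-contained argument.
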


Notice that
$$\bigvee^{(2m+1)^{2}}\limits_{S^{0}}\mathbb{C}^{+}\cong \Sigma \left(\coprod^{(2m+1)^{2}} S^1\right) $$
and we can embed the disjoint union of circles into $S(\mathbb{C}^2) $.
By Lemma \ref{spanier-whitehead duality}, we have
$$
         \Sigma^{\mathbb{C}^2}  \left(  \bigvee^{(2m+1)^{2}}\limits_{S^{0}}\mathbb{C}^{+} \right)^{*}
          =    (\mathbb{C}^{2})^{+}\setminus  ( \coprod^{(2m+1)^{2}} S^1),
$$
and we can obtain
$$
       \bar{I}^{n_{},-}_{m}
       \cong
       \Sigma^{-\mathbb{C}}  \left(   (\mathbb{C}^{2})^{+}  \setminus ( \coprod^{(2m+1)^2} S^{1} )    \right).
$$
Hence, we can conclude that
$$
      \underline{\text{\text{SWF}}}^{R}(Y,\mathfrak{s}_{0};S^{1})
      \cong
      \left(   (\mathbb{C}^{2})^{+} \setminus (\coprod\limits^{\infty} S^{1}),
                 0 ,  n(Y, \mathfrak{s}_{0},A_{0},g)+m(\slashed{D}_{A_{0}},\delta_{r}+\delta_{})+1   \right),
$$
where $(\mathbb{C}^{2})^{+}\setminus (\coprod\limits^{\infty} S^{1} )$ denotes the inverse system
 $$
(\mathbb{C}^{2})^{+}\setminus (S^1)\leftarrow (\mathbb{C}^{2})^{+}\setminus
(\coprod\limits^{2} S^{1})\leftarrow (\mathbb{C}^{2})^{+}\setminus (\coprod\limits^{3} S^{1})\leftarrow \cdots ,
$$
whose connecting
morphisms are given by the natural inclusions.

Since $\delta_{r},\delta_{}>0$ and $-\delta_{r}$ is an eigenvalue of $\slashed{D}$ with multiplicity $2$, we get
$m(\slashed{D}_{A_{0}}, \delta_{r}+\delta_{})=m(\slashed{D}_{A_{0}}, \delta_{})+2$, which implies
\begin{equation}\label{swfr for nil}
         \underline{\text{\text{SWF}}}^{R}(Y,\mathfrak{s}_{0};S^{1})
         \cong
         \left(  (\mathbb{C}^{2})^{+}\setminus (\coprod\limits^{\infty} S^{1} ) ,0,\frac{d-1}{8}   \right) \text{ for } d>0.
\end{equation}
\begin{rmk} For the case $d<0$, the results are
$$
               \underline{\text{\text{SWF}}}^{A}(Y,\mathfrak{s}_{0};S^{1})          \cong
               \left(  \bigvee^{\infty}\limits_{S^{0}} \mathbb{C}^{+}, 0, \frac{d-15}{8}    \right);
$$
$$
         \underline{\text{\text{SWF}}}^{R}(Y,\mathfrak{s}_{0};S^{1})
         \cong
         \left(  (\mathbb{C}^{2})^{+}\setminus (\coprod\limits^{\infty} S^{1} ) ,0,\frac{d+1}{8}   \right).
$$
\end{rmk}
We have finished the calculation of the $S^{1}$-invariants. Since $c_{1}(\mathfrak{s}_{0})=0$, the spin$^c$ structure $\mathfrak{s}_{0}$ can be lifted to $2^{2}=4$ different spin structures, whose spin connections are given by $A_0, A_0+\frac{h_{1}}{2}, A_0+\frac{h_{2}}{2}$ and $A_0+\frac{h_{1}+h_{2}}{2}$.  We denote the corresponding spin structures by $\mathfrak{s}^{0},\mathfrak{s}^{1},\mathfrak{s}^{2},\mathfrak{s}^{3}$ respectively. Although we have to choose the base connection to be the corresponding spin connection, for consistency, we still identify spin$^{c}$ connections with  $1$-forms by sending $A$ to $A-A_0$ instead.

Most of the argument in the $S^{1}$-equivariant case can be easily adapted to be $Pin(2)$-equivariant case. The only thing to be careful is that the set $\Lambda$ (see (\ref{deformation retract of the cube})) should be invariant under the additional $\jmath$-symmetry.

Now we start the calculation. It turns out that the invariants for $\mathfrak{s}^{1}, \mathfrak{s}^2$ and $\mathfrak{s}^3$ are isomorphic to each other, so we will just focus on $\mathfrak{s}^{1}$. With the same setup when computing $\underline{\text{\text{SWF}}}^{A}(Y,\mathfrak{s}_{0};S^{1})$, the addition $\jmath$-action is given by
 $$
 \jmath \cdot(h,\phi_{\vec{v},+})=(h_{1}-h, \phi_{(1,0)-\vec{v},-}) \text{ and  }\jmath \cdot(h,\phi_{\vec{v},-})=(h_{1}-h, -\phi_{(1,0)-\vec{v},+}).
$$
To preserve this symmetry, we consider
\begin{align*}
\hat{J}_{m}^{+} &=\widebar{Str}({\epsilon}')\cap p_\mathcal{H}^{-1}([-\frac{1}{4}-m+1,\frac{1}{4}+m]^{2}), \\
\hat W &= \bigoplus_{\vec{v}\in \mathbb{Z}^2 \cap [-m+1,m]^{2}}V_{\vec{v}},
\end{align*}
where we note that the basic interval is $[-m+1,m]$ instead of $[-m,m]$.

In the similar manner, we have that, for $n$ sufficiently large,
 $$\bar{I}^{n,-}_{m}(Pin(2))\cong \Sigma^{-\hat{W}^{-}}I_{Pin(2)}(\varphi, \inv (\hat{J}^{+}_{m}\cap (i\Omega^{1}_{h}(Y)\oplus \hat{W})),$$ where $\hat{W}^{-}$ is spanned by the negative eigenvector of $(\slashed{D}_{A_0+\frac{h_{1}}{2}}+\delta_{r}-\delta_{})|_{{W}}$. The set $\Lambda$ can be made $\jmath$-invariant by choosing $z_0 =\frac{h_{1}}{2}$  and requiring that $\jmath \cdot \gamma_{\vec{v}}=\gamma_{(1,0)-\vec{v}}$. Repeating the calculations, we can show that$$
I_{S^{1}}(\varphi, \inv (\hat{J}^{+}_{m}\cap (i\Omega^{1}_{h}(Y)\oplus \hat{W}))\cong \bigvee^{4m^{2}}\limits_{(\{0\}\times\mathbb{C}^{4m^{2}})^{+}}(\mathbb{C}^{4m^{2}+1})^{+}\cong  (\mathbb{C}^{4m^{2}})^{+}\wedge \Sigma (\coprod\limits^{4m^{2}}S^{1}).
$$
Notice that these copies of $S^{1}$ correspond to vertices $\vec{v}\in [-\frac{1}{4}-m+1,\frac{1}{4}+m]^{2}$, which are interchanged by $\jmath$. Therefore, we see that
$$
I_{Pin(2)}(\varphi, \inv (\hat{J}^{+}_{m}\cap (i\Omega^{1}_{h}(Y)\oplus \hat{W}))\cong (\mathbb{H}^{2m^{2}})^{+}\wedge \Sigma (\coprod\limits^{2m^{2}}Pin(2)).
$$
Since $\hat{W}^{-}\cong \mathbb{H}^{2m^{2}}$, we can conclude
$$
        \underline{\text{\text{SWF}}}^{A}(Y,\mathfrak{s}^{1};Pin(2))
       \cong
        \left(   \Sigma (\coprod\limits^{\infty}Pin(2)),0,\frac{d-17}{16} \right),
$$
where $\Sigma(\coprod\limits^{\infty}Pin(2)))\in \ob(\mathfrak{S}(Pin(2)))$ denotes  the direct system
$$
\Sigma Pin(2)\rightarrow \Sigma(Pin(2)\amalg Pin(2)) \rightarrow \Sigma(Pin(2)\amalg Pin(2)\amalg Pin(2))\rightarrow \cdots ,
$$
whose connecting morphisms are given by natural inclusions.

The calculation of $\underline{\text{\text{SWF}}}^{R}(Y,\mathfrak{s}^{1};Pin(2))$ is also very similar to the case  $\underline{\text{\text{SWF}}}^{R}(Y,\mathfrak{s}_{0};S^{1})$. For example, we can show that $\bar{I}^{n,-}_{m}(Pin(2))$ is the Spanier-Whitehead dual of $\Sigma (\coprod\limits^{2m^{2}}Pin(2))$.
We can pick an embedding of $\coprod\limits^{\infty} Pin(2) $ in $S(\mathbb{H}) $ and denote by $\mathbb{H}^{+}\setminus \coprod\limits^{\infty}Pin(2)$  the inverse system
$$
           \mathbb{H}^{+} \setminus Pin(2)                 \leftarrow
           \mathbb{H}^{+} \setminus \coprod^2 Pin(2) \leftarrow
           \mathbb{H}^{+} \setminus \coprod^3 Pin(2) \leftarrow \cdots,
$$
whose connecting morphisms are given by inclusions. Then, by Lemma~\ref{spanier-whitehead duality}, the system $\mathbb{H}^{+}\setminus \coprod\limits^{\infty}Pin(2)$ is equivariant $\mathbb{H}$-dual to $\Sigma(\coprod\limits^{\infty}Pin(2))$. Hence we get
$$
         \underline{\text{\text{SWF}}}^{R}(Y,\mathfrak{s}^{1};Pin(2))
         \cong
         \left(   \mathbb{H}^{+}\setminus \coprod\limits^{\infty}Pin(2) ,0,\frac{d-1}{16} \right).
$$

Now we compute the invariants for $\mathfrak{s}^{0}$. The corresponding spin connection is $A_0$ and the additional $\jmath$-symmetry is given by
$$
\jmath \cdot(h,\phi_{\vec{v},+})=(-h,\phi_{-\vec{v},-}) \text{ and }\jmath \cdot (h,\phi_{\vec{v},-})=(-h,-\phi_{-\vec{v},+}).
$$
We want to compute the Conley index
$$\bar{I}^{n,+}_{m}(Pin(2))\cong\Sigma^{-W^{-}}I_{Pin(2)}(\varphi,\inv (\hat{J}^{+}_{m}\cap (i\Omega^{1}_{h}(Y)\oplus W))).$$
Note that the set $\Lambda$ defined in (\ref{deformation retract of the cube}) can never be made $\jmath$-invariant in this case.
Instead, we can use the union of balls $B_{\vec{v}}$ and paths $\gamma_{\vec{v}}$ connecting $B_{\vec{0}}$ to  $B_{\vec{v}}$  for each $\vec{v}\neq 0$ with  $\jmath \cdot\gamma_{\vec{v}}=\gamma_{-\vec{v}}$. Using a deformation retract to this set, we can describe the Conley index $I_{Pin(2)}(\varphi,\inv (\tilde{J}^{+}_{m}\cap (i\Omega^{1}_{h}(Y)\oplus W)))$ as follows: the ball $B_{\vec{v}}$ contributes to a copy of $(\mathbb{H}^{2m^{2}+2m+1})^{+}\cong\Sigma^{(2m^{2}+2m)\mathbb{H}}\Sigma S(\mathbb{H})$. For $\vec v \neq 0 $, each pair of $B_{\pm \vec{v}}$ together contributes a copy of $\Sigma^{(2m^{2}+2m)\mathbb{H}}\Sigma (\tilde{Z}_2\times S(\mathbb{H}))$, where $\tilde{Z}_2 = \{\pm1 \}$ is the two-point space with nontrivial $Pin(2)$-action. More precisely, we can write
$$
       I_{Pin(2)}(\varphi,\inv (\bar{J}^{+}_{m}\cap (i\Omega^{1}_{h}(Y)\oplus W)))
         \cong
          \Sigma^{(2m^{2}+2m) \mathbb{H}}
          \Sigma
           \left(
            S(\mathbb{H}) \mathop{\vee}\limits_{ Pin(2)}
               \mathop{\bigvee}\limits_{Pin(2)}^{2m^{2}+2m}(\tilde{Z}_2 \times S(\mathbb{H}))
            \right).
$$
Here we think of $Pin(2)$ as the subset $\{(1,e^{i\theta})\}\cup \{(-1,je^{i\theta})\}$ in $\tilde{Z}_2\times S(\mathbb{H})$.
Since $W^{-}\cong \mathbb{H}^{2m^{2}+2m+1}$, we obtain
$$
        \underline{\text{\text{SWF}}}^{A}(Y,\mathfrak{s}^{0};Pin(2))
        \cong
        \left(
        \Sigma  \left(    S(\mathbb{H})\mathop{\vee}\limits_{ Pin(2)}
         \bigvee\limits_{Pin(2)}^{\infty}  (\tilde{Z}_2  \times S(\mathbb{H}))   \right),
          0,  \frac{d-9}{16}
        \right),
$$
where $S(\mathbb{H})\mathop{\vee}\limits_{ Pin(2)}(\bigvee\limits_{Pin(2)}^{\infty}(\tilde{Z}_2\times
S(\mathbb{H})))$ denotes the direct system
\[
     \begin{split}
     & S(\mathbb{H})\mathop{\vee}\limits_{Pin(2)}(\tilde{Z}_2\times S(\mathbb{H}))
        \rightarrow
      S(\mathbb{H})\mathop{\vee}\limits_{ Pin(2)} \left(\bigvee\limits_{Pin(2)}^{2}(\tilde{Z}_2  \times S(\mathbb{H})) \right)
       \rightarrow   \\
     & \qquad
       S(\mathbb{H})\mathop{\vee}\limits_{ Pin(2)} \left( \bigvee\limits_{Pin(2)}^{3}(\tilde{Z}_2\times S(\mathbb{H}))  \right)
       \rightarrow   \cdots.
     \end{split}
\]

We are only left with the calculation of $\underline{\text{\text{SWF}}}^{R}(Y,\mathfrak{s}^{0};Pin(2))$. To do this, we need to find the Spanier-Whitehead dual of $S(\mathbb{H})\mathop{\vee}\limits_{ Pin(2)}(\bigvee\limits_{Pin(2)}^{m}(\tilde{Z}_2\times
S(\mathbb{H})))$. It is not hard to check that this space can be embedded into $S(\mathbb{H}^{2})$ as
$$
D_{m}:=\bigcup\limits_{n=0}^{m}\{(z_{1}+jz_{2},z_{3}+jz_{4})\in S(\mathbb{H}^{2})|z_{3}=-\bar{z}_{4}=nz_{1} \text{ or } z_{3}=\bar{z}_{4}=n\bar{z}_{2} \}.
$$
Repeating the calculation we did for $\underline{\text{\text{SWF}}}^{R}(Y,\mathfrak{s}^{0};S^{1})$, we get
$$
    \underline{\text{\text{SWF}}}^{R}(Y,\mathfrak{s}^{0};Pin(2))
    \cong
    \left( (\mathbb{H}^{2})^{+}\setminus D_{\infty},0,\frac{d+7}{16}  \right),
$$
where $(\mathbb{H}^{2})^{+}\setminus D_{\infty}$ denotes  the inverse system
$$
(\mathbb{H}^{2})^{+}\setminus D_{1}\leftarrow (\mathbb{H}^{2})^{+}\setminus D_{2}\leftarrow (\mathbb{H}^{2})^{+}\setminus D_{3}\leftarrow \cdots .
$$

\subsubsection{Other nil manifolds} Suppose that $\Sigma$ is not smooth. Then the genus of $\Sigma$ is $0$ and $Y$ is a rational homology sphere. Without any further perturbation, the functional $\mathcal{L}_{\delta_{r},0}$ has a unique critical point $(A_{0},0)$. This allows us to apply Theorem~\ref{Morse-Bott reducible} and obtain
\begin{align*}
\underline{\text{\text{SWF}}}^{A}(Y,\mathfrak{s};S^{1}) &\cong(S^{0},0,n(Y,\mathfrak{s},A_{0},g)+m(\slashed{D},\delta_{r} )), \\
\underline{\text{\text{SWF}}}^{R}(Y,\mathfrak{s};S^{1}) &\cong(S^{0},0,n(Y,\mathfrak{s},A_{0},g)+m(\slashed{D},\delta_{r})).
\end{align*}
Moreover, when $\mathfrak{s}$ is spin, we have
\begin{align*}
      \underline{\text{\text{SWF}}}^{A}(Y,\mathfrak{s};Pin(2)) &
      \cong
      \left(  S^{0},0,\frac{n(Y,\mathfrak{s},A_{0},g)+m(\slashed{D},\delta_{r} )}{2}  \right), \\
       \underline{\text{\text{SWF}}}^{R}(Y,\mathfrak{s};Pin(2)) &
       \cong
      \left(   S^{0},0,\frac{n(Y,\mathfrak{s},A_{0},g)+m(\slashed{D},\delta_{r})}{2}  \right).
\end{align*}
 The explicit formula for $n(Y,\mathfrak{s},A_{0},g)+m(\slashed{D},\delta_{r})$ can be obtained in the same fashion as in Section \ref{subsection large degree circle bundles} (cf. \cite{Nicolescu1}, \cite{Nicolescu2}) and we omit it.
\subsection{Flat manifolds except $T^3$}\label{subsection flat manifolds} In this subsection, we calculate the spectrum invariants for manifolds $Y$ supporting a flat metric $g$ other than the 3-torus $T^3$. There are five manifolds belonging to this class: four of them are $T^{2}$-bundles
over $S^{1}$ with monodromy automorphism fixing a point and having orders $2,3,4,6$, and the last of them is the Hantzsche-Wendt manifold. By the Weitzenb$\ddot{\text{o}}$ck formula, for any torsion spin$^{c}$ structure $\mathfrak{s}$ on $Y$, the functional $CSD$ has only reducible critical points.

The Hantzsche-Wendt manifold is a rational homology sphere. Therefore, the functional $\mathcal{L}_{0,0}$ has only one critical point $(A_{0},0)$ and we can just apply Theorem~\ref{Morse-Bott reducible} to conclude that
\begin{align*}
\underline{\text{\text{SWF}}}^{A}(Y,\mathfrak{s};S^{1}) &\cong(S^{0},0,n(Y,\mathfrak{s},A_{0})), \\
\underline{\text{\text{SWF}}}^{R}(Y,\mathfrak{s};S^{1}) &\cong(S^{0},0,n(Y,\mathfrak{s},A_{0}))
\end{align*}
as well as analogous results for the spin case.
The numbers $n(Y,\mathfrak{s},A_{0},g)$ can be calculated using the method of \cite{Nicolescu1} and \cite{Nicolescu2} and we omit the result.

Now we consider the  $T^{2}$-bundles over $S^{1}$ whose monodromies are automorphisms $\tau \colon T^{2}\rightarrow T^{2}$ of order 2 (i.e. the hyperelliptic involution on $T^{2}$). The situations for the cases of order 3,4 or 6 are very similar, so we will focus our attention to this case of order 2.

Let $T^{2}$ be given by $\mathbb{R}^{2}/\mathbb{Z}^{2}$ with $\tau(\theta_{1},\theta_{2})=-(\theta_{1},\theta_{2})$.
Then, the manifold $Y$ can be obtained as the quotient of $(\mathbb{R}\times T^{2})/\mathbb{Z}$, where the $\mathbb{Z}$-action is given by $(\theta_{0},\theta_{1},\theta_{2}) \mapsto (\theta_{0}+1,-\theta_{1},-\theta_{2})$. Since $H_{1}(Y;\mathbb{Z})=(\mathbb{Z}/2\mathbb{Z})\oplus (\mathbb{Z}/2\mathbb{Z})\oplus \mathbb{Z}$, there are four torsion spin$^{c}$ structures on $Y$.
By \cite[Lemma~37.4.1]{Kronheimer-Mrowka}, only one of them admits a spin$^{c}$ connection $A$ with $F_{A^{t}}=0$ and $\ker \slashed{D}_{A}\neq 0$. We denote it by $\mathfrak{s}_{0}$ and the other three by $\mathfrak{s}_{1},\mathfrak{s}_{2},\mathfrak{s}_{3}$.

Let us consider the spin$^c$ structure $\mathfrak{s}_{0}$ first. This spin$^{c}$ structure can be identified with a quotient of the spin$^{c}$ structure $\tilde{\mathfrak{s}}_{0}$ on $\mathbb{R}\times T^{2}$ whose spinor bundle is trivial and the Clifford multiplication is given by
$$
\rho(d\theta_{0})=\left(\begin{array} {lr}
 i & 0 \\
 0& -i
\end{array}\right)\text{, }\rho(d\theta_{1})=\left(\begin{array} {lr}
 0 & -1 \\
 1 & 0
\end{array}\right)\text{, }\rho(d\theta_{2})=\left(\begin{array} {lr}
 0 & i \\
 i & 0
\end{array}\right).
$$
Note that the generator of the $\mathbb{Z}$-action on the spinor bundle is given by the constant matrix $\left(\begin{smallmatrix}
  1 & 0 \\
  0& -1
 \end{smallmatrix}\right)$.

As in Section \ref{subsection nil manifolds}, we let $\Gamma_{c}(S_{Y})$ (resp. $\Gamma_{0}(S_{Y})$) be the space of sections of that is constant (resp. integrate to $0$) along each fiber of the $T^2$-bundle.
For each integer $n$, we define sections $\phi_{n,\pm}\in\Gamma_{c}(S_{Y})$ as
$$
\phi_{n,+}(\theta_{0},\theta_{1},\theta_{2})=(e^{\pi i \cdot 2n\theta_{0}},0), \; \phi_{n,-}(\theta_{0},\theta_{1},\theta_{2})=(0,e^{\pi i\cdot (2n+1)\theta_{0}}).
$$
These give an $L^2$-orthonormal basis of $\Gamma_{c}(S_{Y})$.

 We choose as the base connection $A_{0}$ induced from the trivial connection on $\tilde{\mathfrak{s}}_{0}$ and we pick $ h_1 = 2\pi i\cdot d\theta_{0}$ as a basis for $i \Omega^1_h (Y) $. We have the following observation.
\begin{lem}
For any $\theta\in \mathbb{R}$, the kernel of the operator $\slashed{D}_{A_{0}+ \theta h_{1}}|_{L^{2}_{k}(\Gamma_{0}(S_{Y}))}$ is trivial. Moreover, we have
\begin{align*}
 \slashed{D}_{A_{0}+\theta h_{1}}(\phi_{n,+})&=-\pi(2n+2\theta)\phi_{n,+}, \\
 \slashed{D}_{A_{0}+\theta h_{1}}(\phi_{n,-})&=\pi(2n+1+2\theta)\phi_{n,-}.
\end{align*}
\end{lem}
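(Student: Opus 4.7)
\medskip

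My plan is to prove both statements by pulling everything back to the universal cover $\mathbb{R}\times T^2$ and doing a direct computation. First I would write down the perturbed Dirac operator explicitly: since $\slashed{D}_{A_0+\theta h_1}=\slashed{D}+\theta\rho(h_1)$ and the spinor bundle on $\mathbb{R}\times T^2$ is trivialized so that the base connection $A_0$ lifts to the flat product connection, one has
\[
  \slashed{D} = \rho(d\theta_0)\partial_{\theta_0}+\rho(d\theta_1)\partial_{\theta_1}+\rho(d\theta_2)\partial_{\theta_2}.
\]
With $h_1=2\pi i\,d\theta_0$, the zero-th order perturbation is $\rho(\theta h_1)=\mathrm{diag}(-2\pi\theta,2\pi\theta)$.

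The second part is then essentially substitution. For $\phi_{n,+}=(e^{2\pi i n\theta_0},0)$, only the $\rho(d\theta_0)\partial_{\theta_0}$ term survives because $\phi_{n,+}$ is constant in $(\theta_1,\theta_2)$; differentiating and applying $\rho(d\theta_0)=\mathrm{diag}(i,-i)$ yields $-2\pi n\,\phi_{n,+}$, and adding the diagonal term gives $-\pi(2n+2\theta)\phi_{n,+}$. The calculation for $\phi_{n,-}$ is the same modulo a sign tracking the change in the diagonal entry and the half-integer exponent $\pi i(2n+1)\theta_0$, producing $\pi(2n+1+2\theta)\phi_{n,-}$. (A preliminary sanity check that $\phi_{n,\pm}$ actually descend to $Y$ amounts to verifying the equivariance condition $\phi(\theta_0+1,-\theta_1,-\theta_2)=\mathrm{diag}(1,-1)\phi(\theta_0,\theta_1,\theta_2)$, which is automatic: the factor $e^{2\pi i n}=1$ for $\phi_{n,+}$, and $e^{\pi i(2n+1)}=-1$ cancels the $-1$ in the monodromy for $\phi_{n,-}$.)

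For the kernel claim, the idea is a fiberwise Fourier decomposition. Any spinor on $Y$ lifts to a section on $\mathbb{R}\times T^2$ and may be expanded as $\sum_{(m_1,m_2)\in\mathbb{Z}^2} f_{m_1,m_2}(\theta_0)e^{2\pi i(m_1\theta_1+m_2\theta_2)}$, and the subspace $\Gamma_0(S_Y)$ is precisely the part with $(m_1,m_2)\neq(0,0)$. On such a mode, $\partial_{\theta_1}$ and $\partial_{\theta_2}$ act as $2\pi i m_1$ and $2\pi i m_2$, so the transverse part of $\slashed{D}_{A_0+\theta h_1}$ contributes the skew-self-adjoint off-diagonal matrix
\[
  2\pi i\begin{pmatrix}0 & -m_1+im_2\\ m_1+im_2 & 0\end{pmatrix},
\]
which anticommutes with the diagonal part $\rho(d\theta_0)\partial_{\theta_0}+\rho(\theta h_1)$. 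Squaring $\slashed{D}_{A_0+\theta h_1}$ on the mode therefore gives the sum of the squares of the two pieces, and the off-diagonal contribution has norm $4\pi^2(m_1^2+m_2^2)\geq 4\pi^2$. Consequently $\|\slashed{D}_{A_0+\theta h_1}\phi\|^2_{L^2}\geq 4\pi^2\|\phi\|^2_{L^2}$ for $\phi\in L^2_k(\Gamma_0(S_Y))$, which rules out a nontrivial kernel uniformly in $\theta$.

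The only mildly delicate point I anticipate is justifying the anticommutation (and hence the clean split of $\slashed{D}^2$) on each Fourier mode. This is a pure Clifford-algebra check using $\rho(d\theta_0)^2=-1$, $\rho(d\theta_j)^2=-1$, and $\rho(d\theta_0)\rho(d\theta_j)+\rho(d\theta_j)\rho(d\theta_0)=0$ for $j=1,2$, all of which are immediate from the matrices given; so no real difficulty arises, and the lemma follows.
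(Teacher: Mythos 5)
Your proof is correct, and the second half (the eigenvalue computations) is the same direct substitution that the paper has in mind. For the first half (triviality of the kernel on $\Gamma_0$), your route differs from the paper's: the paper simply invokes ``passing to the double cover $T^3$'' and stops, implicitly appealing to the known description of harmonic spinors on a flat torus, while you do a direct Fourier decomposition in the fiber coordinates $(\theta_1,\theta_2)$ on the infinite cyclic cover, split $\slashed{D}_{A_0+\theta h_1}=D_\parallel+D_\perp$, verify the anticommutation $\{D_\parallel,D_\perp\}=0$, and get a uniform bound $\slashed{D}^2=D_\parallel^2+D_\perp^2\ge 4\pi^2(m_1^2+m_2^2)\ge 4\pi^2$ on any nonzero mode. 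Both arguments are Fourier analysis of a flat Dirac operator; yours is more self-contained and avoids the detour through $T^3$, at the cost of having to check (as you do) that the Clifford relations make $D_\parallel$ and $D_\perp$ anticommute and that the $(m_1,m_2)$ decomposition is preserved by the operator. Your equivariance sanity check for $\phi_{n,\pm}$ is also right: $e^{2\pi i n}=1$ and $e^{\pi i(2n+1)}=-1$ match the monodromy $\mathrm{diag}(1,-1)$. One small slip of terminology: $\mathbb{R}\times T^2$ is the infinite cyclic cover of $Y$, not the universal cover (which is $\mathbb{R}^3$); this does not affect the argument.
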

 \begin{proof}
 The first assertion can be proved by passing to the double cover of $Y$, which is $T^{3}$. The second assertion is easy to verify by direct calculation.
 \end{proof}
Let $f_H \colon i\Omega^{1}_{h}(Y)\cong \mathbb{R}\rightarrow \mathbb{R}$ be given by $f(\theta) =- \cos(4\pi\theta)$. 
By the same argument as in the proof of \cite[Proposition~37.1.1]{Kronheimer-Mrowka} and Lemma~\ref{achieve positive definite condition}, we have
 \begin{lem}
 There exist a constant $\delta_{1}\in (0,\frac{\pi}{2})$ and a function $\epsilon_{1}:(0,\delta_{1})\rightarrow (0,\infty)$ such that for any $\delta\in (0,\delta_{1})$ and $\epsilon\in (0,\epsilon_{1}(\delta_{1}))$, we have\begin{enumerate}
\item   The functional
$$\mathcal{L}_{-\delta,\epsilon}=CSD|_{Coul(Y)}-\frac{\delta}{2}\|\phi\|_{L^{2}}^{2}+{\epsilon}f_H(p_\mathcal{H}(a,\phi))$$
has only reducible critical points.
\item For any $h\in i\Omega^{1}_{h}(Y)$ and $\phi\in L^{2}_{k}(\Gamma(S_{Y}))$, we have
$$
\langle (2(\slashed{D}_{A_{0}+h}-\delta)^{2}+\rho( \grad {\epsilon}f_H(h)))\phi,\phi\rangle_{L^{2}}\geq C(\delta,\epsilon)\|\phi\|^{2}_{L^{2}},$$
where $C(\delta,\epsilon)$ is a positive constant depending only on $\delta,\epsilon$.
\end{enumerate}
\end{lem}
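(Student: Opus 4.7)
The plan is to prove the two parts by adapting the arguments from the analogous results in the nil manifold case (Lemma \ref{lem delepsilon reducible} and Lemma \ref{achieve positive definite condition}), with the main simplification being that $b_1(Y)=1$ so that $i\Omega^1_h(Y)$ is one-dimensional and spanned by $h_1 = 2\pi i\, d\theta_0$.

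For part (1), I would argue by contradiction. Since $(Y,g)$ is flat, the Weitzenb\"ock formula implies that $CSD|_{Coul(Y)}$ has only reducible critical points, and the same holds for $CSD|_{Coul(Y)} - \tfrac{\delta}{2}\|\phi\|^2_{L^2}$ for any sufficiently small $\delta > 0$ by the argument of \cite[Proposition~37.1.1]{Kronheimer-Mrowka}. Supposing that for some sequences $\delta_n \to 0$ and $\epsilon_n \to 0$ the functional $\mathcal{L}_{-\delta_n,\epsilon_n}$ admits an irreducible critical point $(a_n,\phi_n)$, one applies the compactness result \cite[Proposition~11.6.4]{Kronheimer-Mrowka} after suitable gauge transformations and passes to a subsequence converging to a critical point of $CSD|_{Coul(Y)}$, which must be reducible of the form $(h,0)$ with $h \in i\Omega^1_h(Y)$ a critical point of $f_H$. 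After normalizing, $\tilde{\phi}_n := \phi_n/\|\phi_n\|_{L^2}$ converges to a unit vector in $\ker \slashed{D}_{A_0+h}$; following the bifurcation analysis of \cite[Proposition~37.1.1]{Kronheimer-Mrowka}, one then derives a contradiction from the first-order variation of the Dirac equation $\slashed{D}_{A_0+a_n}\phi_n = \delta_n \phi_n$ along the irreducible branch.

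For part (2), I would use the $L^2$-orthogonal decomposition $\Gamma(S_Y) = \Gamma_c(S_Y) \oplus \Gamma_0(S_Y)$, which is preserved by both $\slashed{D}_{A_0+h}$ and $\rho(\grad f_H(h))$ (the latter acts via the constant matrix $\rho(h_1)$ up to the scalar $f_H'(\theta)/\|h_1\|^2_{L^2}$). On $\Gamma_0(S_Y)$, the just-proved lemma gives $\ker \slashed{D}_{A_0+\theta h_1}|_{\Gamma_0}=0$ for every $\theta$, and exploiting the gauge-induced unitary equivalence $\theta \mapsto \theta+1$ we may reduce to $\theta \in [-1/2,1/2]$; compactness then yields a uniform $\sigma_1>0$ with $(\slashed{D}_{A_0+\theta h_1})^2|_{\Gamma_0} \geq 4\sigma_1^2$, absorbing both the $\delta$-shift and the $\epsilon$-perturbation for small parameters. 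On $\Gamma_c(S_Y)$, the explicit spectrum in the preceding lemma shows that $\ker(\slashed{D}_{A_0+\theta h_1}-\delta)$ is nontrivial exactly at $\theta = -n - \delta/(2\pi)$ (with kernel $\mathbb{C}\phi_{n,+}$) or $\theta = -n - 1/2 + \delta/(2\pi)$ (with kernel $\mathbb{C}\phi_{n,-}$). At these kernel points a direct calculation gives $f_H'(\theta) = \mp 4\pi\sin(2\delta)$ while $\rho(h_1)$ acts as $\mp 2\pi$ on the respective $\phi_{n,\pm}$, so that the two sign flips cancel and $\langle \rho(\grad f_H(\theta h_1))\phi,\phi\rangle$ is strictly positive at each kernel.

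With this pointwise positivity, I would conclude exactly as in Lemma \ref{achieve positive definite condition}: the compact set $K$ of unit kernel vectors over $\theta\in[-1/2,1/2]$ has an open neighborhood $U$ on which $\langle \rho(\grad f_H(\theta h_1))\phi,\phi\rangle > \sigma_2 > 0$, while outside $U$ one has $\langle 2(\slashed{D}_{A_0+\theta h_1}-\delta)^2\phi,\phi\rangle > \sigma_3^2$. Choosing $\epsilon$ small enough that $\epsilon$ times a $C^0$-bound on $\rho(\grad f_H)$ is less than $\sigma_3^2$ then produces a uniform lower bound $C(\delta,\epsilon)$ of order $\epsilon$. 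The main obstacle I anticipate lies in part (1): because the reducible locus of $CSD|_{Coul(Y)}$ is a positive-dimensional Morse--Bott Picard circle on which $\slashed{D}_{A_0+h}$ acquires kernel at isolated points, the bifurcation analysis of \cite[Proposition~37.1.1]{Kronheimer-Mrowka} must be adapted to track how the combined perturbation $(-\delta,\epsilon f_H)$ interacts with the kernel directions of these reducibles. Part (2), by contrast, is essentially mechanical once the $\Gamma_c \oplus \Gamma_0$ splitting is exploited.
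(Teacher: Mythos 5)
Your part~(2) is essentially correct and matches the paper's approach in Lemma~\ref{achieve positive definite condition}, specialized to $b_1=1$. The splitting $\Gamma(S_Y)=\Gamma_c(S_Y)\oplus\Gamma_0(S_Y)$, the uniform gap on $\Gamma_0$, and the sign calculation at the two kernel loci are all sound: at $\theta=-n-\delta/(2\pi)$ the kernel is $\mathbb{C}\phi_{n,+}$, and $\rho(h_1)$ acts there as $-2\pi$ while $f_H'(\theta)=-4\pi\sin(2\delta)$, so the two negative signs cancel to give $\langle\rho(\grad f_H(\theta h_1))\phi,\phi\rangle>0$; similarly for $\phi_{n,-}$. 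This cancellation is exactly what the period-$1/2$ choice $f_H(\theta)=-\cos(4\pi\theta)$ is designed to produce, and the concluding compactness step (neighborhood $U$ of the kernel locus $K$, $\epsilon$-absorption outside $U$) is the same as the paper's.

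The gap is in part~(1), and you correctly anticipate its location, but the step you do supply is the one that fails. Taking $\delta_n\to 0$ and $\epsilon_n\to 0$ jointly and passing to a limit gives a critical point of $CSD|_{Coul(Y)}$, which is reducible; but there is no reason for that limit $h$ to be a critical point of $f_H$, since the $f_H$-constraint $\epsilon_n\grad f_H(h_n)=-p_{\mathcal{H}}(\rho^{-1}(\phi_n\phi_n^*)_0)$ is vacuous in the limit when $\epsilon_n\to 0$. What is actually forced is only that $\tilde\phi_\infty\in\ker\slashed{D}_{A_0+h}$, so $\theta_\infty\in\{0,1/2\}$ modulo $1$; it is an incidental feature of the period-$1/2$ choice of $f_H$ that these happen to be critical points of $f_H$, not a consequence of the equations you invoke. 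More importantly, the Weitzenb\"ock argument of \cite[Proposition~37.1.1]{Kronheimer-Mrowka} rules out irreducibles by exhibiting $\slashed{D}_B$ with \emph{no} kernel (positive scalar curvature); in the flat case $s=0$, the integrated Weitzenb\"ock only bounds $\|\phi\|_{L^2}$ by $O(\delta)$, it does not force $\phi=0$, and $\slashed{D}_{A_0+h}-\delta$ does acquire kernel at isolated $\theta$. So the Kuranishi/bifurcation analysis at those reducibles --- which you acknowledge but do not carry out --- is the entire content of part~(1). To close the gap, the argument must exploit the specific interaction between the two perturbation parameters: fix $\delta>0$ small first, let $\epsilon\to 0$, and use the explicit eigenvalue computation (the kernel of $\slashed{D}_{A_0+\theta h_1}-\delta$ is simple and crosses transversally in $\theta$) together with the harmonic part of the curvature equation to show that the Kuranishi obstruction is definite, as in the analogous step behind Lemma~\ref{lem delepsilon reducible}.
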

Let us fix a choice of $\delta_{2}\in (0,\delta_{1})$ and $\epsilon_{2}\in (0,\epsilon_{1}(\delta_{2}))$. The the critical points of $\mathcal{L}_{-\delta_{2},\epsilon_{2}}$ are just $(\theta h_{1},0)$ with $4\theta\in \mathbb{Z}$ and there are four gauge equivalent classes $[(0,0)],[(\frac{h_{1}}{4},0)],[(\frac{h_{1}}{2},0)]$ and $[(\frac{3h_{1}}{4},0)]$. Notice that the spectral flow of the operator $\slashed{D}_{A_{0}+\theta h_{1}}-\delta_{2}$ is $0$ when $\theta$ goes from $0$ to $\frac{1}{4}$ or $\theta$ goes from $\frac{1}{4}$ to $\frac{1}{2}$, whereas the spectral flow is $1$ when $\theta$ goes from $\frac{1}{2}$ to $\frac{3}{4}$. Consequently, we have
\begin{align*}
\grade ([(0,0)],[(\frac{h_{1}}{4},0)])&=\grade ([(\frac{h_{1}}{2},0)],[(\frac{h_{1}}{4},0)]) \, \;=-1, \\
\grade ([(0,0)],[(\frac{3h_{1}}{4},0)])&=\grade ([(\frac{h_{1}}{2},0)],[(\frac{3h_{1}}{4},0)])=1.
\end{align*}

As in the proof of Lemma~\ref{achieve Smale transversality}, we can find an extended cylinder function $\bar{f}$ satisfying the following requirements:\begin{enumerate}
\item There exist $\epsilon'>0$ such that $\bar{f}(a,\phi)=0$ whenever $\|\phi\|_{L^{2}}\geq \epsilon'$;
\item The functional $\mathcal{L}_{-\delta_{2},\epsilon_{2}}+\bar{f}$ has only reducible critical points;
\item For any boundary unstable reducible critical manifold $[C]$ and any boundary stable reducible critical manifold $[C']$, the moduli space $\breve{\mathcal{M}}_{\bar{f}}([C],[C'])$ is Smale regular.
\end{enumerate}

We now consider the perturbed functional $\mathcal{L}_{-\delta_{2},\epsilon_{2}}+\bar{f}$.
Let $[C],[C']$ be critical manifolds whose corresponding critical points are $[\mathfrak{a}],[\mathfrak{b}]$ respectively. By a formula analogous to (\ref{expected dimension}), we see that the expected dimension of  $\breve{\mathcal{M}}_{\bar{f}}([C],[C'])$ is nonnegative only if $\grade ([\mathfrak{a}],[\mathfrak{b}])$ is at least 2. This can happen only when $[\mathfrak{a}]=[(\frac{h_{1}}{4},0)]$ and $[\mathfrak{b}]=[(\frac{3h_{1}}{4},0)]$. However, since the functional $\mathcal{L}_{-\delta_{2},\epsilon_{2}}+\bar{f}$ takes the same value at these two points, there cannot be any trajectory connecting them. As a result, we have proved that the moduli space $\breve{\mathcal{M}}_{\bar{f}}([C],[C'])$ is actually empty. This implies that there are no irreducible trajectories for the functional $\mathcal{L}_{-\delta_{2},\epsilon_{2}}+\bar{f}$ and we can use Theorem~\ref{spectrum for the linearized flow} to calculate our invariants.

The computation of the Conley indices of the linearized Seiberg-Witten flow is exactly the same as in the case of nil manifolds and we will skip all the details.
The number $n(Y,\mathfrak{s},A_{0},g)$ can be calculated using the formula in \cite{Nicolescu1}. Since $Y$ admits an orientation reversing diffeomorphism preserving $(\mathfrak{s}_{0},A_{0})$, we see that $\eta(\slashed{D})=\eta_{sign}=0$, which implies $n(Y,g,\mathfrak{s},A_{0})=-\frac{1}{2}\text{dim}_{\mathbb{C}}(\ker \slashed{D})=-\frac{1}{2}$. Note that $m(\slashed{D},-\delta_{2})=0$
as $\delta_{2}$ is small. Therefore, we can conclude that
\begin{align*}
     \underline{\text{\text{SWF}}}^{A}(Y,\mathfrak{s}_{0};S^{1})
   &\cong
     \left(  \bigvee^{\infty}\limits_{S^{0}}\mathbb{C}^{+},  0,  \frac{1}{2}  \right), \\
    \underline{\text{\text{SWF}}}^{R}(Y,\mathfrak{s}_{0};S^{1})
   &\cong
    \left(  (\mathbb{C}^{2})^{+}\setminus (\coprod\limits^{\infty}  S^{1} ) ,0,\frac{3}{2}  \right).
\end{align*}

As for the $Pin(2)$-invariants, notice that $\mathfrak{s}_{0}$ can be lifted to two spin structures. Since the holonomy of $A_{0}^{t}$ along the loop $(0,0,\theta) $ equals $-1$, we see that the spin connections are $A_{0}+\frac{h_{1}}{4}$ and $A_{0}+\frac{h_{3}}{4}$ and denote the corresponding spin structures by $\mathfrak{s}^{0}_0$ and $\mathfrak{s}^{1}_0$ respectively. We have
\begin{align*}
     \underline{\text{\text{SWF}}}^{A}(Y,\mathfrak{s}^{0}_0;Pin(2))
     &\cong
      \left(
        \Sigma
         \left(
          S(\mathbb{H})\mathop{\vee}\limits_{ Pin(2)} \bigvee_{Pin(2)}^{\infty} (  \tilde{Z}_2  \times  S(\mathbb{H})  )
         \right),
         0,  \frac{3}{4}
       \right), \\
       \underline{\text{\text{SWF}}}^{R}(Y,\mathfrak{s}^{0}_0;Pin(2))
        &\cong
       \left(  (\mathbb{H}^{2})^{+}\setminus D_{\infty},0,\frac{5}{4}   \right),\\
        \underline{\text{\text{SWF}}}^{A}(Y,\mathfrak{s}^{1}_0;Pin(2))
        &\cong
        \left(  \Sigma (\coprod^{\infty}Pin(2)),0,\frac{1}{4}  \right), \\
        \underline{\text{\text{SWF}}}^{R}(Y,\mathfrak{s}^{1}_0;Pin(2))
         &\cong
         \left(  \mathbb{H}^{+}\setminus \coprod^{\infty}Pin(2),  0,   \frac{3}{4}  \right).
\end{align*}

 As for the other spin$^{c}$ structures $\mathfrak{s}_{1},\mathfrak{s}_{2}$ and $\mathfrak{s}_{3}$, since the $\ker (\slashed{D}_{A_{0}+h})=0$ for any $h\in i\Omega^{1}_{h}(Y)$, we can just apply Theorem~\ref{Morse-Bott reducible} and get sphere spectrums with suitable suspension. Notice, for $j=1,2,3$, that we have $n(Y,A_{0},\mathfrak{s}_{j},g)=0$ because of existence of an orientation reversing diffeomorphism preserving $(\mathfrak{s}_{j},A_{0})$.

Finally, when $Y$ is one of the other $T^2$-bundles, we can prove that the spectrum invariants of $Y$ are just shifts in the  suspension indices of the above results. The only {difference} comes from the change of the number $n(Y,\mathfrak{s},A_{0},g)$. Again, we refer to \cite{Nicolescu1} and \cite{Nicolescu2} for the calculation of this quantity.

\bibliographystyle{amsplain}
\bibliography{Bbib418}
\end{document}